\documentclass[11pt,letterpaper,reqno]{amsart}
\usepackage[left=2.5cm,right=2.5cm,top=3cm,bottom=3cm]{geometry}
\usepackage[utf8]{inputenc}
\usepackage[english]{babel}
\usepackage{amsmath}
\usepackage{amssymb}
\usepackage{amsfonts}
\usepackage{amsthm}
\usepackage{mathrsfs}
\usepackage{graphicx}
\usepackage{enumitem}
\setlist[itemize]{leftmargin=*}
\usepackage{xcolor}
\usepackage{url}
\usepackage{bm}
\usepackage[all]{xy}
\usepackage{hyperref}
\usepackage{esint}
\usepackage{todonotes}
\usepackage{soul}
\usepackage{stmaryrd}
\hypersetup{
	colorlinks,
	linkcolor={blue!60!black},
	citecolor={green!60!black},
	urlcolor={green!60!black}
}
\usepackage{doi}
\usepackage[capitalise,nameinlink]{cleveref}
\usepackage[stretch=50]{microtype}
\usepackage[maxnames=50,doi=false,isbn=false,url=false,eprint=false,backend=bibtex,giveninits=true,sorting=nyt]{biblatex}

\DeclareFieldFormat[inbook]{chapter}{#1}
\renewbibmacro{in:}{}
\DeclareFieldFormat[article]{volume}{vol\adddot\addnbspace #1}
\DeclareFieldFormat[article]{number}{no\adddot\addnbspace #1}
\renewbibmacro{volume+number+eid}{%
	\setunit{\addcomma\space}%
	\printfield{volume}%
	\setunit{\addcomma\space}%
	\printfield{number}%
	\setunit{\addcomma\space}%
	\printfield{eid}%
}
\AtEveryBibitem{
  \clearlist{language}
}
        
        \crefformat{equation}{#2(#1)#3}
        \crefrangeformat{equation}{#3(#1)#4 to~#5(#2)#6}
        \crefmultiformat{equation}{#2(#1)#3}{ and~#2(#1)#3}{, #2(#1)#3}{ and~#2(#1)#3}
        \usepackage{mathtools}
        \usepackage{etoolbox}
        \usepackage{chngcntr}

        \newcommand{\N}{\mathbb{N}}
        \newcommand{\Z}{\mathbb{Z}}
        
        \newcommand{\R}{\mathbb{R}}
        \newcommand{\C}{\mathbb{C}}
        \renewcommand{\S}{\mathbb{S}}

        \newcommand{\de}{\partial}

        \renewcommand{\bar}{\overline}
        \DeclareMathOperator{\diam}{diam}

        \DeclareMathOperator{\spt}{spt}
        
        \newcommand{\vol}{\mathrm{vol}}

        \renewcommand{\epsilon}{\varepsilon}

        \newcommand{\sff}{\mathrm{I\!I}}
        
        \theoremstyle{definition}
        \newtheorem{definition}{Definition}
        \newtheorem{rmk}[definition]{Remark}
        \newtheorem*{definition*}{Definition}
        \newtheorem*{rmk*}{Remark}
        \newtheorem*{ack*}{Acknowledgement}
        \newtheorem*{acks*}{Acknowledgements}
        \theoremstyle{plain}
        \newtheorem{thm}[definition]{Theorem}
        \newtheorem{lemma}[definition]{Lemma}
        \newtheorem{corollary}[definition]{Corollary}
        \newtheorem{proposition}[definition]{Proposition}

        \newtheorem{question}[definition]{Question}

        \newtheorem*{thm*}{Theorem}
        
        \newtheorem*{lemma*}{Lemma}
        \newtheorem*{corollary*}{Corollary}
        \newtheorem*{proposition*}{Proposition}
        \newtheorem*{claim*}{Claim}
        \newtheorem*{conj*}{Conjecture}
        
        \numberwithin{equation}{section}
        \numberwithin{definition}{section}

        \renewcommand{\textbf}[1]{\bm{\mathrm{#1}}}
        \renewcommand{\div}{\operatorname{div}}
        
        \newcommand{\jump}[1]{\left[\!\left[ #1 \right]\!\right]}
\makeatletter
\renewcommand{\tocsection}[3]{%
  \indentlabel{\@ifnotempty{#2}{\bfseries\ignorespaces#1 #2\quad}}\bfseries#3}
\renewcommand{\tocsubsection}[3]{%
  \indentlabel{\@ifnotempty{#2}{\ignorespaces#1 #2\quad}}#3}

\def\@tocline#1#2#3#4#5#6#7{\relax
  \ifnum #1>\c@tocdepth 
  \else
    \par \addpenalty\@secpenalty\addvspace{#2}%
    \begingroup \hyphenpenalty\@M
    \@ifempty{#4}{%
      \@tempdima\csname r@tocindent\number#1\endcsname\relax
    }{%
      \@tempdima#4\relax
    }%
    \parindent\z@ \leftskip#3\relax \advance\leftskip\@tempdima\relax
    \rightskip\@pnumwidth plus1em \parfillskip-\@pnumwidth
    #5\leavevmode\hskip-\@tempdima{#6}\nobreak
    \leaders\hbox{$\m@th\mkern \@dotsep mu\hbox{.}\mkern \@dotsep mu$}\hfill
    \nobreak
    \hbox to\@pnumwidth{\@tocpagenum{\ifnum#1=1\bfseries\fi#7}}\par
    \nobreak
    \endgroup
  \fi}
\AtBeginDocument{%
\expandafter\renewcommand\csname r@tocindent0\endcsname{0pt}
}

\def\l@subsection{\@tocline{2}{0pt}{2.5pc}{5pc}{}}
\makeatother
\begin{document}
\title[CMC surfaces in 3-manifolds]{Existence of constant mean curvature surfaces with controlled topology in 3-manifolds}
\author{Filippo Gaia \and Xuanyu Li}
\address{Department of Mathematics,
Building 380, Stanford, CA 94305, USA}
\email{fgaia@stanford.edu}
\address{Department of Mathematics, Cornell University, Ithaca, NY 14853, USA}
\email{xl896@cornell.edu}
        \maketitle
\begin{abstract}
    We establish the existence of a non-trivial, branched immersion of a closed Riemann surface $\Sigma$ with constant mean curvature (CMC) $H$ into any closed, orientable 3-manifold $\mathcal{M}$, for almost every prescribed value of $H$. The genus of the surface $\Sigma$ is bounded from above by the Heegaard genus $h$ of $\mathcal{M}$. 
    
    Starting from a family of sweep-outs of $\mathcal{M}$ by surfaces of genus $h$, we apply a min-max construction for a family $\{E_{H,\sigma_k}\}_{k\in \mathbb{N}}$ of perturbations of the energy involving the second fundamental form of the immersions to produce almost-critical points $u_k$ of $E_{H,\sigma_k}$. We then show, following ideas introduced by Rivière and developed by Pigati and Rivière, that the maps $u_k$ converge to a ``CMC-parametrized varifold". This limiting object is then shown to be a smooth, branched immersion with the prescribed mean curvature $H$.
\end{abstract}
\section{Introduction}
\subsection{Main results}

In this paper, we establish the following result regarding existence of constant mean curvature (CMC) surfaces.

\begin{thm}\label{Prop: existence of qualifying sequence}
    Let $(\mathcal{M}^3,g)$ be a smooth, closed, oriented Riemannian manifold with Heegaard genus $h$.
    For almost every $H>0$ and for $H=0$, there exists a closed Riemann surface $\Sigma$ with genus $g(\Sigma)\leq h$ and a non-trivial branched $H$-constant-mean-curvature immersion $u:\Sigma\rightarrow\mathcal{M}$.
\end{thm}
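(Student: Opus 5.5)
The plan follows the strategy announced in the abstract: a min-max scheme over Heegaard sweep-outs for a regularized functional, followed by a Rivière--Pigati type limiting argument.

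\textbf{Setting up the functional.} Fix a Heegaard splitting of $\mathcal{M}$ of genus $h$. This gives a sweep-out $\{\Phi_t\}_{t\in[0,1]}$ by maps from a fixed surface $\Sigma_h$ of genus $h$, degenerating to graphs at the two ends. For an immersion $u:\Sigma_h\to\mathcal{M}$ consider
\[
E_{H,\sigma}(u)=\mathrm{Area}(u)+H\,\mathrm{Vol}(u)+\sigma^2\int_\Sigma\bigl(1+|\sff_u|^2\bigr)^2\,d\mathrm{vol}_u .
\]
Here $\mathrm{Vol}$ is the enclosed (oriented) volume. It is well defined along the sweep-out via the $3$-chain swept between $\Phi_0$ and $\Phi_t$, and locally well defined for nearby immersions. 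The $\sigma$-term makes $E_{H,\sigma}$ satisfy a Palais--Smale condition on $W^{2,4}$ immersions, and it controls the parametrization.

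Define the min-max width $\beta_\sigma(H)=\inf_{\mathcal{F}}\sup_t E_{H,\sigma}(\Phi_t)$, with the infimum taken over the homotopy class $\mathcal{F}$ of the sweep-out. An isoperimetric argument near the degenerate ends shows that $\beta_0(H)$ exceeds the energy at the endpoints. This is the mountain-pass condition, and it rules out trivial limits.

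\textbf{Struwe's monotonicity trick.} For fixed $\sigma$, the quantity $\beta_\sigma(H)/H$ (or $\beta_\sigma(H)$ with a sign adjustment, since $\mathrm{Vol}$ is signed) is monotone in $H$. Hence it is differentiable for a.e.\ $H$, uniformly as $\sigma\to0$ after a diagonal choice. At points of differentiability one extracts, by the standard Struwe argument, critical points $u_k$ of $E_{H,\sigma_k}$ (obtained via Palais--Smale) satisfying:
\begin{itemize}
\item area bounds $\mathrm{Area}(u_k)\le C$;
\item the entropy condition $\sigma_k^2\int(1+|\sff|^2)^2\,\log(1/\sigma_k)\to0$.
\end{itemize}
This is where the ``almost every $H$'' hypothesis enters. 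For $H=0$ the area alone is monotone in nothing, but there one can instead use the $\sigma$-monotonicity directly, in the style of Rivière's viscosity method, and choose $\sigma_k$ with $\frac{d}{d\sigma}\beta_\sigma$ controlled.

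\textbf{Passing to the limit.} The conformal structures of $u_k^*g$ on $\Sigma_h$ may degenerate in moduli space. After Deligne--Mumford compactification and bubbling we obtain finitely many limiting components, each of genus at most $h$, with total genus at most $h$. On each component, following Pigati--Rivière, the entropy condition makes the Euler--Lagrange equation converge to a weak CMC-type equation for the limit. This produces a ``CMC-parametrized integral varifold'': a $W^{1,2}$ weakly conformal map $u$ together with a multiplicity $N\ge1$, satisfying the stationarity relation $\div(N\nabla u)$-type identity with the $H\,u_x\times u_y$ term.

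Non-triviality follows from the lower bound $\beta>0$ combined with a monotonicity formula, which gives an area lower bound for nonconstant pieces.

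\textbf{Regularity, and the main obstacle.} The hardest step is regularity: showing the multiplicity $N$ is locally constant, so that $u$ is a smooth branched CMC immersion. I would adapt Pigati--Rivière's argument:
\begin{enumerate}
\item Use the monotonicity formula to get an almost-everywhere tangent-plane description.
\item Write $u$ as a multigraph away from a discrete set.
\item Exploit the $H\,u_x\times u_y$ term, which is a Jacobian-type structure, together with Wente-type estimates to obtain $\varepsilon$-regularity.
\end{enumerate}
The lower-order $H$-term must be treated carefully where sheets with opposite orientation could cancel. Once $N$ is constant, classical regularity for the $H$-system gives smoothness with isolated branch points. Taking any nontrivial component yields the surface $\Sigma$ with $g(\Sigma)\le h$.
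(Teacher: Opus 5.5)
\paragraph{The main gap: you need $N\equiv1$, not $N$ locally constant.} Your regularity step aims to show that $N$ is locally constant and then to invoke regularity for the $H$-system. That is not enough. In the limiting identity the multiplicity weights only the area term:
\[
\sum_{i=1}^2\int_\omega N\langle \partial_i u, DX\,\partial_i u\rangle\,dx \;=\; H\int_\omega X\cdot(\partial_{x_1}u\times\partial_{x_2}u)\,dx .
\]
The volume term passes to the limit as $H\int_\omega u^\ast\alpha_X$ for the weak limit $u$ itself, so it sees only the net oriented sheet. This is precisely the cancellation you allude to.

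Consequently, if $N\equiv N_0$ on a component, then $u$ weakly solves the $(H/N_0)$-CMC system, not the $H$-system. You cannot recover the conclusion ``for a.e.\ $H$'' from ``for a.e.\ $H/N_0(H)$''.

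What is actually needed is $N\equiv1$ a.e., and this is where the paper spends the entropy condition:
\begin{itemize}
\item At good points it blows up to the linear map $du(x)$ and defines an averaged projected multiplicity $n(v,B_r,B^\Pi_t)$.
\item By a compactness/contradiction argument, $n$ is preserved under a suitable rescaling, and this step is iterated.
\item The iteration continues down to the scale where $\tau^4\log\tau^{-1}\int\lvert\sff\rvert^4$ still forces $\int\lvert\sff\rvert^4$ to be small.
\item At that scale a Langer--Breuning-type argument makes the projection injective, so $n=1$.
\end{itemize}
Monotonicity, multigraphs and Wente-type estimates never use the entropy condition, so they cannot distinguish multiplicity $1$ from $N_0$. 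Note also that the paper's smoothness proof (Allard plus a degree argument) itself relies on $N=1$.

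\paragraph{Gaps in the min-max set-up.}

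\emph{(i) The width is infinite.} Your sweep-outs degenerate to graphs at $t=0,1$. Bounded area and bounded $\int\lvert\sff\rvert^4$ keep immersions $C^1$-precompact modulo reparametrization (Langer, Breuning), so the penalization must blow up near the ends. Hence $\beta_\sigma(H)=+\infty$ for every $\sigma>0$, and the mountain-pass/Palais--Smale scheme is empty as stated. The paper takes the supremum only over
\[
I^H_\gamma=\{t:\operatorname{Area}(\gamma(t))+H\vol(f_{\gamma,t})\ge H(\vol_g(\mathcal M)+\zeta)\}.
\]
It then builds the deformation with a cut-off $\psi$ so that the flow preserves $I^H_\gamma$. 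Some device of this kind is indispensable.

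\emph{(ii) The entropy condition does not come from differentiability in $H$.} Differentiability in $H$ only bounds $\operatorname{Area}+\sigma^4\int\lvert\sff\rvert^4$. The factor $\log\sigma^{-1}$ with decay comes from monotonicity in $\sigma$, and this is needed for every $H>0$, not only for $H=0$. It must also hold at the same parameters as the $H$-derivative bound. The paper obtains this as follows:
\begin{itemize}
\item It integrates $\partial_\sigma\omega_{H,\sigma}$ over $(\sigma,H)\in[0,1]\times[a,b]$.
\item By Fubini it gets one sequence $\sigma_n\to0$ with $\sigma_n\log\sigma_n^{-1}\,\partial_\sigma\omega_{H,\sigma}\vert_{\sigma=\sigma_n}\to0$ for a.e.\ $H$.
\item It applies Fatou to $\partial_H(-\omega_{H,\sigma_n}/H)$.
\item It perturbs both $H$ and $\sigma$ simultaneously to build the good sweep-outs.
\end{itemize}
Your ``uniformly as $\sigma\to0$ after a diagonal choice'' is not available.

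\emph{(iii) Non-triviality for $H>0$ is not shown.} The bound $\beta>0$ (or even $\beta>H\vol_g(\mathcal M)$) does not prevent $\operatorname{Area}(u_k)\to0$, since the energy can be carried by $H\vol$. The monotonicity formula only bounds from below the mass of a \emph{nonzero} limit. The paper needs a separate isoperimetric argument along the near-optimal sweep-out to get $\operatorname{Area}(u_k)\ge\varepsilon_0$.
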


Here we say that $u$ is a branched $H$-CMC immersion if, in conformal coordinates, it satisfies
\begin{align}\label{eq: equation of CMC}
    \Delta u+A_u(\nabla u,\nabla u)=H u_x\times u_y,
\end{align}
where we assume $\mathcal{M}$ is isometrically embedded in some Euclidean space $\R^Q$ and $A_{u(x)}$ denotes the second fundamental form of $\mathcal{M}\subset \R^Q$ at $u(x)$, and $\times$ is the cross product in $T_{u(x)}\mathcal{M}$.
The image of such a map has constant mean curvature equal to $H$. $H$-CMC immersions are critical points to the following functional
\begin{align*}
    E_H(v)=\operatorname{Area}(v)+H\vol(f_v),
\end{align*}
where $\vol(f_v)$ roughly stands for the volume bounded by map $v$; see Remark \ref{rmk: definition of volume} and Section \ref{Sec: Preliminaires} below.

A wide range of methods has been developed to prove the existence of constant mean curvature surfaces; see, for instance, \cite{Heinz,Hilderbrandt,Brezis,StruweLargeSurfaces,StruweNonuniqueness,Struwe-Freeboundary,Kapouleas,Ye,DuzaarSteffen,MahmoudiMazzeoPacard,RosenbergSmith,BreinerKapouleas,Zhou-Zhucmc,ZhouZhupmc,BelletiniWickramasekera1,BelletiniWickramasekera2,BelletiniWickramasekera3}. We refer the reader to the survey by X. Zhou \cite{ZhouSurvey} for a comprehensive overview of the field. 
Despite this progress, the existence of $H$-CMC branched immersions with controlled genus in general three-manifolds has remained a challenging open problem. 
A significant advance in this direction was made by D. R. Cheng and X. Zhou \cite{Cheng-Zhou}, who proved that in any closed Riemannian three‑manifold which is topologically a three-sphere (or more generally has non-trivial third homotopy group) and for almost every $H>0$, there exists a nontrivial $H$-CMC branched immersed two‑sphere. Their work, however, leaves open the case of general three-manifolds. Theorem \ref{Prop: existence of qualifying sequence} fills this gap, and provides an alternative proof of the result of D. R. Cheng and X. Zhou. We also treat the case $H=0$$H=0$, even though sharper results are available in the literature in this case. The existence of minimal two-spheres in non-aspherical manifolds was obtained by J. Sacks and K. Uhlenbeck \cite{SacksUhlenbeck-2sphere}, while the existence of embedded minimal surfaces with genus control in three-manifolds was established by L. Simon and F. Smith \cite{SimonSmith}, see also \cite{ColdingDelellis} and \cite{DeLellisPellandini}.

The result of D. R. Cheng and X. Zhou is based on a penalization of the Dirichlet energy of the form $\varepsilon\int_{\mathbb{S}^2}\lvert\Delta u\rvert^2$. Using a min–max construction over sweep-outs of the target manifold $\mathcal{M}$ by two-spheres, they produce critical points of the penalized energy. Passing to the limit as $\varepsilon\to 0$, they obtain $H$-harmonic maps, that is, solutions of \eqref{eq: equation of CMC} which are not a priori required to be conformal. Since the two-sphere admits a unique conformal structure up to diffeomorphism, these maps are in fact $H$-CMC branched immersions.

For a general closed, orientable three-manifold $\mathcal M$, a sweep-out by two-spheres need not exist. By Heegaard theory, $\mathcal M$ nonetheless admits a sweep-out by a closed surface $\Sigma$, where $\Sigma$ has genus equal to the Heegaard genus of $\mathcal M$. If $\Sigma$ has positive genus, however, applying the methods of \cite{Cheng-Zhou} to this setting would in general yield only $H$-harmonic maps, which need not be conformal.

To address this limitation, in the present work
we use a different perturbation of the area functional, often referred to as the ``viscosity method'': for 
$\sigma>0$, we consider
\begin{align*}
    A_{\sigma}(u)=\operatorname{Area}(u)+\sigma^4\int\vert\sff^u\vert^4\,d\vol_{g_u}
\end{align*}
for $W^{2,4}$ immersions $u:\Sigma\to\mathcal{M}$,
where $\Sigma$ is an arbitrary Riemann surface and $\sff^u$ denotes the second fundamental form of $u(\Sigma)$ in $\mathcal{M}$.
This approach was originally introduced by T. Rivière in \cite{Riviere-IHES} to construct minimal immersions. The regularity part of the method, a stronger convergence, and the optimal control of the limiting multiplicity were then established by A. Pigati and T. Rivière in \cite{PR-Regularity, PR-Multiplicity-One}. A. Pigati then provided substantial improvements of the method in \cite{Pigati-FB}, while implementing it to the free boundary case.
Two key properties of these functionals are their invariance under reparameterization of $u$, and the fact that—as observed by J. Langer \cite{LangerCompactness} and P. Breuning \cite{Breuning}—for each fixed $\sigma>0$, the conformal structures on $\Sigma$ induced by maps lying in a sublevel set of $A_{\sigma}$ lie in a compact subset of the moduli space of $\Sigma$.

A. Pigati and T. Rivière showed that, given a sequence of critical (or almost critical) points $\lbrace u_k\rbrace_{k\in\N}\subset W^{2,4}(\Sigma,\mathcal{M})$ for the functionals $A_{\sigma_{k}}$ (where $\mathcal{M}$ has dimension $n\geq 2$), with $\sigma_k\rightarrow0$, and satisfying a natural entropy condition (Assumption (2) in Theorem \ref{thm: main-theorem}), the associated varifolds converge to a \textit{parametrized stationary varifold}. That is, the limit varifold admits a conformal parametrization 
for which the stationarity condition holds locally (in the domain) and is induced by a smooth branched minimal immersion from a Riemann surface $\Sigma'$ to $\mathcal{M}$, with $g(\Sigma')\leq g(\Sigma)$.

To prove Theorem~\ref{Prop: existence of qualifying sequence}, we follow similar ideas and, in the process, obtain a CMC analogue of the result of Pigati--Rivière, which may be of independent interest.\\
Given a closed, connected 3-manifold $\mathcal{M}$, smoothly embedded in $\R^Q$, there exists a family of sweep-outs $f:[0,1]\times\Sigma\rightarrow\mathcal{M}$, where $\Sigma$ is a Riemann surface of genus $h$, equal to the Heegaard genus of $\mathcal{M}$, and $f(t,\cdot)$ is an immersion for any $t\in (0,1)$. Applying a min–max procedure to such a family of sweep-outs and to the perturbed $H$-CMC functional $E_{H,\sigma}$ defined below, for almost every $H>0$ (and $H=0$) we show that there exist a sequence $\sigma_k\rightarrow0$ and a corresponding family of $W^{2,4}$ immersions $u_k$ such that, for every $k\in\N$,
\begin{enumerate}
        \item $u_k$ is $\theta_k$-critical for the functional
        \begin{align*}
            E_{H,\sigma_k}(v,f_v)=\operatorname{Area}(v)+\sigma_k^4\int_\Sigma\lvert\sff^v\rvert^4\vol_{g_v}+H\vol(f_v),
        \end{align*}
        \item 
        \begin{align*}
            \sigma_k^4\log\sigma_k^{-1}\int_\Sigma\lvert\sff^{u_k}\rvert^4\vol_{g_{u_k}}\leq \alpha_k,
        \end{align*}
        \item \begin{align*}
        \operatorname{Area}(u_k) \leq \overline A.
        \end{align*} 
    \end{enumerate}

\begin{rmk}\label{rmk: definition of volume}    
Here $\{\alpha_k\}_{k\in \mathbb{N}}$ is a sequence converging to zero, while the parameters $\theta_k$ can be chosen to be arbitrarily small. We choose $\theta_k:=\sigma_k^5$. The notion of $\theta_k$-almost criticality—a quantitative measure of proximity to a critical point—is introduced in Definition \ref{def: reduction of functional}. Following \cite{Pigati-FB}, we work with almost critical points rather than exact critical points; in particular, we do not need to verify the Palais–Smale condition for the functionals $E_{H,\sigma}$.

Roughly speaking, $\vol(f_v)$ denotes the volume enclosed by the immersion $v$. This functional is not, in general, globally well defined on the space of immersions. However, as observed in \cite{Cheng-Zhou} (see also Section \ref{Sec: Preliminaires} below), any two possible values of $\vol(f_v)$ differ by an integer multiple of $\vol(\mathcal{M})$. Consequently, the notion of a critical point (or almost critical point) of $E_{H,\sigma}$ is well defined.
\end{rmk}

The construction of good sweep-outs, for which both the area and the quantity in (2) are uniformly controlled, is achieved via a Struwe-type monotonicity argument applied to the two parameters $\sigma$ and $H$ (see Lemma \ref{lem: monotonicity-trick}, based on \cite[Lemma 3.3]{Cheng-Zhou}; this is not immediate, as the third term in $E_{H,\sigma}(v)$ could be negative). This argument guarantees the required control for almost every $H>0$, as well as for $H=0$. Once such a sequence has been constructed, we follow the strategy of Pigati and Rivière to show that the desired branched immersion arises— in an appropriate sense— as a limit of the maps $u_k$. More precisely, we obtain the following.
\begin{thm}\label{thm: main-theorem}
    Let $H\geq0$. Let $\sigma_k\to 0$ and let $\{u_k\}_{k\in \mathbb{N}}$ be a sequence of immersions in $W^{1,2}(\Sigma,\mathcal{M})$ satisfying conditions (1), (2) and (3).\\
    Then there exist Riemann surfaces $\Sigma_\infty^1,...,\Sigma_\infty^N$ with 
    \begin{align*}
        \sum_{i=1}^Ng(\Sigma_{\infty}^i)\leq g(\Sigma),
    \end{align*}
    and maps $u_\infty^i:\Sigma_\infty^i\to \mathcal{M}$ (for $i=1,...,N$) such that $u_\infty^i$ is a smooth $H$-CMC branched immersion.\\
    Moreover, the
    varifolds $\textbf{v}_{u_k}$ induced by the maps $u_k$ converge--- up to subsequences--- to the varifold $\Sigma_{i=1}^N\mathbf{v}_{u_\infty^i}$, where $\mathbf{v}_{u_\infty^i}$ is the varifold induced by $u_\infty^i$.
\end{thm}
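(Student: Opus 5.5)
The plan is to follow the Pigati--Rivière scheme, adapted to the CMC term; this is the main theorem, so the proposal is necessarily high level. There are six steps.

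\emph{Step 1 (conformal structures).} For each $k$, I fix the conformal structure $h_k$ on $\Sigma$ induced by $u_k$. Without uniform bounds the $h_k$ may degenerate in moduli space. By Deligne--Mumford compactness, after passing to a subsequence, $(\Sigma,h_k)$ converges to a nodal surface whose components $\Sigma_\infty^1,\dots,\Sigma_\infty^N$ satisfy $\sum_i g(\Sigma_\infty^i)\le g(\Sigma)$. The collar regions are conformal to long cylinders.

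\emph{Step 2 (Euler--Lagrange equation).} I write the first variation of $E_{H,\sigma_k}$ in conformal coordinates. Testing the $\theta_k$-criticality (with $\theta_k=\sigma_k^5$) against variations $w$, I expect an equation of the form
\begin{align*}
\Delta u_k+A_{u_k}(\nabla u_k,\nabla u_k)-H\,\partial_x u_k\times\partial_y u_k=\operatorname{div}(\sigma_k^4 V_k)+\sigma_k^4 W_k+R_k .
\end{align*}
Here $V_k$ and $W_k$ are cubic in $\sff$ and its derivatives, and $R_k$ is a remainder controlled by $\theta_k$. Using (2), the $\sigma_k^4$ terms have $L^1$ norm $o(1/\log\sigma_k^{-1})$. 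The volume term contributes exactly $H$ times the Jacobian, because its first variation is $H\int\langle w,\nu\rangle\,d\mathrm{vol}$, independently of the ambiguity mentioned in Remark~\ref{rmk: definition of volume}.

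\emph{Step 3 (limit object).} The area bound (3) and conformality give $W^{1,2}$ bounds on each component. Away from finitely many energy-concentration points, $u_k\rightharpoonup u_\infty^i$, and the varifolds $\mathbf v_{u_k}$ converge to a limit $\mathbf v$. Monotonicity for varifolds with bounded first variation (mean curvature bounded by $H$ plus vanishing terms) gives a lower density bound, so the number of bubbles is finite and the neck regions can be treated separately.

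\emph{Step 4 (identifying $\mathbf v$ and passing to the limit).} I would show that $\mathbf v$ is a ``CMC-parametrized varifold'': it is induced by the weak limits with some integer-valued multiplicity $N$. This needs a local version of the stationarity identity with $H$-Jacobian in the domain. The key estimate is the Pigati--Rivière entropy bound, where (2) ensures that the $\sigma_k$ terms do not concentrate on scales $\ge\sigma_k$. To pass to the limit in $\partial_x u\times\partial_y u$, I would use Wente-type compensation: a Jacobian is in the Hardy space, so it converges weakly under $W^{1,2}$ convergence.

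\emph{Step 5 (regularity).} I then establish regularity of parametrized CMC varifolds. Near non-branch points, a blow-up gives the Lipschitz and then the $C^{1,\alpha}$ parametrization argument of Pigati--Rivière. Bootstrapping \eqref{eq: equation of CMC} via Wente/Rivière's regularity for systems with antisymmetric potential gives smoothness, and conformality of the limit removes the branch points to isolated ones.

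\emph{Step 6 (necks).} Finally, a neck analysis shows that the collars carry no area in the limit. I would use a three-circle or Hopf-differential estimate on long cylinders together with (2).

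I expect Step 4 to be the main obstacle, namely proving that the limit varifold is parametrized with integer multiplicity while the $H$ term is present. In the minimal case the proof uses that the coordinates satisfy a stationarity identity with divergence-free structure. The $H$ term breaks exact divergence form, so one must show that the Jacobian term is lower order at small scales and preserves the monotonicity-based Lipschitz bound on the parametrization. My first attempt would be to absorb it using $|\partial_x u\times\partial_y u|\le|\nabla u|^2/2$ and smallness of local area at small scales.
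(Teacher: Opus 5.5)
\textbf{Main gap: multiplicity one is never proved.}

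For $H>0$ this step is indispensable, and your plan omits it. In the limiting identity the multiplicity appears only on the area side:
$\sum_i\int_\omega N\langle\partial_iu,DX\,\partial_iu\rangle=H\int_\omega u^\ast\alpha_X$.
This is because the volume term converges to the \emph{oriented} Jacobian of the weak limit, $\int_\omega u_k^\ast\alpha_X\to\int_\omega u^\ast\alpha_X$, which does not count folded sheets. So on a region where $N\equiv N_0>1$ the limit has mean curvature $H/N_0$. Where $N$ varies, there is no equation of the form \eqref{eq: equation of CMC} to bootstrap at all.

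Consequently your Step 5 (Pigati--Rivière regularity, then Wente/Rivière bootstrapping of \eqref{eq: equation of CMC}) is unjustified. The claimed varifold convergence to $\sum_i\mathbf v_{u^i_\infty}$ with multiplicity one also fails without $N\equiv1$.

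This is the heart of the paper's proof:
\begin{itemize}
\item At good points (rank-two Lebesgue points of $du$ where the localized entropy bound \eqref{eq: localized control on second fundamental form} holds), one blows up to maps close to the linear map $du(x)$.
\item One introduces the averaged projected multiplicity $n(v,B_r,B^\Pi_t)$, which converges to $N(x)$.
\item Proposition \ref{prop: iteration lemma} shows $n$ is unchanged from one scale to the next. Its compactness argument uses $\ell h\to0$, so the limits are stationary and Pigati--Rivière regularity applies.
\item One iterates until the rescaled penalization parameter reaches a fixed threshold. There the $\log\sigma^{-1}$ factor in (2) makes $\int|\sff|^4$ small, so the Langer--Breuning-type Lemma \ref{lem: appendixB-LemmaA.1'} gives an injective projection and $n=1$.
\item Only with $N\equiv1$ does Allard's theorem give $C^{1,\alpha}$ regularity near good points. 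The weak equation is then extended across the bad set and elliptic regularity applies.
\end{itemize}

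\textbf{Other steps that would fail as written.}
\begin{itemize}
\item Step 4: Hardy-space compensation does not pass $X(u_k)\cdot\partial_xu_k\times\partial_yu_k$ to the limit. The coefficients depend on $u_k$, and $(a(u_k)-a(u))\,du_k^i\wedge du_k^j$ need not vanish under weak $W^{1,2}$ convergence. The paper needs $\int_K|u_k-u|\,d\nu_k\to0$ away from atoms, which comes from the monotonicity and concentration analysis. It also needs $X$ supported away from $u(\partial\omega)$ to integrate by parts.
\item Step 2: your PDE cannot be tested uniformly with domain variations. Criticality and the penalization error are measured in $\|\cdot\|_{u_k}$, which depends on the uncontrolled conformal factor of $u_k$. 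This is why the paper works throughout with ambient fields $X\circ u_k$ (Lemma \ref{lem: convergence_of_varifolds}).
\item Step 6: the Hopf differential of the conformal $u_k$ vanishes identically, and the error terms in the Euler--Lagrange equation are too weak for a three-circle decay argument. The paper instead uses the varifold dichotomy of Lemma \ref{lem: degeneration-conf}.
\end{itemize}

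What you flag as the main obstacle, integrality of $N$ with the $H$-term present, is handled much as you suggest. Under blow-up the $H$-term scales like $r_iH\to0$, so blow-up limits are stationary off a circle. The constancy theorem and Allard's constancy lemma then give $N\in\mathbb N$.
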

In the proof of Theorem \ref{thm: main-theorem}, we introduce a few simplifications to the original argument of \cite{PR-Multiplicity-One} to control the multiplicity of the limiting varifold (see Subsection \ref{subsection: sketch} and Remark \ref{rmk: simplifications}).\\
To the best of our knowledge, this is the first application of the ``viscosity method" to obtain existence of minimal/CMC branched immersions in a concrete geometric setting (an existence result for geodesics by the ``viscosity method" was obtained in \cite{Michelat-Riviere}). 

\begin{rmk}\label{rmk: index-almost-crit}
The CMC surfaces constructed in Theorem \ref{thm: main-theorem} are expected to have Morse index at most one, as they arise from a one-parameter min–max procedure. In the case $H=0$, this has been established in \cite{Michelat-Index-Viscosity} and \cite{Riviere-index-viscosity}. If such an index bound holds for $H>0$, then by arguing as in \cite[Section 5]{Cheng-Zhou}, one can show that when $\operatorname{Ric}_g>0$, the areas of the $H$-CMC immersions obtained for almost every $H>0$ are uniformly bounded. By taking limits, this would imply the existence of a branched $H$-CMC immersion for every $H>0$.

In the present work, however, we restrict our attention to almost critical points of the functional $E_{H,\sigma}$, which do not provide sufficient control over the Morse index of the resulting immersions. Establishing such index bounds will be the subject of future work.
\end{rmk}

While our approach allows us to control the genus of the resulting immersions from above, the following question remains open.

\begin{question}
Is it possible to obtain lower bounds for the genus and the Morse index of the surfaces constructed by this method? More specifically, given a closed three-manifold $(\mathcal{M}^3,g)$ of Heegaard genus $h$, does there exist, for $H\ge 0$ (or for almost every $H\ge 0$), a nontrivial branched $H$-CMC immersion from a connected surface of genus $h$ into $\mathcal{M}$?
\end{question}

\subsection{Related backgroud on minimal and CMC surfaces}

Historically, substantial effort has been devoted to the construction of minimal surfaces with controlled topology. T. Radó \cite{Rado} and J. Douglas \cite{Douglas} independently solved the celebrated Plateau problem, establishing the existence of area-minimizing disks in $\mathbb{R}^3$. On general Riemannian manifolds, a major breakthrough was achieved by J. Sacks and K. Uhlenbeck \cite{SacksUhlenbeck-2sphere}, who constructed minimal two-spheres in any non-aspherical manifold via an $\alpha$-harmonic approximation of the Dirichlet energy; see also earlier work of R. Schoen and S. T. Yau \cite{SchoenYau}. Recently, T. Colding and W.Minicozzi \cite{ColdingMinicozzi-Width} introduced the harmonic replacement method to study the existence of minimal spheres directly, without relying on approximation schemes.

The aforementioned works rely fundamentally on the fact that the sphere admits only one conformal class up to diffeomorphism. In dimension two, the Dirichlet energy of a conformal map coincides with the area of its image. Consequently, if a map $u$ is a critical point of the energy functional— namely, a harmonic map— and is conformal, then its image represents a minimal surface. Since the sphere possesses a unique conformal class, any harmonic map from a sphere into a Riemannian manifold is automatically conformal and hence yields a minimal surface. This argument breaks down for Riemann surfaces of positive genus, where the conformal class is no longer unique. In this setting, the above methods give only the existence of harmonic maps. To overcome this difficulty, X. Zhou \cite{Zhoutorus,Zhouhighgenus} treated the conformal class as an additional parameter in the Dirichlet energy, combining this approach with the harmonic replacement method to establish the existence of minimal surfaces with controlled topology. 

Parallel to the search for minimal surfaces, a vast literature has developed around the existence of constant mean curvature hypersurfaces. The existence of solutions to the CMC Plateau problem in $\mathbb{R}^3$ was first established by E. Heinz \cite{Heinz} and S. Hildebrandt \cite{Hilderbrandt}. The Rellich conjecture, asserting the existence of at least two solutions to the CMC Plateau problem, was later resolved by H. Brezis and J.-M. Coron \cite{Brezis} and by M. Struwe \cite{StruweLargeSurfaces,StruweNonuniqueness}. M. Struwe also studied free boundary CMC surfaces in $\mathbb{R}^3$ using the heat flow method \cite{Struwe-Freeboundary}. Related results on prescribed mean curvature surfaces were obtained in \cite{Hildebrandt2,GulliverSpruck,GulliverSpruck2,Steffen,Duzaar}.

For closed CMC hypersurfaces, boundaries of isoperimetric regions are known to be smoothly embedded CMC hypersurfaces (up to a singular set of codimension seven), see \cite{AlmgrenMem,Morgan}. However, this approach does not provide control over the value of the mean curvature, nor over the topology of the hypersurface. On the other hand, R. Ye \cite{Ye} and F. Mahmoudi-R. Mazzeo-F. Pacard \cite{MahmoudiMazzeoPacard} used perturbation methods to construct $H$-CMC hypersurfaces for $H$ large. Using gluing techniques, N. Kapouleas \cite{Kapouleas,Kapouleas2} and C. Breiner and N.Kapouleas \cite{BreinerKapouleas} constructed many important examples of CMC surfaces in $\mathbb{R}^3$.

Regarding general existence results, X. Zhou and J. Zhu \cite{Zhou-Zhucmc,ZhouZhupmc} and C. Bellettini and N. Wickramasekera \cite{BelletiniWickramasekera1,BelletiniWickramasekera2,BelletiniWickramasekera3} established the existence of almost embedded prescribed mean curvature (PMC) hypersurfaces with prescribed mean curvature $h$, where $h$ is an element in a broad function class that in particular includes all constants. X. Zhou and J. Zhu treat ambient dimensions $3\le n+1\le 7$, using Almgren--Pitts min--max methods, whereas C. Bellettini and N. Wickramasekera obtain existence in arbitrary ambient dimension via an Allen--Cahn type approach. These methods, however, do not provide control over the topology of the resulting hypersurfaces.\\
More recently, D. R. Cheng and X. Zhou \cite{Cheng-Zhou} established the existence of $H$-CMC branched immersed spheres for a.e. $H>0$ when the target is a sphere (and for all mean curvature values if the target sphere has positive Ricci curvature). This provides a positive answer to a weaker version of a conjecture of H. Rosenberg and G. Smith \cite[Page 3]{RosenbergSmith}. The original Rosenberg–Smith conjecture, asserting the existence of embedded H-CMC 2-spheres for any value $H\geq 0$ in $\S^3$ with positive sectional curvature, is false in general: F. Torralbo \cite{Torralbo} and W. H. Meeks, P. Mira, J. Pérez and A. Ros \cite{MeeksMiraPerezRos} showed that in certain positively curved homogeneous three-spheres, there exist mean curvature values for which all immersed CMC two-spheres must have self-intersections. D. R. Cheng and X. Zhou’s results were further extended to the free boundary and capillary settings by D. R. Cheng \cite{Cheng-freeboundary,cheng-capillary}. See also the work by R. Gao and M. Zhu \cite{gao-zhu} on the existence of PMC sphere in any codimensions.

A parallel line of research is based on geometric measure theory. The Almgren–Pitts min–max theory \cite{AlmgrenTopology,Pitts} provided the first general existence result for closed minimal hypersurfaces. The theory was substantially advanced by the work of F. C. Marques and A. Neves, following their resolution of the Willmore conjecture \cite{MarquesNeves-Willmoreconjecture}. Yau’s conjecture on the existence of infinitely many closed minimal hypersurfaces was proved in this framework by F. C. Marques and A. Neves in the positive Ricci curvature setting \cite{MarquesNevesInfinitelyMinimalSurfaces}, and later in full generality by A. Song \cite{Song}. Meanwhile, X. Zhou \cite{Zhou-Multiplicityone} confirmed the multiplicity-one conjecture in min–max theory \cite{MarqueNevesAdv}. A variant of this theory--- based on variations by ambient isotopies--- was developed by Simon and Smith \cite{SimonSmith} in ambient dimension 3 (see also \cite{ColdingDelellis}). This approach permits to control the genus of the resulting minimal surface (see \cite{DeLellisPellandini}); however, this approach doesn't seem to generalize to the CMC case, as even for minimizers, regions with mean curvature zero might appear (see \cite{Sarnataro-Stryker}). Recent work combining multiplicity one property and Simon-Smith min-max theory with a refined analysis of the topology of surface spaces has produced minimal surfaces of prescribed topology in three-manifolds with bumpy metrics or positive Ricci curvature; examples include four minimal two-spheres in $\S^3$ by Z. Wang and X. Zhou \cite{wangzhou4spheres}, minimal tori in $\S^3$ by X. Li and Z. Wang \cite{LiWangtori} and A. Chu and Y. Li \cite{ChuLitori}, and higher-genus minimal surfaces in $\mathbb{S}^3$ by A. Chu, Y. Li, and Z. Wang \cite{ChuLiWanggenus2} and by A. Chu \cite{Chu}.
We also refer the reader to related developments in \cite{DeLellisRamic,Guaraco,LiokumovichMarqueNeves,IrieMarqueNeves,MarquesNevesSong,Montezuma,LiZhou,Li,Dey,Wang2}.

\subsection{Overview of the arguments} \label{subsection: sketch}
Here we give an overview of the proofs of Theorems \ref{Prop: existence of qualifying sequence} and \ref{thm: main-theorem}, focusing on the novel contributions of this work.

To prove \ref{thm: main-theorem}, we adapt the arguments of \cite{Riviere-IHES}, \cite{PR-Regularity}, \cite{PR-Multiplicity-One} and \cite{Pigati-FB} to the CMC setting.

First we observe that the varifolds $\textbf{v}_{u_k}$ induced by the maps $u_k$ have bounded mass, so they converge weakly--- up to subsequences--- to a limiting varifold $\textbf{v}$. One can combine the almost criticality of the maps $u_k$ with Assumption $(2)$ (sometimes referred to as ``entropy condition"), to show that the mass ratios of the varifolds $\textbf{v}_{u_k}$ are controlled from below (Lemma \ref{prop: almost monotonicity for large radius}), and that the limiting varifold $\textbf{v}$ has generalized mean curvature bounded in $L^\infty$ (Lemma \ref{lem: convergence_of_varifolds}). Now fix a reference metric $g_0$ on $\Sigma$, and assume that the metrics induced by the maps $u_k$ are conformal to $g_0$. We will explain later how to remove this assumption. Then by Assumption $(3)$, the maps $u_k$ are uniformly bounded in $W^{1,2}_{g_0}(\Sigma)$, and--- up to subsequences--- they converge weakly in $W^{1,2}_{g_0}(\Sigma)$ to a limiting map $u_\infty$.

The measures $\nu_k:=\frac{1}{2}\lvert \nabla u_k\rvert^2 d\vol_{g_0}$ have bounded mass, and thus converge weakly--- up to subsequences--- to some Radon measure $\nu$ on $\Sigma$. By the estimates on the mass ratios obtained before, we show that $\nu$ is of the form
\begin{align*}
    \nu=f\,d\vol_{g_0}+\sum_{i=1}^N\alpha_i\delta_{p_i},
\end{align*}
where $\alpha_i>c_Q$ for some constant $c_Q>0$ (Lemma \ref{lem: concentration property} and Lemma \ref{lem: limit of measures}). Next we show that $f=N\, J(du_\infty)$, where $N$ takes values in $\mathbb{N}$ and $J(du_\infty)$ denotes the Jacobian of $u_\infty$ (Lemma \ref{lem: N-jacobian}). This is proved by showing that, for Lebesgue points $x$ of $du$, for a sequence of radii $r_i\to 0$ and a subsequence $k_i$, the rescaled varifolds $\textbf{v}'_i:=(r_i^{-1}(\cdot-u(x)))_\ast\textbf{v}_{k_i, r_i}$ converge weakly to $N(x)\mathcal{H}^2\vert_{\operatorname{co}(\mathcal{C})}$, where $N(x)\in \mathbb{N}$ and $\operatorname{co}(\mathcal{C})$ is the convex hull of $\{du(x)[y]\vert y\in\partial B_1(0)\}$. Next, for a smooth, open $\omega\subset\Sigma$, denote by $\textbf{v}_\omega$ the integral varifold induced by $u\vert_\omega$ and the multiplicity function $N$ (see Remark \ref{rmk: rectifiable-varifold}). We show that the varifolds induced by $u_k\vert_\omega$ converge to $\textbf{v}_\omega$ (Lemma \ref{lem: Pigati-5.7}). Repeating the argument of Lemma \ref{prop: first variation} for variations supported away from $u(\partial\omega)$, and studying the term involving $H$ in the first variation (Lemma \ref{prop: convergence-of-H-term}), we show that for a.e. $\omega$ (in the sense of Definition \ref{def: parametrized-CMC})
\begin{align}\label{eq: PHCMC-non-conf}
    \int_\omega N\operatorname{div}(X)J(du_\infty)d\vol_{g_0}=H\int_\omega d\vol^\mathcal{M}_{u_\infty(x)}(X\circ u_\infty, \partial_x u_\infty, \partial_y u_\infty)d\vol_{g_0},
\end{align}
for any smooth vector field $X$ on $\mathcal{M}$ supported away from $u_\infty(\partial\omega)$.
Finally, one shows that there exist a quasi-conformal diffeomorphism $\varphi$ such that $u:=u_\infty\circ\varphi$ is conformal (Lemma \ref{lem: 5-12-P-FB}), so that $u$ satisfies \eqref{eq: PHCMC-non-conf} for a.e. $\omega$. We call such a tuple $(\Sigma, u, N)$ a \textit{parametrized $H$-CMC varifold} (see Definition \ref{def: parametrized-CMC}).

In general, this argument can only be performed locally, and one must take into account the possibility of the conformal class degenerating along the sequence, or the possible appearance of concentration points of the energy (``bubbles"). We will show in Subsection \ref{subsec: degeneration} that while the Dirichlet energy might accumulate around points or loops, no energy is lost in the "neck regions" (Lemma \ref{prop: energy estimate for two boundary components}), so that---after appropriate reparametrization--- one can recover the whole varifold $\textbf{v}$ as a sum of parametrized $H$-CMC varifolds defined on possibly multiple Riemann surfaces, whose sum of genera is bounded by $g(\Sigma)$.

Next we show that $N\equiv 1$ a.e.. To this end we study the convergence of the varifolds $\textbf{v}_i'$ in the proof of Lemma \ref{lem: N-jacobian}. In \cite{PR-Multiplicity-One}, Pigati and Rivière showed directly that for a sequence of maps $\{u_k\}_{k\in \mathbb{N}}$ as in Theorem \ref{thm: main-theorem} (with $H=0$), the corresponding varifolds converge to a parametrized stationary varifold with $N=1$ a.e.. Here we adapt their strategy to the sequence of varifolds $\textbf{v}_i'$, which--- if $x$ is a Lebesgue point of $du$ and $\operatorname{rank}(du)=2$--- converge to the varifold induced by $du(x)\vert_{B_1(0)}$. Working with such a simple limiting object allows us to simplify a few steps in the argument. The key idea in this argument is that one can define a notion of ``averaged multiplicity" $n(v_i, B_\alpha, B^\Pi_\beta)$ that encodes the average multiplicity of the image of $v_i:=r_i^{-1}(u_{k_i}-u(x))$ (defined on $B_1(0)$) in the ball $B_\alpha$ in $\mathbb{R}^Q$ (with respect to some ball $B^\Pi_\beta$ in a 2-dimensional plane $\Pi$ in $\mathbb{R}^Q$), and that if the map $v_i$ is sufficiently close to a linear map (as ensured by the closeness of $v_i$ to $\nabla u(x)$), one can inductively define rescalings $v_i^{(j)}$ of $v_i$, such that any $v_i^{(j)}$ has the same averaged multiplicity on appropriate balls (Lemma \ref{prop: iteration lemma}, cf. Lemma 5.3 in \cite{PR-Multiplicity-One}).

Now, on one hand the averaged multiplicity $n(v_i, B_\alpha, B^{\Pi_i}_\beta)$ (for appropriate $\alpha, \beta>0$ and planes $\Pi_i$) converges to $N(x)$ as $i\to\infty$. On the other hand, if $j$ is sufficiently large, Assumption $(2)$ is used to show--- using ideas going back to the works of Langer \cite{LangerCompactness} and Breuning \cite{Breuning}--- that $\pi_\Pi\circ u_{r_i, k_i}$ (where $\pi_\Pi$ denotes the orthogonal projection to the 2-plane $\Pi$ spanned by $\mathcal{C}$) is injective on a small ball, and therefore $n(v_i^{(j)}, B_\alpha, B^{\Pi_i}_\beta)=1$ for $j$ sufficiently large. This implies that $N(x)=1$ (Theorem \ref{thm: multiplicity-one}). One can now apply ideas first introduced by Rivière in \cite{Riviere-Targetharmonicmap} for target harmonic maps to show that $u$ is smooth and satisfies \eqref{eq: equation of CMC} (Theorem \ref{thm: smoothness-and-equation}), so that $u$ is a branched $H$-CMC immersion (Corollary \ref{cor: HW}). This completes the proof of Theorem \ref{thm: main-theorem}.

Next we explain how we construct a sequence of maps $u_k$ satisfying Assumptions $(1)$, $(2)$ and $(3)$ in Theorem \ref{thm: main-theorem}, for $H=0$ and a.e. $H>0$. In this part we follow the general strategy of \cite{Cheng-Zhou}, but for the functionals $E_{H,\sigma}$ defined above.
Given a degree one sweep-out $\gamma: \Sigma\times[0,1]\to\mathcal{M}$ (in the sense of Definition \ref{def: sweep-out-family}), where $\Sigma$ is a Riemann surface, $\gamma(\cdot, t)$ is a smooth immersion for any $t\in (0,1)$ and the images of $\gamma\vert_{\Sigma\times\{0\}}$, $\gamma\vert_{\Sigma\times\{1\}}$ are graphs in $\mathcal{M}$.
We denote $\mathcal{P}_h$ the family of all such sweep-outs, where $h$ is the genus of $\Sigma$.
For $t\in [0,1]$ set $f_{\gamma,t}(s,x):=\gamma(ts)(x)$.
We let
\begin{align*}
    \vol(f_{\gamma,t}) =\int_{\Sigma\times [0,1]}f_{\gamma,t}^\ast d\vol_\mathcal{M}
\end{align*}
to be the volume swept by $\gamma$ over the interval $[0,t]$. Note that for an immersion $u:\Sigma\to \mathcal{M}$, there might exist different such sweep-outs $\gamma$, $\gamma'$ such that $\gamma(t)=u=\gamma'(t')$. By \cite{Cheng-Zhou}, $\vol(f_{\gamma,t})$ and $\vol(f_{\gamma',t'})$ only differ by an integer multiple of $\vol(\mathcal{M})$. We will denote any such a sweep-out by $f_u$.

Morally, for any $\sigma>0$, we would like to find sweep-outs approximating the width relative to $E_{H,\sigma}$ and $\mathcal{P}_h$, but such width would be equal to infinity for any $\sigma>0$, as the penalization term $\int_\Sigma\lvert\sff^{\gamma(t)}\rvert^4$ would tend to infinity for $t\to 0$ and $t\to 1$. Instead, for any sweep-out $\gamma\in \mathcal{P}_h$ we find an interval $I_\gamma^H\subset (0,1)$ such that $\int_\Sigma\lvert\sff^{\gamma(t)}\rvert^4$ does not degenerate for $t\in I_\gamma^H$, but $I_\gamma^H$ contains the $t$ for which $E_{H,\sigma}(\gamma(t),f_{\gamma,t})$ approaches the width of the non-penalized problem $(\sigma=0)$ (see Definition \ref{def: good-intervals}). We then define 
\begin{align*}
    \omega_{H,\sigma}=\min_{\gamma\in \mathcal{P}_h}\max_{t\in I^H_{\gamma}}E_{H,\sigma}(\gamma(t),f_{\gamma,t}).
\end{align*}
Applying Struwe's monotonicity trick to the two parameters $\sigma$ and $H$, for which the functionals $E_{H,\sigma}$ satisfy certain monotonicity properties, we show that for almost any $H>0$ (and for $H=0$, for which one can repeat the argument just for the parameter $\sigma$), there exist a sequence $\sigma_k\to 0$ and sweep-outs $\gamma_k$ close to realizing the width and for which--- for $t$ close to realizing the width--- the area and the penalization term $\int_\Sigma\lvert\sff^{\gamma_k(t)}\rvert^4$ are controlled as desired (Lemma \ref{lem: monotonicity-trick}).

Finally, one can apply a deformation argument by pseudo-gradient flow (Lemma \ref{lem: new-existence-almost-critical-points}) to show that there must exist times $t_k$ such that the maps $u_k=\gamma( t_k)$ satisfy the desired properties. In the construction of the deformations, one has to be careful to preserve the properties of the interval $I_\gamma^H$. To this end, we follow the argument of \cite{Michelat-Riviere}.

\subsection{Organization of the paper}
In Section \ref{Sec: Preliminaires}, we recall and establish several basic properties of the functionals $E_{H,\sigma}$ and of their first variation. In Section \ref{sec: existence of cmc}, we establish the basic convergence scheme to prove existence of parametrized CMC varifolds. In Section \ref{sec: regularity}, we prove the regularity of the CMC parametrized varifolds constructed in Section \ref{sec: existence of cmc}, by showing that their multiplicity is equal to one, and prove Theorem \ref{thm: main-theorem}. In Section \ref{sec: existence of torus}, we construct a min-max theory to find the desired almost critical points, and complete the proof of Theorem \ref{Prop: existence of qualifying sequence}.

\subsection*{Acknowledgements}
The authors are grateful to Xin Zhou for suggesting this problem and for helpful discussions, and to Alessandro Pigati for valuable conversations. F. G. wants to thank Prof. Tristan Rivière for introducing him to the viscosity method. X. L. wants to thank Prof. Xin Zhou and Prof. Daniel Stern for their encouragement throughout this work and their intellectual guidance.\\
F. G. was supported by the Swiss National Science Foundation (SNSF)
through the Postdoc.Mobility grant, project number 230344.\\
X. L. was partially supported by NSF DMS-2243149 and NSF DMS-2404992.
\section{Preliminaries}\label{Sec: Preliminaires}
In this section, we discuss the basic properties of the functional $E_{H,\sigma}$. We assume that $\mathcal{M}$ is isometrically embedded in a large Euclidean space $\R^Q$. Let $h$ be the Heegaard genus of $\mathcal{M}$ and $\Sigma$ be a genus $h$ surface.

As in the works of Rivière and Pigati \cite{Riviere-IHES, Pigati-FB}, we work in the space of Sobolev immersions
\begin{align*}
    \mathfrak{M}:=W^{2,4}_{\text{imm}}(\Sigma,\mathcal{M})=\{ u\in W^{2,4}(\Sigma,\mathcal{M}):\operatorname{rank}(du)=2\}.
\end{align*}
Note that by the Sobolev embedding, each $u\in W^{2,4}(\Sigma,\mathcal{M})$ lies in $C^{1,\frac{1}{2}}(\Sigma,\mathcal{M})$. Hence, $du$ is well-defined everywhere.\par
The space $\mathfrak{M}$ is a Banach manifold. Indeed, it is an open subset of the Banach manifold $W^{2,4}(\Sigma,\mathcal{M})$, since the condition $\operatorname{rank}(du_x)=2$ is open in the $C^1$ topology.
Given $u\in\mathfrak{M}$, the tangent space of $\mathfrak{M}$ at $u$ is
\begin{align*}
    T_u\mathfrak{M}=\{ X\in W^{2,4}(\Sigma, \mathbb{R}^Q):X(x)\in T_{u(x)}\mathcal{M}\text{ for all }x\in\Sigma\}.
\end{align*}

Write the pull back metric of $u$ as $g_u=u^*g$. The norm we use on $T_u\mathfrak{M}$--- which makes $\mathfrak{M}$ into a Finsler manifold--- is
\begin{align*}
    \Vert X\Vert_u=\Vert X\Vert_{C^{1,1}(g_u)}+\Vert\nabla^2 X\Vert_{L^4(g_u)}.
\end{align*}
Here the norms and connections are all with respect to $g_u$ on $\Sigma$ and $g$ on $\mathcal{M}$. The natural distance associated to this norm is
\begin{align*}
    \operatorname{d}_{\mathfrak{M}}(u_1,u_2)=\inf\left\{\int_0^1\Vert h'(t)\Vert_{h(t)}\,dt:h\in C^1([0,1],\mathfrak{M}),h(0)=u_1,h(1)=u_2\right\}.
\end{align*}
Then we have the following.
\begin{proposition}[{\cite[Proposition 2.1]{Pigati-FB}}]
    $(\mathfrak{M},\operatorname{d})$ is a complete Finsler manifold.
\end{proposition}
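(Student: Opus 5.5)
The plan is to follow the scheme of \cite[Proposition 2.1]{Pigati-FB}, itself modeled on Rivière's argument in \cite{Riviere-IHES}. The one substantive point is that the Finsler distance $\operatorname{d}_{\mathfrak{M}}$ controls the ambient $W^{2,4}$ distance, the $C^1$ distance, and, crucially, a quantitative lower bound on the immersion condition. First, I check that $\operatorname{d}_{\mathfrak{M}}$ is a genuine metric compatible with the Banach manifold topology. Near a fixed $u\in\mathfrak{M}$, the metrics $g_v$ for $v$ in a small $W^{2,4}$-neighbourhood of $u$ are uniformly equivalent to $g_u$, with $C^{0,1/2}$-controlled coefficients. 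Hence the norms $\Vert\cdot\Vert_v$ are uniformly equivalent to a fixed Banach norm on $W^{2,4}(\Sigma,\R^Q)$ intersected with the tangency constraint. Continuity of $v\mapsto\Vert\cdot\Vert_v$ then follows as in standard Finsler-manifold theory (Palais).

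For completeness, take a $\operatorname{d}_{\mathfrak{M}}$-Cauchy sequence $u_j$. The key estimate is a Grönwall-type bound along a $C^1$ path $h(t)$ with $\int_0^1\Vert h'\Vert_{h}\,dt\le L$. Differentiating $g_{h(t)}=h(t)^\ast g$ gives
\begin{align*}
\Big|\tfrac{d}{dt}\log g_{h(t)}\Big|\le C\Vert\nabla h'\Vert_{L^\infty(g_{h})}\le C\Vert h'\Vert_{h}.
\end{align*}
Thus $e^{-CL}g_{h(0)}\le g_{h(t)}\le e^{CL}g_{h(0)}$, so the induced metrics stay uniformly comparable along the path. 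In particular $|du|$ and the smallest singular value of $du$ remain bounded above and away from zero, measured against the fixed reference metric $g_{u_1}$. Similarly I differentiate the Christoffel symbols of $g_{h(t)}$, or equivalently the second fundamental form, and use the $\Vert\nabla^2 h'\Vert_{L^4}$ part of the norm. This gives $\Vert \nabla^2 h(t)\Vert_{L^4(g_{u_1})}\le C(L,u_1)$, and hence
\begin{align*}
\Vert h(1)-h(0)\Vert_{W^{2,4}(g_{u_1})}\le C(L,u_1)\int_0^1\Vert h'\Vert_h\,dt.
\end{align*}
Applied to a Cauchy sequence, this shows that $u_j$ is Cauchy in $W^{2,4}(\Sigma,\R^Q)$. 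It therefore converges to some $u_\infty\in W^{2,4}(\Sigma,\mathcal{M})$, with $\mathcal{M}$ closed.

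By the uniform comparability of metrics above, $du_\infty$ has rank $2$ everywhere, so $u_\infty\in\mathfrak{M}$. Finally, I show $\operatorname{d}_{\mathfrak{M}}(u_j,u_\infty)\to0$. For this I connect $u_j$ to $u_\infty$ by the path $\pi_{\mathcal{M}}((1-t)u_j+tu_\infty)$, where $\pi_{\mathcal{M}}$ is the nearest-point projection. Its Finsler length is bounded by $C\Vert u_j-u_\infty\Vert_{W^{2,4}}$, using local equivalence of the norms near $u_\infty$.

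The main obstacle is the second-order part of the Grönwall estimate. The norm $\Vert\nabla^2X\Vert_{L^4(g_u)}$ uses the Levi-Civita connection of the moving metric $g_{h(t)}$, so comparing it to a fixed reference requires controlling the Christoffel symbols. Their difference is only in $L^4$, which is why the $C^{1,1}$ (Lipschitz-gradient) component of the norm is included: it makes the cross terms (Christoffel difference)$\times\nabla X$ bounded in $L^4$. I would treat this step as in \cite{Pigati-FB}, closing the estimate by a Grönwall inequality for the quantity $\Vert\Gamma_{g_{h(t)}}-\Gamma_{g_{u_1}}\Vert_{L^4}$.
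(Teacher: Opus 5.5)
Your proposal is correct and follows essentially the argument of the cited \cite[Proposition 2.1]{Pigati-FB}; the paper itself reproduces its first step, the Gr\"onwall bound $e^{-C\tau}g_u\le g_{u'}\le e^{C\tau}g_u$ along paths, in the proof of Proposition~\ref{prop: continuity of first variation}. As there, comparability of the induced metrics and $L^4$ control of the second-order quantities along paths of bounded length make $\operatorname{d}_{\mathfrak{M}}$-Cauchy sequences $W^{2,4}$-Cauchy, the limit is an immersion by the uniform lower bound on $g_{u_j}$, and $\operatorname{d}_{\mathfrak{M}}$-convergence follows from local equivalence of the norms.

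Two small precisions. First, the cross terms use only $\nabla X\in L^\infty$. The ``$C^{1,1}$'' in the norm is in effect $\Vert X\Vert_{L^\infty}+\Vert\nabla X\Vert_{L^\infty}$; a genuine Lipschitz-gradient norm would not be equivalent to the $W^{2,4}$ norm, which would contradict your first step. Second, the Gr\"onwall argument for $\Gamma_{g_{h(t)}}-\Gamma_{g_{u_1}}$ must be run jointly with $\Vert\sff^{h(t)}\Vert_{L^4}$ (or $\Vert\nabla^2 h(t)\Vert_{L^4}$), since $\partial_t\Gamma_{g_{h(t)}}$ contains a term bilinear in $\sff^{h(t)}$ and $\nabla h'$.
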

As $\mathcal{M}$ has Heegard genus $h$, there exists a splitting $\mathcal{M}=N_1\cup_\varphi N_2$, where $N_1$, $N_2$ are handlebodies of genus $h$, and $\varphi: \partial N_1\to\partial N_2$ is a diffeomorphism (see, e.g., \cite{HeegaardSplittingNotes}). Note that a standard handlebody of genus $h$ can be constructed as the tubular neighborhood of a graph $\gamma_h$. Note also that $\Sigma\simeq \partial N_1\simeq\partial N_2$.
Then each handlebody $N_i$ can be identified with a quotient of $[0,1]\times\Sigma$; more precisely, there is a
quotient map $\pi: \Sigma\to \gamma_h$ such that the quotient $([0,1]\times \Sigma)/\sim$ (where $(0,x)\sim (0,x')$ if $\pi(x)=\pi(x')$, for $x,x'\in \Sigma$) is a handlebody of genus $h$.

We define $\mathcal{E}(u)$ to be the set of maps $f\in C^0([0,1]\times\Sigma, \mathcal{M})$ such that $f$ descend to a map on the quotient $([0,1]\times\Sigma)/\sim\, \simeq N_1$, and such that $f(1,\cdot)=u$.

Heuristically speaking, a map $f\in \mathcal{E}(u)$ describes a region bounded by $u(\Sigma)$. 
\begin{definition}
    For each $H\geq0,\sigma>0,u\in\mathfrak{M},f\in\mathcal{E}(u)\cap W^{1,3}([0,1]\times\Sigma,\mathcal{M})$, let
    \begin{align*}
        E_{H,\sigma}(u,f)=\operatorname{Area}(u)+\sigma^4\int_{\Sigma}\lvert\sff^u\rvert^4\,d\vol_{g_u}+H\vol(f),
    \end{align*}
    where $\sff^u$ is the second fundamental form of $u$ in $\mathcal{M}$ and 
    \begin{align*}
        \vol(f)=\int_{[0,1]\times\Sigma}f^*\,d\vol_g.
    \end{align*}
\end{definition}
The term $\vol(f)$ has been used by D. R. Cheng-X. Zhou in \cite{Cheng-Zhou} to characterize the volume bounded by $u$. A similar quantity was used by M. Struwe in \cite{Struwe-Freeboundary}. We recall the basic properties of $\vol(f)$ for $f\in\mathcal{E}(u)$ in the following Lemma.

\begin{lemma}[{\cite[Lemma 2.2]{Cheng-Zhou}}]\label{lem: define of volume}
    For each $u\in\mathfrak{M}$,
    \begin{enumerate}
        \item If $f_1,f_2\in\mathcal{E}(u)\cap W^{1,3}([0,1]\times\Sigma,\mathcal{M})$, then
        \begin{align*}
            \vol(f_1)-\vol(f_2)\in \vol_g(\mathcal{M})\Z;
        \end{align*}
        \item There exists $\delta_0>0$ such that if $f_1,f_2\in\mathcal{E}(u)\cap W^{1,3}([0,1]\times\Sigma,\mathcal{M})$ and $\lVert f_1-f_2\rVert_{C^0}<\delta_0$, then $\vol(f_1)=\vol(f_2)$.
    \end{enumerate}
\end{lemma}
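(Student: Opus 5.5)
We argue through the quotient structure. Let $X:=([0,1]\times\Sigma)/\sim\,\simeq N_1$. Every $f\in\mathcal{E}(u)$ descends to a continuous map $\bar f:X\to\mathcal{M}$ whose restriction to $\partial X=\{1\}\times\Sigma$ is $u$. The plan is to glue two such maps along their common boundary and read off the volume difference as a degree. For this we need $\vol(f)=\int_X \bar f^\ast d\vol_g$. This holds because the collapsed set $\{0\}\times\Sigma$ maps to the graph $\gamma_h$, which has measure zero, and the quotient map is a diffeomorphism away from it.

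For part (1), let $f_1,f_2\in\mathcal{E}(u)\cap W^{1,3}$. We form the closed oriented $3$-dimensional pseudomanifold $D:=X\cup_{\partial X}\bar X$, the double of the handlebody; it is homeomorphic to $\#^h(S^1\times S^2)$. On it we define $F:=\bar f_1$ on the first copy and $F:=\bar f_2$ on the second, reflected copy. The two pieces agree on the seam since both equal $u$ there. With the orientation of $D$ reversing that of the second copy, we get
\begin{align*}
\vol(f_1)-\vol(f_2)=\int_D F^\ast d\vol_g .
\end{align*}
Since $F\in W^{1,3}$ on a 3-dimensional domain, the degree formula $\int_D F^\ast\omega=\deg(F)\int_{\mathcal{M}}\omega$ is valid. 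One justifies this by approximating $F$ in $W^{1,3}$ by smooth maps, which is possible because $W^{1,3}$ maps in the critical dimension are VMO and can be approximated; the degree is stable under such approximation, and $\int F^\ast\omega$ is continuous in $W^{1,3}$ because $F^\ast\omega$ is cubic in $dF$. This gives $\vol(f_1)-\vol(f_2)=\deg(F)\,\vol_g(\mathcal{M})\in\vol_g(\mathcal{M})\Z$.

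For part (2), if $\|f_1-f_2\|_{C^0}<\delta_0$ with $\delta_0$ below the injectivity/convexity radius of $\mathcal{M}$, then $F$ is homotopic to a map that is not surjective. Concretely, the straight-line geodesic homotopy from $f_2$ to $f_1$ fixes the boundary value $u$, so $F$ is homotopic to the double of $\bar f_1$, namely $\bar f_1$ composed with the fold map $D\to X$. That map factors through the $3$-dimensional space $X$ with boundary. A map from $X$ that is the same on both sheets has degree $0$: the two sheets contribute with opposite orientations, so $\int F^\ast\omega$ cancels exactly. Hence $\deg F=0$ and $\vol(f_1)=\vol(f_2)$. Alternatively, one can compute directly that the geodesic homotopy $\Phi:[0,1]\times X\to\mathcal{M}$, applied to $f_2\to f_1$, fixes $\partial X$. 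By Stokes' theorem, $\vol(f_1)-\vol(f_2)=\int_{\partial([0,1]\times X)}\Phi^\ast d\vol$, and this boundary term is $0$ on the side $[0,1]\times\partial X$, where $\Phi$ factors through the $2$-dimensional $u(\Sigma)$.

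The main obstacle is the low-regularity justification: the degree formula and the Stokes/homotopy argument for $W^{1,3}$ maps that are only continuous across the collapsed set. I would first try density of smooth maps with fixed boundary trace in $W^{1,3}\cap C^0$, which is available since the maps are continuous by definition of $\mathcal{E}(u)$, with mollification followed by nearest-point projection onto $\mathcal{M}$. I would then pass to the limit using continuity of $f\mapsto\int f^\ast d\vol_g$ in $W^{1,3}$. The boundary data is handled by a small collar homotopy, which is harmless by the same continuity.
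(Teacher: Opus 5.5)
The paper gives no proof of this lemma; it cites \cite[Lemma 2.2]{Cheng-Zhou}. Your strategy is that argument: glue $f_1$ and the reflected $f_2$ along $\{1\}\times\Sigma$, read $\vol(f_1)-\vol(f_2)$ as $\deg(F)\vol_g(\mathcal{M})$, and for (2) use the geodesic homotopy. That homotopy fixes $\{1\}\times\Sigma$ and still descends at $t=0$, and it deforms $F$ to the folded map, which factors through the handlebody and so has degree $0$. The topology is correct.

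\emph{The gap is the analytic identity $\int_D F^\ast d\vol_g=\deg(F)\vol_g(\mathcal{M})$.} You justify it by asserting $F\in W^{1,3}$ on $D$, and this is false in general. The hypothesis is $W^{1,3}$ on the cylinder $[0,1]\times\Sigma$, and in the quotient the derivatives along the fibres collapsed by $\pi$ pick up a factor $r^{-1}$ near $\gamma_h$. On a solid torus, $|d\bar f|^2=|\partial_r f|^2+r^{-2}|\partial_\theta f|^2+|\partial_\phi f|^2$, with volume element $r\,dr\,d\theta\,d\phi$. Take $f=\sum_n\chi_n(t)\cos(2^n\theta)$, composed with a chart of $\mathcal{M}$, where the $\chi_n$ are disjoint bumps of height $2^{-7n/6}$ and width $\sim 2^{-n}$ placed near $t\sim 2^{-n}$. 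This $f$ is continuous, descends, and has $\int|df|^3<\infty$ on the cylinder, but $\int r^{-2}|\partial_\theta f|^3\,dr\,d\theta=\infty$. So no $W^{1,3}(D)$ approximation of $F$ exists. Your fallback, mollifying and projecting on the cylinder and then adding a collar, does not repair this. The mollified maps no longer factor through $\pi$ at the ends, so they do not live on $D$ and have no degree. The collar homotopy needed to restore the factorization has no a priori volume control.

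\emph{The missing idea is to localize away from $K:=F(S)$,} where $S\subset D$ is the union of the two copies of $\gamma_h$.
\begin{itemize}
\item Fix $\phi\in C^\infty_c(\mathcal{M}\setminus K)$. Uniform continuity of $F$ gives $\delta>0$ with $F(\{t<\delta\}\cup\{t>2-\delta\})\cap\operatorname{spt}\phi=\emptyset$.
\item On $\Omega_\delta=[\delta,2-\delta]\times\Sigma$ the map $F$ really is $W^{1,3}\cap C^0$, so smooth approximation there is legitimate.
\item For $y\in\operatorname{spt}\phi$, excision gives $\deg(F,\Omega_\delta,y)=\deg F$. Hence $\int_D F^\ast(\phi\,d\vol_g)=\deg(F)\int_{\mathcal{M}}\phi\,d\vol_g$.
\item Exhaust $\mathcal{M}\setminus K$ by such $\phi$, using dominated convergence and the area formula to show $F^\ast d\vol_g=0$ a.e.\ on $F^{-1}(K)$. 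This proves (1) provided $|K|=0$.
\end{itemize}

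In Cheng--Zhou's genus-zero setting $K$ is two points, so $|K|=0$ is automatic. Here $K$ is a continuous image of a graph, which could a priori have positive volume, so you must use the Sobolev hypothesis. Over the interior of an edge of $\gamma_h$, $\pi$ is the projection $I\times S^1\to I$. For a.e.\ $\theta_0$, the slice $g(t,s)=f_i(t,s,\theta_0)$ is $W^{1,3}$ in two variables; reflect it evenly across $t=0$. Morrey's estimate $\operatorname{osc}_{B_r}g\le Cr^{1/3}\|\nabla g\|_{L^3(B_r)}$, applied on $\sim r^{-1}$ discs covering the edge, gives $\mathcal{H}^3(f_i(\{0\}\times\pi^{-1}(I)))\lesssim r\int|\nabla g|^3\to 0$. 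Alternatively, use Lusin's property (N) for continuous $W^{1,3}$ maps in dimension $3$.

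Your Stokes variant of (2) has the same defect at $t=0$. Once (1) is established this way, your homotopy argument for (2) goes through as written.
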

In view of Lemma \ref{lem: define of volume}, we can naturally extend the definition of $\vol$ to all $f\in\mathcal{E}(u)$ by defining $\vol(f)=\vol(\tilde{f})$ with $\tilde{f}\in\mathcal{E}(u)\cap W^{1,3}([0,1]\times\Sigma,\mathcal{M})$ and $\lVert f-\tilde{f}\rVert_{C^0}<\delta_0$. With this definition, Lemma \ref{lem: define of volume} still holds for $f\in\mathcal{E}(u)$.

By Lemma \ref{lem: define of volume}, we can define the volume functional on simply connected open subsets of $\mathfrak{M}$, to consider its first variation.
\begin{definition}\label{def: reduction of functional}
    Let $\mathfrak{A}\subset\mathfrak{M}$ be a simply connect open set. Let $u_0\in\mathfrak{A}$ and $f_0\in\mathcal{E}(u_0)$.
    \begin{enumerate}
        \item For another $u\in\mathfrak{A}$, take a path $q\in C([0,1],\mathfrak{A})$ with $q(0)=u_0,q(1)=u$. Let us concatenate $q$ and $f_0$ by defining
        \begin{align*}
            f(t,\cdot)=\begin{cases}
                f_0(2t);&t\in\left[0,\frac{1}{2}\right];\\
                q(2t-1);&t\in\left[\frac{1}{2},1\right].
            \end{cases}
        \end{align*}
        Then $f\in\mathcal{E}(u)$. We define the local reduction of $E_{H,\sigma}$ on $\mathfrak{A}$ induced by $(u_0,f_0)$ by
        \begin{align*}
            E_{H,\sigma}^{\mathfrak{A}}(u)=E_{H,\sigma}(u,f).
        \end{align*}
        \item The first variation of $E_{H,\sigma}^{\mathfrak{A}}$ is given by
        \begin{align*}
            \delta E_{H,\sigma}^{\mathfrak{A}}(u)(X)=\frac{d}{dt}\bigg |_{t=0}E_{H,\sigma}^{\mathfrak{A}}(u_t),\text{ for }u_0=u,\de_tu_t|_{t=0}=X\in T_u\mathfrak{M},
        \end{align*}
        where $u_t,t\in(-1,1)$ is a $C^1$ family in $\mathfrak{M}$. We also define $\delta E_{H,\sigma}(u)=\delta E_{H,\sigma}^{\mathfrak{A}}(u)$.
        \item We say that $u\in\mathfrak{M}$ is a $\theta$-critical point of $E_{H,\sigma}$ if
        \begin{align*}
            \lvert\delta E_{H,\sigma}(u)(X)\rvert\leq\theta\Vert X\Vert_{u},\text{ for }X\in T_u\mathfrak{M}.
        \end{align*}
    \end{enumerate}
\end{definition}

\begin{rmk}
    By Lemma \ref{lem: define of volume}, $E_{H,\sigma}^{\mathfrak{A}}(u)$ does not depend on the choice of $q$. Also, when choosing a different $f_0\in \mathcal{E}(u_0)$, the values of $E_{H,\sigma}^{\mathfrak{A}}(u)$ will only change by a constant in $\vol_g (\mathcal{M})\Z$. Therefore, the first variation $\delta E_{H,\sigma}(u)$ is well-defined.
\end{rmk}

Now, let us compute the first variation of $E_{H,\sigma}$.

\begin{proposition}\label{prop: first variation}
    Let $u\in\mathfrak{M},H\geq0,\sigma>0$. Assume $\operatorname{Area}(u)\leq A$. The first variation $\delta E_{H,\sigma}(u)$ satisfies
        \begin{align} \label{eq: control-first-variation}
        \nonumber
            &\left\lvert \delta E_{H,\sigma}(u)(X)-\int_{\Sigma}\langle\nabla u,\nabla X\rangle\,d\vol_{g_u}-H\int_{\Sigma}(d\vol_g)_u(X, u_x, u_y)\,dx\wedge dy\right\rvert\\\leq &C\sigma^4\lVert\sff^u\rVert^3_{L^4(g_u)}(\lVert\lvert\nabla^2 X\rvert_{g_u}\rVert_{L^4(g_u)}+\Vert X\Vert_{L^{\infty}}A^{1/4}+\Vert\lvert\nabla X\rvert_{g_u}\Vert_{L^{\infty}}\lVert\sff^u\rVert_{L^4(g_u)}),
        \end{align}
        for all $X\in T_u\mathfrak{M}$. Recall that $\,d\vol_g$ denotes the volume form of $g$.
\end{proposition}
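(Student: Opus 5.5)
The first variation splits into three pieces, one for each term of $E_{H,\sigma}$, and I will treat them separately. Fix $X\in T_u\mathfrak{M}$ and a $C^1$ family $u_t\in\mathfrak{M}$ with $u_0=u$ and $\partial_t u_t|_{t=0}=X$; it is convenient to take $u_t=\pi_{\mathcal M}(u+tX)$, with $\pi_{\mathcal M}$ the nearest point projection. Working in a simply connected neighbourhood $\mathfrak A$ of $u$, the local reduction $E^{\mathfrak A}_{H,\sigma}(u_t)$ equals
\[
\operatorname{Area}(u_t)+\sigma^4\int_\Sigma|\sff^{u_t}|^4\,d\vol_{g_{u_t}}+H\vol(f_t),
\]
where $f_t$ is $f_0$ concatenated with the path $s\mapsto u_{s}$, $s\in[0,t]$.

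\textbf{Area term.} The first variation of area is classical:
\[
\frac{d}{dt}\Big|_{t=0}\operatorname{Area}(u_t)=\int_\Sigma\langle du,dX\rangle_{g_u}\,d\vol_{g_u}.
\]
Here $\langle\nabla u,\nabla X\rangle$ is computed with respect to $g_u$. This follows from $\frac{d}{dt}\sqrt{\det g_{u_t}}=\frac12\operatorname{tr}_{g_u}(\dot g)\sqrt{\det g_u}$ with $\dot g_{ij}=\langle\partial_iu,\partial_jX\rangle+\langle\partial_jX,\partial_iu\rangle$. No error arises from this term.

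\textbf{Volume term.} By Lemma \ref{lem: define of volume} only the added strip contributes, so $\vol(f_t)-\vol(f_0)=\int_{[0,t]\times\Sigma}F^\ast d\vol_g$ with $F(s,x)=u_s(x)$. Differentiating at $t=0$ gives
\[
\int_\Sigma (d\vol_g)_u(X,u_x,u_y)\,dx\wedge dy .
\]
The sign is fixed by the orientation conventions for $[0,1]\times\Sigma$. This term is exact.

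\textbf{Penalization term (main work).} In local coordinates write $|\sff^u|^2=g^{ik}g^{jl}\langle \sff_{ij},\sff_{kl}\rangle$ with $\sff_{ij}=(\partial^2_{ij}u)^{\perp}$, where $\perp$ is the normal projection in $T\mathcal M$ (equivalently $\nabla^2u$ minus its tangential part and minus $A_u(\partial_iu,\partial_ju)$). Differentiating $\int|\sff^{u_t}|^4\,d\vol_{g_{u_t}}$ produces three types of terms.
\begin{enumerate}
\item Variation of $d\vol_{g}$ and of the inverse metrics $g^{ij}$. Each is pointwise bounded by $C|\nabla X|_{g_u}|\sff|^4$. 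Integrating gives $C\|\nabla X\|_{L^\infty}\|\sff\|_{L^4}^4$, which is the third error term.
\item Variation of $\partial^2_{ij}u$. This gives $4|\sff|^2\langle\sff_{ij},(\nabla^2X)_{ij}\rangle$ plus lower-order pieces. By H\"older, $\int|\sff|^3|\nabla^2X|\le\|\sff\|_{L^4}^3\|\nabla^2X\|_{L^4}$.
\item Variation of the normal projection. This involves $\nabla X$ paired with $du$ and the tangential frame, and gives terms bounded by $|\sff|^3(|\nabla X||\sff|+|X|\,|D A|)$. The ambient curvature terms produce $|\sff|^3|X|$, and $\int|\sff|^3|X|\le\|X\|_{L^\infty}\|\sff\|_{L^4}^3\operatorname{Area}(u)^{1/4}$ by H\"older. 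This is the middle error term, since $\operatorname{Area}\le A$.
\end{enumerate}

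The delicate point is step (3) together with the mixed Christoffel-type terms. One must express the derivative of the projection $P^\perp_{u_t}$ onto the normal bundle of $u_t(\Sigma)$ in $T\mathcal M$ so that each term carries at least one factor of $\sff$ beyond the cubic factor, or else an $X$ with an ambient bounded coefficient. This uses $\partial_t P^\perp=-(\text{terms in }dX\text{ and }A_u(X,\cdot))$ composed with $P^\top$ and $P^\perp$, so that $P^\perp(\partial^2u)$ reproduces $\sff$. I would verify this structure carefully, so that no term of the form $|\sff|^3|\nabla X|$ without an extra $\sff$ appears. Any such term can in any case be absorbed by interpolation or by Young's inequality into the stated bounds.

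Summing the three contributions, the area and volume pieces give exactly the main terms subtracted in \eqref{eq: control-first-variation}, and the penalization derivative is bounded by the right-hand side.
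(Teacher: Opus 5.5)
Your outline matches the paper's. The area and volume terms are exact, and the penalization term is handled by a pointwise bound on $\frac{d}{dt}|\sff^{u_t}|^2_{g_t}$ and on the Jacobian, followed by H\"older. You also correctly flag the Christoffel terms as the delicate point. However, the one computation that carries the content of the proposition, the variation of $\sff$, is only announced, and the bookkeeping you sketch for it puts terms in the wrong places.

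Write $\partial^2_{ij}u=\Gamma^m_{ij}\partial_m u+\sff_{ij}+A(\partial_iu,\partial_ju)$.
\begin{itemize}
\item In your step (2), $P^\perp\partial^2_{ij}X=P^\perp\nabla_i\nabla_jX+O(|X||du|^2)$ and $\nabla_i\nabla_jX=(\nabla^2X)_{ij}+\Gamma^m_{ij}\nabla_mX$. So besides $\langle\sff_{kl},(\nabla^2X)_{ij}\rangle$ you also get $\Gamma^m_{ij}\langle\sff_{kl},\nabla_mX\rangle$.
\item In step (3), $\dot P^\perp$ acts on the tangential part $\Gamma^m_{ij}\partial_mu$. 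Using $\dot P^\perp\partial_mu=-P^\perp\partial_mX$, this produces $-\Gamma^m_{ij}\langle\sff_{kl},\nabla_mX\rangle$.
\item These terms involve the tangential, coordinate-dependent part of $\partial^2u$, which nothing on the right-hand side of the estimate controls. They therefore cannot be ``absorbed''; they must cancel exactly, and they do.
\item Conversely, step (3) contains no $|\nabla X||\sff|^4$ term. Indeed $\langle\sff,\dot P^\perp\sff\rangle=0$ because $P^\perp\dot P^\perp P^\perp=0$, and the action on $A(\partial_iu,\partial_ju)$ is $O(|X|)$; this is the curvature term. The $|\nabla X||\sff|^4$ terms come only from step (1).
\end{itemize}
Your fallback is also unsound as stated. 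Young's inequality cannot produce a bound carrying the factor $\lVert\sff\rVert_{L^4}^3$. A bounded-coefficient term $\int|\sff|^3|\nabla X|$ could be handled via $\lVert\nabla X\rVert_{L^4}^2\le C\lVert X\rVert_{L^\infty}\lVert\nabla^2X\rVert_{L^2}$ on the closed surface, but that does not touch $\Gamma$-weighted terms.

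The paper avoids all of this by working intrinsically. It uses $\sff^{u}=\nabla du$, the Hessian with respect to $g_u$ and the pull-back connection, which is a normal-valued tensor. Commuting $\frac{D}{dt}$ with $\nabla$ then gives
\[
\nu\cdot\frac{D}{dt}\nabla du_t(\partial_\alpha,\partial_\beta)\Big|_{t=0}=\nu\cdot\bigl(\nabla\nabla X(\partial_\alpha,\partial_\beta)+R^{\mathcal M}(X,\partial_\alpha u)\partial_\beta u\bigr).
\]
All tangential contributions, coming from $\Gamma$ and $\dot\Gamma$, drop out on pairing with $\nabla du$. Together with the variations of $g^{-1}$ and $\sqrt{\det g}$, this yields $\bigl|\frac{d}{dt}|\sff^{u_t}|^2\bigr|\le C\bigl(|\sff|(|\nabla^2X|+|X|)+|\sff|^2|\nabla X|\bigr)$ at once. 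To turn your extrinsic route into a proof, you need to carry out the cancellation above explicitly. As written, the key step is asserted rather than proved.
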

\begin{proof}
    The computation combines arguments from \cite{Cheng-Zhou} and \cite{Pigati-FB}.
    Let $X\in T_u\mathfrak{M}$.
    For the first variation of $\vol(f)$, exactly as in \cite[Section 2.3]{Cheng-Zhou} we have 
    \begin{align*}
        \delta \vol(f_u)(X)=\int_{\Sigma}(d\vol_g)_u(X, u_x, u_y)\,dx\wedge dy.
    \end{align*}
    It is also clear that the first variation of $\operatorname{Area}(u)$ is  
    \begin{align*}
        \int_\Sigma\operatorname{div}_{T_{u(x)}\Sigma}Xd\vol_{g_u}=\int_{\Sigma}\langle\nabla u,\nabla X\rangle\,d\vol_{g_u}
    \end{align*}
    (here $\nabla$ is the pull-back connection on $u^\ast T\mathcal{M}$).
    For the first variation of $\int_{\Sigma}\lvert\sff^u\vert^4$, take $u_t$ be a $C^1$ curve in $\mathfrak{M}$ with $u_0=u$ and $\de_tu_t|_{t=0}=X$. In local coordinates we have
    \begin{align*}
        \sff^{u_t}(du(\de_{\alpha}),du(\de_{\beta}))=\nabla du_t(\de_{\alpha},\de_{\beta});
    \end{align*}
    \begin{align}\label{eq: t-der-second-derivative}
        \nu\cdot\frac{D}{dt} \left[\nabla d u_t\left(\partial_\alpha,\partial_\beta\right)\right]\bigg\vert_{t=0}=\nu\cdot\left(\nabla\nabla X(\partial_\alpha,\partial_\beta)+R^\mathcal{M}(X,\partial_\alpha u)\partial_\beta u\right),
    \end{align}
    where $\nu$ is a unit vector in $T_{u(x)}\mathcal{M}$, orthogonal to $du(x)[T_x\mathcal{M}]$.\\
    For simplicity, write $g_t=g_{u_t}$. Similarly, in local coordinates we have  \begin{align} \label{eq: first variation of seond fundamental form}
        \frac{1}{2}\frac{d}{dt}\lvert\sff^{u_t}\rvert_{g_{t}}^2\bigg\vert_{t=0}=&g^{\alpha\beta}_0g^{\gamma\lambda}_0\langle \nabla d u(\partial_\alpha,\partial_\gamma), \nabla\nabla X(\partial_\beta,\partial_\lambda )+R^\mathcal{M}(X,\partial_\beta u)\partial_\lambda u \rangle\\ \nonumber
        &+\frac{d}{dt}\bigg\vert_{t=0}(g_t^{\alpha\beta}g_t^{\gamma\lambda})\langle\nabla d u(\partial_\alpha,\partial_\gamma),\nabla d u(\partial_\beta,\partial_\lambda)\rangle,
    \end{align}
    and recall that
    \begin{align}\label{eq: first variation of inverse metric}
        \frac{d}{dt}(g_t^{\alpha\beta}g_t^{\gamma\lambda})\bigg\vert_{t=0}=-g_0^{\alpha\eta} \frac{d}{dt}\bigg\vert_{t=0}(g_t)_{\eta\theta}g_0^{\theta\beta}g_0^{\gamma\lambda}-g_0^{\alpha\beta}g_0^{\gamma\eta} \frac{d}{dt}\bigg\vert_{t=0}(g_t)_{\eta\theta}g_0^{\theta\lambda},
    \end{align}
    \begin{align}\label{eq: first variation of metric}
        \frac{d}{dt}(g_t)_{\alpha\beta}\bigg|_{t=0}=\langle\de_{\alpha}u,\nabla_{\de_{\beta}}X\rangle+\langle\de_{\beta}u,\nabla_{\de_{\alpha}}X\rangle.
    \end{align}
    
    Combining \eqref{eq: first variation of seond fundamental form}, \eqref{eq: first variation of inverse metric} and \eqref{eq: first variation of metric} we obtain
    \begin{align}\label{eq: derivative-sff}
        \left\lvert\frac{d}{dt}\lvert\sff^{u_t}\rvert_{g_{t}}^2\bigg\vert_{t=0}\right\rvert\leq C(\lvert\sff^u\rvert_{g_u}(\lvert\nabla^2 X\rvert_{g_u}+\vert X\vert_{g_u})+\lvert\sff^u\rvert^2_{g_u}\lvert\nabla X\rvert_{g_u}).
    \end{align}
    Also, 
    \begin{align}\label{eq: derivative-jacobian}
        \left\lvert\frac{d}{dt}\sqrt{\det g_t}\bigg|_{t=0}\right\rvert=\left\lvert g_u^{\alpha\beta}\langle \nabla_{\partial_\alpha}X,\partial_\beta u\rangle\right\rvert \sqrt{\det g_u}\leq C\vert\nabla X\vert_{g_u}\sqrt{\det g_u}.
    \end{align}
    The control of $\frac{d}{dt}\vert_{t=0}\int\vert\sff^{u_t}\vert^4\,d\vol_{g_u}$ follows. Combining the previous estimates and applying H\"older's inequality, we obtain the bound for the first variation.
\end{proof} 

We have the following continuity properties.
\begin{lemma}\label{lem: closeness-A-E}
    Let $u,u'\in \mathfrak{M}$ with $\operatorname{dist}_\mathfrak{M}(u,u')\leq 1$. Assume that
    \begin{align*}
        \operatorname{Area}(u)\leq A,\quad \int_{\Sigma}\lvert\sff^u\rvert^4d\operatorname{vol}_{g_{u}}\leq E.
    \end{align*}
    Then
    \begin{align*}
        \lvert \operatorname{Area}(u)-\operatorname{Area}(u') \rvert&\leq C_A\operatorname{dist
        }_\mathfrak{M}(u, u'),\\
        \left\lvert\int_{\Sigma}\lvert\sff^u\rvert^4d\operatorname{vol}_{g_{u}}-\int_{\Sigma}\lvert\sff^{u'}\rvert^4d\operatorname{vol}_{g_{u'}}\right\rvert&\leq C_{A}(E+1)\operatorname{dist
        }_\mathfrak{M}(u, u'),
    \end{align*}
    where $C_A$ depends only on $\mathcal{M}$ and $A$.
\end{lemma}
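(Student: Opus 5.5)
We take a $C^1$ path $h:[0,1]\to\mathfrak{M}$ from $u$ to $u'$ whose length $\int_0^1\Vert h'(t)\Vert_{h(t)}\,dt\le 2\operatorname{dist}_\mathfrak{M}(u,u')\le 2$ (or arbitrarily close to $\operatorname{dist}$). We write $u_t=h(t)$ and $X_t=h'(t)\in T_{u_t}\mathfrak{M}$, and set $a(t)=\operatorname{Area}(u_t)$, $e(t)=\int_\Sigma\lvert\sff^{u_t}\rvert^4\,d\vol_{g_{u_t}}$. The plan is to derive differential inequalities for $a$ and $e$ along the path, using the pointwise derivative formulas from the proof of Proposition \ref{prop: first variation}, and then close them with Gronwall.

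For the area, \eqref{eq: derivative-jacobian} gives
\[
\lvert a'(t)\rvert\le C\,\Vert\lvert\nabla X_t\rvert_{g_{u_t}}\Vert_{L^\infty}\,a(t)\le C\Vert X_t\Vert_{u_t}\,a(t).
\]
Gronwall then yields $a(t)\le A\exp\bigl(C\int_0^1\Vert X_s\Vert\,ds\bigr)\le Ae^{2C}$, so the area stays bounded by a constant $A'$ depending on $A$ along the whole path. Integrating again gives
\[
\lvert a(1)-a(0)\rvert\le CA'\int_0^1\Vert X_t\Vert_{u_t}\,dt\le C_A\operatorname{dist}(u,u').
\]
A small point is that the $C^{1,1}(g_u)$ part of the norm controls $\lvert\nabla X\rvert_{g_u}$, and hence $\lvert X\rvert$, pointwise. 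This is exactly what the definition of $\Vert\cdot\Vert_u$ provides.

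For $e$, we differentiate
\[
\lvert\sff\rvert^4\sqrt{\det g}=(\lvert\sff\rvert^2)^2\sqrt{\det g}.
\]
By \eqref{eq: derivative-sff} and \eqref{eq: derivative-jacobian},
\[
\Bigl\lvert\tfrac{d}{dt}\bigl(\lvert\sff\rvert^4\sqrt{\det g}\bigr)\Bigr\rvert\le C\bigl(\lvert\sff\rvert^3(\lvert\nabla^2X\rvert+\lvert X\rvert)+\lvert\sff\rvert^4\lvert\nabla X\rvert\bigr)\sqrt{\det g}.
\]
The last term contributes $C\Vert X_t\Vert\,e(t)$. For the first term we use H\"older:
\[
\int\lvert\sff\rvert^3\lvert\nabla^2X\rvert\le e^{3/4}\Vert\nabla^2X\Vert_{L^4}\le(1+e)\Vert X\Vert_{u_t}.
\]
Similarly,
\[
\int\lvert\sff\rvert^3\lvert X\rvert\le e^{3/4}a^{1/4}\Vert X\Vert_{L^\infty}\le C_A(1+e)\Vert X\Vert_{u_t},
\]
where the area bound from the previous paragraph is used. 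Altogether,
\[
\lvert e'(t)\rvert\le C_A(1+e(t))\Vert X_t\Vert_{u_t}.
\]
Gronwall applied to $1+e$ gives $1+e(t)\le(1+E)e^{2C_A}$. Reinserting this bound, we get
\[
\lvert e(1)-e(0)\rvert\le C_A(E+1)\operatorname{dist}(u,u').
\]

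The main care point is the justification of differentiating under the integral along a $C^1$ path in the $W^{2,4}$ Banach manifold. This requires $t\mapsto\nabla du_t$ to be $C^1$ into $L^4$ and $g_{u_t}$ to be $C^1$ into $C^0$, which holds since $\mathfrak{M}$ is open in $W^{2,4}$ and $W^{2,4}\hookrightarrow C^1$. We also need to check that the constants in \eqref{eq: derivative-sff} depend only on $\mathcal{M}$ (through its curvature and second fundamental form in $\R^Q$) and not on $u_t$. This is true because all norms are taken with respect to $g_{u_t}$ itself, so the estimates are scale-free with respect to the parametrization.
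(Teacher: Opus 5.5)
Your proposal is correct and follows the paper's proof essentially step for step. Along a near-length-minimizing $C^1$ path, the paper first applies Gronwall to the area using the Jacobian variation. It then bounds the $L^4$ curvature energy using H\"older and the area bound (phrased there as a bound on $\tfrac{d}{dt}\log(e+1)$), and integrates once more.
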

\begin{proof}
    Let $\varepsilon>0$. Let $u_t\in C^1([0,1],\mathfrak{M})$ be a curve with $u_0=u$, $u_1=u'$ and length $\int_0^1\Vert\de_tu_t\Vert_{u(t)}\,dt\leq \operatorname{dist}_{\mathfrak{M}}(u, u')+\varepsilon$.\\
    As in \eqref{eq: derivative-jacobian}, we compute
    \begin{align*}
        \left\lvert\frac{d}{dt}\int_{\Sigma}d\operatorname{vol}_{g_{u_t}}\right\rvert=\left\lvert\int_{\Sigma}\langle \nabla\de_tu_t,d u_t\rangle d\vol_{g_{u_t}}\right\rvert\leq 2\left\lVert\de_t u_t\right\rVert_{u_t}\operatorname{Area}(u_t).
    \end{align*}
    Intergrating this inequality we get
    \begin{align}\label{eq: max-area}
        \max_{t\in [0,1]}\operatorname{Area}(u_t)\leq Ae^2,
    \end{align}
    and
    \begin{align*}
        \lvert \operatorname{Area}(u)-\operatorname{Area}(u')\rvert\leq 2Ae^2(\operatorname{dist}_{\mathfrak{M}}(u, u')+\varepsilon).
    \end{align*}
    For the continuity of the term involving the second fundamental form, we recall from \eqref{eq: first variation of seond fundamental form}, \eqref{eq: first variation of inverse metric} and \eqref{eq: first variation of metric} that
    \begin{align*}
        \left\lvert\frac{d}{dt}\lvert\sff^{u_t}\rvert^4_{g_t}\right\rvert\leq C\left(\lvert\sff^{u_t}\rvert^3_{g_t}\lvert \nabla^2 \de_t u_t\rvert_{g_t}+\lvert\sff^{u_t}\rvert^4_{g_t}\lvert \nabla \de_t u_t\rvert_{g_t}+\lvert\sff^{u_t}\rvert^3_{g_t}\lvert \de_tu_t\rvert_{g_t}\right).
    \end{align*}
    Therefore
    \begin{align*}
        &\left\lvert\frac{d}{dt}\int_{\Sigma}\lvert \sff^{u_t}\rvert^4_{g_t}d\operatorname{vol}_{g_t}\right\rvert\\\leq&C \left\lVert \de_t u_t\right\rVert_{u_t}\left(\left(\int_{\Sigma}\lvert\sff^{u_t}\rvert^4_{g_t}d\operatorname{vol}_{g_t}\right)^\frac{3}{4}(1+(Ae^2)^\frac{1}{4})+\int_{\Sigma}\lvert\sff^{u_t}\rvert^4_{g_t}d\operatorname{vol}_{g_t}\right)\\
        \leq&
        C\left\lVert \de_t u_t\right\rVert_{u_t}\left(\left(\int_{\Sigma}\lvert\sff^{u_t}\rvert^4_{g_t}d\operatorname{vol}_{g_t}\right)+1\right).
    \end{align*}
    Then
    \begin{align*}
        \left\lvert\frac{d}{dt}\log\left(\int_{\Sigma}\lvert\sff^{u_t}\rvert^4_{g_t}d\operatorname{vol}_{g_t}+1\right)\right\rvert\leq C\left\lVert\de_tu_t\right\rVert,
    \end{align*}
    Integrating this, we get
    \begin{align*}
        \max_{t\in [0,1]}\int_{\Sigma}\lvert\sff^{u_t}\rvert^4_{g_t}d\operatorname{vol}_{g_t}\leq C(E+1).
    \end{align*}
    Thus
    \begin{align*}
         \left\lvert\frac{d}{dt}\int_{\Sigma}\lvert \sff^{u_t}\rvert^4_{g_t}d\operatorname{vol}_{g_t}\right\rvert\leq C(E+1)
        \left\lVert \de_t u_t\right\rVert_{u_t}.
    \end{align*}
    Integrating once more and taking the limit $\varepsilon\to 0$ we obtain the desired statement.
\end{proof}

We also have the following continuity property for the first variation.
\begin{proposition}\label{prop: continuity of first variation}
    There exists $\tau_0>0$ such that the following holds. For any $H>0$ and $u,u'\in\mathfrak{M}$, if
        \begin{align*}
            \operatorname{d}_{\mathfrak{M}}(u,u')\leq\tau\leq\tau_0,\text{ and }\operatorname{Area}(u)\leq A, 
        \end{align*}
        and 
        \begin{align*}
            \int_\Sigma \lvert \sff^u\rvert^4d\vol_{g_u}\leq E,
        \end{align*}
        we have 
        \begin{align*}
            \lvert\lVert \delta E_{H,\sigma}(u)\rVert-\lVert\delta E_{H,\sigma}(u')\rVert\rvert\leq C\tau,
        \end{align*}
        where the constant depends on $E, A, \sigma, H$ and $\mathcal{M}$.
\end{proposition}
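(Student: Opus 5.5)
We argue along a near-minimizing path in $\mathfrak{M}$, following the continuity argument of Lemma \ref{lem: closeness-A-E} and \cite[Section 2]{Pigati-FB}. Fix $\varepsilon>0$ and a $C^1$ curve $u_t$, $t\in[0,1]$, from $u$ to $u'$ of length at most $\tau+\varepsilon$. By Lemma \ref{lem: closeness-A-E}, in the form of \eqref{eq: max-area} and of the $\log$-Gronwall bound in its proof, every $u_t$ satisfies $\operatorname{Area}(u_t)\le Ae^2$ and $\int_\Sigma|\sff^{u_t}|^4\le C(E+1)$. This requires $\tau_0\le 1$.

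The difficulty is that $\delta E_{H,\sigma}(u)$ and $\delta E_{H,\sigma}(u')$ act on different tangent spaces, with different norms $\Vert\cdot\Vert_{u}$ and $\Vert\cdot\Vert_{u'}$. To compare them we transport vector fields along the path. Given $X\in T_u\mathfrak{M}$, set $X_t:=P_{u_t(x)}X(x)$, where $P_p$ denotes the orthogonal projection of $\R^Q$ onto $T_p\mathcal{M}$ (smooth in $p$). Then $X_t\in T_{u_t}\mathfrak{M}$. The first step is to show that
\[
\Vert X_t\Vert_{u_t}\le e^{C(t\tau+\varepsilon)}\Vert X\Vert_u .
\]
This follows from a Gronwall argument on $\frac{d}{dt}\Vert X_t\Vert_{u_t}$. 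Differentiating the metric $g_{u_t}$, its Christoffel symbols and $P_{u_t}$ in $t$ produces terms bounded by $\Vert\partial_tu_t\Vert_{u_t}$. The most delicate term is the variation of the Christoffel symbols, which involves $\nabla^2\partial_tu_t$ in $L^4$; it multiplies $\nabla X_t$ in $L^\infty$, so it is controlled in the $L^4$ part of the norm. Since $P$ is smooth, its derivatives are uniformly bounded. Hence the transport is bi-Lipschitz with constant $1+C\tau$, and symmetrically for the transport back from $u'$ to $u$.

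The main step is to bound
\[
\Big|\frac{d}{dt}\,\delta E_{H,\sigma}(u_t)(X_t)\Big|\le C\,\Vert\partial_tu_t\Vert_{u_t}\Vert X\Vert_u .
\]
We differentiate each term of the first variation separately.

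For the area term $\int\langle\nabla u_t,\nabla X_t\rangle\,d\vol_{g_{u_t}}$, the $t$-derivative is controlled by $|\nabla\partial_tu_t|\,|\nabla X_t|$ together with lower order terms, times the area.

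For the volume term $H\int(d\vol_g)_{u_t}(X_t,\partial_xu_t,\partial_yu_t)$, all derivatives are first order and the integral is bounded by $C H A$ times the norms.

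The hardest term is the $\sigma^4$-term. Its integrand is an expression $\sigma^4\int|\sff^{u_t}|^2\langle\sff^{u_t},\nabla^2X_t+\dots\rangle$, built from \eqref{eq: first variation of seond fundamental form} through \eqref{eq: first variation of metric}. Differentiating in $t$ produces terms such as $|\sff|^2|\nabla^2\partial_tu_t||\nabla^2X_t|$, which are only in $L^1$ if we use $|\sff|\in L^4$, $\nabla^2\partial_tu_t\in L^4$ and $\nabla^2X_t\in L^4$. This is exactly Hölder's inequality with four $L^4$ factors, so the term is bounded by $C(E+1)^{1/2}\Vert\partial_tu_t\Vert\,\Vert X_t\Vert$. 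All the other terms have fewer second-derivative factors and are handled with the $C^{1,1}$ parts of the norms.

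Integrating in $t$ gives $|\delta E(u')(X_1)-\delta E(u)(X)|\le C(\tau+\varepsilon)\Vert X\Vert_u$. Taking the supremum over $\Vert X\Vert_u\le1$ and using the bi-Lipschitz bound for the transport, we obtain
\[
\Vert\delta E(u')\Vert\ge(1-C\tau)\Vert\delta E(u)\Vert-C\tau .
\]
Since $\Vert\delta E(u)\Vert\le C(A,E,\sigma,H)$ by Proposition \ref{prop: first variation}, this gives one inequality with error $C\tau$. Exchanging the roles of $u$ and $u'$, which is possible because $u'$ satisfies comparable area and $\sff$ bounds, gives the other inequality, and letting $\varepsilon\to0$ concludes.

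We expect the main obstacle to be the bookkeeping in the $t$-derivative of the $\sigma^4$-term. Terms of the form $|\sff|^3\,|\nabla^2 X_t|$ whose derivative falls on $\sff$ must be controlled in $L^1$ via $\partial_t\sff\sim\nabla^2\partial_tu_t$, and the $L^4$ integrability has to close exactly.
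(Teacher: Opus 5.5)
Your proposal is correct and follows essentially the same route as the paper: a near-minimizing path from $u$ to $u'$ with uniform area and $\sff$ bounds along it, transport of test fields by $X\mapsto P_{u'}X$ (the paper's $X'=\Pi_{u'}X$) with $\bigl|\lVert X\rVert_u-\lVert X'\rVert_{u'}\bigr|\le C\tau\lVert X\rVert_u$, H\"older bookkeeping with four $L^4$ factors for the $\sigma^4$-term, and symmetrization in $u,u'$. The only difference is packaging: you differentiate $t\mapsto\delta E_{H,\sigma}(u_t)(P_{u_t}X)$ along the path and integrate, whereas the paper compares the endpoint integrands term by term, using difference bounds such as $\lVert\sff^u-\sff^{u'}\rVert_{L^4}\le C\tau$ that are themselves obtained by integrating along the path.
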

\begin{proof}
Pick a $C^1$ curve $u_t$ with $u_0=u,u_1=u'$ and 
    \begin{align*}
        \int_0^1\Vert \de_tu_t\Vert_{u_t}\,dt\leq 2\tau.
    \end{align*}
    Set $g_t=g_{u_t}$, then for all $X\in\Gamma(T\Sigma)$, we calculate as in the proof of Proposition 2.1 in \cite{Pigati-FB}
    \begin{align*}
        \left\vert\frac{d}{dt}g_t(X,X)\right\vert=2\vert\langle du_t(X),\nabla_X\de_tu_t\rangle\vert\leq2g_t(X,X)\lvert \de_tu_t\rvert_{u_t},
    \end{align*}
    since $\lvert du_t(X)\rvert=\lvert X \rvert_{g_t}$ and $\lvert\nabla_X\de_tu_t\rvert\leq \lvert \de_tu_t\rvert_{u_t}\lvert X\rvert_{g_t}$.
    Integrating the above inequality, we obtain $e^{-4\tau}g_u\leq g_{u'}\leq e^{4\tau}g_u$. By the same argument as above, for all $X,Y\in T\Sigma$,
    \begin{align*}
        \left\vert\frac{d}{dt}g_t(X,Y)\right\vert\leq2\vert X\vert_{g_t}\vert Y\vert_{g_t}\lvert \de_tu_t\rvert_{u_t}\leq2e^{4\tau}\vert X\vert_{g_u}\vert Y\vert_{g_u}\lvert \de_tu_t\rvert_{u_t}.
    \end{align*}
    Integrating again, we obtain
    \begin{align}\label{eq: distance-metrics}
        \vert g_u(X,Y)-g_{u'}(X,Y)\vert\leq 4\tau e^{4\tau}\vert X\vert_{g_u}\vert Y\vert_{g_{u}}.
    \end{align}

Let $\tau_0$ be such that, if in local coordinates ${(g_u)}_{\alpha\beta}(x)=\delta_{\alpha\beta}$, then $\lVert g_{u'}-\operatorname{Id}\rVert_{Op}\leq \frac{1}{2}$.\\
In the following, we allow the constants $C$ to depend on $A,E,\tau_0$ and $\mathcal{M}$.\\
For $u\in \mathfrak{M}$, $X\in T_u\mathfrak{M}$, let
\begin{align*}
    \mathfrak{F}(u)[X]:=\int_{\Sigma}\lvert\sff^u\rvert^2(\lvert\sff^{u}\rvert^2\langle du, \nabla X\rangle+4\langle\sff^u, \nabla\nabla X+R^\mathcal{M}(X,\cdot)\rangle-8\langle du\otimes\nabla X, \operatorname{tr}_1(\sff^u\otimes\sff^u)\rangle)d\vol_{g_u},
\end{align*}
where $\operatorname{tr}_1$ refers to the trace over the last index (in local coordinates $g_u^{\gamma\lambda}\langle\sff^u_{\alpha\gamma},\sff_{\beta\lambda}^u\rangle$).
Note that $\mathfrak{F}(u)$ corresponds to the first variation of the penalization term in $E_{H,\sigma}$.
Let $X':=\Pi_{u'}X$, where $\Pi_{u'}$ is the orthogonal projection from $\mathbb{R}^Q$ to $T_{u'(x)}\mathcal{M}$. We need to estimate $\mathfrak{F}(u)[X]-\mathfrak{F}(u')[X']$. We estimate the three summands in $\mathfrak{F}$ (denoted $\mathfrak{F}_1$, $\mathfrak{F}_2$ and $\mathfrak{F}_3$) separately.
    \begin{align*}
       \left\lvert \mathfrak{F}_1(u)[X]-\mathfrak{F}_1(u')[X']\right\rvert\leq& \lVert\lvert \nabla X\rvert_{g_u}\rVert_{L^\infty}\left\lvert\int_\Sigma \lvert\sff^u\rvert^4 \vol_{g_u}-\lvert\sff^{u'}\rvert^4 d\vol_{g_{u'}}\right\rvert\\&+\left\lvert\int_\Sigma \lvert\sff^{u'}\rvert^4d\vol_{g_{u'}}\right\rvert\lVert\langle du, \nabla X\rangle_{g_u} -\langle du', \nabla' X'\rangle_{g_{u'}}\rVert_{L^\infty}\\
       \leq&C\lVert X\rVert_u\tau.
    \end{align*}
    In order to estimate the difference in the $L^\infty$ norm, we used the fact that for any $x$, using normal coordinates centered at $x$, one can write the difference as
    \begin{align*}
        g^{\alpha\beta}_u\langle du(\partial_\alpha), \nabla_{\partial_\beta}X\rangle-g^{\alpha\beta}_{u'}\langle du'(\partial_\alpha), \nabla'_{\partial_\beta}X'\rangle
    \end{align*}
    and use estimates \eqref{eq: distance-metrics} (together with our choice of $\tau_0$), $\lVert \lvert\nabla X-\nabla'X'\rvert_{g_u}\rVert_{L^\infty}\leq C\lVert X\rVert_{u}\tau$ and $\lvert du-du'\rvert_{g_u}\leq C\tau$.\\
    Next, to estimate the $\mathfrak{F}_2$ term we observe that
     \begin{align*}
       \int_\Sigma\lvert\nabla\nabla X-\nabla'\nabla' X'\rvert_{g_u}^4d\vol_{g_u}\leq C\lVert X\rVert_{u}^4\tau^4
    \end{align*}
    and
    \begin{align*}
        \left\lVert g_u^{\alpha\beta}R^\mathcal{M}(X,\partial_\alpha u)\partial_\beta u-g_{u'}^{\alpha\beta}R^\mathcal{M}(X',\partial_\alpha u')\partial_\beta u'\right\rVert_{L^\infty}\leq C\lVert X\rVert_{u}\tau.
    \end{align*}
    The last estimate can be derived pointwise on normal charts for $g_u$. Moreover, there holds
    \begin{align}\label{eq: integral-control-sff}
        \int_\Sigma\lvert \sff^u-\sff^{u'}\rvert_{g_u}^4d\vol_{g_u}\leq& \int_\Sigma\left(\int_0^1\lvert\nabla\nabla\partial_t u_t+R^\mathcal{M}(\partial_t u_t,\cdot)\rvert_{g_u}+C\lvert \sff^{u_t}\rvert_{g_u}\lvert \nabla \partial_t u\rvert_{g_u} dt\right)^4 d\vol_{g_u}\\
        \nonumber
        \leq&C\left(\int_0^1\lVert\partial_t u_t\rVert_{u_t}dt\right)^4\leq C\tau^4,
    \end{align}
    as can be verified using \eqref{eq: t-der-second-derivative}, \eqref{eq: distance-metrics} and Minkowski's integral inequality.
    Using \eqref{eq: derivative-jacobian} to control the difference in the volume forms and applying H\"older's inequality, we obtain $\left\lvert \mathfrak{F}_2(u)[X]-\mathfrak{F}_2(u')[X']\right\rvert\leq C\lVert X\rVert_{u}\tau$. The corresponding bound for $\mathfrak{F}_3$ can be derived similarly from the estimates $\lvert du-du'\rvert_{g_u}\leq C\tau$, $\lVert \lvert\nabla X-\nabla'X'\rvert_{g_u}\rVert_{L^\infty}\leq C\lVert X\rVert_{u}\tau$, \eqref{eq: integral-control-sff} and \eqref{eq: derivative-jacobian}.
    Therefore we obtain
    \begin{align*}
        \lvert \mathfrak{F}(u)[X]-\mathfrak{F}(u')[X']\rvert\leq C\lVert X\rVert_{u}\tau.
    \end{align*}
    The other terms in $\delta E_{H,\sigma}$ can be controlled with a simpler argument, to obtain
    \begin{align*}
        \lvert \delta E_{H,\sigma}(u)[X]-\delta E_{H,\sigma}(u')[X']\rvert\leq C\lVert X\rVert_{u}\tau.
    \end{align*}
    One can verify that $\lvert \lVert X\rVert_u-\lVert X'\rVert_{u'}\rvert\leq C\lVert X\rVert_u\tau$. As this holds for any $X\in T_{u}\mathfrak{M}$, and one can repeat the argument for $u'$ in place of $u$ (for similar constants $A'$ and $E'$, by Lemma \ref{lem: closeness-A-E}), we obtain the desired statement.
\end{proof}

Finally, we remark that $E_{H,\sigma}$ has the following scaling property.
\begin{lemma}\label{lem: critical-rescaled-functional}
Let $r,\ell>0$.
Assume that $u$ is $\theta$-critical for $E_{H,\sigma}$ on $W^{1,2}(B_1(0),\mathcal{M})$. Set $\tau=\sigma \ell^{-1}$. For a fixed $x_0\in B_1(0)$ with $B_r(x_0)\subset B_1(0)$, let $p:=u(x_0)$ and set
    \begin{align*}
    v(x)=\ell^{-1}\left(u(x_0+rx)-p\right).   \end{align*}
    The map $v$ is $\ell^{-\frac{5}{2}}\theta$-almost critical for the functional
    \begin{align*}
        E_{\ell H,\tau}(v)=\operatorname{Area}(v\vert_{B_1(0)})+\tau^4\int_{B_1(0)}\lvert\sff^{v}\rvert^{4}d\operatorname{vol}_{g_v}+\ell H\int_{B_1(0)\times[0,1]}f_{v}^\ast d\vol_{\mathcal{M}_{p,\ell}},
    \end{align*}
    on $W^{2,4}(B_1(0),\mathcal{M}_{p,\ell})$, where $\mathcal{M}_{p,\ell}:\ell^{-1}(\mathcal{M}-p)$,
    and, if $\ell\leq 1$,
    \begin{align*}
        \tau^4\log\tau^{-1}\int_{B_1(0)}\lvert\sff^{v}\rvert^4d\vol_{g_v}\leq \ell^{-2}\sigma^4\log\sigma^{-1}\int_{B_r(x_0)}\lvert\sff^u\rvert^4d\vol_{g_u}.
    \end{align*}
\end{lemma}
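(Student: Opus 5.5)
The plan is to track how each of the three terms of $E_{H,\sigma}$ transforms under the substitution $v(x)=\ell^{-1}(u(x_0+rx)-p)$. The map is the composition of the domain change $\phi(x)=x_0+rx$, which leaves the functional unchanged by reparametrization invariance, and the ambient homothety $y\mapsto \ell^{-1}(y-p)$, which takes $\mathcal{M}$ to $\mathcal{M}_{p,\ell}$.

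Under the homothety, the induced metric becomes $g_v=\ell^{-2}\phi^\ast g_u$. Hence $\operatorname{Area}(v)=\ell^{-2}\operatorname{Area}(u|_{B_r(x_0)})$. The second fundamental form satisfies $|\sff^v|_{g_v}=\ell|\sff^u|_{g_u}\circ\phi$, so
\begin{align*}
\int_{B_1(0)}|\sff^v|^4\,d\vol_{g_v}=\ell^{2}\int_{B_r(x_0)}|\sff^u|^4\,d\vol_{g_u}.
\end{align*}
With $\tau=\sigma\ell^{-1}$ this gives $\tau^4\int|\sff^v|^4=\ell^{-2}\sigma^4\int|\sff^u|^4$, which is exactly $\ell^{-2}$ times the penalization term of $u$. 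The volume term scales like $\ell^{-3}$: we take $f_v=\ell^{-1}(f_u\circ\phi-p)$, so $\ell H\vol(f_v)=\ell^{-2}H\vol(f_u)$. Altogether,
\begin{align*}
E_{\ell H,\tau}(v)=\ell^{-2}E_{H,\sigma}(u|_{B_r(x_0)}),
\end{align*}
modulo the integer ambiguity of the volume, which does not affect first variations. For the entropy inequality, $\ell\le1$ gives $\tau\ge\sigma$. Provided $\tau\le1$, we then have $\log\tau^{-1}\le\log\sigma^{-1}$, and multiplying the identity above yields the claim. (If $\tau>1$, the logarithm is negative and the inequality holds trivially, since the right-hand side is nonnegative for $\sigma<1$.)

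For almost criticality, let $Y\in T_v\mathfrak{M}$ be compactly supported in $B_1$, and set $X=\ell\, Y\circ\phi^{-1}$, a tangent field along $u$. A variation $v_t$ with velocity $Y$ corresponds to $u_t=p+\ell v_t\circ\phi^{-1}$ with velocity $X$. The scaling identity then gives $\delta E_{\ell H,\tau}(v)[Y]=\ell^{-2}\delta E_{H,\sigma}(u)[X]$. Applying $\theta$-criticality, this is bounded by $\ell^{-2}\theta\|X\|_u$.

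It remains to compare $\|X\|_u$ with $\|Y\|_v$. Since $g_u=\ell^2 g_v$ (after pulling back by $\phi$), the $C^0$ part satisfies $|X|=\ell|Y|$. For the derivative parts, the norm of $\nabla X$ measured in $g_u$ equals $\ell\cdot\ell^{-1}|\nabla Y|_{g_v}=|\nabla Y|_{g_v}$. Similarly, $|\nabla^2X|_{g_u}=\ell^{-1}|\nabla^2Y|_{g_v}$, and the Lipschitz seminorm of $\nabla X$ scales by $\ell^{-1}$. Finally, $\|\nabla^2X\|_{L^4(g_u)}=\ell^{-1}\ell^{1/2}\|\nabla^2 Y\|_{L^4(g_v)}=\ell^{-1/2}\|\nabla^2Y\|_{L^4(g_v)}$, because $d\vol_{g_u}=\ell^2d\vol_{g_v}$.

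\textbf{Main obstacle.} The worst factor in this comparison depends on whether $\ell\le1$ or $\ell\ge1$, and the stated exponent $\ell^{-5/2}$ must dominate all cases. For $\ell\le1$ the worst factor is $\ell^{-1}$, which would give $\ell^{-3}$ overall. So the bookkeeping of which norm components actually enter must be done carefully. I would first try to bound the $C^{1,1}$ piece using only the terms that appear in the first variation (cf. Proposition \ref{prop: first variation}). There, $\nabla^2X$ appears only through the $L^4$ norm, producing $\ell^{-1/2}$. The $L^\infty$ bounds on $X$ and $\nabla X$ produce factors $\ell$ and $1$. This combination is what should yield the net exponent $-5/2$. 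I expect this exponent-tracking step to be the only nontrivial point; the rest is a direct scaling computation.
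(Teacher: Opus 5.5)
Apart from one step, your argument is the paper's. The scaling $E_{\ell H,\tau}(v)=\ell^{-2}E_{H,\sigma}(u|_{B_r(x_0)})$, the identity $\delta E_{\ell H,\tau}(v)[Y]=\ell^{-2}\delta E_{H,\sigma}(u)[X]$ with $X=\ell\,Y\circ\phi^{-1}$, the componentwise scalings, and the entropy inequality all match it.

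The gap is exactly the step you flagged, and your proposed remedy does not work. You want to discard the Lipschitz seminorm of $\nabla X$ because only $\|X\|_{L^\infty}$, $\|\nabla X\|_{L^\infty}$ and $\|\nabla^2X\|_{L^4}$ enter the first-variation estimate of Proposition~\ref{prop: first variation}. But the hypothesis is $|\delta E_{H,\sigma}(u)(X)|\le\theta\|X\|_u$, with the full norm on the right-hand side. The structure of the first variation only tells you that $\delta E_{H,\sigma}(u)$ is bounded with respect to the weaker quantity $N(X):=\|X\|_{L^\infty}+\|\nabla X\|_{L^\infty}+\|\nabla^2X\|_{L^4(g_u)}$ with a constant of order one (depending on the area, $H$, $\sigma$), not of order $\theta$. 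Smallness in the dual of a stronger norm does not transfer to the dual of a weaker norm. So if $\|\cdot\|_u$ genuinely contained the Lipschitz seminorm of $\nabla X$, your own computation would give only $\ell^{-3}\theta$, and no bookkeeping inside the first-variation formula can improve this.

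What actually produces the exponent $-5/2$ is that the Finsler norm is $N$ itself, i.e.\ the norm of Rivi\`ere and Pigati. The ``$C^{1,1}$'' in Section~\ref{Sec: Preliminaires} must be read as $C^1$: a true $C^{1,1}$ norm would be infinite for general $X\in T_u\mathfrak{M}=W^{2,4}$ and would make the $L^4$ term redundant. The paper's proof uses exactly this norm, writing it as $\|X\|_{W^{2,4}_{g_u}}$ and tracking only those three pieces, so that
\begin{align*}
\|X\|_u=\ell\|Y\|_{L^\infty}+\|\nabla Y\|_{L^\infty}+\ell^{-1/2}\|\nabla^2Y\|_{L^4(g_v)}\le\ell^{-1/2}\|Y\|_v .
\end{align*}
This inequality requires $\ell\le1$. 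Contrary to your remark that $\ell^{-5/2}$ ``must dominate all cases'', for $\ell>1$ the $L^\infty$ term contributes a factor $\ell$, and one only gets $\ell^{-1}\theta$, which is weaker than the claim. So the criticality part of the lemma should be stated for $\ell\le1$; the ``$r\le1$'' in the paper's proof is a misprint for this. That is the only regime in which the lemma is applied ($\ell=r_i\to0$, $\ell=\lambda^{-1}r_i\to0$, $\ell'<s(K)$).
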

\begin{proof}
    We compute
    \begin{align*}
        \operatorname{Area}(v\vert_{B_1(0)})=&\frac{1}{2}\int_{B_1(0)}\lvert\nabla v\rvert^2=\frac{\ell^{-2}}{2}\int_{B_1(0)}\lvert r\nabla u(x_0+ry)\rvert^2dy\\
        =&\frac{\ell^{-2}}{2}\int_{B_r(x_0)}\lvert \nabla u(z)\rvert^2 dz=\ell^{-2}\operatorname{Area}(u\vert_{B_r(x_0)}).
    \end{align*}
    Moreover, as $\sff^{v(y)}=\ell\sff^{u(x_0+ry)}$,
    \begin{align}\label{eq: rescaling-penalization}
    \nonumber
        \int_{B_1(0)} \lvert\sff^{v}\rvert^4d\operatorname{vol}_{g_v}=&\int_{B_1(0)}\ell^4\lvert\sff^{u}(x_0+ry)\rvert^4\frac{\lvert\nabla v(y)\rvert^2}{2}dy=\frac{\ell^4r^2}{2\ell^2}\int_{B_1(0)}\lvert\sff^{u}(x_0+ry)\rvert^4\lvert\nabla u(x_0+ry)\rvert^2 dy\\
        =&{\ell^2}\int_{B_r(x_0)}\lvert\sff^{u}(z)\rvert^4d\vol_{g_u}.
    \end{align}
    We also have
    \begin{align*}
        \int_{B_1(0)\times [0,1]}f_{v}^\ast   d\vol_{\mathcal{M}_{p,\ell}}=\ell^{-3}\int_{B_r(x_0)\times [0,1]}f_{u}^\ast d\vol_{\mathcal{M}},
    \end{align*}
    where $f_{v}(t, y)=\ell^{-1}f_{u}(t, x_0+ry)$.\\
    Thus
    \begin{align*}
        E_{\ell H,\tau}(v)=\ell^{-2}\left(\operatorname{Area}(u\vert_{B_r(x_0)})+\sigma^4\int_{B_r(x_0)}\lvert\sff^{u}\rvert^{4}d\vol_{g_u}+H\int_{B_r(x_0)\times[0,1]}f_u^\ast d\vol_\mathcal{M}\right).
    \end{align*}
    Now if $X$ is a variation of $u$, supported in $B_r(x_0)$, set $X^{r,\ell}(y):=\ell^{-1}X(x_0+ry)$. $X^{r,\ell}$ is a variation of $v$ supported in $B_1(0)$, and
    \begin{align*}
        \left\lvert\delta E_{\ell H,\tau}(v)(X^{r,\ell})\right\rvert=\left\lvert \ell^{-2}\delta E_{H,\sigma}(u)(X)\right\rvert\leq \ell^{-2}\lVert \delta E_{H,\sigma}(u)\rVert_{{W^{2,4}}^\ast_{g_{u}}}\lVert X\rVert_{W^{2,4}_{g_{u}}}.
    \end{align*}
    Note that
    \begin{align*}
        &\lVert X^{r,\ell}\rVert_{L^\infty}=\ell^{-1}\lVert X\rVert_{L^\infty},\, \lVert \nabla^{g_{v}}X^{r,\ell}\rVert_{L^\infty}=\lVert \nabla^{g_{u}}X\rVert_{L^\infty},\\&\lVert\nabla^{g_{v}}\nabla^{g_{v}} X^{r,\ell}\rVert_{L^4_{g_{v}}(B_1(0))}=\ell^\frac{1}{2}\lVert\nabla^{g_{u}}\nabla^{g_{u}} X\rVert_{L^4_{g_{u}}(B_r(x_0))}.
    \end{align*}
    Therefore $\lVert X\rVert_{W_{u}^{2,4}}\leq \ell^{-\frac{1}{2}}\lVert X^{r,\ell}\rVert_{W_{v}^{2,4}}$ (if $r\leq 1)$, so that
    \begin{align*}
        \left\lvert\delta E_{\ell H,\tau}(v)(X^{r,\ell})\right\rvert\leq \ell^{-\frac{5}{2}}\lVert \delta E_{H,\sigma}(u)\rVert_{{W^{2,4}}^\ast_{g_{u}}}\lVert X^{r,\ell}\rVert_{W_{v}^{2,4}}\leq \ell^{-\frac{5}{2}}\theta\lVert X^{r,\ell}\rVert_{W_{v}^{2,4}}.
    \end{align*}
    This shows that $v$ is a $\ell^{-\frac{5}{2}}\theta$-critical point of $E_{\ell H,\tau}$.\\
    Finally, note that, if $\ell\leq 1$,
    \begin{align*}
        \tau^4\log\tau^{-1}\int_{B_1(0)}\lvert\sff^{v}\rvert^4d\vol_{g_v}=& \sigma^4\ell^{-4}\log(\sigma^{-1}\ell)\ell^2\int_{B_r(x_0)}\lvert\sff^{u}\rvert^4d\vol_{g_u}\\\leq&\ell^{-2}\sigma^4\log\sigma^{-1}\int_{B_r(x_0)}\lvert\sff^u\rvert^4d\vol_{g_u}.
    \end{align*}
\end{proof}

\section{The existence of CMC parametrized varifolds}\label{sec: existence of cmc}

\subsection{Asymptotic behavior}
In this section we show that a family of maps $\{u_k\}_{k\in \mathbb{N}}$ satisfying the assumptions of Theorem \ref{thm: main-theorem} converges, up to subsequences and in a suitable sense, to a CMC parametrized varifold, defined as follows.
\begin{definition}\label{def: parametrized-CMC}
    Given $u\in W^{1,2}(\Sigma,\mathcal{M})$ and an integer valued measurable function $N$ on $\Sigma$, we say that the triple $(\Sigma, u, N)$ is a parametrized varifold with constant mean curvature equal to $H$ if for a.e. $\omega\subset \Sigma$, for any smooth vector field $X$ on $\mathcal{M}$ supported away from $u(\partial \omega)$ there holds
    \begin{align*}
        \sum_{i}^2\int_\omega N\langle \partial_i u,DX\partial_iu\rangle dx=H\int_\omega X\cdot \ast\partial_{x_1}u\wedge\partial_{x_2}u\, dx=H\int_\omega u^\ast \alpha_X ,
    \end{align*}
    where $\alpha_X:=\ast X^\flat$.
\end{definition}
Here and in the following, we say that a property holds \textit{for a.e. $\omega\subset\Sigma$} if, for every non-negative $\rho\in C^\infty(\Sigma)$, for a.e. $\lambda>0$ the property holds for the super-level set $\lbrace\rho>\lambda\rbrace$.

In this section we follow closely the arguments of \cite{Pigati-FB}, especially Sections 3-6.\\
Denote the 2-Grassmannian on $\mathcal{M}$ by $G_2(\mathcal{M})$. For any $u\in \mathfrak{M}$, define the push-forward varifold $\textbf{v}_u$ by
\begin{align*}
    \textbf{v}_u(\varphi)=\int_{\Sigma}\varphi(u(x),du(T_{u(x)}\Sigma))\,d\operatorname{vol}_{g_u},\text{ for all }\varphi\in C^{\infty}(G_2(\mathcal{M})).
\end{align*}
For any open set $\omega\subset\Sigma$, we also define the local push-forward by
\begin{align*}
    \textbf{v}_{u,\omega}(\varphi)=\int_{\omega}\varphi(u(x),du(T_{u(x)}\Sigma))\,d\operatorname{vol}_{g_u},\text{ for all }\varphi\in C^{\infty}(G_2(\mathcal{M})).
\end{align*}
For $u=u_k$ we sometime write $\textbf{v}^k_\omega$ for $\textbf{v}_{u_k,\omega}$.
Let $\pi:G_2(\mathcal{M})\rightarrow\mathcal{M}$ be the natural projection. For each 2-varifold $\textbf{v}$, $\mu_{\textbf{v}}(U)=\textbf{v}(\pi^{-1}(U))$ is the area measure on $\mathcal{M}$ associated to $\textbf{v}$.

\begin{lemma}\label{lem: convergence_of_varifolds}
    Let $\{u_k\}_{k\in \mathbb{N}}$ be as in Theorem \ref{thm: main-theorem}.
    For each smooth open set $\omega\subset\Sigma$, if $u_k(\de\omega)$ converge in the Hausdorff distance to a curve $\Gamma\subset\mathcal{M}$, then the induced varifolds $\textbf{v}_{u_k,\omega}$ converge in the sense of varifolds, up to subsequences, to a varifold $\textbf{v}_\omega$ with bounded first variation away from $\Gamma$. More precisely
    \begin{align*}
        \vert\delta \textbf{v}_\omega(X)\vert\leq H\int_{\mathcal{M}}\vert X\vert d\mu_{\textbf{v}},\text{ for all smooth}X\in \Gamma(T\mathcal{M}), \text{ with }X\text{ supported away from }\Gamma.
    \end{align*}
\end{lemma}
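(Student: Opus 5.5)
Weak compactness comes first. By assumption (3) the masses $\textbf{v}_{u_k,\omega}(G_2(\mathcal{M}))=\operatorname{Area}(u_k\vert_\omega)\le\overline A$ are uniformly bounded, and $G_2(\mathcal{M})$ is compact. Hence, up to a subsequence, $\textbf{v}_{u_k,\omega}\rightharpoonup\textbf{v}_\omega$ as Radon measures on $G_2(\mathcal{M})$, i.e. in the sense of varifolds. It remains to bound $\delta\textbf{v}_\omega(X)$ for a smooth $X$ supported in $\mathcal{M}\setminus\Gamma$. Since $u_k(\partial\omega)\to\Gamma$ in Hausdorff distance, for $k$ large $\operatorname{spt}X$ lies at positive distance from $u_k(\partial\omega)$. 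Then $X\circ u_k$ vanishes near $\partial\omega$, so we may extend it by zero to a variation $X_k:=X\circ u_k\in T_{u_k}\mathfrak{M}$ on all of $\Sigma$ that is supported in $\omega$. (If $u_k\in W^{2,4}$, then $X\circ u_k\in W^{2,4}$ and it is tangent to $\mathcal{M}$.)

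Next we test the almost criticality against $X_k$. The first variation of area is $\int_\Sigma\operatorname{div}_{du_k(T\Sigma)}X\,d\vol_{g_{u_k}}=\delta\textbf{v}_{u_k,\omega}(X)$. By Proposition \ref{prop: first variation} and $\theta_k$-criticality,
\begin{align*}
\Big|\delta\textbf{v}_{u_k,\omega}(X)+H\int_\omega (d\vol_g)_{u_k}(X,\partial_xu_k,\partial_yu_k)\Big|\le \theta_k\|X_k\|_{u_k}+R_k,
\end{align*}
where $R_k$ is the error in \eqref{eq: control-first-variation}. The $H$-term is bounded by $H\int_\omega|X(u_k)|\,d\vol_{g_{u_k}}$, since $|(d\vol_g)(X,a,b)|\le|X||a\wedge b|$. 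This quantity converges to $H\int|X|\,d\mu_{\textbf{v}_\omega}$, because $|X|$ is continuous on $\mathcal{M}$ (or we may bound it against a smooth majorant and approximate). Moreover, $\delta\textbf{v}_{u_k,\omega}(X)\to\delta\textbf{v}_\omega(X)$ by varifold convergence, since $\operatorname{div}_S X$ is a continuous function on $G_2(\mathcal{M})$. This would give the claim, with $\mu_{\textbf{v}}$ read as $\mu_{\textbf{v}_\omega}$, which is at most the full limit measure.

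The main obstacle is showing that both error terms vanish. For this we need the norms of $X_k$. Computing with the chain rule, $\nabla X_k=DX(du_k)$, so $|\nabla X_k|_{g_{u_k}}\le C_X$. Also $\nabla^2X_k=D^2X(du_k,du_k)+DX(\nabla du_k)$, and the tangential part of $\nabla du_k$ vanishes in $g_{u_k}$-normal terms, leaving $\sff^{u_k}$. Hence
\begin{align*}
|\nabla^2X_k|_{g_{u_k}}\le C_X(1+|\sff^{u_k}|),\qquad \|X_k\|_{u_k}\le C_X\big(1+\|\sff^{u_k}\|_{L^4}+\operatorname{Area}^{1/4}\big).
\end{align*}
The $C^{1,1}$ part of the norm is likewise controlled by $C_X(1+\|\sff^{u_k}\|_{L^\infty})$. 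This is the delicate point: the $C^{1,1}$ norm of $\nabla X_k$ involves $\sff^{u_k}$ pointwise. I would try to use $\theta_k=\sigma_k^5$ together with a crude bound $\|\sff^{u_k}\|_{L^\infty}\lesssim\sigma_k^{-1}$-type control; failing that, I would rely on the fact that the dual norm pairing only requires $\theta_k\|X_k\|_{u_k}\to0$, interpreting the Lipschitz bound of $\nabla X_k$ in the $W^{2,4}$ sense, as in \cite{Pigati-FB}.

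With these bounds, write $\epsilon_k:=\sigma_k^4\int|\sff^{u_k}|^4\le\alpha_k/\log\sigma_k^{-1}\to0$. The error $R_k$ is bounded by $C\sigma_k^4\|\sff^{u_k}\|_{L^4}^3(1+\|\sff^{u_k}\|_{L^4})$. The first piece satisfies $\sigma_k^4\|\sff\|^3_{L^4}=\sigma_k\,\epsilon_k^{3/4}\to0$, and the second is $\sigma_k^4\|\sff\|_{L^4}^4=\epsilon_k\to0$. The criticality term is at most $\sigma_k^5(1+\sigma_k^{-1}\epsilon_k^{1/4})\to0$. Letting $k\to\infty$ yields $|\delta\textbf{v}_\omega(X)|\le H\int|X|\,d\mu_{\textbf{v}_\omega}$.
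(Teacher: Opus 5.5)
Your proposal is correct and follows essentially the same route as the paper: test the $\theta_k$-criticality against $X_k=X\circ u_k$ extended by zero, bound $\|X_k\|_{u_k}\le C_X(1+\overline{A}^{1/4}+\|\sff^{u_k}\|_{L^4})$ using $\nabla\nabla X_k=\nabla^2X(du_k,du_k)+\nabla X(\sff^{u_k}(du_k,du_k))$, use assumption (2) to make both the penalization error and the criticality term vanish, and pass to the limit in $\delta\mathbf{v}_{u_k,\omega}(X)$ and in $H\int|X|\,d\mu$ by varifold convergence. On your ``delicate point'': the Finsler norm must be read as $\|X\|_{L^\infty}+\|\nabla X\|_{L^\infty}+\|\nabla^2X\|_{L^4}$, since a literal $C^{1,1}$ norm would be infinite on generic $W^{2,4}$ fields and the paper's own estimate \eqref{eq: norm-of-variation-in-limit} uses exactly this reading; so your second option is the right one, and the first (a pointwise bound $\|\sff^{u_k}\|_{L^\infty}\lesssim\sigma_k^{-1}$) is not available and should be dropped.
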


\begin{proof}
    Let $X\in \Gamma(T\mathcal{M})$ be such that $X=0$ near $\Gamma$. For any $k\in \mathbb{N}$, set $X_k=X\circ (u_k|_{\omega})$. Since by assumption $u_k(\de\omega)$ converges in the Hausdorff distance to $\Gamma$, we have $X_k=0$ near $\de\omega$, for $k$ large enough. Extend  $X_k$ (by zero) to a vector field on $\Sigma$ along $\mathcal{M}$, supported in $\omega$. Let $x_\alpha$ denote local coordinates on $\omega$. We compute 
    \begin{align}\label{eq: explicit-derivative-pullback}
    \nonumber
    \nabla_\frac{\partial}{\partial x_\alpha} X_k=&\nabla^{T\mathcal{M}}_{\partial_\alpha u_k}X,\\
        \nabla\nabla X_k \left(\frac{\partial}{\partial x_\alpha} , \frac{\partial}{\partial x_\beta}\right)=&\nabla^{T\mathcal{M}}\nabla^{T\mathcal{M}}X(\partial_\alpha u_k, \partial_\beta u_k)+\nabla^{T\mathcal{M}}X(\sff^{u_k}(\partial_\alpha u_k, \partial_\beta u_k) ).
    \end{align}
    Here $\nabla$ denotes the pullback connection on $u_k^\ast T\mathcal{M}$. In the second estimate we used the fact that $\nabla du_k(X,Y)=\sff^{u_k}({u_k}_\ast X,{u_k}_\ast Y)$, see \cite{Pigati-FB}, Proposition 2.6. 
    In particular,
    with respect to the pullback metric $g_{u_k}$ on $\Sigma$, we have
    \begin{align*}
        \vert\nabla X_k\vert\leq\vert \nabla^{T\mathcal{M}} X\circ u_k\vert,
    \end{align*}
    and
    \begin{align}\label{eq: norm-of-variation-in-limit}
       \Vert X_k\Vert_{u_k}\leq\Vert X\Vert_{C^{0,1}}+\overline{A}^{\frac{1}{4}}\Vert(\nabla^{T\mathcal{M}})^2 X\Vert_{L^{\infty}}+\Vert\nabla^{T\mathcal{M}} X\Vert_{L^{\infty}}\Vert\sff^{u_k}\Vert_{L^4(g_{u_k})}.
    \end{align}
    Hence, by assumption $(2)$ in Theorem \ref{thm: main-theorem}, there holds
    \begin{align}\label{eq: sigmaXk-infinitesimal}
        \sigma_k\Vert X_k\Vert_{u_k}\leq\sigma_k(\Vert X\Vert_{C^{0,1}}+\overline{A}^{\frac{1}{4}}\Vert\nabla^2 X\Vert_{L^{\infty}})+o\left(\frac{1}{\log^\frac{1}{4}\sigma_k^{-1}}\right).
    \end{align}
    Note that the right hand side tends to $0$ as $k\to\infty$.
    Combining the estimate for the first variation of $E_{H,\sigma_k}$ obtained in Proposition \ref{prop: first variation} with \eqref{eq: explicit-derivative-pullback}, we get
    \begin{align*}
            &\left\lvert\delta E_{H,\sigma_k}(u_k)(X_k)-\int_{\Sigma}\langle\nabla u_k,\nabla X_k\rangle\,d\vol_{g_{u_k}}-H\int_{\Sigma}(d\vol_g)_{u_k}(X_k,(u_k)_x,(u_k)_y)\,dx\wedge dy\right\rvert\\\leq &C\sigma_k^4\lVert\sff^{u_k}\rVert^3_{L^4(g_{u_k})}\left(\overline{A}^{1/4}\lVert(\nabla^{T\mathcal{M}})^2 X\rVert_{L^\infty}+\overline{A}^{1/4}\Vert X\Vert_{L^{\infty}}+\lVert \sff^{u_k}\rVert_{L^4(g_{u_k})}\Vert\nabla^{T\mathcal{M}} X\Vert_{L^{\infty}}\right),
    \end{align*}
    for each vector field $Y\in T_{u_k}\mathfrak{M}$ (here $g$ denotes the metric of $\mathcal{M}$).
    Note that the right hand side of the above expression tends to $0$ as $k\to\infty$ by assumption (2) in Theorem \ref{thm: main-theorem}.\\
    Observe that
    \begin{align*}
        \int_{\omega}\langle\nabla u_k,\nabla X_k\rangle\,d\vol_{g_{u_k}}=\int_{G_2(\mathcal{M})}\div_LX(p)\,d\textbf{v}_{u_k,\omega}(p,L)=\delta \textbf{v}_{u_k,\omega}(X),
    \end{align*}
    \begin{align*}
        \left\lvert H\int_{\Sigma}(\,d\vol_g)_{u_k}(X_k,(u_k)_x,(u_k)_y)\,dx\wedge\,dy\right\rvert\leq H\int_{\mathcal{M}}\vert X\vert\,d\mu_{{\textbf{v}}_{u_k,\omega}}.
    \end{align*}
    Since we have the uniform area bound Area $(u_k)\leq \overline{A}$, after passing to a subsequence we may assume that $\textbf{v}_{u_k,\omega}\rightharpoonup \textbf{v}_\omega$ (varifold convergence), for some varifold $\textbf{v}_\omega$.
Since $u_k$ is assumed to be a $\theta_k$-critical point of $E_{H,\sigma_k}$, and $\theta_k\leq\sigma_k$, there holds 
   \begin{align*}
       \vert\delta E_{H,\sigma_k}(u_k)(X_k)\vert\leq\theta_k\Vert X_k\Vert_{u_k}\leq \sigma_k\Vert X_k\Vert_{u_k},
   \end{align*} 
   and the right hand side tends to zero as $k\to\infty$ by \eqref{eq: sigmaXk-infinitesimal}.
    Combining the above estimates, we obtain
    \begin{align}\label{eq: convergence-first-variation-to-H-term} \lim_{k\to\infty}\left\lvert \int_{Gr_2(\mathcal{M})}\div_LX(p)\,d\textbf{v}_\omega(p,L)-H\int_{\Sigma}(\,d\vol_g)_{u_k}(X_k,(u_k)_x,(u_k)_y)\,dx\wedge\,dy\right\rvert=0,
    \end{align}
    from which we deduce that
    \begin{align*}
        \vert\delta \textbf{v}_\omega(X)\vert=\left\vert\int_{Gr_2(\mathcal{M})}\div_LX(p)\,d\textbf{v}_\omega(p,L)\right\vert\leq H\int_{\mathcal{M}}\vert X\vert\,d\mu_{\textbf{v}_\omega}.    \end{align*}
\end{proof}
When $\omega=\Sigma$, We simply denote by $\textbf{v}$ the varifold $\textbf{v}_{\Sigma}$ obtained in Lemma \ref{lem: convergence_of_varifolds}.

In what follows, we assume that the sequence $\{u_k\}_{k\in\mathbb N}$ induces a single conformal structure on $\Sigma$. The case of varying conformal structures will be treated in Subsection $\ref{subsec: degeneration}$.
Accordingly, for each $k$ there exists a diffeomorphism $\varphi_k:\Sigma\to\Sigma$ such that
$$(u_k\circ \varphi_k)^*g = e^{f_k}\, g_0,$$
for some fixed reference metric $g_0$ on $\Sigma$ and some function $f_k:\Sigma\to\mathbb R$. Setting $\tilde u_k := u_k\circ \varphi_k$, we note that $\{\tilde u_k\}$ still satisfies the hypotheses of Theorem $\ref{thm: main-theorem}$, since $E_{H,\sigma}$ is invariant under reparametrizations. For simplicity, we relabel $\tilde u_k$ by $u_k$.
For any $k\in \mathbb{N}$, set $\nu_k:=\operatorname{vol}_{g_{u_k}}=\frac{1}{2}\lvert d u_k\rvert^2\vol_{{g_0}}$ and $\mu_k:=(u_k)_\ast\nu_k$. $\nu_k$ and $\mu_k$ are Radon measures respectively on $\Sigma$ and $\mathcal{M}$. Up to subsequences, $\{\nu_k\}_{k\in \mathbb{N}}$ converges weakly-$\ast$ to a Radon measure $\nu$ on $\Sigma$, while $\{\mu_k\}_{k\in \mathbb{N}}$ converges weakly-$\ast$ to a Radon measure $\mu$ on $\mathcal{M}$. Moreover, as the sequence $\{u_k\}_{k\in \mathbb{N}}$ is bounded in $W^{1,2}_{g_0}$ (by assumption (3) in Theorem \ref{thm: main-theorem} and the fact that $u_k$ is assumed to be conformal as a map from $(\Sigma, g_0)$), up to subsequences $\{u_k\}_{k\in \mathbb{N}}$ converges weakly in $W^{1,2}_{g_0}$ to a limiting map $u$.

Below we outline the main steps in the analysis of the asymptotic behavior of the sequences $\{u_k\}_{k\in \mathbb{N}}$ and $\{\nu_k\}_{k\in \mathbb{N}}$. Proofs that closely follow \cite{Pigati-FB} are omitted; for those arguments, we refer the reader to \cite{Pigati-FB}.

First of all, the almost criticality of the functions $u_k$ with respect to $E_{H,\sigma_k}$ can be exploited to deduce lower bounds for the mass ratio of the measures $\mu_k$:


\begin{proposition}[Cf. {\cite[Proposition 4.2]{Pigati-FB}}]\label{prop: almost monotonicity for large radius} Fix $k\in \mathbb{N}$. Let $x\in \Sigma$. Let $p=u_k(x)$. Let $\omega$ be a neighborhood of $x$ and set $\mu^\omega_k:=(u_k|_{\omega})_*\nu_k$.
Define a Radon measure on $\mathcal{M}$ by
    \begin{align*}
        \lambda_k^\omega(U)=\sigma_k^4\int_{u_k^{-1}(U)\cap\omega}\vert\sff^{u_k}\vert^4\,d\nu_k\text{ for any measurable }U\subset\mathcal{M}.
    \end{align*}
    Suppose that there holds
    \begin{align*}
        \lambda_k^\omega(B_t(p))\leq\delta_k\mu_k^\omega(B_{5t}(p))\text{ for all }t>0,
    \end{align*}
    for some $\delta_k>0$.
    Then if $\delta_k,\sigma_k$ are sufficiently small, we have for all $0<s<r<\diam(\mathcal{M})$
    \begin{align*}
        \frac{\mu^\omega_k(B_r(p))}{r^2}\geq(c-C\delta_k\log(r/s))\frac{\mu^\omega_k(B_s(p))}{s^2}-C\sigma_k^2.
    \end{align*}
    Moreover, for
    $\sigma_k<r<\diam(\mathcal{M)}$, we have
    \begin{align*}
        \frac{\mu_k^\omega(B_r(p))}{r^2}\geq c-C\delta_k\log(r/\sigma)-C\sigma_k^2.
    \end{align*}
    The constants $c,C$ depend only on $\mathcal{M}$ and $H$. 
\end{proposition}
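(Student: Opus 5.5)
We derive an almost-monotonicity formula by testing the almost-criticality of $u_k$ against the radial vector field centered at $p$, truncated at scale $r$. We work in normal coordinates on $\mathcal{M}$ around $p$; since $\mathcal{M}$ is compact with bounded geometry we may take $r<r_0$ small, and handle larger radii by a trivial adjustment of the constants. For a cutoff $\phi_\rho(y)=\eta(|y-p|/\rho)$, with $\eta$ non-increasing, equal to $1$ on $[0,1/2]$ and to $0$ on $[1,\infty)$, we set $X_\rho(y)=\phi_\rho(y)(y-p)$, corrected by the tangential projection onto $T_y\mathcal{M}$, and $X_{k,\rho}=X_\rho\circ u_k$ on $\omega$, extended by zero. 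Because $\operatorname{spt}X_\rho\subset B_\rho(p)$, only the region $u_k^{-1}(B_\rho(p))\cap\omega$ contributes. This requires $u_k^{-1}(B_\rho(p))\cap\omega$ to be compactly contained in $\omega$; I expect the paper's setting to allow this (or to take $\omega$ to be the connected component of $u_k^{-1}(B_r(p))$ containing $x$), and I adopt that convention.

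First I apply Proposition \ref{prop: first variation} together with \eqref{eq: explicit-derivative-pullback}. The area term gives
\[
\int \operatorname{div}_L X_\rho\, d\mathbf{v}^k_\omega = 2\int\phi_\rho\,d\mu^\omega_k+\int|y-p|\,\partial_r\phi_\rho\,|\nabla^L r|^2\,d\mu^\omega_k+O(\rho^2)\int\phi_\rho\,d\mu^\omega_k .
\]
The $H$-term is bounded by $H\rho\,\mu^\omega_k(B_\rho(p))$. For the penalization, I use $|X_{k,\rho}|\le\rho$, $|\nabla X_{k,\rho}|\le C$ and $|\nabla^2X_{k,\rho}|\le C\rho^{-1}+C|\sff^{u_k}|$, and I apply H\"older's inequality localized to $B_\rho(p)$ rather than the global estimate. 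This bounds the penalization contribution by
\[
C\lambda^\omega_k(B_\rho(p))+C\sigma_k^4\rho^{-1}\int_{B_\rho}|\sff|^3\,d\mu_k.
\]
By Young's inequality the last integral is at most $C\lambda_k^\omega(B_\rho)+C\sigma_k^4\rho^{-4}\mu_k^\omega(B_\rho)$. Almost-criticality contributes $\theta_k\|X_{k,\rho}\|_{u_k}\le \sigma_k^5 C(1+\rho^{-1}\bar A^{1/4}+\|\sff\|_{L^4})$. Using assumption (2), this is $\le C\sigma_k^2\rho^2$ for $\rho\ge\sigma_k$, and $\le C\sigma_k^2$ in general, which yields the additive error in the statement.

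Next I write $I(\rho)=\int\phi_\rho\,d\mu^\omega_k$, so that $\rho I'(\rho)=-\int|y-p|\partial_r\phi_\rho\,d\mu^\omega_k$. The identity above becomes the differential inequality
\[
\frac{d}{d\rho}\big(\rho^{-2}I(\rho)\big)\ge -C\rho^{-3}\Big(\lambda^\omega_k(B_\rho)+\sigma_k^4\rho^{-4}\mu^\omega_k(B_\rho)+H\rho\,\mu^\omega_k(B_\rho)+\text{err}\Big),
\]
after discarding the nonnegative normal-gradient term, as in the Euclidean monotonicity formula. I then insert the hypothesis $\lambda^\omega_k(B_\rho)\le\delta_k\mu^\omega_k(B_{5\rho})$. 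The key point is that $\rho^{-2}\mu(B_{5\rho})$ is comparable to $\rho^{-2}I(10\rho)$, so its integral against $d\rho/\rho$ produces the logarithmic factor $\delta_k\log(r/s)$ multiplied by a maximal mass ratio. This maximal ratio has to be absorbed, for instance by a Gronwall-type/continuity argument on $\sup_{[s,r]}\rho^{-2}\mu(B_\rho)$, and this produces the multiplicative factor $(c-C\delta_k\log(r/s))$. The $H$-term integrates to $CH(r-s)$, which is absorbed multiplicatively into $c$ for $r<r_0$. The $\sigma_k^4\rho^{-6}$ term is only harmless for $\rho\gtrsim\sigma_k$, which restricts the estimate to $s\ge\sigma_k$.

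For the second statement I take $s=\sigma_k$ and bound $\mu^\omega_k(B_{\sigma_k}(p))/\sigma_k^2$ from below. This uses that $u_k$ is an immersion with $\sigma_k^4\int|\sff|^4\le C/\log\sigma_k^{-1}$, which is small. Rescaling by Lemma \ref{lem: critical-rescaled-functional} with $\ell=\sigma_k$, the map $v$ has $\int|\sff^v|^4\lesssim 1/\log\sigma_k^{-1}$ on the relevant region. By Langer--Breuning graphical decomposition, the component through $x$ is then a graph with small slope over a disk of radius $\sim1$, so the mass ratio at scale $\sigma_k$ is at least about $\pi$. I expect the main obstacle to be closing the absorption step: $\lambda$ is controlled by $\mu$ at the larger radius $5\rho$, so the inequality is not of closed form, and the $\log(r/s)$ factor arises exactly from integrating it.
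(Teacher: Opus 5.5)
The paper gives no proof of its own here; it defers to Pigati's Proposition 4.2. Your skeleton (test almost-criticality against a truncated radial field, then insert $\lambda^\omega_k(B_\rho)\le\delta_k\mu^\omega_k(B_{5\rho})$ into the penalization error) is the right one. The problem is the step you flag as the main obstacle: it is genuinely missing, and the fix you suggest does not work. Integrating your inequality from $s$ to $r$ produces $C\delta_k\int_s^r\rho^{-3}\mu^\omega_k(B_{5\rho}(p))\,d\rho$, which involves mass ratios on all of $[5s,5r]$. A continuity or Gr\"onwall argument on $\sup_{[s,r]}\rho^{-2}\mu^\omega_k(B_\rho)$ cannot absorb the part coming from $(r,5r]$, and large mass there is useless for a lower bound at radius $r$. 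The missing idea is to stop the monotonicity at $r/5$ and to start it at a maximizer rather than at $s$. Set $h(t):=t^{-2}\mu^\omega_k(B_t(p))$ and $M:=\sup_{[s,r]}h\ge h(s)$, and pick $t^*\in[s,r]$ with $h(t^*)\ge M/2$. If $t^*\ge r/5$, then $h(r)\ge(t^*/r)^2h(t^*)\ge M/50$. Otherwise integrate only over $[t^*,r/5]$. There every $5\rho\le r$, so the error is at most $C\delta_k\log(r/s)\,M$, giving $h(r/5)\ge(\tfrac12-C\delta_k\log(r/s))M$. Then conclude with the trivial bound $h(r)\ge h(r/5)/25$. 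This yields $h(r)\ge(c-C\delta_k\log(r/s))h(s)$ up to the additive errors; the $H$-term and the curvature of $\mathcal{M}$ only cost a factor $e^{-C(1+H)r}$. It is precisely why the statement has a small constant $c$ instead of $1$.

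Your base case for the second inequality rests on a scaling error. By Lemma \ref{lem: critical-rescaled-functional}, $\int\lvert\sff^{v}\rvert^4d\vol_{g_v}=\ell^2\int\lvert\sff^{u_k}\rvert^4d\vol_{g_{u_k}}$. With $\ell=\sigma_k$, assumption (2) therefore only gives $\int\lvert\sff^v\rvert^4\lesssim\sigma_k^{-2}/\log\sigma_k^{-1}$, which blows up. The global entropy bound makes the surface graphical only at scales of order $\sigma_k^2(\log\sigma_k^{-1})^{1/2}$ and says nothing at scale $\sigma_k$, so the lower bound there must come from the local hypothesis. For $\rho\le\sigma_k$, use the classical monotonicity identity of the immersed surface $u_k\vert_\omega$ itself, with no criticality needed. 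Its error term is $\rho^{-2}\int_{B_\rho}\lvert\vec H\rvert\,d\mu^\omega_k\le C\rho^{-2}\big(\sigma_k^{-1}\lambda^\omega_k(B_\rho)^{1/4}\mu^\omega_k(B_\rho)^{3/4}+\mu^\omega_k(B_\rho)\big)\le C\delta_k^{1/4}\sigma_k^{-1}\rho^{-2}\mu^\omega_k(B_{5\rho})+Ch(\rho)$. Its integral over $(0,\sigma_k/5)$ is $C(\delta_k^{1/4}+\sigma_k)$ times the maximal ratio on $(0,\sigma_k]$. The same maximizer argument, together with $\lim_{t\to0}h(t)\ge\pi$ (since $u_k$ is a $C^1$ immersion near $x$), then gives $h(\sigma_k)\ge c$. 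It also yields the first inequality for $s<\sigma_k$, the range you had to discard. Your graphical idea can only be rescued through the local hypothesis: either $h(\sigma_k)\ge1/25$, or $\lambda^\omega_k(B_{\sigma_k/5})\le\delta_k\mu^\omega_k(B_{\sigma_k})<\delta_k\sigma_k^2/25$, which makes the rescaled $L^4$-curvature on $v^{-1}(B_{1/5})$ at most $\delta_k/25$.
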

\begin{rmk}[Cf. {\cite[Remark 4.5]{Pigati-FB}}]
The argument used to prove Proposition \ref{prop: almost monotonicity for large radius} (namely, the proof of \cite[Proposition 4.2]{Pigati-FB}) also shows that any 2-varifold $\textbf{v}$ supported in $\mathcal{M}$ with generalized mean curvature (with respect to $\mathcal{M}$) bounded in $L^\infty$ by $H$ (like the varifold $\textbf{v}$ obtained in Lemma \ref{lem: convergence_of_varifolds}) satisfies
\begin{align*}
    \frac{\lvert\textbf{v}\rvert(B_r(p))}{r^2}\geq (1+C\sqrt{r})^{-1}\frac{\lvert\textbf{v}\rvert(B_s(p))}{s^2},
\end{align*}
    for any $p\in \mathcal{M}$ and $0<s<r<\operatorname{diam}(\mathcal{M})$. In particular, this implies that the density
    \begin{align*}
        \theta(\textbf{v},p):=\lim_{s\to\infty}\frac{\lvert\textbf{v}\rvert(B_s(p))}{s^2},
    \end{align*}
    exists for any $p\in \mathcal{M}$, and
    \begin{align}\label{eq: control-density}
        c\theta (\textbf{v},p)r^2\leq \lvert \textbf{v}\rvert(B_r(p))\leq C\lvert\textbf{v}\rvert(\mathcal{M})r^2,
    \end{align}
    for any $p\in \mathcal{M}$ and $r\in (0,\operatorname{diam}(\mathcal{M}))$. All the constants depend only on $\mathcal{M}$ and $H$.
\end{rmk}
Proposition \ref{prop: almost monotonicity for large radius} can be combined with a covering argument to show that the varifolds induced by $u_k\vert_{B_r(x)}$ (for small balls $B_r(x)$ in a conformal chart) converge to varifolds $\textbf{v}_{B_r(x)}$ whose density is bounded below by some constant $c>0$, away from $u(\partial B_r)$. Therefore, Lemma \ref{lem: alnerative for density lower bound} (which corresponds to Lemma A.4 and Remark A.5 in \cite{Pigati-FB}) can be applied to obtain the following ``$\varepsilon$-regularity-type" dichotomy.
\begin{lemma}[Cf. {\cite[Proposition 5.1]{Pigati-FB}}]\label{lem: concentration property}
    Given $x\in\Sigma$ and $0<r<1$, assume that $u_k|_{\de B_r(x)}$ converges to the trace $u|_{\de B_r(x)}$ in $C^0$ and that $s=\diam(u(\de B_r(x)))<c_v$, for the constant $c_v=c_v(H,\mathcal{M})$ appearing in Lemma \ref{lem: alnerative for density lower bound}. Then either 
    \begin{enumerate}
        \item $\limsup_{k\rightarrow\infty}\nu_k(B_r(x))\geq c_Q$ for a constant $c_Q=c_Q(H,\mathcal{M})$ depending only on H, $\mathcal{M}$; or
        \item Any weak limit $\mu^{B_r(x)}$ of $\{(u_k|_{B_r(x)})_*\nu_k\}_{k\in \mathbb{N}}$ is supported in a $2s$-neighborhood of $u(\de B_r(x))$.
    \end{enumerate}
\end{lemma}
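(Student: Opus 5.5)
The plan is to argue by dichotomy on the limit measure. Pass to a subsequence so that $\limsup_k \nu_k(B_r(x))$ is realized as a limit and the measures $\mu_k^{B_r(x)}:=(u_k|_{B_r(x)})_*\nu_k$ converge weakly to some $\mu^{B_r(x)}$. If alternative (1) fails, then $\nu_k(B_r(x))<c_Q$ for $k$ large, where $c_Q$ will be fixed at the end. It then suffices to show that $\mu^{B_r(x)}$ gives no mass outside the $2s$-neighborhood of $\Gamma:=u(\partial B_r(x))$.

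First, the uniform $C^0$ convergence of $u_k|_{\partial B_r(x)}$ to the trace gives Hausdorff convergence of $u_k(\partial B_r(x))$ to $\Gamma$. Lemma \ref{lem: convergence_of_varifolds} then applies with $\omega=B_r(x)$: the varifolds $\textbf{v}_{u_k,B_r(x)}$ subconverge to a varifold $\textbf{v}_{B_r(x)}$ whose weight is $\mu^{B_r(x)}$ and whose generalized mean curvature is bounded by $H$ away from $\Gamma$.

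Second, I establish a uniform lower density bound for $\textbf{v}_{B_r(x)}$ on the support of $\mu^{B_r(x)}$ away from $\Gamma$.
\begin{itemize}
\item Take $p\in\operatorname{spt}\mu^{B_r(x)}\setminus\Gamma$ and points $p_k=u_k(x_k)$, $x_k\in B_r(x)$, with $p_k\to p$.
\item Apply Proposition \ref{prop: almost monotonicity for large radius} at $x_k$ with $\omega=B_r(x)$, on radii $t<\operatorname{dist}(p,\Gamma)/2$.
\item The hypothesis $\lambda_k^\omega(B_t(p_k))\le\delta_k\mu_k^\omega(B_{5t}(p_k))$ is exactly where the entropy assumption (2) enters. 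The total $\lambda_k$ mass is $o(1/\log\sigma_k^{-1})$, so by a Vitali-type covering argument (as in \cite{Pigati-FB}) the set of points where the hypothesis fails with $\delta_k=(\log\sigma_k^{-1})^{-1/2}$ has small $\mu_k$-measure. I can therefore choose $p_k$ among good points and still approach every point of the support.
\end{itemize}
Since $\delta_k\log(r/\sigma_k)\to0$, the Proposition gives $\mu_k^\omega(B_t(p_k))\ge c t^2/2$ for $\sigma_k<t$. Passing to the limit yields $\mu^{B_r(x)}(\bar B_t(p))\ge c t^2/2$ for all $t<\min(c_v,\operatorname{dist}(p,\Gamma)/2)$. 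I expect this covering step, which controls the bad set uniformly enough to pass to the limit, to be the main technical obstacle. I would handle it as in Pigati's Section 4.

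Third, I invoke Lemma \ref{lem: alnerative for density lower bound}, the analogue of \cite[Lemma A.4, Remark A.5]{Pigati-FB}. It concerns a varifold with mean curvature bounded by $H$ away from a set $\Gamma$ of diameter $s<c_v$ and with density at least $c$ on its support off $\Gamma$. It says that either the mass is at least some $c_Q$, or the support lies in the $2s$-neighborhood of $\Gamma$.

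The heuristic behind that lemma runs as follows. A point $p$ of the support at distance more than $2s$ from $\Gamma$ lets us use the monotonicity of the remark following Proposition \ref{prop: almost monotonicity for large radius}, with radii up to about $\operatorname{dist}(p,\Gamma)-s$, or up to $c_v$ if that is smaller. Combined with the density lower bound, this forces mass at least of order $c\,\min(\operatorname{dist}(p,\Gamma),c_v)^2$. The remaining issue is the regime where $\operatorname{dist}(p,\Gamma)$ is small but larger than $2s$, and there one would use the maximum-principle/convex-hull type argument built into the lemma.

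Finally, $|\textbf{v}_{B_r(x)}|(\mathcal{M})\le\limsup_k\nu_k(B_r(x))<c_Q$, so the first alternative of the lemma is excluded. Hence the support of $\mu^{B_r(x)}$ lies in the $2s$-neighborhood of $u(\partial B_r(x))$, which is alternative (2). The constant $c_Q$ is taken from that lemma and depends only on $H$ and $\mathcal{M}$.
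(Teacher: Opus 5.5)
Your architecture matches the paper's: varifold convergence with mean curvature bounded by $H$ off $\Gamma$ (Lemma \ref{lem: convergence_of_varifolds}), a uniform lower density bound from Proposition \ref{prop: almost monotonicity for large radius} plus a covering argument, and then the dichotomy of Lemma \ref{lem: alnerative for density lower bound}. However, the one quantitative step where the entropy assumption (2) actually matters fails as you wrote it.

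With $\delta_k=(\log\sigma_k^{-1})^{-1/2}$ and fixed $r$, you get $\delta_k\log(r/\sigma_k)\sim(\log\sigma_k^{-1})^{1/2}\to\infty$, not $\to 0$. So the lower bound $c-C\delta_k\log(r/\sigma_k)-C\sigma_k^2$ from the Proposition becomes negative. It gives no control of $\mu_k^\omega(B_t(p_k))/t^2$ at any fixed scale $t$, and the claim $\mu_k^\omega(B_t(p_k))\geq ct^2/2$ does not follow. Your choice spends all of $\lambda_k^\omega(\mathcal{M})=o(1/\log\sigma_k^{-1})$ on making the bad set small and leaves nothing for the monotonicity.

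You need two things at once. First, $\delta_k\log\sigma_k^{-1}\to 0$, so the almost-monotonicity error vanishes at every scale between $\sigma_k$ and a fixed radius. Second, $\delta_k^{-1}\lambda_k^\omega(\mathcal{M})\to 0$, so that the Vitali ($5t$-covering) bound on the $\mu_k^\omega$-measure of the bad set, namely $\delta_k^{-1}\lambda_k^\omega(\mathcal{M})$, tends to zero; then good points $p_k$ accumulate at every point of $\operatorname{spt}\mu^{B_r(x)}\setminus\Gamma$. The product of these two quantities is $\log\sigma_k^{-1}\,\lambda_k^\omega(\mathcal{M})\leq\alpha_k$. So both can be made to vanish exactly because the entropy quantity in (2) tends to zero, not merely because $\lambda_k^\omega(\mathcal{M})\to 0$.

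For instance, take $\delta_k:=\sqrt{\alpha_k}/\log\sigma_k^{-1}$. For $k$ large this gives $\delta_k\log(r/\sigma_k)\leq 2\sqrt{\alpha_k}$ for $r<\operatorname{diam}(\mathcal{M})$, and a bad set of $\mu_k^\omega$-measure at most $\sqrt{\alpha_k}$. With this choice your second step yields $\mu^{B_r(x)}(\overline{B_t(p)})\geq ct^2/2$. The rest of your argument then goes through as in the paper: set $c_Q=c_v\bar\theta$ with $\bar\theta$ the resulting density bound, and exclude the large-mass alternative because $\mu^{B_r(x)}(\mathcal{M})\leq\limsup_k\nu_k(B_r(x))<c_Q$.
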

Case (1) of Lemma \ref{lem: concentration property} describes single-point energy concentration (``bubbling"). The Lemma implies that any atom in the limiting measure $\nu$ has mass at least $c_Q$, therefore there are only finitely many such concentration points. On the complement of atoms, Case (2) implies absolute continuity of the limit measure with respect to $\vol_{g_0}$. More precisely:
\begin{lemma}[Cf. {\cite[Theorem 5.2]{Pigati-FB}}]\label{lem: limit of measures}
    The limiting measure $\nu$ has finitely many atoms, with weight at least $c_Q$, for a constant $c_Q$ depending only on $\mathcal{M}$ and $H$. On the complement $\tilde\Sigma$ of the finite set of atoms, $\nu$ is absolutely continuous with respect to $\operatorname{vol}_{g_0}$, and $u$ has a continuous representative. Moreover, for any open set $\omega\Subset\tilde\Sigma$ with $\nu(\partial\omega)=0$, $(u_k\vert_{\omega})_\ast\nu_k\rightharpoonup (u\vert_{\omega})_\ast\nu_{\infty}$.
\end{lemma}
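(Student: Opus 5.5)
The plan is to follow the proof of \cite[Theorem 5.2]{Pigati-FB}, using Lemma \ref{lem: concentration property} as the basic dichotomy. We work throughout with the conformal parametrizations $u_k:(\Sigma,g_0)\to\mathcal{M}$, so that $\nu_k=\frac12\lvert du_k\rvert^2\vol_{g_0}$.

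\textbf{Step 1: atoms.} Let $x$ be a point with $\nu(\{x\})>0$. Since $u_k\rightharpoonup u$ in $W^{1,2}_{g_0}$, a Fubini/Courant--Lebesgue argument in polar coordinates around $x$ gives, for a.e. small $r$, uniform $W^{1,2}$ bounds on the circles $\partial B_r(x)$ along a subsequence. Hence $u_k|_{\partial B_r(x)}\to u|_{\partial B_r(x)}$ in $C^0$, and the Courant--Lebesgue lemma gives
\begin{align*}
\diam\bigl(u(\partial B_r(x))\bigr)\le C\Bigl(\nu\bigl(B_{\sqrt r}(x)\bigr)/\log(1/r)\Bigr)^{1/2}.
\end{align*}
This can be made smaller than $c_v$ and tends to $0$ along a sequence $r\to0$.

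If alternative (1) of Lemma \ref{lem: concentration property} fails, then the limit of $(u_k|_{B_r})_\ast\nu_k$ is supported in a $2s$-neighbourhood of $u(\partial B_r)$, with $s$ small. By the density bound \eqref{eq: control-density} for the limit varifold $\textbf{v}_{B_r}$, whose mean curvature is bounded by Lemma \ref{lem: convergence_of_varifolds}, we get $\nu(B_r)\lesssim \lvert\textbf{v}\rvert(\mathcal{M})\,s^2$. This tends to $0$ as $r\to0$, contradicting $\nu(\{x\})>0$. Therefore $\nu(B_r(x))\ge c_Q$ for all small $r$, i.e. every atom has weight at least $c_Q$. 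Since $\nu(\Sigma)\le\overline A$, there are at most $\overline A/c_Q$ atoms.

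\textbf{Step 2: absolute continuity and continuity of $u$ on $\tilde\Sigma$.} For $x\in\tilde\Sigma$, choose $r$ so small that $\nu(B_{\sqrt r}(x))<c_Q/2$. Then alternative (2) of Lemma \ref{lem: concentration property} holds on $B_\rho(y)$ for all $y$ near $x$ and good radii $\rho\le r$. Consequently $u(B_\rho(y))$ lies in the $3s$-neighbourhood of $u(\partial B_\rho(y))$, which is an oscillation estimate giving a continuous representative of $u$: the modulus of continuity is controlled by the Courant--Lebesgue bound above.

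For absolute continuity, take a compact set $K\subset\tilde\Sigma$ with $\vol_{g_0}(K)=0$ and cover it by good balls $B_{\rho_i}(y_i)$ with $\sum\rho_i^2$ small. The mass $\nu(B_{\rho_i})$ equals the mass of the limit varifold, which is supported near $u(\partial B_{\rho_i})$, a curve of diameter $s_i$. The monotonicity upper bound then gives $\nu(B_{\rho_i})\le C s_i^2$. It remains to control $\sum_i s_i^2$ by $\sum_i\int_{B_{2\rho_i}}\lvert du\rvert^2$, which is small by absolute continuity of the integral of $\lvert du\rvert^2$. This step is the expected main obstacle: the diameter of the boundary image must be bounded by the Dirichlet energy of the limit $u$ near the ball, not by $\nu$. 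I would handle it with the averaging-over-radii trick, choosing $\rho\in(\rho_0,2\rho_0)$ with $\int_{\partial B_\rho}\lvert\partial_\theta u\rvert^2\le \rho^{-1}\int_{B_{2\rho_0}}\lvert du\rvert^2$, so that $s^2\le C\int_{B_{2\rho_0}}\lvert du\rvert^2$, and then applying a Vitali covering.

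\textbf{Step 3: convergence of push-forwards.} Let $\omega\Subset\tilde\Sigma$ with $\nu(\partial\omega)=0$. The maps $u_k$ converge to $u$ uniformly on compact subsets of $\tilde\Sigma$: the oscillation estimate from Step 2 is uniform in $k$, since it bounds the image of $u_k$ on small balls by neighbourhoods of the boundary curves, which converge uniformly. Given this, for $\varphi\in C^0(\mathcal{M})$ we have $\varphi\circ u_k\to\varphi\circ u$ uniformly on compact subsets of $\omega$. Combined with $\nu_k\rightharpoonup\nu$ and $\nu(\partial\omega)=0$, this yields $(u_k|_\omega)_\ast\nu_k\rightharpoonup(u|_\omega)_\ast\nu$.
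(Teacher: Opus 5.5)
Your Step 1 and the absolute-continuity part of Step 2 follow the route the paper defers to (Pigati's Theorem 5.2). Step 3, however, rests on a claim you have not justified and cannot get from the dichotomy: that $u_k\to u$ uniformly on compact subsets of $\tilde\Sigma$. Alternative (2) of Lemma~\ref{lem: concentration property} concerns only the weak limit of the measures $(u_k|_{B_\rho})_\ast\nu_k$. It says the $\nu_k$-mass of $u_k(B_\rho)$ eventually lies near $\Gamma:=u(\partial B_\rho)$, not that the sets $u_k(B_\rho)$ do.

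To turn support information on a limit measure into a Hausdorff bound on $u_k(B_\rho)$, you would need a lower mass-ratio bound for $\mu_k$ at every point of $u_k(B_\rho)$. Proposition~\ref{prop: almost monotonicity for large radius} gives this only at points where $\lambda_k^\omega(B_t(p))\le\delta_k\mu_k^\omega(B_{5t}(p))$ for all $t$, and that fails wherever curvature concentrates. For example, a tube of radius comparable to $\sigma_k$ and fixed length has area $O(\sigma_k)$ and $\sigma_k^4\log\sigma_k^{-1}\int\lvert\sff\rvert^4=O(\sigma_k\log\sigma_k^{-1})$. Nothing in the estimates at hand excludes such ``tentacles''; they carry no mass in the limit but destroy $C^0$ convergence. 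This is why the paper only records the weaker statement \eqref{eq: convergence-dist-L1}, and why later arguments (e.g.\ in Lemma~\ref{lem: N-jacobian}) only control the $\nu_k$-measure of the set where $u_k$ and $u$ differ. The same issue sits behind the word ``consequently'' in Step 2: the inclusion of $u(B_\rho)$ in the $3s$-neighbourhood of $\Gamma$ does not follow from the dichotomy alone.

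Both points can be repaired by a truncation argument that uses the conformality of $u_k$.
\begin{itemize}
\item Let $U$ be the open $3s$-neighbourhood of $\Gamma$ and set $\phi:=\operatorname{dist}(\cdot,U)$. Since $\nu_k=\frac12\lvert du_k\rvert^2\vol_{g_0}$, you get $\frac12\int_{B_\rho}\lvert d(\phi\circ u_k)\rvert^2\le(u_k|_{B_\rho})_\ast\nu_k(\mathcal{M}\smallsetminus U)\to0$.
\item Meanwhile $\phi\circ u_k\rightharpoonup\phi\circ u$ in $W^{1,2}(B_\rho)$, and $\phi\circ u$ has zero trace on $\partial B_\rho$. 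Hence $\phi\circ u=0$ a.e., which gives $\operatorname{ess\,osc}_{B_\rho}u\le 7s$. Combined with your Courant--Lebesgue radii, this yields the continuous representative.
\item For $u_k$ the same computation only gives small Dirichlet energy with small boundary values. In two dimensions that does not give an $L^\infty$ bound, which is consistent with the tentacle picture.
\item Once $u$ is continuous, $\int_{B_\rho}\lvert u_k-u\rvert\,d\nu_k\le 7s\,\nu_k(B_\rho)+\operatorname{diam}(\mathcal{M})\,(u_k|_{B_\rho})_\ast\nu_k(\mathcal{M}\smallsetminus U)$. A finite cover of $\overline{\omega}$ by such balls with $s$ small gives $\int_\omega\lvert u_k-u\rvert\,d\nu_k\to0$.
\item For Lipschitz $\varphi$, bound $\lvert\int_\omega\varphi(u_k)\,d\nu_k-\int_\omega\varphi(u)\,d\nu\rvert$ by $\operatorname{Lip}(\varphi)\int_\omega\lvert u_k-u\rvert\,d\nu_k+\lvert\int_\omega\varphi(u)\,d(\nu_k-\nu)\rvert$. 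The last term tends to zero because $\varphi\circ u$ is continuous on $\overline{\omega}$ and $\nu(\partial\omega)=0$.
\end{itemize}

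A smaller point in Step 2: a plain Vitali covering does not directly absorb the mismatch between $B_\rho$ and $B_{2\rho}$ in $\nu(B_\rho)\le C\int_{B_{2\rho}}\lvert du\rvert^2$. A differentiation argument does. At almost every point of the singular part of $\nu$, the ratio $\int_{B_{2\rho}}\lvert du\rvert^2/\nu(B_{2\rho})$ tends to $0$. This forces $\nu(B_\rho)=o(\rho^2)$ there, contradicting the infinite Lebesgue density of the singular part.
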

\begin{rmk}
    In the proof of Lemma \ref{lem: limit of measures}, one shows that for any compact $K\subset\tilde\Sigma$,
    \begin{align}\label{eq: convergence-dist-L1}
        \limsup _{k\to\infty}\int_K\lvert u_k-u\rvert d\nu_k=0
    \end{align}
    and
    \begin{align}\label{eq: control-nu-by-du}
        \nu(K)\leq C\int_{K}\lvert du\rvert^2,
    \end{align}
    with $C$ depending only on $\mathcal{M}$ and $H$.
\end{rmk}
In fact, we have the following, more precise description of the limiting measure $\nu$.
\begin{lemma}[Cf. {\cite[Theorem 5.3]{Pigati-FB}}]\label{lem: N-jacobian}
    The absolutely continuous part of $\nu$, which we denote by $m\operatorname{vol}_{g_0}$, has $m=0$ a.e. on the set of points where $du$ doesn't have rank 2. Moreover, $m=NJ(du)$ a.e. for a bounded, integer valued function $N\geq 1$ on the set of points where $du$ has rank 2.
\end{lemma}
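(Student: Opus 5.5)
We prove the statement pointwise: at a.e. point $x\in\tilde\Sigma$ we blow up both the maps and the measures, and identify the tangent object through its first variation.

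\textbf{Choice of points.} Fix $x\in\tilde\Sigma$ satisfying three conditions:
\begin{itemize}
\item $x$ is a Lebesgue point of $du$ (in the $L^2$ sense) and of $m$;
\item the approximate differential $du(x)$ exists;
\item the singular part of $\nu$ has zero density at $x$.
\end{itemize}
Almost every point qualifies. Work in a conformal chart around $x$ and choose radii $r_i\to0$ with $\nu(\partial B_{r_i}(x))=0$.

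\textbf{Blow-up sequence.} Using a diagonal choice of $k_i$, set
\begin{align*}
v_i(y):=r_i^{-1}\bigl(u_{k_i}(x+r_iy)-u(x)\bigr),\qquad y\in B_1(0).
\end{align*}
The indices $k_i$ are chosen so that four properties hold.
\begin{itemize}
\item $\nu_{k_i}(B_{r_i}(x))r_i^{-2}$ is close to $\pi m(x)$.
\item $v_i\to L:=du(x)$ in $L^2$. This uses the Lebesgue point property together with \eqref{eq: convergence-dist-L1}.
\item $v_i$ is almost critical for $E_{r_iH,\sigma_{k_i}/r_i}$, and the entropy quantity of $v_i$ tends to zero. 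Both follow from Lemma \ref{lem: critical-rescaled-functional} with $\ell=r$, provided $\sigma_{k_i}\ll r_i$ and $\theta_{k_i}r_i^{-5/2}\to0$.
\item The varifolds $\textbf{v}_{v_i}$ converge to a varifold $\textbf{w}$ in $\R^Q$. Since the rescaled targets $\mathcal{M}_{p,r_i}$ flatten, $\textbf{w}$ lives in $T_{u(x)}\mathcal{M}\cong\R^3$.
\end{itemize}
Repeating the proof of Lemma \ref{lem: convergence_of_varifolds} with mean curvature $r_iH\to0$ shows that $\textbf{w}$ is stationary away from $\lim v_i(\partial B_1)$. That limit set is $L(\partial B_1)$: by the Courant–Lebesgue lemma we may pick radii along which $v_i|_{\partial B_1}$ converges uniformly.

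\textbf{Identifying the tangent varifold.} Apply Lemma \ref{lem: limit of measures} to the blown-up sequence. Energy concentration inside $B_1$ is excluded because the rescaled measures converge to $m(x)\,dy$, which has no atoms. Hence the rescaled measures $\nu_i$ converge to $m(x)\mathcal{L}^2$, and $\mu^{B_1}$ is the pushforward of $m(x)\mathcal{L}^2$ under $L$.

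\textbf{Rank $<2$.} If $\operatorname{rank}L<2$, then $\textbf{w}$ is supported on a segment or a point, i.e.\ on $L(B_1)$. A stationary 2-varifold supported in the image of $L$, with boundary contained in $L(\partial B_1)$, must have zero mass. This follows from the monotonicity lower bound (the Remark after Proposition \ref{prop: almost monotonicity for large radius} with $H=0$): a nonzero varifold would need to have area density at least $c$ at some point away from the boundary. Near the midpoint of a segment, stationarity forces mass $\ge c\rho^2$ in balls, which is incompatible with the support being a segment; one uses the constancy-type argument, since a stationary varifold on a line is zero. Therefore $m(x)=0$.

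\textbf{Rank $2$.} If $\operatorname{rank}L=2$, then $\textbf{w}$ is stationary, supported in the plane $\Pi=L(\R^2)$, away from the ellipse $L(\partial B_1)$. By the constancy theorem, $\textbf{w}=\theta\,\mathcal{H}^2\llcorner$ on the bounded component $L(B_1)$, with $\theta$ constant. Integrality of $\theta$ is the main obstacle. I would obtain it from the density lower bound $\theta\ge c$ together with an Allard-type / integrality argument as in Pigati–Rivière. Concretely, near a point of $L(B_1)$, the maps $v_i$ are $W^{1,2}$-close to a linear map, so the projections $\pi_\Pi\circ v_i$ have a well-defined degree $N$ over generic points of the plane. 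Comparing mass with area counted with multiplicity, using the area formula and the convergence of energy densities, gives $\theta=N\in\N$.

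Finally, $m(x)=\theta J(du)(x)$ follows from comparing masses. Boundedness of $N$ follows from $m\le C|du|^2$ by \eqref{eq: control-nu-by-du}, and from $J(du)\ge c|du|^2$ wherever the map is conformal, which makes $N\le C$.
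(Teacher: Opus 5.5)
Your blow-up architecture matches the paper's: an a.e. point, a diagonal choice of $k_i$, almost-criticality of the rescaled maps via Lemma~\ref{lem: critical-rescaled-functional}, stationarity of the tangent varifold off $L(\partial B_1)$, and the constancy theorem in the rank-two case. However, the integrality step, which you rightly flag as the crux, does not work as proposed.

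The degree of $\pi_\Pi\circ v_i$ at a point inside the ellipse is the winding number of $\pi_\Pi\circ v_i|_{\partial B_1}$ around that point. Uniform boundary convergence forces it to be $\pm1$, since it counts preimages with sign and is blind to the multiplicity. Nor are the $v_i$ $W^{1,2}$-close to $L$ in a usable sense. The convergence is only weak, and it cannot be strong when $\theta>1$: strong convergence would make $L$ conformal with convergent energies, which forces $\theta=1$. The area formula only gives that the unsigned counts $N_i(q)=\#\bigl((\pi_\Pi\circ v_i)^{-1}(q)\cap B_1\bigr)$ have averages tending to $\theta$, and averages of integer-valued functions need not approach an integer.

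The missing idea is to prove that $N_i$ is almost constant. Following Pigati, the paper shows $\bigl\lvert\int_{B_\alpha}N_i\operatorname{div}\bar X\,d\mathcal L^2\bigr\rvert\le\delta_i\lVert d\bar X\rVert_{L^\infty}$ with $\delta_i\to0$. It does so by testing the almost-criticality against mollifications of $X$ at scale $\tau_i=\sigma_{k_i}/r_i$, projected onto $\mathcal M_{p,r_i}$. This lets the entropy condition absorb the penalization term using only $\lVert dX\rVert_{L^\infty}$, while the $H$-term is $O(r_i)$. Allard's constancy lemma then yields constants $\bar N_i$ with $\lVert N_i-\bar N_i\rVert_{L^{1,\infty}}\to0$. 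Hence $\operatorname{dist}(\bar N_i,\N)\to0$ and $\theta\in\N$.

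Your rank-$<2$ case and your bound on $N$ both fail because you never establish a uniform quadratic upper bound $\lvert\textbf{w}\rvert(B_s(q))\le Cs^2$ for all $q\in\R^Q$ and $s>0$.

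\emph{Rank $<2$.} Here $L(\partial B_1)=L(\overline{B_1})$, so $\textbf{w}$ is stationary only on the complement of its own support. There is no point ``away from the boundary'' from which to run monotonicity. Nothing you have rules out $\lvert\textbf{w}\rvert=L_\#\bigl(m(x)\mathcal L^2|_{B_1}\bigr)$ being a nonzero one-dimensional measure on the segment.

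\emph{Boundedness of $N$.} The inequality $J(du)\ge c\lvert du\rvert^2$ is not available, because $u$ is only a weak limit of conformal maps and need not be conformal. The dilatation control $\frac12\lvert du\rvert^2\le N J(du)$ is itself derived from this lemma; it is what feeds the quasiconformal reparametrization in Lemma~\ref{lem: 5-12-P-FB}. So your argument is circular.

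\emph{How the paper gets the upper bound.} It comes from the global limit. One has $\lvert\textbf{v}_{k,r}\rvert\le\lvert\textbf{v}_{u_k}\rvert$, and $\textbf{v}$ has $L^\infty$ mean curvature on all of $\mathcal M$, so it obeys \eqref{eq: control-density}. Choosing $k_i$ diagonally over a countable dense family of balls transfers $\lvert\textbf{w}\rvert(B_s(q))\le 2Cs^2$ to the blow-up. Covering the segment by $O(s^{-1})$ balls of radius $s$ then gives mass $O(s)\to0$, so $m(x)=0$. Testing the same bound on a ball inside $L(B_1)$ gives $\theta\le 2C/\pi$.
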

\begin{proof}
    Let $x\in \Sigma$ be a Lebesgue point for $du$ with $\nu(\{x\})=0$. We will work in a  conformal chart (for $g_0$) centered at $x$, and consider $u$ and $u_k$ as maps from such a chart (so that $x$ is identified with $0$). We will show that
    \begin{align*}
        \frac{\nu(B_r(x))}{\pi r^2}\to N\lvert\partial_1 u\wedge\partial_2u\rvert(x)
    \end{align*}
    for some bounded integer $N\geq 1$, as $r\to 0$ along some subsequence.
    For all $r>0$ small enough, let $\textbf{v}_{k,r}$ be the varifold induced by $u_k\vert_{B_r}$. By Lemmas A.4 and A.5 in \cite{PR-Regularity}, we can select an arbitrarily small $r$ such that the trace $u\vert_{\partial B_r}$ satisfies
    \begin{align}\label{eq: condition-radii-1}
        u(ry)=u(0)+rdu(0)[y]+o(r)\text{ for }\lvert y\rvert=1,
    \end{align}
    and such that 
        the traces $u_k\vert_{\partial B_r}$ converge to $u\vert_{\partial B_r}$ in $C^0$ up to subsequences.\\
    By Lemma \ref{lem: concentration property}, if $r$ is sufficiently small, any (subsequential) weak limit of $\lvert \textbf{v}_{k,r}\rvert$ is supported in a ball $B_{Cr}(u(0))$, where $C$ depends on $\lvert \nabla u(0)\rvert$.\par
    Moreover, any (subsequential) weak limit $\textbf{v}_r=\lim_{k\to \infty}\textbf{v}_{k,r}$ has generalized mean curvature bounded by $H$ in $\mathcal{M}\smallsetminus u(\partial B_r)$ (by Lemma \ref{lem: convergence_of_varifolds}) and satisfies $\lvert \textbf{v}_r\rvert(B_s(p))\leq C_{\mathcal{M}, \lvert \mathbf{v}\rvert(\mathcal{M})}s^2$ for all $p\in \mathcal{M}$. Indeed, $\lvert \mathbf v\rvert$ obeys \eqref{eq: control-density}, and since $\lvert \mathbf v_{k,r}\rvert\le \lvert \mathbf{v}\rvert$ the same bound holds uniformly for $\lvert \mathbf v_{k,r}\rvert(B_s(p))$; passing to the limit in $k$ yields the estimate for $\mathbf v_r$.\\
    Let $r_i\to 0$ be a sequence of radii as above. We claim that there exist a sequence $\{k_i\}_{i\in \mathbb{N}}$ going to infinity such that the varifolds $\textbf{v}'_i:={(r_i^{-1}(\cdot-p))}_\ast\textbf{v}_{k_i, r_i}$ (where $p=u(x)$) satisfy the following properties
    \begin{enumerate}
        \item $\{\textbf{v}_i'\}_{i\in\mathbb{N}}$ is a tight sequence converging to a varifold $\textbf{v}'_\infty$;
        \item $\textbf{v}_\infty'$ satisfies
        \begin{align*}
        \lvert \textbf{v}_\infty'\rvert(B_s(q))\leq 2C_{\mathcal{M}, \lvert \textbf{v}\rvert(\mathcal{M})}s^2\text{ for all }q\in \mathbb{R}^Q\text{ and all }s>0;
    \end{align*}
      \item \begin{align*}
        r_i^{-2}\sigma_{k_i}^4\int_{B_{r_i}}\lvert\sff^{u_{k_i}}\rvert^4\vol_{u_{k_i}}\to 0,\quad r_i^{-1}\sigma_{k_i}\to 0;
    \end{align*}
    \item $\textbf{v}_\infty'$ has compact support and is stationary in $\mathbb{R}^Q\smallsetminus\mathcal{C}$, with
        \begin{align*}
        \mathcal{C}=\left\{du(0)[y]\vert y\in \partial B_1(0)\right\};
    \end{align*}
    \item \begin{align}\label{eq: condition-convergence-nuk-nuinfty}
        \lvert\textbf{v}_{\infty}'\rvert(\mathbb{R}^Q)=\lim_{i\to\infty}\lvert\textbf{v}_i'\rvert(\mathbb{R}^Q)=\lim_{i\to\infty}\frac{\nu_{k_i}(B_{r_i})}{r_i^2}=\lim_{i\to\infty}\frac{\nu(B_{r_i})}{r_i^2}.
    \end{align}
    \end{enumerate}    
    We now prove the claim. Fix $i\in \mathbb{N}$. We first show that there exist $\overline k_i$ such that if $k_i\geq \overline k_i$ and $\nabla u(x)\neq0$, then
    \begin{align}\label{eq: good-choice-k}
        \nu_{k_i}\left(\left\{z\in B_{r_i}(x)\vert\lvert u_{k_i}(z)-u(z)\rvert>2\lvert\nabla u(x)\rvert r_i\right\}\right)\leq r_i^3,
    \end{align}
    while if $\nabla u(x)=0$, then
    \begin{align*}
        \nu_{k_i}\left(\left\{z\in B_{r_i}(x)\vert\lvert u_{k_i}(z)-u(z)\rvert>2 r_i\right\}\right)\leq r_i^3.
    \end{align*}
    In fact, if this wasn't true, if $\nabla u(x)\neq 0$ there would exist a sequence $k_n\to\infty$ such that for any $k_n$,
    \begin{align*}
        \int_{B_{r_i}(x)}\lvert u-u_{k_n}\rvert d\nu_{k_n}\geq &2\lvert\nabla u(x)\rvert r_i\nu_{k_n}\left(\{z\in B_{r_i}(x)\vert\lvert u_{k_n}(z)-u(z)\rvert>2\lvert\nabla u(x)\rvert r_i\}\right)\\\geq& 2\lvert\nabla u(x)\rvert r_i^4,
    \end{align*}
    but taking $\limsup$ for $n\to\infty$, one would reach a contradiction to \eqref{eq: convergence-dist-L1}. The proof for the case $\nabla u(x)=0$ is similar.
    
    By Lemma \ref{lem: image-u-close-to-ball} below and the weak-$\ast$ convergence $\nu_k\rightharpoonup \nu$, for a.e. $x$ as above, choosing $k_i$ larger if necessary we can ensure that for any $k_i\geq \overline k_i$,
    \begin{align*}
        \nu_{k_i}(\{z\in B_{r_i}(x)\vert \lvert u(z)-p\rvert> 3\lvert \nabla u(x)\rvert r_i\})\leq 2\varepsilon_ir_i^2
    \end{align*}
    (or, if $\nabla u(x)=0$, the inequality holds with $1$ instead of $\lvert \nabla u(x)\rvert$),
    where the $\varepsilon_i$ are the same as in Lemma \ref{lem: image-u-close-to-ball}, and satisfy $\varepsilon_i\to 0$.
    Now note that
    \begin{align*}
        &\{z\in B_{r_i}(x)\vert \lvert u_{k_i}(z)-p\rvert>5\lvert\nabla u(x)\rvert r_i\}\\\subset&\{z\in B_{r_i}(x)\vert \lvert u_{k_i}(z)-u(z)\rvert>2\lvert\nabla u(x)\rvert r_i\text{ and } \lvert u(z)-p\rvert\leq 3\lvert\nabla u(x)\rvert r_i\}\\
        &\cup\{z\in B_{r_i}(x)\vert  \lvert u(z)-p\rvert>3\lvert\nabla u(x)\rvert r_i\}.
    \end{align*}
    Therefore
    \begin{align*}
        &\nu_{k_i}(\{z\in B_{r_i}(x)\vert \lvert u_{k_i}(z)-p\rvert> 5\lvert\nabla u(x)\rvert r_i\})\\\leq &\nu_{k_i}(\{z\in B_{r_i}(x)\vert\lvert u_{k_i}(z)-u(z)\rvert>2\lvert\nabla u(x)\rvert r_i\})
        +\nu_{k_i}(\{z\in B_{r_i}(x)\vert\lvert u(z)-p\rvert>3\lvert\nabla u(x)\rvert r_i\})\\
        \leq&r_i^2(r_i+2\varepsilon_i).
    \end{align*}
    Thus, for any $ k_i\geq \overline k_i$, we have
    \begin{align}\label{eq: mass-vanishing-infty}
        &(r_i^{-1}(\cdot-p))_\ast\textbf{v}_{ k_i, r_i}(\mathbb{R}^Q\smallsetminus B_{5\lvert\nabla u(x)\rvert}(0))\\
        \nonumber=&r_i^{-2}\nu_{ k_i}(\{z\in B_{r_i}(x)\vert \lvert u_{ k_i}(z)-p\rvert> 5\lvert\nabla u(x)\rvert r_i\})\\
        \nonumber\leq&r_i+2\varepsilon_i,
    \end{align}
    and--- if $\overline k_i$ is large enough---
    \begin{align*}
        (r_i^{-1}(\cdot-p))_\ast\textbf{v}_{ k_i, r_i}( B_{5\lvert\nabla u(x)\rvert}(0))&\leq r_i^{-2} \mu_{ k_i}(B_{5\lvert\nabla u(x)\rvert r_i}(p))\\ &\leq 2\frac{\lvert \textbf{v}\rvert(B_{6\lvert\nabla u(x)\rvert r_i}(p))}{r_i^2}\leq 72 C\lvert \nabla u(x)\rvert^2.
    \end{align*}
    If $\nabla u(x)=0$, the same argument applies for $1$ instead of $\lvert\nabla u(x)\rvert$.
    Hence, for any such choice of $ k_i$, we have that the sequence $\textbf{v}'_{i}:=(r_i^{-1}(\cdot-p))_\ast\textbf{v}_{ k_i, r_i}$ is tight and converges up to subsequences to some varifold $\textbf{v}'_\infty$ on $\mathbb{R}^Q$. This proves $(1)$.
    
    Next, let $\{q_j\}_{j\in \mathbb{N}}$ and $\{s_j\}_{j\in \mathbb{N}}$ be enumerations of $\mathbb{Q}^Q$ and $\mathbb{Q}_{>0}$ respectively.
    Then for any $i\in \mathbb{N}$, we can choose $\overline k_i$ sufficiently large so that for any $k_i\geq \overline k_i$, for any $j\leq i$
    \begin{align*}
        (r_i^{-1}(\cdot-p))_\ast \textbf{v}_{k_i, r_i}(B_{s_j}(q_j))&=r_i^{-2}\textbf{v}_{k_i, r_i}(B_{r_i s_j}(p+r_iq_j))\\&\leq 2r_i^{-2}\textbf{v}(B_{r_i s_j}(p+r_iq_j))\leq 2Cs_j^2.
    \end{align*}
    Therefore, choosing $\overline k_i$ sufficiently large, we can ensure that $(2)$ holds for all rational $q,s$, hence for all $q,s$.
    
    It is clear that condition $(3)$ can be satisfied by choosing $\overline k_i$ to be large enough.
    
    Next, \eqref{eq: mass-vanishing-infty} (or the analogous statement for $\nabla u(x)=0$) implies that $\textbf{v}_\infty'$ has compact support. Moreover, since the traces $u_k\vert_{\partial B_{r_i}}$ converge to $u\vert_{\partial B_{r_i}}$, choosing again $\overline k_i$ large enough we may assume that for $k_i\geq \overline k_i$, the curves $r_i^{-1}(u_{ k_i}(\partial B_{r_i})-p)$ converge in the Hausdorff distance to
    \begin{align*}
        \lim_{i\to\infty}(r_i^{-1}(u(\partial B_{r_i})-p))=\{du(x)[y]\vert y\in\partial B_1(0)\}
    \end{align*}
    (here the equality follows from \eqref{eq: condition-radii-1}). Hence, one can repeat the argument in the proof of Lemma \ref{lem: convergence_of_varifolds}\footnote{Instead of working with the second fundamental form $\sff_\mathcal{M}$ with respect to $\mathcal{M}$, here one works with the second fundamental form $\sff_{\mathbb{R}^Q}$ with respect to $\mathbb{R}^Q$, and uses the fact that $\sff_{\mathbb{R}^Q}^u=\sff_{\mathcal{M}}^u+\sff_{\mathcal{M}\subset\mathbb{R}^Q}$.} for the functions $r_i^{-1}(u_{{k}_i}(r_i\cdot)-p): B_1(0)\to\mathbb{R}^Q$ (for any choice $ k_i\geq \overline k_i$) and variations supported away from $\mathcal{C}$. Thanks to the rescaling properties of the functional $E_{H,\sigma}$ (Lemma \ref{lem: critical-rescaled-functional}), one sees that choosing $k_i$ larger if necessary (such that $r_i^{-\frac{5}{2}}\theta_{ k_i}\to 0$), it is possible to ensure that the limit varifold $\textbf{v}_\infty'$ is stationary away from $\mathcal{C}$. This completes the proof of $(4)$.
    
    Finally, the first equality in $(5)$ follows from the fact that the sequence $\{\textbf{v}_i'\}_{i\in \mathbb{N}}$ is tight, the second one follows from the definition of $\textbf{v}_{i}'$, while the last one can be achieved by choosing $\overline k_i$ so large that for any $k_i\geq \overline k_i$ there holds
    \begin{align*}
        \lvert\nu_{k}(B_{r_i})-\nu(B_{r_i})\rvert\leq o(r_i^2).
    \end{align*}
    This concludes the proof of the claim.
    
    Property (4) and the convex hull property of stationary varifolds (see \cite{SimonGMT}, Theorem 19.2) imply that $\textbf{v}'_\infty$ is supported in $\operatorname{co}(\mathcal{C})$ (the convex hull of $\mathcal{C}$).
    If the rank of $\nabla u(x)$ is less than $2$, then $\operatorname{co}(\mathcal{C})$ is a point or a segment, hence it can be covered by $O(s^{-1})$ balls of radius $s$. Therefore by property (2) we have that $\lvert \textbf{v}'_\infty\rvert(\mathcal{C})=0$, so the Lemma holds in this case. If the rank of $\nabla u(x)$ is 2, then by the constancy Theorem (Theorem 41.1 in \cite{SimonGMT}), we have that $\textbf{v}_\infty'=N\mathcal{H}^2\vert_{\operatorname{co}(\mathcal{C})}$ for some constant $N$. As $\mathcal{H}^2(\operatorname{co}(\mathcal{C}))=\pi\lvert \partial_1 u\wedge\partial_2 u\rvert(0)$,  $N$ is bounded by $2C_{\mathcal{M},\lvert \textbf{v}\rvert(\mathcal{M})}$ because of property (2) (applied to a ball $B_s(0)$ contained in $\operatorname{co}(\mathcal{C})$), and $N\geq 1$ by the convergence of $r_i^{-1}(u_{ k_i}(r_i\cdot)-p)$ to $\nabla u(x)$ on $\partial B_1(0)$ and property (5). Therefore it will be enough to show that $N\in \mathbb{N}$.
    To this end, we may assume that $\operatorname{co}(\mathcal{C})$ is contained in $\mathbb{R}^2\times\{0\}$. Let $\alpha>0$ be such that $\mathcal{C}$ encloses a ball $B_{2\alpha}$ in $\mathbb{R}^2\times\{0\}$. One can show that for any $\overline{X}\in C_c^\infty(B_\alpha, \mathbb{R}^2)$,
\begin{align*}
    \left\lvert\int_{B_{r_i}}\div (\overline{X})J_i\,d\operatorname{vol}_{\tilde u_i}\right\rvert\leq \delta_i\lVert d\overline X\rVert_{L^\infty},
\end{align*}
where $\tilde u_i=r_i^{-1}(u_{k_i}-p)$, $J_i$ is the Jacobian of the map $\pi\circ \tilde u_i$, $\pi$ is the orthogonal projection from $\mathbb{R}^Q$ to $\mathbb{R}^2\times\{0\}$ and $\delta_i\to 0$.
    In fact, one can follow the argument in the proof of Theorem 5.3 in \cite{Pigati-FB}, which is based on the control of the first variation of the the rescaled functionals $E_{r_iH, r_i^{-1}\sigma_{k_i}}$ and on approximating vector fields $X\in C_c(B_\alpha\times \mathbb{R}^{Q-2})$ by convolution with smooth functions supported in $B_{\tau_i}$, with $\tau_i=r_i^{-1}\sigma_{k_i}$. This allows us to obtain estimates in terms of $dX$ (instead of the more immediate estimates in terms of $(\nabla^{T\mathcal{M}_{p, r_i}})^2 X$). The only difference in our setting is the appearance of the additional term involving $H$ in \eqref{eq: control-first-variation}. This term can be estimated as follows\footnote{Given a vector field $X\in C_c(B_\alpha\times\mathbb{R}^{Q-2})$, one defines $Y_i$ as the projection of $X\ast \rho_{\tau_i}$ on $r_i^{-1}(\mathcal{M}-p)$; one considers $X=\chi \overline{X}$, with $\chi$ a cut-off equal to $1$ near $\overline{B_\alpha^{\mathbb{R}^Q}}$ See the proof of Theorem 5.3 in \cite{Pigati-FB} for details.}:
    \begin{align*}
        \left\lvert r_iH\int_{B_{r_i}}(d\operatorname{vol}_g)_{\tilde u_i}(Y_i(\tilde u_i),(\tilde u_i)_x,(\tilde u_i)_y)dx\wedge dy \right\rvert\leq r_i H\lVert Y_i\rVert_{L^\infty}\frac{\nu_{k_i}(B_{r_i})}{r_i^2},
    \end{align*}
which can be controlled by $\delta_i\lVert d X\rVert_{L^\infty}$ for a sequence $\delta_i\to 0$.

We can then conclude the proof as in \cite{Pigati-FB}: for any $i\in \mathbb{N}$ set $\textbf{v}_i'':=\pi_\ast\textbf{v}_i'$. By the area formula, for any $i$ there exists an integer valued multiplicity function $N_i$ such that
\begin{align*}
    \left\lvert \int_{B_\alpha}N_i\operatorname{div}(\overline{X})d\mathcal{L}^2\right\rvert=\left\lvert\int_{B_{r_i}}\operatorname{div}(\overline{X})J_i\,d\operatorname{vol}_{\tilde u_i}\right\rvert\leq \delta_i\lVert d\overline{X}\rVert_{L^{\infty}}.
\end{align*}
By Allard's constancy Lemma ((4) in \cite{allard-integrability}; see Lemma A.7 in \cite{PR-Regularity} for a version of the result that applies directly to this setting), there is a constant $\overline N_i$ such that $\lVert N_i-\overline N_i\rVert_{L^{1,\infty}}$ goes to zero as $i\to \infty$. As $N_i$ is integer valued, it follows that $\operatorname{dist}(\overline N_i, \mathbb{N})\to 0$. Since $\textbf{v}_i''$ converges weakly to $\textbf{v}_\infty'$, we conclude that
\begin{align}\label{eq: definition-N-limit}
\pi (\alpha/2)^2N=\lim_{i\to\infty}\int_{B_{\alpha/2}}N_i d\mathcal{L}^2=\lim_{i\to\infty}\pi(\alpha/2)^2\overline{N_i},
\end{align}
which implies that $N\in \mathbb{N}$.

\end{proof}

\begin{lemma}\label{lem: image-u-close-to-ball}
    For almost any $x\in \Sigma$, for a sequence of radii $r_i\to 0$ as in \eqref{eq: condition-radii-1}, there holds
    \begin{align*}
        \nu(\{z\in B_{r_i}(x)\vert \lvert u(z)-p\rvert>2\lvert\nabla u(x)\rvert r_i\})\leq \varepsilon_ir_i^2,
    \end{align*}
    with $p=u(x)$ and $\varepsilon_i\to 0$, provided $\nabla u(x)\neq0$. If $\nabla u(x)=0$, we have
    \begin{align*}
        \nu(\{z\in B_{r_i}(x)\vert \lvert u(z)-p\rvert>2 r_i\})\leq \varepsilon_ir_i^2.
    \end{align*}
\end{lemma}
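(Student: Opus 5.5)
We would prove the lemma by combining the Lebesgue differentiation theorem for $u$ and $\lvert du\rvert^2$ with the bound \eqref{eq: control-nu-by-du} relating $\nu$ to $\lvert du\rvert^2$ off the atoms. Fix $x\in\tilde\Sigma$, not an atom of $\nu$, which is a Lebesgue point of $du$ (in the $L^2$ sense, so that $\fint_{B_r(x)}\lvert du-du(x)\rvert^2\to0$), a Lebesgue point of $m$ (the density of the absolutely continuous part of $\nu$), and a point where $u$ is $L^1$-differentiable, i.e. $r^{-2}\fint_{B_r(x)}\lvert u(z)-p-du(x)[z-x]\rvert\,dz\to0$ (this holds a.e. for $W^{1,2}$ maps, e.g. by Calderón–Zygmund / the approximate differentiability theory, or via Poincaré applied to $u-du(x)[\cdot]$ on $B_r$). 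All these conditions hold for a.e. $x$. Since $\nu$ has no atom at $x$ and is absolutely continuous on $\tilde\Sigma$, near $x$ we have $\nu=m\operatorname{vol}_{g_0}$.

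Set $\Lambda:=\lvert\nabla u(x)\rvert$ if nonzero and $\Lambda:=1/2$ otherwise, and let $F_r:=\{z\in B_r(x):\lvert u(z)-p\rvert>2\Lambda r\}$. For $z\in F_r$ we have $\lvert du(x)[z-x]\rvert\le\lvert\nabla u(x)\rvert r\le \Lambda r$ (using an operator norm bound by the Hilbert–Schmidt norm), hence $\lvert u(z)-p-du(x)[z-x]\rvert>\Lambda r$. By Chebyshev and the $L^1$-differentiability,
\begin{align*}
\frac{\mathcal{L}^2(F_r)}{r^2}\le \frac{1}{\Lambda r^3}\int_{B_r(x)}\lvert u(z)-p-du(x)[z-x]\rvert\,dz=o(1).
\end{align*}
This is the step where the radii $r_i$ chosen as in \eqref{eq: condition-radii-1} are compatible: the conclusion holds for all small $r$, hence in particular along $r_i$.

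It remains to pass from Lebesgue measure to $\nu$. Since $\nu=m\,d\mathcal{L}^2$ near $x$ (in the conformal chart, up to the bounded conformal factor of $g_0$) and $x$ is a Lebesgue point of $m$, we get $\nu(F_r)\le m(x)\mathcal{L}^2(F_r)+\int_{B_r(x)}\lvert m-m(x)\rvert=o(r^2)$. Alternatively, one can use \eqref{eq: control-nu-by-du} locally: absolute continuity plus the inequality on compact sets gives $m\le C\lvert du\rvert^2$ a.e., so $\nu(F_r)\le C\int_{F_r}\lvert du\rvert^2\le 2C\lvert du(x)\rvert^2\mathcal{L}^2(F_r)+2C\int_{B_r}\lvert du-du(x)\rvert^2=o(r^2)$. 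Setting $\varepsilon_i:=\nu(F_{r_i})/r_i^2\to0$ finishes the proof. The only delicate point is the a.e. $L^1$-differentiability of $u$ at scale $o(r)$ in average, which we would take from the standard fact that Sobolev functions are $L^1$-differentiable a.e. (Ziemer, Theorem 3.4.2); everything else is Chebyshev and Lebesgue differentiation.
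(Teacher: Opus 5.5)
Your proof is correct and is essentially the paper's argument. You use Chebyshev together with the a.e.\ $L^1$-differentiability of $u$ (the paper cites Evans--Gariepy Theorem 6.2 plus Jensen) to get $\mathcal{L}^2(F_r)=o(r^2)$, and then pass to $\nu$ via \eqref{eq: control-nu-by-du} and the Lebesgue-point property of $du$ and $\lvert du\rvert^2$, which is exactly your second option; your first option, using a Lebesgue point of the density $m$, is an equally valid shortcut that avoids \eqref{eq: control-nu-by-du}. One small slip: the differentiability condition should read $r^{-1}\fint_{B_r(x)}\lvert u(z)-p-du(x)[z-x]\rvert\,dz\to0$ (equivalently $r^{-3}\int_{B_r(x)}\lvert u(z)-p-du(x)[z-x]\rvert\,dz\to0$), not $r^{-2}\fint$, and this correct normalization is the one you actually use in the Chebyshev step.
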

\begin{proof}
    By Theorem 6.2 in \cite{Evans-Gariepy} and Jensen's inequality, for almost any $x\in \Sigma$, there holds
    \begin{align*}
        \delta_i:=r_i^{-2}\int_{B_{r_i}(x)}\left\lvert\frac{u(z)-p}{r_i}-\nabla u(x)\cdot \frac{z-x}{r_i}\right\rvert\to 0.
    \end{align*}
    Therefore
    \begin{align*}
        \lvert\{z\in B_{r_i}(x)\vert\lvert u(z)-p-\nabla u(x)\cdot (z-x)\rvert>\varepsilon r_i\}\rvert\leq \frac{\delta_ir_i^2}{\varepsilon}.
    \end{align*}
    Note that
    \begin{align*}
        A_{x,r_i}:= &\left\{z\in B_{r_i}(x)\vert \lvert u(z)-p\rvert\geq 2\lvert\nabla u(x)\rvert r_i\right\}\\ &\subset \left\{z\in B_{r_i}(x)\vert\lvert u(z)-p-\nabla u(x)\cdot (z-x)\rvert>\lvert\nabla u(x)\rvert r_i\right\}.
    \end{align*}
    Then
    \begin{align}\label{eq: estimate-measure-A}
       \lvert\nabla u(x)\rvert \lvert A_{x, r_i}\rvert\leq \delta_i r_i^2.
    \end{align}
    By \eqref{eq: control-nu-by-du}, it will be enough to show that $r_i^{-2}\int_{{A_{x, r_i}}}\lvert \nabla u\rvert^2\to 0$.
    In fact, if $x$ is a Lebesgue point of $\nabla u$ and $\lvert \nabla u\rvert^2$, we have
    \begin{align*}
        r_i^{-2}\int_{{A_{x, r_i}}}\lvert \nabla u\rvert^2\leq 2r_i^{-2}\int_{{B_{r_i}(x)}}\lvert \lvert \nabla u\rvert-\lvert \nabla u(x)\rvert\rvert^2+2r_i^{-2}\lvert {A_{x, r_i}}\rvert\lvert \nabla u(x)\rvert^2.
    \end{align*}
    The first term on the right hand side tends to zero as $x$ is a Lebesgue point of $du$ and $\lvert du\rvert^2$, the second terms tends to zero by \eqref{eq: estimate-measure-A}.
\end{proof}

\begin{rmk}\label{rmk: rectifiable-varifold}
For any open set $\omega\subset\tilde \Sigma$, let $\mathcal{G}_\omega$ be the subset of Lebesgue points for $du$ where the differential has rank 2. Equip the image $u(\mathcal{G}_\omega)$ with the multiplicity
\begin{align*}
    \theta_\omega(p)=\sum_{x\in \mathcal{G}_\omega\cap u^{-1}(p)}N(x).
\end{align*}
Note that the set $u(\mathcal{G}_\omega)$ 
is rectifiable by Lemma A.2 in \cite{PR-Regularity}. Then the set $u(\mathcal{G}_\omega)$, with the multiplicity function $\theta_\omega$, form an integral varifold in $\mathcal{M}$, which we denote $\textbf{v}_\omega$. Note that by Lemma \ref{lem: N-jacobian} and the coarea formula (see Theorem 11 in \cite{Hajlasz-coarea}), $\lvert \textbf{v}_\omega\rvert=(u\vert_\omega)_\ast\nu_\infty$.
\end{rmk}
The same adaptations as above allow us to prove that the varifold $\textbf{v}_\omega$ is the varifold limit of the varifolds induced by $u_k\vert_\omega$:

\begin{lemma}[Cf. {\cite[Theorem 5.7]{Pigati-FB}}]\label{lem: Pigati-5.7}
    Given an open subset $\omega\Subset\tilde\Sigma$ with $\nu(\partial \omega)=0$, the varifolds induced by $u_k\vert_\omega$ converge to $\textbf{v}_\omega$.
\end{lemma}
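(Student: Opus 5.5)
We follow the proof of \cite[Theorem 5.7]{Pigati-FB}. By the area bound (3), the varifolds $\textbf{v}^k_\omega$ have bounded mass, so every subsequence has a further subsequence converging to some varifold $\textbf{w}$. The statement then reduces to showing $\textbf{w}=\textbf{v}_\omega$ independently of the subsequence. We compare the weight measures first and the tangent-plane components second.

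\emph{Weights.} Since $\omega\Subset\tilde\Sigma$ and $\nu(\partial\omega)=0$, Lemma \ref{lem: limit of measures} gives $(u_k\vert_\omega)_\ast\nu_k\rightharpoonup(u\vert_\omega)_\ast\nu$. By Remark \ref{rmk: rectifiable-varifold} this limit equals $\lvert\textbf{v}_\omega\rvert$, so $\lvert\textbf{w}\rvert=\lvert\textbf{v}_\omega\rvert$.

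\emph{Tangent planes.} It remains to show that, for $\lvert\textbf{w}\rvert$-a.e.\ point, the $\textbf{w}$-disintegration is concentrated on the approximate tangent plane of $u(\mathcal{G}_\omega)$. We test against $\varphi(p,L)=\psi(p)\,\lvert P_L-P(p)\rvert^2$, and in practice against a localized version on the domain. For a.e.\ $x\in\mathcal{G}_\omega$ with $\nu(\{x\})=0$, we use the blow-up analysis from the proof of Lemma \ref{lem: N-jacobian}. Along radii $r_i$ and indices $k_i$, the rescaled varifolds $\textbf{v}'_i$ converge to $N(x)\mathcal{H}^2\vert_{\operatorname{co}(\mathcal{C})}$. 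This convergence holds as varifolds, not only as measures, and with matching total mass (property (5)). Because the limit has constant tangent plane $du(x)[\mathbb{R}^2]$, we get
\begin{align*}
r_i^{-2}\int_{B_{r_i}(x)}\lvert P_{du_{k_i}}-P_{du(x)}\rvert^2\,d\nu_{k_i}\to 0.
\end{align*}
To upgrade this from the diagonal sequence $k_i$ to all large $k$, we argue by contradiction. If the energy $\int_\omega\lvert P_{du_k}-P_{du(\cdot)}\rvert^2 d\nu_k$ (with $P_{du(\cdot)}$ replaced by a continuous approximation) stayed bounded below, a Vitali/Besicovitch covering of $\omega$ by such small balls, where the blow-up argument applies uniformly on a set of large $\nu$-measure, would give a contradiction. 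The remaining set is controlled by the absolute continuity of $\nu$ and by Lemma \ref{lem: N-jacobian}, which gives $m=0$ where $\operatorname{rank}du<2$. Combined with \eqref{eq: convergence-dist-L1}, which allows us to replace $u_k(z)$ by $u(z)$ in the first slot of the test functions, this yields $\textbf{v}^k_\omega(\varphi)\to\int_\omega\varphi(u,du[T\Sigma])\,N\,J(du)\,dx=\textbf{v}_\omega(\varphi)$.

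\emph{Main obstacle.} The hard step is the tangent-plane convergence: obtaining it uniformly in $k$ rather than along a diagonal sequence. Here we would follow Pigati's argument, applying Lemma \ref{lem: N-jacobian}'s blow-up to an arbitrary subsequence; since $\textbf{w}$ is fixed, this suffices. The key identity is $\textbf{w}=\lim\textbf{v}^{k}_{\omega}$ localized to balls $B_r(x)$ with $\nu(\partial B_r)=0$, together with the fact that the local limit's tangent measure is $N\mathcal{H}^2\vert_{\operatorname{co}(\mathcal{C})}$. Alternatively, since $\textbf{w}$ has first variation bounded away from $u(\partial\omega)$ by Lemma \ref{lem: convergence_of_varifolds} and integer density by Lemma \ref{lem: N-jacobian}, we could try Allard's rectifiability theorem: $\textbf{w}$ is rectifiable with weight $\lvert\textbf{v}_\omega\rvert$, and rectifiable varifolds are determined by their weight. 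The catch is that this requires a density lower bound on the support, which Proposition \ref{prop: almost monotonicity for large radius} provides away from $u(\partial\omega)$; the boundary image has zero weight since $\nu(\partial\omega)=0$. We would try this second route first, as it avoids the diagonal issue.
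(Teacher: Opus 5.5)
Your weight step ($\lvert\textbf{w}\rvert=\lvert\textbf{v}_\omega\rvert$ via Lemma \ref{lem: limit of measures} and Remark \ref{rmk: rectifiable-varifold}) is correct. The tangent-plane step, however, is not closed by either of your routes as written; the paper gives no argument of its own here beyond deferring to \cite[Theorem 5.7]{Pigati-FB}, with the $H$-term handled as in Lemma \ref{lem: N-jacobian}.

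In route (B), which you prefer, the boundary step fails in two ways. First, Lemma \ref{lem: convergence_of_varifolds} bounds $\delta\textbf{w}$ only off the Hausdorff limit $\Gamma$ of $u_k(\partial\omega)$. This set equals $u(\partial\omega)$ only when $u_k\vert_{\partial\omega}\to u\vert_{\partial\omega}$ uniformly, and the paper has that only for a.e.\ $\omega$ (e.g.\ a.e.\ radius, along subsequences, via Lemmas A.4--A.5 of \cite{PR-Regularity}), not for every $\omega$ with $\nu(\partial\omega)=0$. Second, ``the boundary image has zero weight since $\nu(\partial\omega)=0$'' is a non sequitur: $\lvert\textbf{w}\rvert(E)=\nu(\omega\cap u^{-1}(E))$ counts interior points of $\omega$ that are mapped into $E$. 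What Allard's rectifiability theorem in $\mathcal{M}\smallsetminus\Gamma$ really needs is $\mathcal{H}^2(\Gamma)=0$, together with $\lvert\textbf{v}_\omega\rvert\ll\mathcal{H}^2$ (by the area formula, since $\nu\vert_\omega=NJ(du)\operatorname{vol}_{g_0}$). Both hold for good balls $B\Subset\omega$, meaning $\nu(\partial B)=0$, $u\vert_{\partial B}\in W^{1,2}$, and traces converging uniformly along a subsequence, because then $\Gamma=u(\partial B)$ is a curve of finite length. Positive density a.e.\ is then automatic from $\lvert\textbf{w}\rvert=\theta\,\mathcal{H}^2\vert_{u(\mathcal{G}_B)}$ with $\theta\geq 1$, so Proposition \ref{prop: almost monotonicity for large radius} is not needed.

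The general case follows by exhaustion, and this is where $\nu(\partial\omega)=0$ actually enters. Take finitely many disjoint good balls $B_j\Subset\omega$ with $\nu(\omega\smallsetminus\bigcup_jB_j)<\varepsilon$. Since $\overline\omega\smallsetminus\bigcup_jB_j$ is compact, $\limsup_k\nu_k(\omega\smallsetminus\bigcup_jB_j)\leq\nu(\overline\omega\smallsetminus\bigcup_jB_j)<\varepsilon$. Additivity of $\omega\mapsto\textbf{v}^k_\omega$ then gives $\lvert\textbf{w}(\varphi)-\textbf{v}_\omega(\varphi)\rvert\leq 2\varepsilon\lVert\varphi\rVert_\infty$.

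The same exhaustion also rescues route (A), because its ``diagonal'' obstacle is not real. In the proof of Lemma \ref{lem: N-jacobian}, every choice $k_i\geq\bar k_i$ is admissible (along the subsequence on which the traces on $\partial B_{r_i}(x)$ converge), and the blow-up limit $N(x)\mathcal{H}^2\vert_{\operatorname{co}(\mathcal{C})}$ does not depend on the subsequence. Hence for fixed $x$ and large $i$, the tilt $r_i^{-2}\int_{B_{r_i}(x)}\lvert P_{du_k}-P_{du(x)}\rvert^2d\nu_k$ is small for \emph{all} large $k$. With finitely many such balls centred at Lebesgue points of $du$, you need no contradiction argument, no uniformity in $x$, and no continuous approximation of $P_{du}$. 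The uncovered part is handled by the same compactness bound as above.
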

Next we note that the proof of Lemma \ref{lem: convergence_of_varifolds} (see in particular \eqref{eq: convergence-first-variation-to-H-term}) applied to vector fields $X$ supported away from $u(\partial\omega)$ implies that
\begin{align*}
    \delta \textbf{v}_\omega (X)=\lim_{k\to\infty} H\int_{\omega}u_k^\ast\alpha_X,
\end{align*}
where $\alpha_X:=\ast X^\flat$.
The limit on the right hand side is computed in the following Lemma.
\begin{lemma}\label{prop: convergence-of-H-term}
    For any domain $\omega\Subset\tilde\Sigma$, if $X$ is supported away from $u(\partial \omega)$ we have
        \begin{align*}
        \lim_{k\to\infty}\int_\omega u_k^\ast \alpha_X=\int_\omega u^\ast \alpha_X.
    \end{align*}
\end{lemma}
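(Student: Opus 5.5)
The integrand $u_k^\ast\alpha_X$ is a Jacobian-type quantity: in a conformal chart, $u_k^\ast\alpha_X = \alpha_X(u_k)(\partial_x u_k,\partial_y u_k)\,dx\wedge dy$, which is quadratic in $\nabla u_k$. We only know $u_k\rightharpoonup u$ weakly in $W^{1,2}$, so the convergence cannot follow from weak convergence alone. The plan is to use the compensation structure of the pullback of a 2-form, in the spirit of Wente/Reshetnyak-type weak continuity of Jacobians, together with the uniform convergence information available away from atoms.

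\emph{Step 1: localize.} Cover $\overline\omega\subset\tilde\Sigma$ by finitely many small conformal discs and use a partition of unity $\{\chi_j\}$ on $\Sigma$. On each disc we want to show
\begin{align*}
\int \chi_j\,\alpha_X(u_k)(\partial_x u_k,\partial_y u_k) \to \int \chi_j\,\alpha_X(u)(\partial_x u,\partial_y u).
\end{align*}
The boundary of $\omega$ is where a cutoff would produce uncontrolled terms. However, $X$ vanishes on a neighbourhood $V$ of $u(\partial\omega)$. Since $u$ is continuous on $\tilde\Sigma$ (Lemma \ref{lem: limit of measures}), there is a neighbourhood $U$ of $\partial\omega$ with $u(U)$ compactly inside $V$. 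The integrand vanishes wherever $u_k\in V$, so we need $u_k(U')\subset V$ for large $k$ on a slightly smaller neighbourhood $U'$.

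This is the main obstacle, since $u_k\to u$ is not known to be uniform. I would handle it as follows. The integrand is bounded by $C|X\circ u_k|\,d\nu_k$, so it suffices that $\int_{U'\cap\omega}|X(u_k)|\,d\nu_k\to\int_{U'\cap\omega}|X(u)|\,d\nu$, which is $0$. This follows from \eqref{eq: convergence-dist-L1}, $\int_K|u_k-u|\,d\nu_k\to0$ on compacts $K\subset\tilde\Sigma$, combined with the Lipschitz bound on $X$ and with $\nu_k\rightharpoonup\nu$. Here we take $U'$ with $\nu(\partial U')=0$. This removes the boundary region up to $o(1)$. If needed, I would alternatively choose $\omega$ generic, so that the traces $u_k|_{\partial\omega}\to u|_{\partial\omega}$ in $C^0$ as in Lemma \ref{lem: concentration property}, but I expect the $L^1(\nu_k)$ argument to suffice.

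\emph{Step 2: interior convergence via exactness.} Fix the cutoff $\chi$ supported in a chart inside $\omega$. In $\mathbb{R}^Q$, extend $\alpha_X$ to a smooth compactly supported 2-form $\beta$ near $\mathcal{M}$. The pullback $u^\ast\beta$ depends only on $\beta$ restricted to $\mathcal{M}$, which is $\alpha_X$. On a ball, $\beta = d\gamma$ by Poincaré's lemma; since $\mathcal{M}$ lies in $\mathbb{R}^Q$, we can take $\gamma$ a global smooth 1-form on $\mathbb{R}^Q$ via the Euclidean homotopy formula. Then $u_k^\ast\beta = d(u_k^\ast\gamma)$, so
\begin{align*}
\int\chi\,u_k^\ast\alpha_X = -\int d\chi\wedge u_k^\ast\gamma = -\int d\chi\wedge \gamma_a(u_k)\,du_k^a.
\end{align*}
Here $\gamma_a(u_k)\to\gamma_a(u)$ strongly in $L^2$, by Rellich compactness from the $W^{1,2}$ bound, since $\gamma$ is Lipschitz. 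Also $du_k\rightharpoonup du$ weakly in $L^2$. The product of a strongly and a weakly convergent sequence converges, giving the limit $-\int d\chi\wedge u^\ast\gamma=\int\chi\,u^\ast\alpha_X$. The last equality is the same integration by parts for the $W^{1,2}$ map $u$, justified by density of smooth maps into $\mathbb{R}^Q$ (with $\beta$ defined on all of $\mathbb{R}^Q$, no projection to $\mathcal{M}$ is needed).

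\emph{Step 3: conclude.} Sum over the finitely many charts, and combine with the boundary estimate of Step 1 together with the corresponding estimate for $u$, which holds since $X(u)=0$ on $U$. This yields $\lim_k\int_\omega u_k^\ast\alpha_X=\int_\omega u^\ast\alpha_X$.

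Note that the only delicate point is the boundary layer in Step 1. Everything else is the standard weak continuity of null-Lagrangian pullbacks of exact forms.
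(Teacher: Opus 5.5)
There is a genuine gap in Step 2: the exactness claim is false.

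The form $\alpha_X=\ast X^\flat=\iota_X\,d\vol_g$ is not closed on $\mathcal{M}$, since $d\alpha_X=\mathcal{L}_X d\vol_g=(\operatorname{div}_g X)\,d\vol_g$. Any extension $\beta$ to $\mathbb{R}^Q$ pulls back to $\alpha_X$ on $\mathcal{M}$, so $d\beta\neq 0$ for a generic $X$. The Euclidean homotopy formula then gives only $\beta=d(K\beta)+K(d\beta)$, not $\beta=d\gamma$. Consequently $v\mapsto\int\chi\,v^\ast\beta$ is not a null Lagrangian, and weak $W^{1,2}$ convergence together with Rellich cannot suffice.

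A concrete check: let $\phi:\mathbb{C}\to S^2\subset\mathbb{R}^3$ be inverse stereographic projection, $u_k(z)=\phi(kz)$ on $B_1$, and $\beta=x^1\,dx^2\wedge dx^3$. Then $u_k$ converges weakly in $W^{1,2}$ to a constant, but $\int_{B_1}u_k^\ast\beta\to\pm\frac{4\pi}{3}\neq 0$. Your Step 2 uses nothing beyond weak convergence, so it would prove this false statement too. Atoms are excluded on $\tilde\Sigma$, so this is not a counterexample to the lemma. It does show that the information specific to this setting, namely \eqref{eq: convergence-dist-L1}, must enter the interior argument and not only the boundary layer.

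The fix, which is the paper's proof, is to use \eqref{eq: convergence-dist-L1} to freeze the coefficients. Choose the extension supported near $\operatorname{supp}X$ and write $\beta=\sum_{i<j}a_{ij}\,dx^i\wedge dx^j$. Since $\overline\omega$ is a compact subset of $\tilde\Sigma$,
\[
\Bigl|\int_\omega\bigl(a_{ij}(u_k)-a_{ij}(u)\bigr)\,du_k^i\wedge du_k^j\Bigr|\leq\lVert a_{ij}\rVert_{C^1}\int_\omega\lvert u_k-u\rvert\,d\nu_k\to 0 .
\]
The coefficient is now the fixed function $a_{ij}\circ u\in W^{1,2}\cap C^0$, which vanishes near $\partial\omega$ by continuity of $u$. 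Only the exact forms $dx^i\wedge dx^j=d(x^i\,dx^j)$ remain, so you can integrate by parts to get $-\int_\omega d(a_{ij}\circ u)\wedge u_k^i\,du_k^j$. Then pass to the limit: replace $u_k^i$ by $u^i$ using $\lvert u_k-u\rvert\leq 2\operatorname{diam}(\mathcal{M})$ with \eqref{eq: convergence-dist-L1} (or dominated convergence), and finally use the weak convergence of $du_k^j$.

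Because the coefficient replacement is done on all of $\omega$ at once, your boundary-layer Step 1 and the partition of unity become unnecessary, although Step 1 as you wrote it is correct.
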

\begin{proof}
    Let $\rho$ be a non-negative $C^\infty$ function on $\Sigma$, let $\omega$ be a super-level set of $\rho$ such that $\partial \omega$ is $C^1$.
    Extend $\alpha_X$ to a smooth $2$-form $\overline{\alpha}_X$ on $\mathbb{R}^Q$, and write
    \begin{align*}
        \overline{\alpha}_X=\sum_{i<j}a_{ij}dx_i\wedge dx_j,
    \end{align*}
    where the indices run from $1$ to $Q$, and  where $a_{ij}$ are smooth functions on $\mathbb{R}^Q$.
    We need to show that for any $i,j$
    \begin{align*}
        \lim_{k\to\infty} \int_\omega a_{ij}(u_k)d u^i_k\wedge d u_k^j=\int_\omega a_{ij}(u)du^i\wedge d u^j.
    \end{align*}
    By \eqref{eq: convergence-dist-L1}, we have
    \begin{align*}
        \left\lvert\int_\omega \left(\alpha_{ij}(u_k)-\alpha_{ij}(u)\right)d u_k^i\wedge du_k^j\right\rvert\leq \lVert \alpha_{ij}\rVert_{C^1(\mathcal{M})}\int_\omega\lvert u_k-u\rvert d\nu_k\to 0.
    \end{align*}
    As $X$ (and thus also $\alpha_X$) is supported away from $u(\partial \omega)$,  we have
    \begin{align*}
        \int_\omega \alpha_{ij}(u)du_k^i\wedge du_k^j=-\int_\omega d(\alpha_{ij}\circ u)\wedge u_k^id u_k^j
    \end{align*}
    (this can be proved by approximating $u$ and $u_k$ in $W^{1,2}(\omega)$ by smooth functions, and taking advantage of the fact that $u_k$, $u$ are uniformly bounded in $L^\infty$).
    Now by Cauchy-Schwarz and the fact that $\lVert u_k-u\rVert_{L^\infty}\leq 2\operatorname{diam}(\mathcal{M})$
    \begin{align*}
        &\left\lvert \int_\omega d(\alpha_{ij}\circ u)\wedge(u_k^i-u^i)d u_k^j\right\rvert\\&\leq 2\lVert \alpha_{ij}\rVert_{C^1(\mathcal{M})}\text{diam}^\frac{1}{2}(\mathcal{M})\left(\int_\omega\lvert \nabla u\rvert^2 dx\right)^\frac{1}{2}\left(\int_\omega\lvert u_k-u\rvert d\nu_k\right)^\frac{1}{2}.
    \end{align*}
    The right hand side converges to zero by \eqref{eq: convergence-dist-L1}. Finally,
    \begin{align*}
        \left\lvert \int_\omega d(\alpha_{ij}\circ u)\wedge u^id(u_k^j-u^j)\right\rvert\to0,
    \end{align*}
    by the weak convergence of $\{u_k\}_{k\in \mathbb{N}}$ in $W^{1,2}$.
    Combining the above estimates, we get
    \begin{align*}
        \lim_{k\to\infty}\int_\omega \alpha_{ij}(u_k)du_k^i\wedge du_k^j=-\int_\omega d(\alpha_{ij}\circ u)\wedge u^idu^j=\int_\omega \alpha_{ij}(u)du^i\wedge du^j,
    \end{align*}
    as desired.\\
\end{proof}
Finally, we show that the limit varifolds $\textbf{v}_\omega$, for $\omega\subset\tilde \Sigma$, can be patched together to obtain a parametrized CMC-varifold, whose parametrization is given--- up to reparametrization--- by $u$.

\begin{lemma}[Cf. {\cite[Theorem 5.12]{Pigati-FB}}]\label{lem: 5-12-P-FB}
    There exists a Riemann surface $\Sigma'$ and a quasi-conformal homeomorphism $\varphi: \Sigma'\rightarrow\Sigma$ such that
    $(\Sigma', u\circ\varphi, N\circ\varphi)$ is a parametrized $H$-CMC varifold and for a.e. open $\omega\subset\tilde\Sigma$ with $\nu(\partial \omega)=0$, the varifold $\textbf{v}_\omega$ coincides with the parametrized varifold $(\varphi^{-1}(\omega), u\circ\varphi, N\circ\varphi)$.
\end{lemma}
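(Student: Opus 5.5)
The plan is to follow the structure of \cite[Theorem 5.12]{Pigati-FB}. First we show that $u$ is weakly conformal on $\tilde\Sigma$, up to a measurable Beltrami coefficient, and then we straighten this structure by solving a Beltrami equation. We then transport the local first variation identity \eqref{eq: convergence-first-variation-to-H-term} through the resulting change of variables.

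\emph{Step 1: the Beltrami coefficient.} On the set $\mathcal{G}$ of Lebesgue points of $du$ where $\operatorname{rank}(du)=2$, the pull-back metric $u^\ast g$ is a measurable metric $h$ on $\tilde\Sigma$; we set $h=g_0$ on the complement. Let $\mu$ be the Beltrami coefficient of $h$ with respect to $g_0$. To apply the measurable Riemann mapping theorem we need $\lVert\mu\rVert_{L^\infty}<1$, which means uniformly bounded distortion of $du$ on $\mathcal{G}$. I would derive this from the blow-up in the proof of Lemma \ref{lem: N-jacobian}. There, $\textbf{v}'_\infty=N\mathcal{H}^2\vert_{\operatorname{co}(\mathcal{C})}$, so by property (5),
\begin{align*}
\lim_{i\to\infty}\frac{\nu(B_{r_i})}{r_i^2}=N\pi\lvert\partial_1u\wedge\partial_2u\rvert(x).
\end{align*}
On the other hand, $\nu_k$ is the conformal energy of $u_k$, and by weak lower semicontinuity of the Dirichlet energy in $W^{1,2}$ we get $m\geq \frac12\lvert du\rvert^2$ a.e. Hence
\begin{align*}
\tfrac12\lvert du\rvert^2\leq N J(du)\leq C_{\mathcal{M},\lvert\textbf{v}\rvert(\mathcal{M})}J(du),
\end{align*}
which is a bounded-distortion inequality. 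This gives $\lvert\mu\rvert\leq k<1$.

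\emph{Step 2: the quasi-conformal map.} By the measurable Riemann mapping theorem on each chart, patched together by uniqueness of the solution up to conformal maps, we obtain a Riemann surface $\Sigma'$ and a quasi-conformal homeomorphism $\varphi:\Sigma'\to\Sigma$ such that $\varphi^\ast h$ is conformal to the structure of $\Sigma'$ a.e. The surface $\Sigma'$ is $\Sigma$ with the new complex structure. Quasi-conformal maps preserve $W^{1,2}$ and null sets, and they carry a.e.\ super-level sets to a.e.\ super-level sets: take $\rho\circ\varphi$ and note that it can be approximated suitably, or argue directly with the Fubini-type definition. Therefore $u\circ\varphi\in W^{1,2}$ is weakly conformal, and $N\circ\varphi$ is measurable. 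At the finitely many atoms we extend using removability of points for $W^{1,2}$ conformal maps.

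\emph{Step 3: the CMC identity.} Fix an a.e.\ $\omega\subset\tilde\Sigma$ with $\nu(\partial\omega)=0$ and a field $X$ supported away from $u(\partial\omega)$. Combining Lemma \ref{lem: Pigati-5.7} (so that $\textbf{v}_{u_k,\omega}\to\textbf{v}_\omega$), \eqref{eq: convergence-first-variation-to-H-term} and Lemma \ref{prop: convergence-of-H-term}, we obtain
\begin{align*}
\delta\textbf{v}_\omega(X)=H\int_\omega u^\ast\alpha_X.
\end{align*}
By Remark \ref{rmk: rectifiable-varifold} and the area formula, the left-hand side equals $\int_{\mathcal{G}\cap\omega}N\operatorname{div}_{du(T\Sigma)}X\,J(du)$. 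The pull-back of the $2$-form $u^\ast\alpha_X$ is invariant under the orientation-preserving homeomorphism $\varphi$. Since $u\circ\varphi$ is conformal, $\operatorname{div}_{T}X\,J=\sum_i\langle\partial_i(u\circ\varphi),DX\,\partial_i(u\circ\varphi)\rangle$. This yields exactly the identity of Definition \ref{def: parametrized-CMC} on $\varphi^{-1}(\omega)$, and it also shows that $\textbf{v}_\omega$ is induced by $(\varphi^{-1}(\omega),u\circ\varphi,N\circ\varphi)$.

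The main obstacle is Step 1, namely the bounded distortion. We must check that lower semicontinuity together with the integrality/density bound on $N$ really controls $\lvert du\rvert^2/J(du)$ pointwise a.e. If this fails, I would instead argue as Pigati does, using the blow-up directly: the rescaled conformal maps $\tilde u_i$ converge on $\partial B_1$ to the linear map $du(x)$, and their energy converges to $N$ times the area of $\operatorname{co}(\mathcal{C})$.
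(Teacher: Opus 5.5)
Your route is the paper's. Step 1 is exactly the paper's claim \eqref{eq: N-controls-deformation}, Step 2 is the Beltrami/Imayoshi--Taniguchi construction the paper imports from Pigati, and Step 3 is the transport of the first-variation identity. The obstacle you flag in Step 1 is not one: for smooth $V\Subset\tilde\Sigma$ one has $\int_V\frac12\lvert du\rvert^2\le\liminf_k\nu_k(V)\le\nu(\overline V)=\int_{\overline V}NJ(du)$. Together with $N\le C_{\mathcal{M},\lvert\textbf{v}\rvert(\mathcal{M})}$, this gives the pointwise distortion bound directly, and it also gives $du=0$ a.e. where the rank is less than two.

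There is, however, a genuine gap at the atoms of $\nu$. Definition \ref{def: parametrized-CMC} on $\Sigma'$ requires the identity for a.e. $\omega\subset\Sigma'$, that is, for super-level sets of arbitrary smooth $\rho$. Such sets may contain points of the finite atom set $\mathcal{A}$. Your Step 3 only treats $\omega\subset\tilde\Sigma$, which with $\nu(\partial\omega)=0$ means $\omega\Subset\tilde\Sigma$; that is the only setting where Lemmas \ref{lem: Pigati-5.7} and \ref{prop: convergence-of-H-term} apply.

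Your remark about removability of points for $W^{1,2}$ conformal maps does not address this. The map $u$ is already in $W^{1,2}(\Sigma)$, and the Beltrami coefficient only matters a.e. What can fail at a bubble point is the first-variation identity itself, since energy at least $c_Q$ concentrates there along $u_k$ and is invisible to $u$.

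The paper closes this as follows.
\begin{enumerate}
\item For each $x\in\omega\cap\mathcal{A}$, excise a ball $B_r(x)\Subset\omega$ chosen so that $u_k|_{\partial B_r(x)}\to u|_{\partial B_r(x)}$ in $C^0$.
\item Apply your Step 3 to $\tilde\omega_r=\omega\setminus\bigcup_xB_r(x)$.
\item Let $r\to0$ along radii with $\operatorname{diam}u(\partial B_r(x))\to0$ (Lemma A.3 in \cite{PR-Regularity}), using $\int_{B_r(x)}\lvert du\rvert^2\to0$. This yields \eqref{eq: first-variation-cmc-local} only for $X$ supported away from $u(\partial\omega)\cup F$, where $F$ is a finite set of points.
\item To remove $F$, use the quadratic area bound $\lvert\textbf{v}_\omega\rvert(B_s(p))\le\lvert\textbf{v}\rvert(B_s(p))\le Cs^2$ from \eqref{eq: control-density}. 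Take cut-offs $\varphi^j_p$ vanishing on $B_{1/(2j)}(p)$, equal to $1$ off $B_{1/j}(p)$, with $\lvert d\varphi^j_p\rvert\le 3j$.
\item The divergence error is bounded by $\lVert X\rVert_\infty\int\lvert\nabla\varphi^j_p\rvert\,d\lvert\textbf{v}_\omega\rvert\le Cj^{-1}$. The $H$-term error is bounded by $H\lVert X\rVert_\infty\lvert\textbf{v}_\omega\rvert(B_{1/j}(p))$. Both tend to $0$.
\end{enumerate}
Only after this does the identity hold for all good $\omega\subset\Sigma$, so that your Step 3 transports it to every good $\omega\subset\Sigma'$.

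A smaller point concerns transporting ``a.e. $\omega$'', since $\rho'\circ\varphi^{-1}$ is not smooth on $\Sigma$. Verify the good-domain conditions directly on $\Sigma'$. There $u_k\circ\varphi$ is bounded in $W^{1,2}(\Sigma')$ by quasi-conformal invariance of the Dirichlet integral. The usual Fubini argument on level sets of a smooth $\rho'$ then gives $C^0$ convergence of traces for a.e. $\lambda$.
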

\begin{proof}
    For a.e. open $\omega \subset \Sigma$, $u_k\vert_{\partial\omega}\to u\vert_{\partial\omega}$ in $C^0$ up to subsequences, and $\partial\omega\cap\mathcal{A}=\emptyset$, where $\mathcal{A}$ is the finite set of atoms of $\nu$. Let $\omega$ be such a domain, and $r>0$ be such that for any $x\in \omega\cap\mathcal{A}$, the balls $B_r(x)$ (w.r.t. the reference metric $g_0$ on $\Sigma$)  satisfy $B_r(x)\Subset \omega$ and $u_k\vert_{\partial B_r(x)}\to u\vert_{\partial B_r(x)}$ in $C^0$ (up to subsequences). Let $\tilde\omega_r=\omega\smallsetminus\bigcup_{x\in \omega\cap\mathcal{A}}B_r(x)$. Repeating the argument of Lemma \ref{lem: convergence_of_varifolds} for vector fields $X$ on $\mathcal{M}$ supported away from $u(\partial\tilde\omega)$, and combining \eqref{eq: convergence-first-variation-to-H-term} with Lemma \ref{prop: convergence-of-H-term}, we obtain that
    \begin{align*}
    \delta \textbf{v}_{\tilde\omega_r} (X)= H\int_{\tilde\omega_r}u^\ast\alpha_X,
\end{align*}
where $\alpha_X:=\ast X^\flat$, for any vector field $X$ on $\mathcal{M}$ supported away from $u(\partial\tilde\omega_r)$. 

We claim that \eqref{eq: first-variation-cmc-local} remains true for $\omega$ in place of $\tilde \omega$. In fact, note that for $x\in \omega\cap\mathcal{A}$,
\begin{align*}
    \lim_{r\to 0}\operatorname{diam} u(\partial B_r(x))=0
\end{align*}
(see \cite{PR-Regularity}, Lemma A.3). Therefore there exist a finite subset $F$ of $\mathcal{M}$ such that for any $X$ supported away from the points in $F$,
\begin{align}\label{eq: first-variation-cmc-local}
    \delta \textbf{v}_{\omega} (X)= H\int_{\omega}u^\ast\alpha_X.
\end{align}
Note also that since the varifolds induced by $u_k$ converge to $\textbf{v}$, there holds $\textbf{v}\geq \textbf{v}_\omega$, therefore by \eqref{eq: control-density} there holds $\lvert\textbf{v}_\omega\rvert(B_s(p))\leq Cs^2$ for $p\in F$. For any $j\in \mathbb{N}$, $p\in F$, let $\varphi_p^j$ be a cut-off function on $\mathcal{M}$ with $\varphi_p^j=0$ on $B_{\frac{1}{2j}}(p)$, $\varphi_p^j=1$ outside $B_{\frac{1}{j}}(p)$ and $\lVert d\varphi_p^j\rVert_{L^\infty}\leq 3j$ (so that $\lim_{j\to\infty}\int_{\mathcal{M}}\lvert\nabla^\mathcal{M}\varphi_p^j\rvert d\lvert \textbf{v}_\omega\rvert=0$).
Then, given a smooth vector field $X$ on $\mathcal{M}$ supported away from $u(\partial\omega)$, there holds
\begin{align*}
    &\left\lvert\langle\textbf{v}_\omega,\operatorname{div}(X)\rangle-H\int_\omega u^\ast\alpha_X\right\rvert\\ =&\lim_{j\to\infty}\left\lvert\langle\textbf{v}_\omega,\operatorname{div}(X\Pi_{p\in F}\varphi_j^p)\rangle-H\int_\omega \vol_g(X\Pi_{p\in F}\varphi_j^p\circ u,\partial_x u, \partial_y u) dx\wedge dy\right\rvert=0,
\end{align*}
where the last equality follows from \eqref{eq: first-variation-cmc-local}. This completes the proof of the claim.

Next we claim that, a.e. in any local conformal chart $h: U\to U'$ for $\Sigma$, with $U\subset\tilde\Sigma$, there holds
\begin{align}\label{eq: N-controls-deformation}
    N \lvert\partial_xu\wedge\partial_y u\rvert\geq \frac{1}{2}\lvert du\rvert^2.
\end{align}
Indeed, for any smooth open $V\Subset U$, there holds
\begin{align*}
    \int_V\frac{1}{2}\lvert du\rvert^2d\mathcal{L}^2\leq\liminf_{k\to\infty}\nu_k(V)\leq \nu(\overline{V})=\int_{\overline{V}}N\lvert\partial_x u\wedge\partial_y u\rvert d\mathcal{L}^2, 
\end{align*}
where the last equality follows from Lemma \ref{lem: N-jacobian}. As $V$ is arbitrary, the claim is verified.
We can then repeat verbatim the proof of Theorem 5.12 in \cite{Pigati-FB} (which is based on Theorem 4.24 in \cite{Imayoshi-Taniguchi}) to show that there exists a conformal structure on $\Sigma$ such that--- calling $\Sigma'$ a copy of $\Sigma$ with this structure--- the identity map $\varphi:\Sigma'\to\Sigma$ is $K$-quasi-conformal (with $K$ controlled in terms of $\lVert N\rVert_{L^\infty}$), and  $u\circ\varphi$ is conformal as a map from $\Sigma'$. Then $(\Sigma', u\circ \varphi, N\circ\varphi)$ is a parametrized $H$-CMC varifold satisfying the desired properties.
\end{proof}

\subsection{Degeneration of the conformal structure and bubbling}\label{subsec: degeneration}
In this section we study the possible degeneration of the conformal class along a sequence of almost-critical points $\{u_k\}_{k\in \mathbb{N}}$ and the possibility of concentration points of the energy ("bubbles"). In particular, we justify why in the previous part of this section there was no loss of generality in assuming that all the elements of the sequence $\{u_k\}_{k\in \mathbb{N}}$ induce the same conformal class on $\Sigma$.

In fact, one can repeat verbatim the arguments of Section 6 in \cite{Pigati-FB}, provided we can prove a result corresponding to Proposition 6.1 of \cite{Pigati-FB} in our setting.
In the following, we first give a proof of such a result (Lemma \ref{prop: energy estimate for two boundary components}), and then give a brief overview of the arguments of Section 6 in \cite{Pigati-FB}.\\

\begin{lemma}
    \label{prop: energy estimate for two boundary components}
    Given a sequence of open domains $U_k\subset\Sigma$ whose boundaries $\de U_k$ are contained in the support of two compact closed curves $\alpha_{k,i},i=1,2$, we have either 
    \begin{enumerate}
        \item $\limsup_{k\to\infty}\nu_k(U_k)\geq c_Q$; or
        \item $\limsup_{k\to\infty}\nu_k(U_k)\leq\delta(\limsup_{k\to\infty}\max\{d_{k,1},d_{k,2}\},C)$,
    \end{enumerate}
    where $d_{k,i}$ is the diameter of $u_k(\alpha_{k,i})$, $C$ depends only on $\overline{A}$ (the upper bound of $\text{Area}(u_k)$), and $\delta: (0,\infty)^2\to(0,\infty)$ is a function with $\lim_{s\to 0}\delta(s,t)=0$ for any $t\in (0,\infty)$.
\end{lemma}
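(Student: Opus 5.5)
The plan is to argue by contradiction along a subsequence, using a limiting varifold. Suppose alternative (1) fails, so $\limsup_k\nu_k(U_k)<c_Q$. Let $d:=\limsup_k\max\{d_{k,1},d_{k,2}\}$; only small $d$ matters, since for $d$ bounded below we may take $\delta(d,C)\geq \overline A$. Passing to a subsequence, the curves $u_k(\alpha_{k,i})$ converge in Hausdorff distance to compact sets $\Gamma_i$ of diameter at most $d$, each contained in a ball $B_{d}(p_i)$. Running the argument of Lemma \ref{lem: convergence_of_varifolds} with $\omega=U_k$ (this works verbatim, because the variations $X\circ u_k$ vanish near $\partial U_k$ for $k$ large when $X$ vanishes near $\Gamma_1\cup\Gamma_2$), the varifolds $\mathbf v_{u_k,U_k}$ subconverge to a varifold $\mathbf w$. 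Away from $\Gamma_1\cup\Gamma_2$, $\mathbf w$ has generalized mean curvature bounded by $H$, its mass satisfies $|\mathbf w|(\mathcal M)=\lim\nu_k(U_k)$, and it is dominated by $\mathbf v$. In particular \eqref{eq: control-density} gives $|\mathbf w|(B_s(q))\leq C(\overline A)s^2$ for all $q$ and $s$.

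The next step is the dichotomy itself, obtained from the mass-ratio lower bound. Choose a point $q\in\operatorname{spt}|\mathbf w|$ lying outside $B_{2\rho}(p_1)\cup B_{2\rho}(p_2)$, where $\rho\gg d$. I expect the main work to lie in obtaining a \emph{uniform} density lower bound $\theta(\mathbf w,q)\geq c$ at such a point. To get it, I would use Proposition \ref{prop: almost monotonicity for large radius} applied on $\omega=U_k$ together with the covering argument mentioned before Lemma \ref{lem: concentration property}, exactly as in \cite[Proposition 6.1]{Pigati-FB}. The entropy condition (2) makes the bad set where $\lambda_k\geq\delta_k\mu_k(B_{5t})$ negligible, with $\delta_k\log(1/\sigma_k)\to0$. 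Given the density bound, the monotonicity remark applied to balls $B_r(q)$ avoiding $\Gamma_1\cup\Gamma_2$ yields $|\mathbf w|(B_{r}(q))\geq c r^2$ for $r$ up to $\operatorname{dist}(q,\Gamma_1\cup\Gamma_2)$. Then there are two cases. If the support of $|\mathbf w|$ leaves a $\rho$-neighbourhood of $\Gamma_1\cup\Gamma_2$ by a definite amount $\rho_0$ independent of $d$ (of order $c_v$ in Lemma \ref{lem: alnerative for density lower bound}), we obtain $\lim\nu_k(U_k)\geq c\rho_0^2$. This is excluded once $c_Q$ is chosen no larger than $c\rho_0^2$, giving alternative (1). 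Otherwise, I would apply Lemma \ref{lem: alnerative for density lower bound}, which is the two-boundary-component version of the argument behind Lemma \ref{lem: concentration property}, to $\mathbf w$ with boundary $\Gamma_1\cup\Gamma_2$ of small diameter. It forces $\operatorname{spt}|\mathbf w|\subset B_{2d}(\Gamma_1)\cup B_{2d}(\Gamma_2)$.

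In this second case, the upper density bound gives $|\mathbf w|(\mathcal M)\leq |\mathbf w|(B_{3d}(p_1))+|\mathbf w|(B_{3d}(p_2))\leq 18C(\overline A)d^2$. Hence $\limsup_k\nu_k(U_k)\leq\delta(d,C):=18Cd^2$ for small $d$, and $\delta(d,C):=\overline A$ otherwise. We then upgrade from subsequences to $\limsup$ in the standard way: take a subsequence realizing the $\limsup$ of $\nu_k(U_k)$ at the start.

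The main obstacle is ruling out a connected but thin configuration. Here the support of $|\mathbf w|$ joins the two small sets $\Gamma_1$ and $\Gamma_2$, which may be far apart, as in a long neck, while the mass stays small. This is where the lower density bound off $\Gamma_1\cup\Gamma_2$ must be used. A support point at distance $\gtrsim\rho_0$ from both $\Gamma_i$ carries mass $\gtrsim\rho_0^2$, so if $\operatorname{dist}(\Gamma_1,\Gamma_2)$ is large, any connecting support already yields alternative (1). If instead $\Gamma_1$ and $\Gamma_2$ are close, the whole configuration lies in a small ball and the upper bound \eqref{eq: control-density} controls the mass directly. I would check that this interpolation gives a function $\delta(\cdot,C)$ that is genuinely independent of $k$ and depends on $\overline A$ only through \eqref{eq: control-density}.
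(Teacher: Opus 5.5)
\paragraph{Where the proposal fails.} Your set-up matches the paper's. The limit $\mathbf w$ of $\mathbf v_{u_k,U_k}$ has mean curvature bounded by $H$ off the two small sets. It inherits the quadratic upper bound $|\mathbf w|(B_r(q))\le Cr^2$ from \eqref{eq: control-density}. It has a uniform density lower bound off the two sets, from Proposition \ref{prop: almost monotonicity for large radius} plus a covering argument.

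The gap is in the dichotomy step. Lemma \ref{lem: alnerative for density lower bound} is a \emph{one-ball} statement: it needs the first variation to be bounded outside a single small ball $B_s(p)$. It is not a two-boundary-component version. Each $\Gamma_i$ has diameter at most $d$, but $\Gamma_1\cup\Gamma_2$ has diameter about $D:=\operatorname{dist}(p_1,p_2)$, and $d$ does not control $D$.

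When $D$ is large compared to your threshold $\rho_0$, the support lies in two disjoint $\rho_0$-neighbourhoods $N_{\rho_0}(\Gamma_1)$, $N_{\rho_0}(\Gamma_2)$. You can then split $\mathbf w$ into two pieces and apply the lemma to each, so that regime is fine. The regime $d\ll D\lesssim\rho_0$ is not. There your resolution (``the whole configuration lies in a small ball and \eqref{eq: control-density} controls the mass directly'') only gives $|\mathbf w|(\mathcal M)\le C(D+d+\rho_0)^2$. Applying Lemma \ref{lem: alnerative for density lower bound} to the single ball $B_{D+d}(p_1)$ gives $|\mathbf w|(\mathcal M)\le 4C(D+d)^2$. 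Neither bound tends to $0$ as $d\to0$, so $\delta(d,C)=18Cd^2$ is not justified. You would have to exclude a support point at intermediate distance $d\ll t\ll\rho_0$ from both balls, i.e.\ a thin neck joining two tiny balls at small but fixed distance. The density lower bound only forces mass $\gtrsim t^2$ there, which is neither $\ge c_Q$ nor $o(1)$ as $d\to 0$. Nothing in your proposal rules this configuration out.

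\paragraph{How the paper closes this case.} The paper uses the compactness argument of Lemma \ref{lem: degeneration-conf}, which does not depend on where $p_1,p_2$ are.
\begin{itemize}
\item Define $\delta(s,\alpha)$ as the supremum of $|V|(\mathcal M)$ over all varifolds with: mean curvature bounded off $\bar B_s(p_1)\cup\bar B_s(p_2)$, a density lower bound off these balls, $|V|(B_r(q))\le\alpha r^2$ for all $q,r$, and mass $<c_v$.
\item Take $s_n\to 0$ and nearly extremal $V_n$. In the limit the two balls shrink to points.
\item These points are removable for the first variation because of the quadratic upper bound: use cut-offs $\varphi_j$ with $\int|\nabla\varphi_j|\,d|V|\lesssim j\cdot\alpha j^{-2}\to0$.
\item So the limit has bounded mean curvature on all of $\mathcal M$ and still a density lower bound. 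By monotonicity it is either $0$ or has mass at least $c(\mathcal M,H)>c_v$. Hence it is $0$, and convergence of masses gives $\delta(s_n,\alpha)\to0$.
\end{itemize}
To repair your proof, replace the near/far case analysis by this argument, applied to $\mathbf w$ with $s\approx d$ and $\alpha=C(\overline A)$. It yields only a qualitative modulus $\delta$, not the explicit rate $18Cd^2$.
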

\begin{proof}
    Note that for $i=1,2$, $u_k(\alpha_{k,i})$ is contained in $ \overline{B_{d_{k,i}}(p_{k,i})}$ for some points $p_{k,i}\in \mathcal{M}$. Up to a subsequence we may assume that $p_{k,i}\to p_i$, $d_{k,i}\to d_i$ for some $p_i\in \mathcal{M}$, $d_i\geq 0$.
    Up to subsequences, we may assume that the varifolds $\textbf{v}_{u_k\vert_{U_k}}$ converge to some varifold $\textbf{v}_\infty$.
    Then, repeating the argument in the proof of Lemma \ref{lem: convergence_of_varifolds} with variations supported in the complement of $\overline{B_{d_1}(p_1)}\cup \overline{B_{d_2}(p_2)}$, we obtain that the limiting varifold $\textbf{v}_\infty$ has generalized mean curvature bounded (in $L^\infty$) in the complement of $\overline{B_{d_1}(p_1)}\cup \overline{B_{d_2}(p_2)}$, has mass at most $Cr^2$ on balls of radius $r$ (as $\lvert \textbf{v}_\infty\rvert\leq \vert \textbf{v}\rvert$, and $\textbf{v}$ satisfies \eqref{eq: control-density}), and has density bounded below by a constant $c>0$ a. e. on its support (this can be shown as in the proof of Lemma \ref{lem: concentration property}, or Proposition 5.1 in \cite{Pigati-FB}, by an application of Proposition  \ref{prop: almost monotonicity for large radius}).
    As a result, the claim follows from Lemma \ref{lem: degeneration-conf} below.
\end{proof}

We first discuss how to remove the assumption of the fixed conformal structure, and show that no energy is lost in the possible degeneration of the conformal structure, assuming for the moment that there is no concentration point for the energy; later we will discuss how to deal with them.

First, suppose $\Sigma$ is a sphere. Since $\mathbb{S}^2$ has a unique conformal structure up to diffeomorphism, we may assume, after precomposing each $u_k$ with a diffeomorphism of $\Sigma$, that all $u_k$ induce the same conformal structure on $\Sigma$.

Next let's consider the case where $\Sigma$ is a torus. Then $(\Sigma, g_{u_k})$ is conformal to $\mathbb{C}/(\mathbb{Z}+\mathbb{Z}\lambda_k)$, for some complex $\lambda_k$ with $\lvert\lambda_k\rvert\geq 1, \lvert\Re(\lambda_k)\rvert\leq\frac{1}{2}$. Up to precomposing with a diffeomorphism of $\Sigma$, we can actually assume that $(\Sigma, g_{u_k})=\mathbb{C}/(\mathbb{Z}+\mathbb{Z}\lambda_k)$.
Up to subsequences, we may assume that the sequence of $\ell_k:=\lvert \lambda_k\rvert$ converges to some $\ell_\infty\in [1,\infty]$. If $\ell_\infty<\infty$ we may assume that $\lambda_k\to \lambda_\infty$ for some complex $\lambda_\infty$. Set $\Sigma_\infty=\mathbb{C}/(\mathbb{Z}+\mathbb{Z}\lambda_\infty)$ and denote by $g_\infty$ the natural flat metric on $\Sigma_\infty$. Then we can find diffeomorphisms $\varphi_k:\Sigma_\infty\to (\Sigma, g_{u_k})$ such that the pullback of the conformal class $[g_{u_k}]$ converges smoothly to $[g_\infty]$ on $\Sigma_\infty$. One can then define $\tilde \nu_k$ as $\frac{1}{2}\lvert d(u_k\circ \varphi_k)\rvert^2d\vol_{\Sigma_\infty}$ and repeat all the proofs of this section (with the exception of Lemma \ref{lem: N-jacobian}) for $u_k\circ\varphi_k$ instead of $u_k$. As the weak limit of $\{\tilde \nu_k\}_{k\in \mathbb{N}}$ coincides with the weak limit of 
$\{\nu_k\}_{k\in \mathbb{N}}$, the statements in this section remain valid in this case.
For the proof of Lemma \ref{lem: N-jacobian}, we used the conformality of the maps $u_k$. For this result, we can argue locally: one can precompose $u_k$ with  conformal maps $\varphi_k:B_1^2\to (\Sigma, g_{u_k})$ which are diffeomorphisms with their image and converge smoothly to $\varphi_\infty$, the inverse  of a conformal chart for $(\Sigma_\infty, g_\infty)$.
As $B_1(0)$ only has one conformal class, all maps $u_k\circ\varphi_k$ induce the same conformal class on $B_1(0)$. One can then prove the statement of Lemma \ref{lem: N-jacobian} for the limit of the maps $\{u_k\circ\varphi_k\}_{k\in \mathbb{N}}$.

If $\ell_\infty=\infty$, since $\lvert\Re(\lambda_k)\rvert\leq \frac{1}{2}$, we can regard $(\Sigma, g_{u_k})$ as $\mathbb{S}^1\times\ell_k\mathbb{S}^1$, with a conformal structure converging to the flat one. The circle $\ell_k \mathbb{S}^1$ can be subdivided into arcs $I_{k,1},...I_{k, N_k}$ of length in $[L,2L]$ (for an appropriate $L>0$). Up to subsequences, the energy concentrates around finitely many unions of such arcs, denoted $J_{k,1},...J_{k,h}$, of finite length and satisfying $\lim_{k\to\infty}\operatorname{dist}(J_{k,i}, J_{k,j})=\infty$, for $i\neq j$. More precisely, Lemma \ref{prop: energy estimate for two boundary components} implies that
\begin{align*}
    \lim_{R\to\infty}\limsup_{k\to\infty} \int_{\mathbb{S}^1\times(\ell_k\mathbb{S}^1\smallsetminus\bigcup_{j=1}^h RJ_{k,j})}\lvert du_k\rvert^2=0.
\end{align*}
Here $RJ_{k,j}$ is the interval with the same center as $J_{k,j}$, dilated by a factor $R$.
For $j\in \{1,...,h\}$, we obtain weak limits $u_{\infty,j}: \mathbb{S}^1\times\mathbb{R}\to\mathcal{M}$ of the maps $u_k\vert_{\mathbb{S}^1\times RJ_{k,j}}$,
where $RJ_{k,j}$ is identified with an interval of the same length centered at zero. Arguing as above, we can repeat the arguments of this section for any such sequence of maps.
Note that $\mathbb{S}^1\times\mathbb{R}$ is conformal
to the sphere minus two points, thus we can view the domain as the sphere. The resulting varifold is parametrized $H$-CMC across the two points, as can be seen by multiplying any variation by a function that vanishes around the points, as in the proof of Lemma \ref{lem: 5-12-P-FB}.

If $\chi(\Sigma)<0$, we might take $g_{u_k}$ to be a metric of constant Gaussian curvature $-1$. By Bers' theorem, we can decompose $(\Sigma, g_{u_k})$ into hyperbolic pairs of pants, with length of their boundaries uniformly bounded by a constant depending only on the genus of $\Sigma$ (see Theorem IV.3.7 in \cite{Hummel}). Let $\{\beta_{k,i}\}_{i=1}^p$ denote the geodesics which bound the pairs of pants. For any $i\in \{1,...,p\}$, up to subsequences, the lengths $\ell(\beta_{k,i})$ converge to some value in $[0,C]$, as $k\to\infty$.
Let $I$ be the index set of  geodesics for which $\ell(\beta_{k,i})\to0$ as $k\to\infty$. By
Proposition IV.5.1 in \cite{Hummel}, there exists a (possibly disconnected) limit surface $\Sigma_\infty$, which equals a closed Riemannian surface minus finitely many points, and diffeomorphisms $\psi_k: \Sigma_\infty\to\Sigma\smallsetminus\bigcup_{i\in I}\beta_{k,i}$ such that the metric $\psi_k^\ast g_k$ converges locally to the hyperbolic structure of $\Sigma_\infty$. Arguing as above, we can then repeat the arguments of this section for the sequence of maps $u_k\circ\psi_k$. Apart from concentration points (discussed later), the area might concentrate in collar neighborhood around the curves $\{\beta_{k,i}\}_{i\in I}$. These neighborhoods can be conformally identified with cylinders $\mathbb{S}^1\times [0, L_{k,i}]$,
with $L_{k,i} \to\infty$ as $k\to\infty$, and one can recover the missing part of the area as in the degenerating cylinder case. As above, one verifies that the resulting object extends to a parametrized $H$-CMC varifold across the punctures.\\

Finally, we consider the case of concentration of energy (``bubbling"). The issue is local, and as there can be only finitely many concentration points, it is enough to study the case of a single one. Let $p$ be a concentration point of the energy. After precomposing the maps $u_k$ with conformal charts $\varphi_k:B_1(0)\to\Sigma$ centered at $p$, we have the tight convergence
\begin{align*}
    \nu_k'=\frac{1}{2}\lvert d (u_k\circ\varphi_k)\rvert^2\vol_{g_0}\rightharpoonup m\mathcal{L}^2+\alpha\delta_0,
\end{align*}
of measures on $B_1(0)$. Considering charts to a smaller neighborhood of $p$ if necessary, we can ensure that $\int_{B_1(0)}m<\frac{c_Q}{2}$, while Lemma \ref{lem: limit of measures} implies that $\alpha\geq c_Q$. For any $k$, let $B_{r_k}(x_k)$ be a ball of minimal radius such that
\begin{align}\label{eq:choice-radii-bubbling}
    \int_{B_{r_k}(x_k)\cap B_1(0)}\frac{1}{2}\lvert d(u_k\circ\varphi_k)\rvert^2\geq \alpha-\frac{c_Q}{2},
\end{align}
so that the integral is exactly $\alpha-\frac{c_Q}{2}$ and $r_k\to 0$, $x_k\to 0$. Now Lemma \ref{prop: energy estimate for two boundary components} can be used to show that
\begin{align*}
    \lim_{R\to\infty}\limsup_{k\to\infty} \nu_k'((B_{R^{-1}}(x_k)\smallsetminus B_{Rr_k}(x_k))\cap B_1(0))=0.
\end{align*}
    Thus setting $\Psi_k:=u_k\circ\varphi_k(x_k+r_k\cdot)$ one has that the corresponding measures $\frac{1}{2}\lvert d\Psi_k\rvert^2$ converge to a measure $\nu$ of total mass $\alpha$. Along this convergence, there might be further concentration points, but the minimality condition in \eqref{eq:choice-radii-bubbling} can be used to show that their masses are at most $\alpha-\frac{c_Q}{2}$, so that the defect energy decomposes into a finite sum of bubble energies, and the neck energies vanish. The limiting maps are defined on larger and larger portions of $\mathbb{R}^2$, the limit of which can be regarded as maps from spheres minus a point. As the mass ratios of the corresponding varifolds are bounded from above, the limiting object can also be viewed as a parametrized $H$-CMC varifold whose domain is a sphere (by an argument similar to the one in the proof of Lemma \ref{lem: 5-12-P-FB}).\par

Combining the results presented in this section, we obtain the following result.
\begin{thm}\label{thm: convergence to H-pv}
    Let $H\geq 0$. Let $\{u_k\}_{k\in\mathbb{N}}$ be a sequence as in Theorem \ref{thm: main-theorem}. Then there exists a subsequence such that the varifolds induced by $u_k$ converge to a parametrized $H$-CMC varifold $\textbf{v}$ in $\mathcal{M}$. The connected components $\Sigma_i$ of its domain satisfy $\sum_ig(\Sigma_i)\leq g(\Sigma)$.
\end{thm}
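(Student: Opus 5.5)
The proof assembles the results of this section in the following order.

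First, we pass to a subsequence along which the conformal classes $[g_{u_k}]$ have a well-defined limiting behaviour in the moduli space (after compactification). We then split according to the topology of $\Sigma$: sphere, torus with $\ell_\infty<\infty$ or $\ell_\infty=\infty$, and $\chi(\Sigma)<0$.

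In the hyperbolic case we use Bers' pants decomposition and Proposition IV.5.1 in \cite{Hummel}. This gives diffeomorphisms $\psi_k:\Sigma_\infty\to\Sigma\smallsetminus\bigcup_{i\in I}\beta_{k,i}$ onto the complement of the pinching geodesics, with $\psi_k^\ast g_{u_k}$ converging locally to the hyperbolic structure of $\Sigma_\infty$. On each compact subset of $\Sigma_\infty$ we apply the fixed-conformal-structure analysis to $u_k\circ\psi_k$: Lemma \ref{lem: limit of measures}, Lemma \ref{lem: N-jacobian} (in local conformal charts, where the structures converge smoothly), Lemma \ref{lem: Pigati-5.7}, Lemma \ref{prop: convergence-of-H-term} and Lemma \ref{lem: 5-12-P-FB}. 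This yields, on each component of $\Sigma_\infty$, a parametrized $H$-CMC varifold away from the atoms and punctures.

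Second, we recover the mass lost in the degenerating collars $\mathbb{S}^1\times[0,L_{k,i}]$ and at the atoms of $\nu$. Lemma \ref{prop: energy estimate for two boundary components} is the key tool: on each annular region bounded by two curves whose images have small diameter, the energy is either at least $c_Q$ or tends to zero.

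For the collars, we subdivide into pieces of bounded conformal length. Since the total energy is bounded by $\overline A$, only finitely many of them, say $J_{k,j}$, carry energy at least $c_Q$, and the neck energy vanishes in the iterated limit $R\to\infty$, $k\to\infty$. For the atoms, we choose minimal concentration radii $r_k$ as in \eqref{eq:choice-radii-bubbling} and rescale. The rescaled maps converge to maps on $\mathbb{R}^2$, and the minimality forces any secondary atom to have mass at most $\alpha-\frac{c_Q}{2}$. Iterating this finitely many times (each bubble costs at least $c_Q$ of the bounded energy), the defect measure decomposes exactly into bubble energies plus zero neck energy.

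Each resulting piece lives on a cylinder or a punctured plane, which we view as a punctured sphere. We extend the parametrized $H$-CMC condition across the finitely many punctures by the capacity cutoff used in the proof of Lemma \ref{lem: 5-12-P-FB}. That cutoff needs only the quadratic upper mass bound \eqref{eq: control-density}, which holds because each piece is dominated by $\textbf{v}$. Since no mass is lost anywhere, the sum of the limit parametrized varifolds equals the full limit $\textbf{v}$ of the induced varifolds.

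Third, the genus bound. The limit components are the components of the nodal limit surface $\Sigma_\infty$, whose closure has total genus at most $g(\Sigma)$ because pinching curves only decreases genus. The remaining components are bubble and neck spheres of genus $0$. Hence $\sum_i g(\Sigma_i)\le g(\Sigma)$.

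I expect the main obstacle to be the neck analysis: showing that Lemma \ref{prop: energy estimate for two boundary components} really forces zero energy in the necks. This requires the images of the neck boundary curves to have diameters tending to zero, so that $\delta(\cdot,C)\to0$ applies. I would obtain this from a Courant–Lebesgue type argument, choosing good curves of small length via the $W^{1,2}$ bound. I would combine it with the continuity and oscillation control of the limit traces (\cite{PR-Regularity}, Lemma A.3), following Section 6 of \cite{Pigati-FB} verbatim, since the $H$-term only contributes lower-order contributions bounded by $H\lVert X\rVert_{L^\infty}$ times mass.
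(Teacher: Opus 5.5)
Your proposal is correct and follows essentially the same route as the paper. You reduce to the fixed-conformal-structure analysis on the non-degenerating part (sphere/torus cases, or the Bers/Hummel picture for $\chi(\Sigma)<0$), and use Lemma~\ref{prop: energy estimate for two boundary components} to rule out energy loss in collars and bubble necks, with minimal concentration radii for bubbling. You then extend across punctures by the cutoff argument of Lemma~\ref{lem: 5-12-P-FB} using the quadratic mass bound, and obtain the genus bound from pinching plus genus-zero bubbles.
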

\begin{proof}
    The arguments in this subsection imply that--- up to a subsequence--- for each $k\in \mathbb{N}$ there exist disjoint open sets $U_{k,1},...,U_{k,N}\subset\Sigma$ (for some fixed $N$) such that the varifolds induced by $u_k\vert_{U_{k,i}}$ converge to a parametrized $H$-CMC varifold, and $\nu_k(\Sigma\setminus\cup_{i=1}^N U_{k,i})\to 0$ as $k\to\infty$.
    More precisely, for any 
    $i\in \{1,...,N\}$ there exist a compact Riemann surface $\Sigma_i$, a finite
set $P_i\subset \Sigma_i$, open sets $\Omega_{k,i}\subset \Sigma_i\setminus P_i$ exhausting
$\Sigma_i\setminus P_i$, and diffeomorphisms $\phi_{k,i}:\Omega_{k,i}\to U_{k,i}$ such that
$u_k\circ\phi_{k,i}$ converges locally to a parametrized $H$-CMC varifold $(\Sigma_i\smallsetminus P_i, u_\infty^i, N_i)$ (in the sense described in Lemma \ref{lem: Pigati-5.7} and Lemma \ref{lem: 5-12-P-FB}). Arguing as in the proof of Lemma~\ref{lem: 5-12-P-FB}, we see that this construction extends to a parametrized $H$-CMC varifold $(\Sigma_i, u_\infty^i, N_i)$.
Since $\nu_k(\Sigma\setminus\cup_{i=1}^N U_{k,i})\to 0$, the varifolds induced by $u_k$ converge in the sense of varifolds to the sum of the parametrized varifolds $(\Sigma_i, u_\infty^i, N_i)$.
Finally, the surfaces $\Sigma_i$ arise from $\Sigma$ by pinching (cutting along disjoint
simple closed curves) and by adding bubble spheres; thus the genus cannot increase and
$\sum_i g(\Sigma_i)\le g(\Sigma)$.
\end{proof}

\section{Regularity and multiplicity one}\label{sec: regularity}
In this section we show that the maps obtained as weak limits in the convergence process described above are in fact smooth branched CMC immersions.
The key step in the argument is the proof of the ``multiplicity one property" in this setting, i.e. we need to show that the multiplicity $N$ arising in the limiting parametrized varifold is in fact equal to one.
\begin{rmk}\label{rmk: simplifications}
    Our proof follows the general strategy of \cite{PR-Multiplicity-One}, with some modifications. In \cite{PR-Multiplicity-One}, Pigati and Rivière showed that any parametrized varifold arising as a limit of maps satisfying the assumptions of Theorem \ref{thm: main-theorem} (with $H=0$) has multiplicity $N$ equal to 1.
    
    Instead, we show that for a suitably chosen blow-up sequence around ``good points", multiplicity one is preserved in the limit. As the multiplicity $N$ can be detected by blow-ups, this is enough to prove the result. Since at generic points the limit of a blow-up sequence is a linear map into a 2-dimensional plane, at a few stages our argument is simpler than the original one. We remark that when $H=0$, our argument works in any codimension, therefore it provides a slightly simpler proof of the original multiplicity one Theorem of \cite{PR-Multiplicity-One} in any codimension.
    Like the proof of \cite{PR-Multiplicity-One}, however, our argument relies on the regularity result of \cite{PR-Regularity}.
    
    While Pigati and Rivière worked with a sequence of critical points for the functionals $E_{\sigma_k}$, we instead work with almost critical points, so that the results of this section can be applied directly to the maps obtained in the previous section.
\end{rmk}
Let $\{u_k\}_{k\in \mathbb{N}}$ be as in the conclusion of Theorem \ref{thm: convergence to H-pv}, assume that $u_k\rightharpoonup u$ weakly in $W^{1,2}$ and, for $\nu'_k:=\frac{1}{2}\lvert\nabla u_k\rvert^2$, assume that $\nu_k'\rightharpoonup \nu$ weakly-$\ast$.\\
Let $\mathcal{G}'$ denote the set of points such that
\begin{itemize}
    \item $x$ is not an atom of $\nu$;
    \item $x$ is a Lebesgue point for $u$, $du$ and $\lvert du\rvert^2$ for a conformal chart centered at $x$;
    \item $du(x)$ has rank 2;
    \item $u_k(x)\to u(x)$;
    \item 
    \begin{align}\label{eq: localized control on second fundamental form}
        \sigma_k^4\log\sigma_k^{-1}\int_{B_r(x)}\vert\sff^{u_k}\vert^4\,d\vol_{g_{u_k}}\leq\delta_k\int_{B_r(x)}\,d\vol_{g_{u_k}},\text{ for all }r\in(0,1/2),\text{ for a sequence }\delta_k\rightarrow0.
    \end{align}
\end{itemize}
Note that the set $\mathcal{G}'$ has full $\nu$ measure in $\Sigma$. For condition \eqref{eq: localized control on second fundamental form}, the proof can be found in \cite[Proof of Theorem 6.4]{PR-Multiplicity-One}.
We will show that for $x\in \mathcal{G}'$, $N(x)=1$. Repeating the argument iteratively for the reparametrizations around the possible bubbles (which converge to maps $\mathbb{S}^2\to\mathcal{M}$, as discussed at the end of Subsection \ref{subsec: degeneration}), one obtains the desired result for all the components of the limiting parametrized varifold.\par
Let $x$ be a point in $\mathcal{G}'$ and let $\Phi:U\to U'$ (with $B_1(0)\subset U'\subset\mathbb{C})$ be a conformal chart for a neighborhood of $x$, centered at $x$ and containing no atoms of $\nu$. In what follows, we identify \(U\) with \(U'\) through the chart \(\Phi\). We still denote \(u_k\circ \Phi^{-1}\) and \(u\circ \Phi^{-1}\) by \(u_k\) and \(u\), respectively, and, by a slight abuse of notation, we continue to denote the coordinate point \(\Phi(x)=0\) by \(x\). Let $p:=u(x)$.\\
For any $k\in \mathbb{N}$, for any $r>0$ sufficiently small, set
\begin{align*}
    v_{r,k}=r^{-1}(u_k(x+r\cdot)-p): B_1(0)\rightarrow\mathcal{M}_{p,r},\text{ where }\mathcal{M}_{p, r}=r^{-1}(\mathcal{M}-p).
\end{align*}
Choosing suitable sequences $r_i\to 0$ and $k_i\to \infty$, we can ensure that
$v_{r_i, k_i}$ converges a linear map:  the differential of $u$ at $x$. This is proved in the following Lemma.

\begin{lemma}\label{lem: convergence-blow-up}
    Let $\{r_i\}_{i\in \mathbb{N}}$ be a sequence such that $r_i\to 0$. Let $p:=u(x)$.
    Then the sequence of functions
    \begin{align*}
        u^{r_i}: B_1(0)\to \mathcal{M}_{p, r_i},\qquad z\mapsto r_i^{-1}(u(x+r_iz)-p)
    \end{align*}
    converges up to subsequences weakly in $W^{1,2}(B_1(0))$ to the linear map $v(z):=Du
    (x)[z]$.\\
    Moreover, for any $r_i$ we can find $k(r_i)\in \mathbb{N}$, denoted as $k_i$, such that $v_{r_i, k_i}\rightharpoonup v$ in $W^{1,2}$.\\
\end{lemma}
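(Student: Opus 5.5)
The plan is to treat the two assertions separately: first the blow-up of the limit map $u$, then a diagonal choice of $k_i$.

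For the first part, I would use that $x\in\mathcal{G}'$ is a Lebesgue point of $u$, $du$ and $\lvert du\rvert^2$ in the conformal chart. Changing variables gives $\nabla u^{r_i}(z)=\nabla u(x+r_iz)$, so
\begin{align*}
\int_{B_1(0)}\lvert\nabla u^{r_i}-Du(x)\rvert^2\,dz=\frac{1}{r_i^2}\int_{B_{r_i}(x)}\lvert\nabla u-Du(x)\rvert^2\to 0 .
\end{align*}
This uses $\lvert\nabla u-Du(x)\rvert^2\le 2\lvert\lvert\nabla u\rvert^2-\lvert Du(x)\rvert^2\rvert+2\lvert Du(x)\rvert\,\lvert \nabla u-Du(x)\rvert+\dots$. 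Alternatively, I would expand $\lvert\nabla u-Du(x)\rvert^2=\lvert\nabla u\rvert^2-2\langle\nabla u,Du(x)\rangle+\lvert Du(x)\rvert^2$ and use the Lebesgue point property for $\lvert du\rvert^2$ and for $du$ separately. For the zeroth order part, $\int_{B_1}\lvert u^{r_i}-v\rvert$ is exactly the quantity $\delta_i$ from the proof of Lemma \ref{lem: image-u-close-to-ball}, which tends to $0$ by \cite[Theorem 6.2]{Evans-Gariepy}. One could also use Poincaré together with $\frac{1}{r_i^2}\int_{B_{r_i}(x)}\lvert u-p\rvert\to0$ in the form $r_i^{-3}\int\lvert u-p-Du(x)(\cdot-x)\rvert\to0$. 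Combining these gives convergence of $u^{r_i}$ to $v$ strongly in $W^{1,2}(B_1(0))$, hence weakly, along the whole sequence. In particular, no subsequence is actually needed. The only thing to check is that the $L^1$ convergence upgrades to $L^2$; this follows from Poincaré applied to $u^{r_i}-v$, whose gradients converge in $L^2$ and whose means converge.

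For the second part, I would fix $i$ and use $u_k\rightharpoonup u$ in $W^{1,2}$. Rescaling is a fixed bounded linear operation for fixed $r_i$, so $v_{r_i,k}\rightharpoonup u^{r_i}$ weakly in $W^{1,2}(B_1(0))$ as $k\to\infty$. By Rellich, this convergence is also strong in $L^2$. The key uniform bound is
\begin{align*}
\lVert\nabla v_{r_i,k}\rVert_{L^2(B_1)}^2=\int_{B_{r_i}(x)}\lvert\nabla u_k\rvert^2=2\nu'_k(B_{r_i}(x)).
\end{align*}
Since $\nu'_k\rightharpoonup\nu$ and $\nu(\partial B_{r_i})=0$ for a.e. radius (restricting $r_i$ slightly if needed), this tends to $2\nu(B_{r_i}(x))$ as $k\to\infty$. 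By absolute continuity of $\nu$ near $x$ (Lemma \ref{lem: limit of measures}) and the Lebesgue point property, $\nu(B_{r_i}(x))\le Cr_i^2$, with $C$ depending on $m(x)$. Hence $\sup_k\lVert v_{r_i,k}\rVert_{W^{1,2}}$ is bounded uniformly in $i$ once $k\ge\bar k_i$; the $L^2$ part is bounded using $u_k(x)\to u(x)$ and Poincaré, or simply via the $L^2$ convergence.

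I would then choose $k_i\ge\bar k_i$ so that $\lVert v_{r_i,k_i}-u^{r_i}\rVert_{L^2(B_1)}\le 1/i$. With this choice, $v_{r_i,k_i}$ is bounded in $W^{1,2}$ and converges to $v$ in $L^2$. Since every weak subsequential limit must coincide with the $L^2$ limit $v$, we get $v_{r_i,k_i}\rightharpoonup v$. I expect the main obstacle to be the uniform $W^{1,2}$ bound on the rescalings, that is, excluding energy concentration at scale $r_i$. It is handled by the choice of $x$ as a non-atom Lebesgue point of the density $m$ together with the choice of radii with $\nu(\partial B_{r_i}(x))=0$.
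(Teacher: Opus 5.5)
Your proposal is correct and follows essentially the same route as the paper. Both arguments use three ingredients: a uniform bound on $r_i^{-2}\nu_k'(B_{r_i}(x))$ coming from the absolute continuity of $\nu$ and the Lebesgue-point property at $x$; a diagonal choice of $k_i$ with $v_{r_i,k_i}$ close to $u^{r_i}$; and identification of the weak limit through $L^2$ convergence. Your $L^2$-distance-plus-uniform-bound selection replaces the paper's metric for weak convergence on bounded sets, and your first part even gives strong $W^{1,2}$ convergence, where the paper only uses boundedness plus the Evans--Gariepy $L^{q^\ast}$ differentiability.

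There is one slip to fix: the energy identity should read $\lVert\nabla v_{r_i,k}\rVert_{L^2(B_1)}^2=2r_i^{-2}\nu_k'(B_{r_i}(x))$. Also, no restriction on the radii is needed. Since $\nu\ll\mathcal{L}^2$ near $x$, you have $\nu(\partial B_r(x))=0$ for every small $r$. Alternatively, you can simply use $\limsup_k\nu_k'(B_r(x))\le\nu(\overline{B_r(x)})$.
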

\begin{proof}
    As $x$ is a Lebesgue point of $\nabla u$, there exists a constant $C$ such that
    \begin{align*}
        \int_{B_1(0)}\lvert\nabla u^r \rvert^2=\frac{1}{r^2}\int_{B_r(x)}\lvert \nabla u\rvert^2\leq C\text{ for any }r>0.
    \end{align*}
    Moreover, we claim that
    the sequence $\{u^{r_i}\}_{i\in \mathbb{N}}$ is uniformly bounded in $L^2(B_1(0))$.
    Indeed, if $q\in(1,2)$, Theorem 6.2 of \cite{Evans-Gariepy} implies that for almost any $x\in \Sigma$,
    \begin{align*}
        \left(\frac{1}{\lvert B_r\rvert}\int_{B_r}\lvert u(x+y)-u(x)-Du(x)[y]\rvert^{q^\ast} dy\right)^\frac{1}{q^\ast}=o(r).
    \end{align*}
Dividing by $r$ and setting $y=rz$ we obtain
\begin{align*}
    &\frac{1}{\lvert B_1(0)\rvert^\frac{1}{q^\ast}}\lVert u^r-Du(x)[\cdot]\rVert_{L^{q^\ast}(B_1(0))}\\=&\left(\frac{1}{\lvert B_1(0)\rvert}\int_{B_1(0)} \lvert r^{-1}u(x+rz)-r^{-1}u(x)-Du(x)[z]\rvert^{q^\ast}dz\right)^\frac{1}{q^\ast}=o(1),
\end{align*}
    which proves the claim. Let $D:=\sup_{i\in \mathbb{N}}\lVert u^{r_i} \rVert_{L^2(B_1(0))}<\infty$.  
    As $\{u^{r_i}\}_{i\in \mathbb{N}}$ is bounded in $W^{1,2}(B_1(0))$, there is a subsequence of $\{ r_i\}_{i\in \mathbb{N}}$ (not relabeled) such that $\{u^{r_i}\}_{i\in \mathbb{N}}$ converges weakly in $W^{1,2}(B_1(0))$ to $v(z):=Du(x)[z]$. As the limit is independent from the subsequence, the convergence actually holds for the original sequence.
    Next note that
    \begin{align*}
        \int_{B_1(0)}\lvert\nabla v_{k,r}\rvert^2d\vol_{g_0}=\frac{1}{r^2}\int_{B_r(x)}\lvert\nabla u_k\rvert^2d\vol_{g_0}=\frac{2}{r^2}\int_{B_r(x)}d\nu_k',
    \end{align*}
    since $\nu_k'=\frac{1}{2}\lvert\nabla u_k\rvert^2_{g_0}d\vol_{g_0}$. Recall that $\nu_k'\rightharpoonup\nu$ and that $\nu=NJ(\nabla u)\vol_{g_0}$ for some integer valued function $N$, bounded by a constant $\overline{N}$. Then for any $r>0$
    \begin{align*}
        \limsup_{k\to\infty}\int_{B_1(0)}\lvert \nabla v_{k,r}\rvert^2d\vol_{g_0}\leq \frac{\overline{N}}{r^2}\int_{B_{2r}(x)}\lvert\nabla u\rvert^2d\vol_{g_0}= \frac{4\overline{N}}{(2r)^2}\int_{B_{2r}(x)}\lvert\nabla u\rvert^2d\vol_{g_0}\leq 4\overline{N}C.
    \end{align*}
    In particular, for any $r_i$ there exist $K_i\in \mathbb{N}$ such that for any $k\geq K_i$ there holds
    \begin{align*}
        \int_{B_1(0)}\lvert \nabla v_{k, r_i}\rvert^2\leq 5\overline{N}C.
    \end{align*}
Let $d$ be a distance for the weak $W^{1,2}$ convergence in
\begin{align*}
    M=\{f\in W^{1,2}(B_1(0))\vert\lVert f\rVert_{W^{1,2}}\leq 5\overline{N}C+2D\}.
\end{align*}
As $u^{r_i}\rightharpoonup v$ weakly in $W^{1,2}$, we know that $d(u^{r_i}, v)\to 0$.\\
Since \(u_k(x)\to u(x)\), for any $r_i$
\begin{align*}
    v_{r_i, k}\rightharpoonup u^{r_i}\text{ weakly in }W^{1,2}(B_1(0))
\end{align*}
as $k\to\infty$, and the convergence holds strongly in $L^2$. Therefore, for any $i\in \mathbb{N}$ there exist $k_i\geq K_i$ such that
\begin{align*}
    d(v_{k_i, r_i}, u^{r_i})\leq \frac{1}{i}.
\end{align*}
We conclude that, up to subsequences,
\begin{align*}
    v_{r_i, k_i}\rightharpoonup v \text{ weakly in }W^{1,2}(B_1(0)),
\end{align*}
as $i\to\infty$.
\end{proof}

Next we present the main step in the proof of the multiplicity one. The main idea, borrowed from \cite{PR-Multiplicity-One}, is to define an averaged multiplicity at various scales, and to show that this quantity doesn't change if we move from one scale to another in a suitable way.

In the following, given an immersion $u\in C^1(\Omega,\R^Q)$ from a 2-dimensional domain $\Omega$, $z\in \Omega$ and a 2-plane $\Pi\in\operatorname{Gr}_2(\R^Q)$, let $\pi_\Pi$ be the orthogonal projection from $\R^Q$ to $\Pi$ and set
\begin{align*}
    N(u,B_r(z),\Pi):\Pi\rightarrow\N\cup\lbrace\infty\rbrace,q\mapsto\#((\pi_\Pi\circ u)^{-1}(q)\cap B_r(z)),
\end{align*}
which is the multiplicity of the projection of the surface $u(B_r(z))$ onto $\Pi$.
For $s\in \Pi$ we also set
\begin{align*}
    n(u, B_r(z),B_t^{\Pi}(s))=\left[\fint_{B_t^{\Pi}(s)}N(u,B_r(z),\Pi)(q)d\mathcal{H}^2(q)+\frac{1}{2}\right]\in\N.
\end{align*}
By the proof of Lemma \ref{lem: N-jacobian} (see in particular \eqref{eq: definition-N-limit}), the multiplicity $N$ of the limiting parametrized varifold is computed as follows: if $\alpha>0$ is such that
\begin{align*}
    \mathcal{C}=\{du(x)[y]\vert y\in \partial B_1(0)\}
\end{align*}
encloses $B_{2\alpha}(p)$ in $\Pi=p+\operatorname{Im}(D u(x)[\cdot])$, then, since $N(x)\in \mathbb{N}$,
\begin{align}\label{eq: obtaining-N-in-the-limit}
    N(x) =\lim_{i\to\infty}\frac{1}{\lvert B_\frac{\alpha}{2}^\Pi(p)\rvert}\int_{B_\frac{\alpha}{2}^\Pi(p)}N(v_{r_i,k_i}, B_1(x),\Pi)(q)d\mathcal{H}^2(q)=\lim_{i\to\infty} n(v_{r_i,k_i}, B_1(x), B_\frac{\alpha}{2}^\Pi(p)),
\end{align}
for any sequence $r_i\to 0$ satisfying \eqref{eq: condition-radii-1}, and for any sequence $k_i\to\infty$ sufficiently fast such that $u_{k_i}|_{\de B_{r_i}}$ converges in $C^0$ to $u\vert_{\de B_{r_i}}$.
The aim is then to show that for large $i$ we have $n(v_{r_i, k_i}, B_1(x), B_\frac{\alpha}{2}^\Pi(p))=1$.
The key step in the proof will be Proposition \ref{prop: iteration lemma}, for which we need the following definition.
\begin{definition}
    A map $\varphi\in W_{\text{loc}}^{1,2}\cap C^0(\C,\C)$ is a $K$-quasiconformal homeomorphism of $\C$ if it is a homeomorphism from $\mathbb{C}$ to $\mathbb{C}$ and satisfies
    \begin{align*}
        \partial_{\overline{z}}\varphi=\mu \partial_z \varphi
    \end{align*}
    in the distributional sense, for some $\mu$ in
    \begin{align*}
        \mathscr{E}_K=\left\{\mu\in L^\infty(\C,\C)\bigg\vert \lVert \mu\rVert_{L^\infty}\leq \frac{K-1}{K+1}\right\}.
    \end{align*}
    We will denote by $\mathscr{D}_K$ the set of $K$-quasiconformal homeomorphisms $\varphi$ of $\C$ such that
    \begin{align*}
        \varphi(0)=0,\qquad\min_{x\in \partial B_1^2(0)}\lvert \varphi(x)\rvert=1.
    \end{align*}
    For any  $2$-plane $\Pi$ in $\R^Q$, we let $\mathscr{D}_K^\Pi$ denote the set of maps of the form $i_\Pi\circ \varphi$, where $i_\Pi$ is a linear isometry from $\C$ to $\Pi$, and $\varphi\in \mathscr{D}_K$.
\end{definition}

\begin{proposition}\label{prop: iteration lemma}
    Let $x\in \mathcal{G}'$. Let $p\in\mathcal{M}$ and $u:B_1(x)\rightarrow\mathcal{M}_{p,\ell}$ be a conformal immersion, $\theta$-critical for the functional
    \begin{align*}
        \operatorname{Area}(u)+\tau^4\int_{B_1(x)}\vert\sff_{\mathcal{M}_{p,\ell}}^u\vert^4d\vol_{g_u}+h\int_{B_1(x)\times [0,1]}f_u^\ast d\vol_{\mathcal{M}_{p,\ell}}.
    \end{align*}
    Given $E,K>0,A\in\N+\frac{1}{2}$, there exist a constant $\bar{\delta_0}>0$ depending only on $A,K$ with the following property.  For every \(\delta_0\in(0,\bar\delta_0)\), there
    exists a constant \(\varepsilon_0>0\), depending only on
    \(E,A,K,\delta_0,\mathcal M\), such that the following holds. Suppose that
    \begin{enumerate}[label=\arabic*.]
        \item $0<\ell,h,\tau<\varepsilon_0$ and $\theta\in (0,\tau^5)$;
        \item There exists a $K$-quasiconformal homeomorphism $f$
        in $\mathscr{D}_K^\Pi$ (for a 2-dimensional plane $\Pi$) such that
        \begin{align*}
            \vert u-f\vert<\delta_0\text{ on }\de B_1(0)\cup \de B_{s(K)}(0)\cup\de B_{s(K)^2}(0);
        \end{align*}
        
        \item $\frac{1}{2}\int_{B_1(0)}\vert\nabla u\vert^2\leq E$;
        \item $\operatorname{Area}(u(B_1(0))\cap B_1(0))\leq  A\pi$ and $\operatorname{Area}(u(B_1(0))\cap B_{\eta(K)}(0))/\eta(K)^2\leq A\pi$;
        \item $\tau^4\log\tau^{-1}\int_{B_1(0)}\vert\sff^u_{\mathcal{M}_{p,\ell}}\vert^4\,d\vol_{g_u}\leq\varepsilon_0.$
    \end{enumerate}
    Here the assumptions are stated for a conformal chart centered at $x$.
    Then we can find new scales $r',\ell'$, a new point $p'\in \mathcal{M}_{p,\ell}$ and a new plane $\Pi'$ such that
    \begin{enumerate}[label=\arabic*'.]
        \item $r',\ell'\in (\varepsilon_0,s(K))$;
        \item There exists a new $K’(A)$-quasiconformal homeomorphism $f'\in \mathscr{D}_{K'(A)}^{\Pi'}$ from $\mathbb{C}$ to $\Pi'$ with
        \begin{align*}
            u'=\ell'^{-1}(u(x+r'\cdot)-p'),
        \end{align*}
        satisfying $\vert u'-f'\vert\leq\delta_0$ on $\de B_1(0)\cup \de B_{s(K'(A))}(0)\cup\de B_{s(K'(A))^2}(0)$;
        \item $\frac{1}{2}\int_{B_1(0)}\vert\nabla u'\vert^2\leq E'(A):=25\pi AD(K'(A))^2$;
        \item $\operatorname{Area}(u'(B_1(0))\cap B_1(0)),\operatorname{Area}(u'(B_1(0))\cap B_{\eta(K'(A))}(0))/\eta(K'(A))^2\leq\pi A;$
        \item $n(u,B_{s(K)^2}(x),B_{\eta(K)}^{\Pi}(0))=n(u',B_{s(K'(A))^2}(0),B_{\eta(K'(A))}^{\Pi'}(0))$.
    \end{enumerate}
    Moreover, $u'$ is $\theta \ell'^{-\frac{5}{2}}$-critical for the functional
        \begin{align*}
           \operatorname{Area}(v)+(\tau \ell'^{-1})^4\int_{B_1(0)}\lvert\sff_{\mathcal{M}_{p', \ell'\ell}}^v\rvert^4d\vol_{g_v}+\ell'h\int_{B_1(0)\times[0,1]}f_v^\ast d\vol_{\mathcal{M}_{p',\ell'\ell}},
        \end{align*}
        on $W^{2,4}_{\text{imm}}(B_1(0), \mathcal{M}_{p',\ell'\ell})$.
\end{proposition}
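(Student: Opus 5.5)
The plan is to argue by contradiction and compactness, following the strategy of \cite[Lemma 5.3]{PR-Multiplicity-One}. The constants $s(K),\eta(K),D(K),K'(A)$ are the ones fixed by the quasiconformal distortion estimates: $s(K)$ is chosen so that for every $\varphi\in\mathscr{D}_K$ the image $\varphi(B_{s(K)})$ lies well inside $B_{1/2}$, and $D(K)$ bounds $\sup_{B_1}\lvert\varphi\rvert$ uniformly over the compact family $\mathscr{D}_K$. Fix $\delta_0<\bar\delta_0$ and suppose no $\varepsilon_0$ works. Then there is a sequence $u_j$ satisfying hypotheses 1--5 with $\varepsilon_0=1/j$ for which no admissible $(r',\ell',p',\Pi')$ exists.

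The first step is compactness. By hypothesis 3 and conformality, $u_j$ is bounded in $W^{1,2}(B_1)$. Since $\ell_j\to0$, the targets $\mathcal{M}_{p_j,\ell_j}$ converge to a $3$-plane. Since $h_j\to0$ and $\tau_j\to0$, and since the entropy bound 5 combined with the rescaling Lemma \ref{lem: critical-rescaled-functional} shows the first-variation error tends to zero, we can rerun Lemmas \ref{lem: convergence_of_varifolds}, \ref{lem: concentration property}, \ref{lem: limit of measures}, \ref{lem: N-jacobian} and \ref{lem: 5-12-P-FB} in the Euclidean setting. The conclusion is that $u_j\rightharpoonup u_\infty$ weakly, and that the limit is a parametrized \emph{stationary} varifold in $\mathbb{R}^3$ (the $H$-term disappears), with multiplicity $N_\infty$.

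Next we use the boundary data. By hypothesis 2, after passing to a subsequence, $f_j\to f_\infty\in\mathscr{D}_K^{\Pi}$ locally uniformly, so the traces of $u_\infty$ on the three circles are $\delta_0$-close to $f_\infty$. The density bounds in hypothesis 4 bound $N_\infty$ by roughly $A$. By the regularity theory of \cite{PR-Regularity}, $u_\infty$ is then a smooth branched minimal immersion, conformal and harmonic.

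The central geometric step is to choose a point $z$ near $0$ of $B_{s(K)^2}$ that is regular for $u_\infty$, i.e.\ where $du_\infty(z)$ has rank $2$. Set $\Pi'$ to be the tangent plane at $z$ and $p'=u_\infty(z)$. Blowing up at $z$, the rescalings $u_\infty(z+r\cdot)$ are $C^1$-close to the linear conformal map $du_\infty(z)$. This gives scales $r',\ell'$, bounded below in terms of $A$ and $K$ only, at which the rescaled limit is $\delta_0/2$-close to a map in $\mathscr{D}_{K'(A)}^{\Pi'}$ (in fact close to a similarity) on the three circles. Here $K'(A)$ absorbs the possible anisotropy: because $u_\infty$ is conformal, one could even take $K'$ close to $1$. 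The area bounds in $4'$ follow from the monotonicity formula for the stationary limit together with $N_\infty\le A$, and the energy bound $3'$ follows from conformality and the sup bound $D(K')$. For large $j$, strong $C^0$ convergence on a.e.\ circle (Lemma \ref{lem: limit of measures}) transfers properties $2'$--$4'$ back to $u_j'$. The criticality statement follows directly from Lemma \ref{lem: critical-rescaled-functional}.

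The main obstacle is $5'$, the preservation of the averaged multiplicity $n$. Away from branch points and the boundary curves, $n(\cdot,B_{s^2},B^\Pi_\eta)$ is a degree-type quantity. Because $u_j\to u_\infty$ and $\pi_\Pi\circ u_j$ is $\delta_0$-close to the homeomorphism $\pi\circ f$ on the circles, the average of $N$ over $B^\Pi_{\eta}$ converges to $N_\infty$ times the degree, up to an error $O(\delta_0)$. Rounding (this uses $A\in\mathbb{N}+\frac12$ and $\delta_0<\bar\delta_0$) then identifies both sides with the constant multiplicity $N_\infty$ on the connected regular sheet containing $z$. To do this I would first use the constancy theorem for $N_\infty$ on the support of the stationary limit. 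I would then apply Allard's constancy lemma to the projections, as in the proof of Lemma \ref{lem: N-jacobian}, to show that for large $j$ the integer-valued $N(u_j,\cdot,\Pi)$ is nearly constant, equal to $N_\infty$, on both $B^\Pi_{\eta(K)}$ and the rescaled $B^{\Pi'}_{\eta(K')}$. This contradicts the choice of the sequence.
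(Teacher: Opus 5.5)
The proposal has a genuine gap at its central step. You pick a regular point $z\neq 0$ of the limit and blow up there. The statement does not allow this. Conclusion $2'$ requires $u'=\ell'^{-1}(u(x+r'\cdot)-p')$ with the \emph{same} domain center $x$. This matters downstream: in Theorem \ref{thm: multiplicity-one}, hypothesis 5 is re-verified at each iteration from \eqref{eq: localized control on second fundamental form}, which is only available on balls centred at the fixed good point $x$.

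Recentring is unnecessary because of Lemma \ref{lem: appendixB-Lem4.3} (Lemma 4.3 of \cite{PR-Multiplicity-One}), which your plan never uses. Apply it to $u(s(K)\cdot)$ and $f(s(K)\cdot)$, with $\bar u=u\circ\varphi^{-1}$ harmonic and conformal. It gives that $\pi_\Pi\circ\bar u$ is a diffeomorphism on $\varphi(\overline{B_{s(K)/2}})$. In particular $D\bar u(0)$ has rank two, so the center itself is a regular point.

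The same lemma is what proves $5'$. Combined with Lemma \ref{lem: appendixB-LemmaA.1}, which gives $\pi_\Pi\circ u(B_{s(K)^2})\supset B^\Pi_{\eta(K)}$, injectivity yields $\lVert(\pi_\Pi)_\ast\mathbf{v}_{B_{s(K)^2}}\rVert(B^\Pi_{\eta(K)})=N\pi\eta(K)^2$ exactly. Weak convergence of the projected varifolds, with no mass on $\partial B^\Pi_{\eta(K)}$, then gives $n(u_k,B_{s(K)^2},B^{\Pi_k}_{\eta(K)})=N$ for large $k$, and likewise at the new scale.

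Your replacement for this step does not work. $N(u,B_r,\Pi)$ is an unsigned preimage count, not a degree, and it exceeds the degree wherever the projection folds. Moreover, the small-first-variation estimate for projections behind Allard's constancy lemma in Lemma \ref{lem: N-jacobian} relies on the blow-up limit being a flat plane, so that tangent planes are nearly horizontal. That fails for the curved limit here.

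A second error affects $2'$--$4'$. The weak limit $u_\infty$ need not be smooth or conformal in the original coordinates. It only satisfies $\frac12\lvert\nabla u_\infty\rvert^2\le NJ(u_\infty)$. Only $\bar u=u_\infty\circ\varphi^{-1}$ is harmonic and conformal, and $\varphi$ is merely quasiconformal. Its constant $K'(A)=(16A)^2$ comes from the density bound $16A$ on $u(B_{s(K)})$, which follows from hypothesis 4, the monotonicity formula, and the convex hull property (keeping $u(B_{s(K)})$ at distance at least $\frac14$ from the limit boundary curve).

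The rescaling has to be done in the original coordinates at $x$, where $\varphi$ need not be differentiable. So the blow-ups are close to the quasiconformal map $f'=\ell'^{-1}\nabla\bar u(0)\cdot\varphi(r'\cdot)\in\mathscr{D}^{\Pi'}_{K'(A)}$, with $\ell'=\lvert\nabla\bar u(0)\rvert\min_{\partial B_{r'}}\lvert\varphi\rvert/\sqrt2$. They are not close to a similarity, and $K'$ cannot be taken close to $1$.

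Finally, keeping the same $A$ in $4'$ needs a strict margin. Take a point $u(y)$ with $\pi_\Pi\circ u(y)=0$ and $\lvert u(y)\rvert\le\delta_0$. Monotonicity from the $\eta(K)$-scale bound gives integer density at most $A+\frac14$ there, hence $N\le A-\frac12$ because $A\in\mathbb{N}+\frac12$. This is where the half-integrality of $A$ and the dependence of $\bar\delta_0$ on $(A,K)$ enter, not in rounding $n$.
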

Here, $K'(A)=(16A)^2$, and $D(K),s(K)$ are constants such that for all $K$-quasiconformal map $\varphi: \C\rightarrow\C$ with $\varphi(0)=0$ and $\min_{\lvert x\rvert=1}\lvert\varphi(x)\rvert\geq 1$ there holds
\begin{align*}
    \vert\varphi(z)\vert\leq D(K)\,\forall z\in B_1(0)\text{ and }\varphi(B_{s(K)})\subset B_{1/2}(0).
\end{align*}
Moreover, $\eta(K)$ is the largest number such that for any $\varphi$ as above
\begin{align*}
    \vert\varphi(z)\vert\geq4\eta(K),\text{ for all }\vert z\vert=s(K)^2.
\end{align*}
We will chose $\bar \delta_0$ to be the largest number smaller or equal to $\min\{\frac{1}{8}, 6\eta(K'(A))\}$, such that Lemma \ref{lem: appendixB-Lem4.3} holds for $\delta_0$ (for $s=s(K)$) and such that \eqref{eq: new-bound-A} holds.
\begin{proof}
    Without loss of generality, we may assume that $x=0$. We prove the statement by contradiction. Assume that for a sequence $\varepsilon_k\rightarrow 0$ (instead of $\varepsilon_0$), there exists a sequence $(u_k, p_k, \Pi_k, \ell_k, f_k)$ which satisfies the assumptions but does not satisfy the conclusion of the Proposition. Up to subsequences, we may assume that $p_k\to p$, $\Pi_k\to \Pi$ for some $p\in \mathcal{M}$ and some plane $\Pi\in \operatorname{Gr}_2(\mathbb{R}^q)$, and--- by compactness of $(\mathscr{D}_K, \lVert\cdot\rVert_{L^\infty})$ (see Corollary A.4 in \cite{PR-Multiplicity-One})--- that $f_k$ converges to a map $f\in \mathscr{D}_K^\Pi$ in $C^0_{\text{loc}}(\C)$.
    
    By taking a further subsequence, we may assume that the sequence $\{u_k\}_{k\in \mathbb{N}}$ converges weakly in $W^{1,2}(B_1(0))$ to a map $u$.
    In fact, Assumption $2$ and the fact that $f_k$ converges to a limiting map $f$ in $C^0_{\text{loc}}(\C)$ imply that the sequence of traces $\{u_k\vert_{\partial B_1(0)}\}_{k\in \mathbb{N}}$ is bounded in $L^\infty(\partial B_1(0))$. This, together with Assumption $3$, implies that the sequence is bounded in $W^{1,2}(B_1(0))$ (otherwise, one would find a contradiction by considering the limit of the maps $\tilde{u}_k=\frac{u_k}{\lVert u_k\rVert_{L^2}}$ in $W^{1,2}(B_1(0))$), and therefore it has a subsequence converging weakly in $W^{1,2}(B_1(0))$ to some map $u$.
    The weak convergence of the traces of $u_k$ to $u$ implies (testing against functions in $L^1$) that
    \begin{align}\label{eq: delta-0-selected-circles}
        \lvert u-f\rvert\leq \delta_0\text { on }\partial B_1(0)\cup \partial B_{s(K)}(0)\cup \partial B_{s(K)^2}(0).
    \end{align}
    Note that $\{u_k(\partial B_1(0))\}_{k\in \mathbb{N}}$ is a sequence of compact sets contained in a compact region of $\mathbb{R}^Q$, therefore up to subsequences it converges in the Hausdorff distance to a compact set $\Gamma_\infty$ contained in a $\delta_0$-neighborhood of $f(\partial B_1(0))$.
    
    \textbf{Step 1} \textit{\underline{Claim}: The varifolds induced by $u_k\vert_{B_{s(K)}}$ converge to a parametrized varifold $\textbf{v}_{B_{s(K)}}=(\varphi(B_{s(K)}(0)),u\circ\varphi^{-1}, N)$, where $\varphi$ is a $K'(A)$-quasiconformal homeomorphism from $B_{s(K)}(0)$ to an open $\Omega\subset\mathbb{C}$, and $\textbf{v}_{B_{s(K)}}$ is stationary in $\mathbb{R}^q\smallsetminus \Gamma_\infty$.
    }
    \begin{proof}
    
    The sequence $\{u_k\}_{k\in \mathbb{N}}$ satisfies--- up to subsequences---  the following conditions:
    \begin{enumerate}
        \item $u_k(\partial B_1(0))\to \Gamma_\infty$ in the Hausdorff distance.
        \item $\frac{1}{2}\int_{B_1(0)}\lvert \nabla u_k\rvert^2\leq E$
        \item $\tau_k^4\log\tau_k^{-1}\int_{B_1(0)}\lvert\sff^{u_k}_{\mathbb{R}^Q}\rvert^4d\vol_{g_{u_k}}\to 0$.
        \item $u_k$ is $\theta_k$-critical for
            \begin{align*}
        \operatorname{Area}(u)+\tau_k^4\int_{B_1(0)}\vert\sff_{\mathcal{M}_{p,\ell}}^u\vert^4+h_k\vol(f_u).
        \end{align*}
        \item $h_k\to 0$, $\tau_k\to 0$, $\theta_k\to 0$ and $\theta_k\leq \tau_k^5$.
    \end{enumerate}
    
    Recall that by the Gauss-Codazzi equations, the difference between $\sff^{u_k}_{\mathbb{R}^Q}$ and $\sff^{u_k}_{\mathcal{M}_{p_k, \ell_k}}$ can be expressed in terms of the second fundamental form of ${\mathcal{M}_{p_k, \ell_k}}$, therefore this difference goes to zero as $\ell_k\to 0$, so that condition $(3)$ above follows from Assumption $5$.
    Thus arguing just as in the proof of Lemma \ref{lem: N-jacobian}, one shows that for any open $\omega\Subset B_1(0)$, the map $u\vert_\omega$ is continuous and satisfies the convex hull property, i.e.
    \begin{align*}u(\overline\omega) \subset\operatorname{co}(u(\partial\omega)),
    \end{align*}
    and if $u(\omega)\cap \Gamma_\infty=\emptyset$, the varifolds induced by $u_k\vert_\omega$ converge in $\mathbb{R}^Q\smallsetminus\Gamma_\infty$ up to subsequences to a parametrized varifold $\textbf{v}_\infty=(\varphi(\omega), u\circ\varphi^{-1}, N)$,
    where $\varphi$ is a quasi conformal homeomorphism from $\omega$ to an open set $\Omega\subset \C$.\par
    Note that, by definition of $\mathscr{D}_K$ and $s(K)$, there holds $ f(B_{s(K)}(0))\smallsetminus B_\frac{1}{2}(0)=\emptyset$ and $f(\partial B_1(0))\cap B_1(0)=\emptyset$. Thus, since $\delta_0<\frac{1}{8}$, by the convex hull property and \eqref{eq: delta-0-selected-circles}
    \begin{align*}
        u(B_{s(K)}(0))\subset\operatorname{co}(u(\partial B_{s(K)}))\subset \overline{B_\frac{5}{8}(0)}.
    \end{align*}
    Therefore
    \begin{align}\label{eq: distance-from-boundary}
        \operatorname{dist}(u(B_{s(K)}(0)), \Gamma_\infty)\geq\frac{1}{4}.
    \end{align}
    Then $\textbf{v}_{B_{s(K)}}=(\varphi(B_{s(K)}(0)), u\circ\varphi^{-1}, N)$ is stationary in $\mathbb{R}^Q\smallsetminus u(\partial B_{s(K)}(0) )$ (i.e. it is a \textit{local parametrized stationary varifold}), for some quasiconformal homeomorphism $\varphi$ from $B_{s(K)}(0)$ to some open set $\Omega\subset\C$. Without loss of generality we may assume that $\varphi(0)=0$.
    
    We claim that the map $\varphi$ can be chosen to be $K'(A)$-quasiconformal. Indeed, since $\textbf{v}_{B_{s(K)}}$ is stationary and $\lVert \textbf{v}_{B_{s(K)}}\rVert(B_1(0))\leq A\pi $ by Assumption $4$, the monotonicity formula and \eqref{eq: distance-from-boundary} imply that the density at any point in $u(B_{s(K)}(0))$ is bounded above by $16A$. Thus, arguing as in the proof of Lemma \ref{lem: 5-12-P-FB} (for more details, see Section 4 in \cite{PR-Regularity}), we see that $\varphi$ can be chosen to be $K'(A)$-quasiconformal.
    \end{proof}
    \textbf{Step 2} \textit{\underline{Claim}: There exist $r', \ell'>0$ such that for $k$ large enough,
    \begin{align*}
        u_k'=\ell'^{-1}(u_k(r'\cdot)-p'_k)
    \end{align*}
    (for some point $p'_k\in \mathcal{M}_{p_k,\ell_k}$) satisfies the conclusions of the proposition.}
    \begin{proof}
     Set $\bar{u}=u\circ\varphi^{-1}$. As $\textbf{v}_{B_{s(K)}}$ is a local parametrized stationary varifold, Theorem 5.7 in \cite{PR-Regularity} implies that $\bar u$ is harmonic and $N$ is constant. Thus by Lemma \ref{lem: appendixB-Lem4.3} (for $u(s(K)\cdot)$ and $f(s(K)\cdot)$), $\pi_\Pi\circ\overline{u}$ is injective on $\varphi(B_\frac{s(K)}{2})$, and the differential $D \overline{u}(0)$ is a conformal linear map of rank two, spanning a $2$-plane $\Pi'$.
     By Lemma \ref{lem: appendixB-LemmaA.1} (applied to $\eta(K)^{-1}\pi_\Pi\circ u(s(K)\cdot)$ and $\eta(K)^{-1}f(s(K)\cdot)$) and the fact that $\eta(K)^{-1}\delta_0<1$, there exist $y\in B_{s(K)^2}(0)$ such that $\pi_\Pi\circ u(y)=0$.
     Note that $\lvert u(y)\rvert\leq \delta_0$; indeed, since 
     \begin{align*}
         \lvert u(s(K)^2x)-f(s(K)^2x)\rvert\leq \delta_0\text{ for }x\in \partial B_1(0),
     \end{align*}
     we have
     \begin{align*}
         u(\partial B_{s(K)^2}(0))\subset\{p\in \mathbb{R}^Q\vert \lvert\pi_\Pi^\perp(p)\rvert\leq \delta_0\}.
     \end{align*}
     Therefore, since $y\in B_{s(K)^2}(0)$, by the convex hull property there holds
     \begin{align*}
         \lvert u(y)\rvert=\lvert\pi_\Pi^\perp\circ u(y)\rvert\leq \delta_0.
     \end{align*}
     Note that the varifolds induced by $u_k\vert_{B_{s(K)^2}}$ converge to a limiting varifold $\textbf{v}_{B_{s(K)^2}}$.
     Since $\lVert \textbf{v}_{B_{s(K)^2}}\rVert(B_{\eta(K)}(0))\leq \eta(K)^2 A\pi$ by Assumption $4$, the stationarity of $\textbf{v}_{B_{s(K)^2}}$ in $B_{\eta(K)}(0)$ (as $B_{\eta(K)}(0)\cap u(\partial B_{s(K)^2}(0))=\emptyset$) and the monotonicity formula imply that the density at $u(y)$ is at most
    \begin{align}\label{eq: new-bound-A}
        \frac{\eta(K)^2A\pi}{\pi(\eta(K)-\delta_0)^2}\leq A+\frac{1}{4},
    \end{align}
    Recall that the density of $\textbf{v}_{B_{s(K)^2}}$ at $u(y)$ is an integer.
    Therefore the upper bound can be improved to $[A]=A-1/2$, given that we selected $A$ to be in $\N+1/2$.
    Since $N$ is constant, by the definition of the density we have
    \begin{align}\label{eq: monotonicity-t-small}
        \Vert \textbf{v}_{B_{s(K)}}\Vert(B_t(u(0)))\leq (A-1/4)\pi t^2,\text{ for all small }t.
    \end{align}
    Next let's choose $r'$ and $\ell'$. Since $u=\bar{u}\circ\varphi$, by smoothness of $\bar{u}$ we have that for $r'>0$ small enough, for all $x\in B_1(0)$
    \begin{align}\label{eq: closeness-to-nabla-overline-u}
        &\vert u(r'x)-u(0)-\nabla\bar{u}(0)\cdot\varphi(r'x)\vert\\
        \nonumber =&\vert\bar{u}\circ\varphi(r'x)-\bar{u}(0)-\nabla\bar{u}(0)\cdot\varphi(r'x)\vert\\
        \nonumber
        \leq&\frac{1}{2\sqrt{2}D(K'(A))}\delta_0\vert\nabla\bar{u}(0)\vert\vert\varphi(r'x)\vert,
    \end{align}
    since $\nabla \overline{u}(0)\neq 0$, by Lemma \ref{lem: appendixB-Lem4.3}.   
    Let's choose $r'$ small enough so that the above inequality holds (for any $x\in B_1(0)$) and $u_k(r'\cdot)\rightarrow u(r'\cdot)$ in $C^0$ on $\de B_1(0)\cup\de B_{s(K)}(0)\cup\de B_{s(K)^2}(0)$ (up to subsequences).
    We might need to take $r'$ smaller later, but note that it will always be possible to chose it such that all these conditions are met.\par 
    Let $\lambda=\min_{\de B_{r'}}\vert\varphi\vert$
    and set $\ell'=\frac{\vert\nabla\bar{u}(0)\vert\lambda}{\sqrt{2}}$.
    Set
    \begin{align*}
        u'=\frac{u(r'\cdot)-u(0)}{\ell'},f'=\frac{\nabla\bar{u}(0)\cdot \varphi(r'\cdot)}{\ell'}.
    \end{align*}
    Then using \eqref{eq: closeness-to-nabla-overline-u} and the fact that $\lambda^{-1}\varphi(r'\cdot)\in \mathscr{D}_{K'(A)}$ (and the definition of $D(K'(A))$), we obtain 
    \begin{align}\label{eq: u'-close-to-qchom}
        \vert u'-f'\vert\leq\frac{1}{2}\delta_0\text{ on } \overline{B_1(0)}.
    \end{align}
    Recall that $\Pi'=\operatorname{Im}(D\overline{u}(0))$ and note that $f'\in \mathscr{D}_{K'(A)}^{\Pi'}$, since $\varphi\in \mathscr{D}_{K'(A)}$ and $D \overline{u}(0)$ is conformal.
    Note also that by choosing $r'$ smaller if necessary, we can ensure that $r'$, $\ell'<s(K)$. For $k$ sufficiently large, the bound $r', \ell'>\varepsilon_k$ is also satisfied.
    \par
    Let $\textbf{v}'$ be the varifold given by
    \begin{align*}
        \left(\varphi(B_{r'}),\ell'^{-1}(\overline{u}-u(0)),N\right).
    \end{align*}
    By \eqref{eq: monotonicity-t-small}, if $r'$ is small enough (so that $\ell'$ is small enough), we can also guarantee that
    \begin{align}\label{eq: control-density1}
        \frac{\Vert \textbf{v}'\Vert(B_1(0))}{\pi},\frac{\Vert \textbf{v}'\Vert(B_{\eta(K'(A))}(0))}{\pi\eta(K'(A))^2}\leq A-\frac{1}{4}.
    \end{align}
    Since $\lambda^{-1}\varphi(r'\cdot)$ is $K'(A)$-quasiconformal, $\lambda^{-1}\varphi(B_{r'}(0))\subset D(K'(A))$. Moreover, recall that for any point in $u(B_{r'})$, the density of $\textbf{v}'$ is bounded from above by $16A$, by the last part of the proof of Step 1. Thus    
    choosing $r'$ smaller if necessary we have
    \begin{align}\label{eq: control-energy}
        \int_{B_{r'}(0)}N J(d u)&\leq \frac{16A}{2}\int_{B_{D(K'(A))\lambda}(0)}\vert\nabla\bar{u}\vert^2\\
        \nonumber
        &\leq 10A\pi(D(K'(A))\lambda)^2\vert\nabla\bar{u}(0)\vert^2=20\pi AD(K'(A))^2\ell'^2.
    \end{align}
    Now, for any $k\in \mathbb{N}$ let
    \begin{align*}
        u_k': B_1(0)\to \mathcal{M}_{p_k',\ell'\ell_k},\quad z\mapsto\ell'^{-1}(u_k(r'z)-p_k'),
    \end{align*}
    where $p_k'$ is the closest point to $p'=u(0)\in T_p\mathcal{M}$ in $\mathcal{M}_{p_k, \ell_k}$. Note that since $\mathcal{M}_{p_k,\ell_k}$ converges locally to $T_p\mathcal{M}$, $\{p_k'\}_{k\in \mathbb{N}}$ converges to $p'$.
    As $u_k(r'\cdot)\rightarrow u(r'\cdot)$ in $C^0$ on $\de B_1(0)\cup\de B_{s(K)}(0)\cup\de B_{s(K)^2}(0)$ (up to subsequences), \eqref{eq: u'-close-to-qchom} implies that up to subsequences, for $k$ sufficiently large $u'_k$ satisfies Conclusion $2'$. Moreover, as $\frac{1}{2}\lvert\nabla u_k\rvert^2d\mathcal{L}^2\rightharpoonup NJ(u)d\mathcal{L}^2$ in a neighborhood of $0$ by Lemma \ref{lem: N-jacobian}, \eqref{eq: control-energy} implies that for $r'$ sufficiently small 
    \begin{align*}
        \frac{1}{2}\int_{B_1(0)}\lvert \nabla u_k'\rvert^2\to \int_{B_1(0)}NJ(du')=\ell'^{-2}\int_{B_{r'}(0)}NJ(du)\leq 20\pi AD(K'(A))^2,
    \end{align*}
    so that for $k$ sufficiently large, $u'_k$ satisfies Conclusion $3'$.
    As the varifolds $\textbf{v}_k$ induced by $u_k'$ converge to $\textbf{v}'$ by Lemma \ref{lem: Pigati-5.7}, by \eqref{eq: control-density1} Conclusion $4'$ is satisfied by $u'_k$, for $k$ sufficiently large.\par
    Next, note that by Lemma \ref{lem: appendixB-LemmaA.1}, $\pi_\Pi\circ u(B_{s(K)^2}(0))\supset B_{\eta(K)}^\Pi(0)$, and recall that $\pi_\Pi\circ \overline{u}$ is a diffeomorphism from $\varphi(\overline{B_{\frac{s(K)}{2}}(0)})$ to its image by Lemma \ref{lem: appendixB-Lem4.3}.
    Therefore
    \begin{align*}
        \frac{\lVert {\pi_{\Pi}}_\ast \textbf{v}_{B_{s(K)^2}}\rVert(B_{\eta(K)}^{\Pi}(0))}{\pi \eta(K)^2}=\frac{1}{\pi \eta(K)^2}\int_{B_{s(K)^2}(0)\cap (\pi_\Pi\circ u)^{-1}(B_{\eta(K)}^\Pi(0))}\hspace{-3.3cm}NJ(\pi_\Pi\circ u)\hspace{1.5cm}=\frac{N\operatorname{Area}(B_{\eta(K)}(0))}{\pi \eta(K)^2}=N.
    \end{align*}
    As a result, if we denote by $\textbf{v}_{B_{s(K)^2}}^k$ the varifold induced by $u_k\vert_{B_{s(K)^2}(0)}$, for any $k\in \mathbb{N}$ we have
    \begin{align*}
        \fint_{B_{\eta(K)}^{\Pi_k}(0)}\hspace{-0.6cm}N(u_k, B_{s(K)^2}(0), \Pi_k)=\frac{\lVert {\pi_{\Pi_k}}_\ast \textbf{v}^k_{B_{s(K)^2}}\rVert\left(B_{\eta(K)}^{\Pi_k}(0)\right)}{\pi \eta(K)^2}\to  \frac{\lVert {\pi_{\Pi}}_\ast \textbf{v}_{B_{s(K)^2}}\rVert\left(B_{\eta(K)}^{\Pi}(0)\right)}{\pi \eta(K)^2}=N.
    \end{align*}
    For the convergence, we are using that $\lVert {\pi_{\Pi}}_\ast \textbf{v}_{B_{s(K)^2}}\rVert(\partial B_{\eta(K)}^{\Pi}(0))=0$ by the conclusion of Lemma  \ref{lem: appendixB-Lem4.3}.
    Hence 
    $n(u_k,B_{s(K)^2}(0),B_{\eta(K)}^{\Pi_k}(0))$ is equal to $N$ for $k$ large.\\
    Next we claim that $\pi_{\Pi'}\circ\ell'^{-1}(u(B_{r's(K'(A))^2})-p')\supset B_{\eta(K'(A))}^{\Pi'}$. Indeed,
    \eqref{eq: closeness-to-nabla-overline-u} implies
    that
    \begin{align*}
        \max_{\lvert x\rvert\leq1}\left\lvert\ell'^{-1}(u(r'x)-p')-\ell'^{-1}\nabla\overline{u}(0)\cdot\varphi(r'x)\right\rvert\leq \frac{\delta_0}{2}.
    \end{align*}
    As $\psi:=\ell'^{-1}\nabla\overline{u}(0)\cdot\varphi(r'\cdot)\in \mathscr{D}_{K'(A)}^{\Pi'}$, Lemma \ref{lem: appendixB-LemmaA.1} applied to
    \begin{align*}
        \left(4\eta(K'(A))\right)^{-1}\pi_{\Pi'}\circ\ell'^{-1}(u(r's(K'(A))^2\cdot)-p')\text{ and }\left(4\eta(K'(A))\right)^{-1}\psi(s(K'(A))^2\cdot),
    \end{align*}
    implies
    \begin{align*}
        B_{\eta(K'(A))}^{\Pi'}(0)\subset B_{4\eta(K'(A))-\frac{\delta_0}{2}}^{\Pi'}(0)\subset \pi_{\Pi'}\circ\ell'^{-1}(u(B_{r's(K'(A))^2}(0))-p'),
    \end{align*}
    as desired. Now let $\textbf{v}_{k, B_{s(K'(A))^2}(0)}$ denote the varifolds induced by $u_k'\vert_{B_{s(K'(A))^2}}$, and $\textbf{v}_{B_{s(K'(A))}}'$ the parametrized varifold $(\varphi(B_{r's(K'(A))^2}), \ell'^{-1}(\overline{u}-u(0)), N)$.
    Then, arguing as above, we obtain 
    \begin{align*}
        \fint_{B_{\eta(K'(A))}^{\Pi'}(0)}\hspace{-0.6cm}N(u_k', B_{s(K'(A))^2}(0), \Pi')=\frac{\lVert {\pi_{\Pi'}}_\ast \textbf{v}_{k, B_{s(K'(A))^2}}\rVert\left(B_{\eta(K'(A))}^{\Pi'}(0)\right)}{\pi \eta(K'(A))^2}\to  \frac{\lVert {\pi_{\Pi'}}_\ast \textbf{v}'_{B_{s(K'(A))^2}}\rVert\left(B_{\eta(K'(A))}^{\Pi'}(0)\right)}{\pi \eta(K'(A))^2}=N.
    \end{align*}
    We conclude that
    $n(u_k',B_{s(K'(A))^2}(0),B_{\eta(K'(A))}^{\Pi'})$ is equal to $N$ for $k$ sufficiently large. Therefore, for $k$ sufficiently large we have
    \begin{align*}
        n\left(u_k,B_{s(K)^2}(0),B_{\eta(K)}^{\Pi_k}(0)\right)=n\left(u_k',B_{s(K'(A))^2}(0),B_{\eta(K'(A))}^{\Pi'}(0)\right),
    \end{align*}
    as desired.
    \end{proof}
    The previous step shows that for $k$ large enough, $u_k'$ satisfies the conclusion of the proposition; this yields the desired contradiction.\par
    Thus there exist $\varepsilon_0$ and $r', \ell', u', \Pi'$ as in the conclusion of the proposition, and by Lemma \ref{lem: critical-rescaled-functional}, $u'$ is $\theta\ell'^{-\frac{5}{2}}$-critical for the rescaled functional.
\end{proof}

With this proposition in hand, we are able to establish the multiplicity one in the limit.

\begin{thm}\label{thm: multiplicity-one}
    The multiplicity function $N$ is equal to $1$ almost everywhere on $\Sigma$.
\end{thm}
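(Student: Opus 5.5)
The plan is to fix $x\in\mathcal{G}'$ and show $N(x)=1$; since $\mathcal{G}'$ has full $\nu$-measure, this gives the statement on the main component. The bubble components are handled by repeating the argument on the rescaled sequences described at the end of Subsection \ref{subsec: degeneration}.

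By \eqref{eq: obtaining-N-in-the-limit}, $N(x)=\lim_i n(v_{r_i,k_i},B_1(0),B^\Pi_{\alpha/2}(p))$. Here $r_i\to0$ satisfies \eqref{eq: condition-radii-1}, and $k_i\to\infty$ grows fast enough. Choose $k_i$ so that, in addition:
\begin{itemize}
\item the conclusions of Lemma \ref{lem: convergence-blow-up} hold;
\item $u_{k_i}|_{\partial B_{r_i}}\to u|_{\partial B_{r_i}}$ in $C^0$;
\item $\theta_{k_i}r_i^{-5/2}\to0$ and $\tau_i:=\sigma_{k_i}/r_i\to 0$;
\item by \eqref{eq: localized control on second fundamental form} and Lemma \ref{lem: critical-rescaled-functional}, $\tau_i^4\log\tau_i^{-1}\int_{B_1}|\sff^{v_i}|^4\,d\vol_{g_{v_i}}\lesssim \delta_{k_i}r_i^{-2}\nu_{k_i}(B_{r_i})\to0$.
\end{itemize}
After normalizing by a fixed linear rescaling (replacing $B_{2\alpha}$ by a unit-size disk and adjusting $\ell$), we take $f=du(x)[\cdot]$ suitably normalized. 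Since $du(x)$ is conformal of rank two, $f$ lies in $\mathscr{D}^\Pi_1\subset\mathscr{D}^\Pi_K$. With $A=N(x)+1/2$, or any half-integer above the bound $2C_{\mathcal{M},|\mathbf{v}|(\mathcal{M})}$, the map $v_i$ then satisfies hypotheses 1–5 of Proposition \ref{prop: iteration lemma} for $i$ large. Hypothesis 2 uses the $C^0$ closeness on the three circles; this can be arranged by choosing the radii via Fubini, so that the traces on the rescaled circles converge uniformly as in \eqref{eq: condition-radii-1}. Moreover, $n(v_i,B_{s(K)^2},B^\Pi_{\eta(K)})=N(x)$ for $i$ large, by the constancy argument of Lemma \ref{lem: N-jacobian}, since the blow-up limit is $N(x)\mathcal{H}^2$ on a flat disk.

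Next, apply Proposition \ref{prop: iteration lemma} iteratively. The output has the form required by the input, with $K$ replaced by $K'(A)$ and $E$ by $E'(A)$; a fixed $\delta_0<\bar\delta_0(A,K'(A))$ works from the second step on. The curvature parameter becomes $\tau\ell'^{-1}$, the criticality becomes $\theta\ell'^{-5/2}$, and $h$ becomes $\ell' h$, so $h$ only decreases. The integrated entropy quantity is multiplied by at most $\ell'^{-2}$, by Lemma \ref{lem: critical-rescaled-functional}. Since $\ell'\ge\varepsilon_0$ at each step and $\varepsilon_0$ depends only on the fixed data after the first step, after $j$ steps every smallness parameter has grown by at most $\varepsilon_0^{-Cj}$. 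Because $\tau_i\to0$, $\theta_{k_i}\le\sigma_{k_i}^5$, and the entropy quantity tends to $0$, for each fixed $j$ we can take $i$ large enough that $j$ iterations are allowed. Each step preserves the averaged multiplicity $n$ (conclusion 5'). Thus $n(v_i^{(j)},B_{s^2},B^{\Pi^{(j)}}_{\eta})=N(x)$ for all such $j$, where $v_i^{(j)}$ is a rescaling of $u_{k_i}$ around a point with total domain scale $\rho_j\le s(K)^j$ and image scale $L_j\le s(K)^j r_i$.

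The main obstacle is the final step: choosing $j=j(i)$ so that the Langer–Breuning mechanism applies. The iteration runs until the image scale $L_j$ is comparable to the curvature scale, that is, until $\tau^{(j)}=\sigma_{k_i}/L_j$ becomes of order a small fixed constant rather than tending to zero. At that scale, conclusions 3' and 4' bound the energy and area. In addition, the $\log\tau^{-1}$ factor in the entropy condition forces the scale-invariant quantity $(\tau^{(j)})^4\int|\sff|^4$, equivalently $\int|\sff|^2$ at that scale, to be small on $B_1$. I expect one must pick, among the roughly $\log\tau_i^{-1}$ available dyadic scales, one where the $\sff$-energy is at most average; this is where the logarithm is used. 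Smallness of $\|\sff\|_{L^4}$ in the rescaled $W^{2,4}$ sense, combined with the $C^0$ closeness to a quasiconformal map onto $\Pi^{(j)}$ and a Langer-type graphical decomposition, shows that $v^{(j)}_i(B_{s^2})$ is a single graph over $\Pi^{(j)}$ near $B_\eta$, and that $\pi_{\Pi^{(j)}}\circ v_i^{(j)}$ is injective there. Then $N(v_i^{(j)},B_{s^2},\Pi^{(j)})\le1$ on $B_\eta^{\Pi^{(j)}}$, while surjectivity from Lemma \ref{lem: appendixB-LemmaA.1} gives $n=1$. Hence $N(x)=1$. Controlling the interplay between the number of iterations and the growth of $\theta\ell'^{-5/2}$ and of the entropy, so that the final step still has uniform hypotheses, is the delicate bookkeeping I expect to need most care.
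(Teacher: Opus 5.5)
Your overall architecture matches the paper's. You blow up at $x\in\mathcal{G}'$, check the hypotheses of Proposition \ref{prop: iteration lemma}, and iterate while preserving the averaged multiplicity. You stop once the rescaled curvature parameter reaches a fixed threshold $\varepsilon_\ast$, and finish with Lemma \ref{lem: appendixB-LemmaA.1'}.

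\textbf{The gap.} The gap sits exactly in the step you call bookkeeping, and the route you sketch for it cannot work.

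\emph{Unbounded number of steps.} Write $L_i^{(j)}$ for the product of the first $j$ scale factors $\ell'$. The stopping index $m_i$ has $L_i^{(m_i)}\approx \tau_i/\varepsilon_\ast$. Since each $\ell'\ge\varepsilon_0$, this forces $m_i\gtrsim\log\tau_i^{-1}\to\infty$. So ``for each fixed $j$, take $i$ large'' never reaches the curvature scale.

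\emph{Loss of hypothesis 5.} Suppose you propagate hypothesis 5 through Lemma \ref{lem: critical-rescaled-functional}, paying $\ell'^{-2}$ per step. At the stopping index the accumulated factor is $(L_i^{(m_i)})^{-2}\gtrsim(\varepsilon_\ast/\tau_i)^{2}\to\infty$. All you know about the initial quantity is that it is $o(1)$, of size $\delta_{k_i}$, with no rate relative to $\sigma_{k_i}$. So hypothesis 5 is lost long before the curvature scale is reached.

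\emph{Pigeonholing does not help.} The scales are produced by the Proposition; you do not choose them. The final scale is forced to be the one where $\tau_i/L_i^{(j)}\approx\varepsilon_\ast$. Lemma \ref{lem: appendixB-LemmaA.1'} needs $\int_{B_1}\lvert\sff\rvert^4$ small at that particular scale, not on average over dyadic scales.

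\textbf{The missing idea.} Use \eqref{eq: localized control on second fundamental form} at every step, not only the first; this is why it is built into the definition of $\mathcal{G}'$. It holds at $x$ for all radii. Proposition \ref{prop: iteration lemma} always rescales the domain about the same centre $x$ and only translates the target. Hence $v_i^{(j)}\vert_{B_1}$ is a rescaling of $u_{k_i}\vert_{B_{r_iR_i^{(j)}}(x)}$.

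The ratio of $\tau^4\int\lvert\sff\rvert^4$ to the area is invariant under such rescalings. Setting $\tau_i^{(j)}:=\tau_i/L_i^{(j)}$, you get, uniformly in $j$,
\[
(\tau_i^{(j)})^4\log(\tau_i^{(j)})^{-1}\int_{B_1}\lvert\sff^{v_i^{(j)}}\rvert^4\,d\vol_{g_{v_i^{(j)}}}\le(\tau_i^{(j)})^4\log\tau_i^{-1}\int_{B_1}\lvert\sff^{v_i^{(j)}}\rvert^4\,d\vol_{g_{v_i^{(j)}}}\le\delta_{k_i}\operatorname{Area}(v_i^{(j)}\vert_{B_1})\le\delta_{k_i}E'(A).
\]

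The other hypotheses survive any number of steps:
\begin{itemize}
\item $h$ and the ambient scale only decrease.
\item Conclusions 2'--4' reproduce hypotheses 2--4 with the fixed constants $K'(A),E'(A),A$.
\item The ratio $\theta/\tau^5$ only improves, since $\theta\mapsto\theta\ell'^{-5/2}$ while $\tau^5\mapsto\tau^5\ell'^{-5}$.
\end{itemize}

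So you can iterate up to the first index $m_i$ with $\tau_i^{(m_i)}\ge\varepsilon_\ast$. The display then gives directly
\[
\int_{B_1}\lvert\sff^{v_i^{(m_i)}}\rvert^4\,d\vol_{g_{v_i^{(m_i)}}}\le\frac{\delta_{k_i}E'(A)}{\varepsilon_\ast^4\log\tau_i^{-1}}\longrightarrow 0 .
\]
Together with the vanishing ambient scale, this is what Lemma \ref{lem: appendixB-LemmaA.1'} requires; no selection among scales is needed. Then $n=1$ at step $m_i$, and conclusion 5' carries it back to $v_i$.

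\textbf{A smaller point.} $du(x)$ is not conformal in general. Here $u$ is the weak limit in a fixed chart and becomes conformal only after the quasiconformal change of Lemma \ref{lem: 5-12-P-FB}. By \eqref{eq: N-controls-deformation}, $du(x)$ is quasiconformal with dilatation controlled by $\lVert N\rVert_{L^\infty}$. So $K$ must be taken of that size, as the paper does, not $K=1$.
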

\begin{proof}
    Note that it will be enough to show that for $x\in \mathcal{G}'$, $N(x)=1$, since
    \begin{align*}
        \int_{\Sigma\smallsetminus\mathcal{G}'}\lvert\nabla u\rvert^2d\vol_{g_0}=0,
    \end{align*}
    by the definition of $\mathcal{G}'$.\\
    Let $x\in \mathcal{G}'$. We work in a conformal chart centered at $x$.\\
    
    \textbf{Step 1} \textit{\underline{Claim}: there exist a constant $\lambda$ and sequences $r_i\to 0$, $k_i\to \infty$ such that $r_i>i^{-1}$ and  for $i$ large enough, $\lambda v_{r_i, k_i}$ satisfies the assumptions of Proposition \ref{prop: iteration lemma}, for constants $E$, $K$ and $A$ depending only on $\bar{A}:=\sup_k\operatorname{Area}(u_k)$ and $\mathcal{M}$.}\\

    Set $\lambda:=\left(\min_{\lvert y\rvert=1}\lvert Du(0)y\rvert\right)^{-1}$ and
    \begin{align*}
        E:=3\lVert N\rVert_{L^\infty}^2\pi,\quad K:=\lVert N\rVert_{L^\infty},\quad A\geq\frac{3\lVert N\rVert_{L^\infty}^2}{\eta(K)^2},
    \end{align*}
    where $A$ is the smallest number in $\mathbb{N}+\frac{1}{2}$ satisfying the inequality above. As discussed in the proof of Lemma \ref{lem: N-jacobian}, $\lVert N\rVert_{L^\infty}$ can be controlled in terms of $\mathcal{M}$ and $\lvert\textbf{v}\rvert(\mathcal{M})$ (which is bounded by $\overline{A}$).
    Let $\bar\delta_0$, $\bar\delta_0'$  be the constants in Proposition \ref{prop: iteration lemma} corresponding respectively to $K,A$ and $K'(A), A$. Let $\delta_0:=\frac{\min\{\bar \delta_0, \bar \delta_0'\}}{2}$ and let $\varepsilon_0$ and $\varepsilon_0'$ be the constants in Proposition \ref{prop: iteration lemma} corresponding respectively to \(E,A,K,\delta_0,\mathcal M\) and \(E'(A),A,K'(A),\delta_0,\mathcal M\). Set $\varepsilon_\ast:=\min\{\varepsilon_0,\varepsilon_0'\}$.
    By Lemma \ref{lem: convergence-blow-up}, we can pick sequences ${r_i}\to 0$ and ${\overline k_i}$, and a subsequence of $k_i$ (not relabeled) such that $r_i>i^{-1}$ and $v_{{r_i},{k_i} }\rightharpoonup D u(0)[\cdot]$ weakly in $W^{1,2}(B_1(0))$ for any sequence $\{k_i\}_{i\in \mathbb{N}}$ with $k_i\geq \overline k_i$, and such that
\begin{align*}
    \lvert \lambda v_{r_i, k_i}(y)-\lambda D u(0)[ y]\rvert\leq \delta_0 \text{ for }y\in\de B_1(0)\cup\de B_{s(K)}(0)\cup\de B_{s(K)^2}(0)
\end{align*}
when $i$ is sufficiently large. Note that $\lambda D u(0)[\cdot]\in \mathscr{D}_K^{\Pi}$ (where $\Pi=Du(0)[T_0\Sigma]$). Therefore Assumption $2$ is satisfied.\par
By \eqref{eq: condition-convergence-nuk-nuinfty}, choosing the numbers $\overline k_i$ to be sufficiently large, we can ensure that
\begin{align*}
    \lim_{i\to\infty}\frac{1}{2}\int_{B_{1}(0)}\lvert \nabla \lambda v_{r_i,k_i}\rvert^2d\vol_{g_0}=N(0)\lambda^2\pi\lvert\partial_x u\wedge\partial_yu\rvert(0)\leq 2N(0)^2\pi.
\end{align*}

Thus with our choice of $E$ Assumption 3 is satisfied. Moreover, since the maps $\lambda v_{r_i, k_i}$ are conformal, Assumption 4 is also satisfied by our choice of $A$.
    Finally note that by Lemma \ref{lem: critical-rescaled-functional},
if $u_k$ is $\theta_k$-critical for $E_{H,\sigma_k}$, then $\lambda v_{r_i, k}$ is $\theta_k (\lambda r_i^{-1})^{\frac{5}{2}}$-critical for $E_{ \lambda^{-1}r_iH, \sigma_k \lambda r_i^{-1}}$, and, if $\tau_i=\sigma_{k_i}\lambda r_i^{-1}$,
    \begin{align}\label{eq: control-entropy-upto-r}
        \tau_i^4 \log\tau_i^{-1}\int_{B_1(0)}\lvert\sff^{\lambda v_{r_i, k}}\rvert^4d\vol_{\lambda v_{r_i, k_i}}\leq \lambda^{2}r_i^{-2}\sigma_{k}^4\log\sigma_{k}^{-1}\int_{B_{r_i}(x)}\lvert\sff^{u_{k}}\rvert^4d\vol_{u_k}.
    \end{align}
Therefore, increasing $\bar{k}_i$ if necessary, we can ensure that for $i$ sufficiently large, for any $k_i\geq \bar k_i$, $\lambda v_{r_i, k_i}$ is $\theta_i$-critical for $E_{\tau_i, h_i}$ (where $h_i=\lambda^{-1}r_i H$) with $\tau_i,  h_i\to 0$ and $\theta_i\leq \tau_i^5$, so that Assumption $1$ is satisfied. On the other hand, choosing $\bar{k}_i$ larger if necessary, by \eqref{eq: control-entropy-upto-r} we can guarantee that Assumption $5$ is also satisfied for $\varepsilon_0$ corresponding to $E,K$ and $A$.

For sequences $\{r_i\}_{i\in \mathbb{N}}$ and $\{k_i\}_{i\in \mathbb{N}}$ as in the previous step, set $v_i:=\lambda v_{r_i, k_i}$.\\
\\
\textbf{Step 2} \textit{Applying Proposition \ref{prop: iteration lemma} iteratively, we show that
\begin{align*}
    n\left(v_i, B_{s(K)^2}(0), B_{\eta(K)}^{\Pi}(0)\right)=1,
\end{align*}
and we deduce that $N(x)=1$.}\\

For $i$ as above and $j\in \mathbb{N}$, we want to apply Proposition \ref{prop: iteration lemma} iteratively to obtain maps
\begin{align*}
    v_i^{(j)}:= \left(\ell^{(j)}_i\right)^{-1}\left(v_i^{(j-1)}(r^{(j)}_i\cdot)-p'^{(j)}_i\right),
\end{align*}
for $j\in \{1,...,m_i\}$
satisfying the conclusions of Proposition \ref{prop: iteration lemma}, where $v_i^{(0)}:=v_i$ and $\ell^{(j)}_i$, $r^{(j)}_i$, $p^{(j)}_i$ are the constants and the point given by Proposition \ref{prop: iteration lemma} at the $j$-th iteration.
For $i,j\in \mathbb{N}$, let
\begin{align*}
    L_i^{(j)}:=\left(\prod_{a=1}^j\ell^{(a)}_i\right), \quad  R_i^{(j)}:=\left(\prod_{a=1}^j r^{(a)}_i\right),\quad \tau_i^j:=\frac{\tau_i}{L_i^{(j)}}.
\end{align*}
(where the numbers $\ell^{(j)}$, $r^(j)$ are defined as long as we can iterate Proposition \ref{prop: iteration lemma})
Assume that $i$ is sufficiently large so that $\tau_i<\varepsilon_\ast$ and let $m_i-1$ be the largest integer $j$ such that $\tau_i^j< \varepsilon_\ast$. Then we can apply the proposition iteratively $m_i$ times.
In fact, as long as $\tau_i^j<\varepsilon_\ast$, at any step of the iteration the first four assumptions in Proposition \ref{prop: iteration lemma} are easily seen to be satisfied. Regarding Assumption 5, we note that if $k_i$ is sufficiently large so that the constants $\delta_k$ in \eqref{eq: localized control on second fundamental form} are smaller than $\frac{\varepsilon_\ast}{2E'}$, then for $j\leq m_i$
\begin{align*}
    (\tau_i^j)^4\log(\tau_i^j)^{-1}\int_{B_1(0)}\lvert \sff^{v_i^{(j)}}\rvert^4d\vol_{g_{v_i^{(j)}}}\leq & (L_i^{(j)})^{-2}\tau_i^4\log \tau_i^{-1}\int_{B_{R_i^{(j)}}(0)}\lvert \sff^{v_i}\rvert^4d\vol_{g_{v_i}}\\
    \leq & (L_i^{(j)})^{-2}\frac{\varepsilon_\ast}{2E'}\int_{{B_{R_i^{(j)}}(0)}}d\vol_{g_{v_i}}=\frac{\varepsilon_\ast}{2E'}\int_{B_1(0)}d\vol_{g_{v_i^{(j)}}}\leq \varepsilon_\ast.
\end{align*}
Observe that by choice of $m_i$, $v_i^{(m_i)}$ satisfies
\begin{align*}
    \int_{B_1(0)}\lvert\sff^{v_i^{(m_i)}}\rvert^4d\vol_{g_{v_i^{(m_i)}}}\leq \frac{\varepsilon_\ast}{(\tau_i^{(m_i)})^4\log\tau_i^{-1}}\leq \frac{1}{\varepsilon_\ast^3\log\tau_i^{-1}}.
\end{align*}
Therefore for $i$ sufficiently large we have that
\begin{align*}
    \int_{B_1(0)}\lvert\sff^{v_i^{(m_i)}}\rvert^4d\vol_{g_{v_i^{(m_i)}}}\leq \varepsilon_{E', K', \delta_0} \text{ and }r_iL_i^{(m_i)}\leq \varepsilon_{E', K', \delta_0},
\end{align*}
where $\varepsilon_{E', K', \delta_0}$ is the constant in Lemma \ref{lem: appendixB-LemmaA.1'} corresponding to $E', K'$ and $\delta_0$.
Then by Lemma \ref{lem: appendixB-LemmaA.1'}, $\pi_{\Pi^{(m_i)}}\circ v_i^{(m_i)}$ is a diffeomorphism from $\overline{B_{s(K'(A))^2}(0)}$ onto its image. Then for such $i$ we have
\begin{align*}
    n\left(v_i^{(m_i)}, B_{s(K')^2}(0), B_{\eta(K')}^{\Pi^{(m_i)}}(0)\right)=1.
\end{align*}
Now by Proposition \ref{prop: iteration lemma} we have
\begin{align*}
    n\left(v_i, B_{s(K)^2}(0), B_{\eta(K)}^{\Pi}\right)=n\left(v_i^{(1)}, B_{s(K')^2}(0), B_{\eta(K')}^{\Pi^{(1)}}(0)\right)
\end{align*}
and for any $j\in \{2,...,m_i\}$
\begin{align*}
    n\left(v_i^{(j-1)}, B_{s(K')^2}(0), B_{\eta(K')}^{\Pi^{(j-1)}}(0)\right)=n\left(v_i^{(j)}, B_{s(K')^2}(0), B_{\eta(K')}^{\Pi^{(j)}}(0)\right),
\end{align*}
so that for $i$ sufficiently large
\begin{align}\label{eq: mult-one-final}
    n\left(v_i, B_{s(K)^2}(0), B_{\eta(K)}^{\Pi}\right)=1.
\end{align}
Rescaling the domain by a factor $s(K)^2$ and the target by a factor $\lambda^{-1}$, we see that \eqref{eq: obtaining-N-in-the-limit} together with \eqref{eq: mult-one-final} imply that $N(x)=1$.
\end{proof}
Combining Theorem \ref{thm: convergence to H-pv} and Theorem \ref{thm: multiplicity-one}, we see that the map $u$ satisfies the following property: for almost any $\omega\subset \Sigma$, for any vector field $X$ in $\mathcal{M}$ supported away from $u(\partial \omega)$ there holds
    \begin{align}\label{eq: weak form H-CMC}
        \sum_{i}^2\int_\omega \langle \partial_i u,DX\partial_iu\rangle dx=H\int_\omega X\cdot (\partial_{x_1}u\times\partial_{x_2}u)=H\int_\omega u^\ast \alpha_X,
    \end{align}
    where $\alpha_X=\ast X^\flat$.


\begin{thm}\label{thm: smoothness-and-equation}
    The map $u$ is smooth, weakly conformal and satisfies
    \begin{align}\label{eq: CMC-eq-1}
        \operatorname{tr}_g\nabla du=H\nu,
    \end{align}
    where $\nu$ is the unit vector field in $u^\ast T\mathcal{M}$ normal to $u(\Sigma)$ induced by $\partial_{x_1}u\times\partial_{x_2}u$ (where $\frac{\partial}{\partial x_1}$, $\frac{\partial}{\partial x_2}$ is a local oriented frame of $T\Sigma$). Equivalently, in conformal coordinates,
    \begin{align*}
        \Delta u+A(\nabla u,\nabla u)=H\,\partial_x u\times\partial_y u.
    \end{align*}
\end{thm}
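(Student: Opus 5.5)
We start from the weak identity \eqref{eq: weak form H-CMC}, valid for a.e.\ $\omega$ and all $X$ supported away from $u(\partial\omega)$, together with the weak conformality of $u$ given by Lemma \ref{lem: 5-12-P-FB} with $N\equiv 1$ (Theorem \ref{thm: multiplicity-one}). The plan is to turn \eqref{eq: weak form H-CMC} into an elliptic system for $u$ in conformal coordinates, following Rivière's treatment of target harmonic maps \cite{Riviere-Targetharmonicmap}, and then bootstrap regularity.

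\textbf{Step 1: localization and continuity.} We work on a small disk $B\subset\tilde\Sigma$ in a conformal chart. By Lemma \ref{lem: limit of measures}, $u$ is continuous on $\tilde\Sigma$. Hence for small $B$ the image $u(B)$ lies in a small ball $B_\rho(q)\subset\mathcal M$. Away from the finitely many atoms, we pick $\omega\Subset B$ among the a.e.\ good domains. Variations of the form $X(y)=\phi\, Y(y)$ with $Y$ smooth are admissible as long as they are supported away from $u(\partial\omega)$. \emph{The key subtlety} is that \eqref{eq: weak form H-CMC} only allows \emph{target} vector fields $X$, not arbitrary domain test functions $\psi(x)$. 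The standard way around this is Rivière's trick: by coarea/slicing, for a.e.\ level set of a domain cutoff one can approximate domain test functions. Concretely, we choose $\omega$ as super-level sets $\{\rho>\lambda\}$ and integrate in $\lambda$ (layer-cake). For $X$ supported in $B_\rho(q)\smallsetminus u(\partial\omega_\lambda)$ with $u(\partial\omega_\lambda)$ small curves, averaging over $\lambda$ produces identities of the form
\[
\int_B \rho\,\big(\langle\nabla u, \nabla(X\circ u)\rangle - H\,(X\circ u)\cdot \partial_1u\times\partial_2u\big)\,dx \;=\; -\int_B (X\circ u)\cdot \nabla u\cdot\nabla\rho\,dx + \text{(boundary-curve error)}.
\]
Choosing $X$ to be coordinate-type fields (the tangential projection of constant vectors, times cutoffs) yields the system
\[
\Delta u + A_u(\nabla u,\nabla u)=H\,\partial_x u\times\partial_y u
\]
in the distributional sense on $B$. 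We must handle the issue that $X$ must vanish near $u(\partial\omega)$. When $u$ is nonconstant, $u(\partial\omega_\lambda)$ is a curve of zero $\mathcal H^2$-measure weighted by $|du|^2$ for a.e.\ $\lambda$. We cut off near it with cutoffs whose gradient contribution vanishes in the limit, using the area-ratio bounds \eqref{eq: control-density} as in the proof of Lemma \ref{lem: 5-12-P-FB}. I expect this conversion from the varifold-type identity to the PDE to be the \emph{main obstacle}. If the direct cutoff fails, I would instead follow \cite{Riviere-Targetharmonicmap} literally, testing with $X=\nabla F$ for smooth functions $F$ on $\mathcal M$ to obtain $\operatorname{div}(\nabla(F\circ u))$-type equations.

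\textbf{Step 2: regularity.} Once the equation holds weakly with $u\in W^{1,2}\cap C^0$, the right-hand side $A(\nabla u,\nabla u)+H\,u_x\times u_y$ is quadratic in $\nabla u$. Since $u$ is continuous, the standard theory for harmonic-map-type systems with continuous solutions applies: $C^{0,\alpha}$ via small oscillation (Hildebrandt–Widman / Ladyzhenskaya–Ural'tseva), then $W^{2,p}$ and Schauder bootstrapping. Alternatively, the $H$-term has Jacobian (Wente) structure, and Hélein/Rivière give continuity directly. This yields smoothness on $\tilde\Sigma$. At the atoms, $u$ has finite energy and satisfies the equation on a punctured disk. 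Removable singularity for $H$-harmonic maps of finite energy, or equivalently a capacity argument showing the equation holds across a point, extends smoothness.

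\textbf{Step 3: geometric form.} Weak conformality of the smooth map $u$ gives $g_u=\tfrac12|\nabla u|^2 g_0$ wherever $du\neq 0$. Then $\operatorname{tr}_{g_u}\nabla du = \tfrac{2}{|\nabla u|^2}(\Delta u + A(\nabla u,\nabla u))^{T}$, while $u_x\times u_y = \tfrac12|\nabla u|^2\nu$. This gives \eqref{eq: CMC-eq-1} away from the branch points, which are isolated by Hartman–Wintner.
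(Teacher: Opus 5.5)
There is a genuine gap in Step 1, exactly where you suspected. You propose to cut off in the target near $\Gamma=u(\partial\omega_\lambda)$ with $\chi_\eta=\chi(\operatorname{dist}(\cdot,\Gamma)/\eta)$ and claim the gradient contribution vanishes, by the area bounds \eqref{eq: control-density} as in Lemma \ref{lem: 5-12-P-FB}. This does not work. Quadratic area bounds give zero capacity only for finitely many points. For a curve of length $L$, the collar of $u(\omega_\lambda)$ along $\Gamma$ has mass of order $\eta L$ inside the $\eta$-neighbourhood of $\Gamma$, so $\int\lvert\nabla\chi_\eta\rvert\,d\lVert\mathbf v_{\omega_\lambda}\rVert$ stays of order $L$ and does not tend to zero.

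More fundamentally, the term $\int_{\omega_\lambda}\langle (X\circ u)\otimes d(\chi_\eta\circ u),du\rangle$ \emph{must not} vanish. Its limit has to be the flux $-\int_{\partial\omega_\lambda}\langle X\circ u,\partial_\nu u\rangle\,d\mathcal H^1$. After layer-cake integration, this is precisely the term $-\int_B (X\circ u)\cdot\nabla u\cdot\nabla\rho$ in your own display; if it vanished, you would obtain a wrong identity with no boundary term. Identifying this limit requires knowing how $u$ approaches $\Gamma$ from inside $\omega_\lambda$. It also requires controlling other parts of $u(\omega_\lambda)$ that pass through $\Gamma$. Neither is available for a mere $W^{1,2}\cap C^0$ weakly conformal map. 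Your fallback with $X=\nabla F$ has the same problem, since $\nabla F$ must still vanish near $u(\partial\omega)$.

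The missing ingredient is local smoothness at generic points, proved \emph{before} deriving the PDE. At $x\in\mathcal G'$ one has $N(x)=1$, the differential has rank two, and $x$ is a Lebesgue point of $\lvert du\rvert^2$. Hence $\mathbf v_{B_r}$ has bounded generalized mean curvature away from $u(\partial B_r)$ and density ratio close to one near $u(x)$. Allard's theorem then gives a multiplicity-one $C^{1,\alpha}$ graph $S$ containing $u(B_{r'})$. Injectivity of $u$ on the good set, together with a degree argument, shows $u=(\mathrm{id}\times f)\circ\Psi$ with $\Psi$ of constant orientation. So $u$ satisfies the Cauchy--Riemann equations in conformal coordinates of $S$, and is smooth and solves the equation near $\mathcal G'$.

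Only with this in hand can the cutoff terms be evaluated. The paper proceeds in three parts:
\begin{itemize}
\item Along boundary arcs in the good set, $u$ is an immersion and the image arcs can be taken transverse, so the flux limit can be computed directly.
\item The contributions of other sheets crossing $\Gamma$ are eliminated by integrating by parts, using the equation on the good set.
\item A cutoff whose gradient contribution vanishes is used only near the image of the part of $\partial\omega_\lambda$ where $du=0$, which has small $\mathcal H^1$-measure (as in Proposition 7.7 of \cite{Pigati-FB}).
\end{itemize}
Your Steps 2 and 3 are fine once the weak equation is established.
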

\begin{proof}
    For any open set $U\subset\Sigma$, let $\textbf{v}_U$ be the varifold induced by $u\vert_U$. Note that for almost any open $\omega\subset\Sigma$, $\textbf{v}_\omega$ is an integral varifold  with  generalized mean curvature in  $\mathcal{M}\smallsetminus u(\partial \omega)$ bounded by $H$.\\
\\
    \textbf{Step 1}: \textit{\underline{Claim}: $u$ is continuous and condition \eqref{eq: weak form H-CMC} holds for any domain $\omega\subset\Sigma$.}\\

    $u$ has a continuous representative by Lemma \ref{lem: limit of measures}.
    For any open $\omega\subset \Sigma$, let $X$ be a vector field on $\mathcal{M}$ supported away from $u(\partial \omega)$. Let $\rho$ be a function in $C^\infty_c(\omega)$ equal to $1$ on $u^{-1}(\operatorname{supp(X)})\cap\omega$. As \eqref{eq: weak form H-CMC} holds for $\{x\in \omega\vert\rho(x)>\lambda\}$ for a.e. $\lambda\in (0,1)$, we conclude that it also holds for $\omega$.\\
    Now recall the definition of the set $\mathcal{G}'$ at the beginning of Section 3.\\
    \\
    \textbf{Step 2}: \textit{\underline{Claim:} $u$ is smooth in a neighborhood of $\mathcal{G}'$}\\
    
    Set $\mathcal{B}':=\Sigma\smallsetminus \mathcal{G}'$, $\mathcal{B}:=u^{-1}(u(\mathcal{B}'))$ and $\mathcal{G}:=\Sigma\smallsetminus\mathcal{B}$.
    
Let $x\in \mathcal{G}'$. We work in a conformal chart centered at $x$. As $x$ is a Lebesgue point for $\lvert\nabla u\rvert^2$,
\begin{align*}
    \frac{1}{2}\int_{B_r}\lvert\nabla u\rvert^2=\pi s^2+o(r^2),
\end{align*}
where $s=\lvert\partial_1u\rvert(0)r$. Moreover for arbitrarily small radii $r>0$ we have
\begin{align*}
    u(ry)=u(0)+du(0)[ry]+o(r)\text{ for }\lvert y\rvert=1
\end{align*}
by Lemma A.4 in \cite{PR-Regularity}. Thus for $r$ small enough we can apply Allard's regularity theorem to the varifold $\textbf{v}_{B_r}$ (which has generalized mean curvature bounded in $L^\infty$ as a varifold in $\mathbb{R}^Q$) in the ball $B_{(1-\delta)s}(u(0))$, where $\delta$ is chosen to be sufficiently small. Allard's result implies that for some $\theta>0$, the varifold $\textbf{v}_{B_r}$ agrees--- up to rigid motions--- in $B_\theta(u(0))$ with the graph $S$ of a $C^{1,\alpha}$ function $f: \mathbb{R}^2\to\mathbb{R}^{Q-2}$, with multiplicity one.
Then for some $r'\in (0,\theta)$ $\lvert \mathbf{v}_{B_{r'}}\rvert$ is supported in $S$. Let $\tilde{\mathcal{G}}:=\mathcal{G}\cap\overline{B_{r'}}$. Then $u(y)\in S$ for any $y\in \tilde{\mathcal{G}}$. Considering $\operatorname{dist}(u,S)$ we conclude that $u(\overline{B_{r'}^2})\subset S$. Thus $u\vert_{B_{r'}}$ factors as $(\operatorname{id}\times f)\circ \Psi$ for some $\Psi\in C^0\cap W^{1,2}(\overline{B_{r'}},\mathbb{R}^2)$.  By the chain rule, any point $y\in \tilde{\mathcal{G}}$ is Lebesgue for $d\Psi$ and $d\Phi(y)$ is invertible.
As $\textbf{v}_{B_{r'}}$ has multiplicity $1$, $u$ is injective on $\tilde{\mathcal{G}}$. Thus for any $y\in\tilde{\mathcal{G}}$, $u(y)\notin u(\overline{B_{r'}}\smallsetminus\{y\})$ (as if $u(y')=u(y)$ for $y'\in \overline{B}_{r'}$, then $y'\in \tilde{\mathcal{G}}$), and the same holds for $\Psi$.
Therefore, for $y\in\tilde{\mathcal{G}}$ close to $0$, $\Psi\vert_{\partial B_{r'}(0)}-\Psi(y)$ and $\Psi\vert_{\partial B_t(y)}-\Psi(y)$ (for $t$ small) induce the same element of $\pi_1(\mathbb{R}^2\smallsetminus\{0\})$. If $y$ is sufficiently close to $0$, the first curve is homotopic (in $\mathbb{R}^2\smallsetminus\{0\}$) to $\Psi\vert_{\partial B_{r'}(0)}-\Psi(0)$, while the second is homotopic to $d\Psi(y)$ (again by Lemma A.4 in \cite{PR-Regularity}, as $y\in \tilde{\mathcal{G}}$). Thus $d\Psi$ is always orientation preserving or always orientation reversing on $\tilde{\mathcal{G}}$ near $0$. Thus $u$ in local conformal coordinates for $S$, solves the Cauchy-Riemann equations (up to conjugation) near $0$, so that $u$ is smooth near $0$.
This concludes the proof of the claim.\\
In particular, it follows from \eqref{eq: weak form H-CMC} that $u$ solves \eqref{eq: CMC-eq-1} on $\mathcal{G}'$.

Next we show that $u$ solves the equation on $\Sigma$.
Let $y\in \Sigma$ and pick a conformal chart $U\to B_1(0)$ centered at $y$, such that $u(U)$ lies in a coordinate chart for $\mathcal{M}$. Call $\{x^1,x^2, x^3\}$ the coordinates and let $u^i=x^i\circ u$. Write $e_k=\frac{\partial}{\partial x_k}$. It will be sufficient to show that for any $f\in C_c^\infty$,
\begin{align}\label{eq: weak-form-equation-ek}
    \int_{B_1(0)}\langle\nabla(f e_k), du\rangle=H\int_{B_1(0)}fe_k\cdot \partial_{x_1}u\times\partial_{x_2}u.
\end{align}
Indeed, if \eqref{eq: weak-form-equation-ek} holds, then $u$ is a weak solution of the system
\begin{align}\label{eq: PDE-system-CMC}
    -\partial_i(g_{jk}(u)\partial_iu^j)+&\Gamma_{pk}^a(u)g_{aq}(u)\partial_iu^p\partial_i u^q\\\nonumber &=Hg_{kl}(u) Q^l_{pq}(u)\partial_1u^p\partial_2^q u\quad\text{ for any }k\in \{1,...,Q\}.
\end{align}
Here $Q^j_{pq}$ denote the coefficients of the $(2,1)$-tensor on $\mathcal{M}$ defined by $\left(X\times Y\right)^j=Q^j_{pq}X^pY^q$, where $\times$ is the vector product on $T\mathcal{M}$. 
The smoothness then follows from elliptic regularity (see Proposition A.1 in \cite{Pigati-FB}), so that $u$ is a classical solution of \eqref{eq: PDE-system-CMC} and thus of \eqref{eq: CMC-eq-1}.\\
\\
\textbf{Step 3}: \textit{\underline{Claim}: \eqref{eq: weak-form-equation-ek} is satisfied}.\\

By the layer-cake formula and the coarea formula,
\eqref{eq: weak-form-equation-ek} is equivalent to 
\begin{align*}
    \int_0^\infty\left(-\int_{\partial\{f>\lambda\}}\langle (e_k\circ u),\partial_\nu u\rangle d\mathcal H^1+\int_{\{f>\lambda\}}\langle\nabla(e_k\circ u), du\rangle dx-H\int_{\{f>\lambda\}}e_k \cdot \partial_{x_1}u\times\partial_{x_2}u\, dx\right)d\lambda=0.
\end{align*}
We claim that for a.e. $\lambda$, the expression in brackets in the previous equation is equal to zero.

To see the claim, let $\lambda>0$ be a regular value of $f$ such that on $\partial\{f>\lambda\}$, $du$ vanishes $\mathcal{H}^1$-a.e. on $\mathcal{B}$. Set $\omega:=\{f>\lambda\}$.
For $\varepsilon>0$, let $\mathcal{B}_\varepsilon$ be the closed $\varepsilon$-neighborhood of $\mathcal{B}$ in $B_1(0)$.
Fix $\varepsilon>0$ and let $\rho$ be a smooth function on $\mathcal{M}$, vanishing on a neighborhood of $u(\partial\omega\cap\mathcal{B}_\varepsilon)$ and equal to $1$ outside of a slightly larger set. Then $u$ is a smooth immersion on a neighborhood of $S\cap\partial\omega$, where $S=\operatorname{supp}(\rho\circ u)$.
As in the proof of Proposition 7.7 in \cite{Pigati-FB}, one can reduce the proof to the case where $S\cap \partial\omega$ is covered by finitely many curves $\gamma_j$ in $B_1(0)\cap\mathcal{G}$, with endpoints in $B_1(0)\smallsetminus S$ and images $\Gamma_j=u(\gamma_j)$ transverse to each other (meaning also self-transverse).
Let $\chi$ be a smooth function from $[0,\infty)$ to $[0,1]$ such that $\chi(x)=0$ for $x\leq\frac{1}{2}$ and $\chi(x)=1$ for $x\geq 1$. Set $\chi_\eta:=\chi\left(\frac{\operatorname{dist}(\cdot,\Gamma)}{\eta}\right)$, where $\Gamma:=\bigcup_j \Gamma_j$, and let $U_r$ denote the $r$-neighborhood of $\bigcup_j\gamma_j$. Then one can show---taking advantage of the conformality of $u$ and the fact that the curves $\Gamma_j$ are transverse--- that
\begin{align*}
    \lim_{\eta\to 0}\int_{\omega\cap U_r}\rho(u)\langle e_k\circ u\otimes d(\chi_\eta\circ u), du\rangle=-\sum_j\int_{\gamma_j}\langle (\rho e_k)(u), \partial_\nu u\rangle d\mathcal{H}^1 +O(r),
\end{align*}
where $\nu$ is the outward unit normal for $\omega$.

Note that $u(\mathcal{B})\cap\Gamma=\emptyset$, hence $\chi_\eta=1$ near $u(\mathcal{B})$ for $\eta$ small, and in such case $\operatorname{supp}((1-\chi_\eta)\circ u)\subset\mathcal{G}$. Since $u$ satisfies \eqref{eq: CMC-eq-1} on $\mathcal{G}$, we have that
\begin{align*}
    \int_{\omega\smallsetminus U_r}\langle \nabla((\rho(1-\chi_\eta)e_k)\circ u), du\rangle=H\int_{\omega\smallsetminus U_r}\rho(1-\chi_\eta)e_k(u)\cdot\partial_{x_1}u\times\partial_{x_2}u.
\end{align*}
Therefore, integration by parts yields
\begin{align*}
    &\int_{\omega\smallsetminus U_r}\rho\circ u\langle (e_k\circ u)\otimes d(\chi_\eta\circ u), du\rangle\\
   = &\int_{\omega\smallsetminus U_r}(1-\chi_\eta)\circ u\langle (e_k\circ u)\otimes d(\rho\circ u), du\rangle+\int_{\omega\smallsetminus U_r}\rho(1-\chi_\eta)\circ u\langle \nabla (e_k\circ u), du\rangle\\
    &+\int_{\omega\cap \partial U_r}\rho(1-\chi_\eta)\circ u\langle (e_k\circ u), \partial_{\nu_{U_r}} u\rangle d\mathcal{H}^1-H\int_{\omega\smallsetminus U_r}\rho(1-\chi_\eta)e_k\cdot\partial_{x_1}u\times\partial_{x_2}u.
\end{align*}
Here $\nu_{U_r}$ is the unit normal pointing outside $U_r$.
As $(1-\chi_\eta)\circ u\to 0$ a.e. in $\omega\smallsetminus U_r$ and on $\partial U_r$ (for a.e. small $r$), the right hand side tends to zero as $\eta\to 0$.
By \eqref{eq: weak form H-CMC}, we have
\begin{align*}
    \int_\omega\langle\nabla((\rho\chi_\eta
e_k)\circ u), du\rangle=H\int_\omega\rho\chi_\eta
e_k\cdot\partial_{x_1}u\times\partial_{x_2}u,
\end{align*}
since $\rho\chi_\eta$ vanishes in a neighborhood of $u(\partial\omega)$.
Thus the previous computations imply (letting $r\to 0$ after taking the limit $\eta\to 0$)
\begin{align}\label{eq: last-step-weak-form-and-regularity}
    &\sum_j\int_{\gamma_j}\rho\circ u\langle(e_k\circ u),\partial_\nu u\rangle=-\lim_{\eta\to 0}\int_\omega\rho\circ u\langle(e_k\circ u)\otimes d(\chi_\eta\circ u), du\rangle\\
    \nonumber
    =&\int_\omega\langle (e_k\circ u)\otimes d(\rho\circ u), du\rangle+\int_\omega \rho\circ u\langle\nabla (e_k\circ u), du\rangle\\
    \nonumber&-H\int_\omega\rho
e_k(u)\cdot\partial_{x_1}u\times\partial_{x_2}u.
\end{align}
Now one shows exactly as in the last part of the proof of Proposition 7.7 in \cite{Pigati-FB} that $\mathcal{H}^1(u(\partial\omega\cap\mathcal{B}_\varepsilon))\to 0$ as $\varepsilon\to 0$ and that--- writing $\rho_\varepsilon$ instead of $\rho$ to mark the dependence on $\varepsilon$--- $\rho_\varepsilon\circ u\to 1$ a.e. in $\mathcal{G}$ and $\int_\omega\lvert d\rho_\varepsilon\rvert(u)\lvert du\rvert^2\to 0$ as $\varepsilon\to 0$. Therefore the desired statement follows from \eqref{eq: last-step-weak-form-and-regularity}.
\end{proof}

\begin{corollary}\label{cor: HW}
    The map $u$ is a branched $H$-CMC immersion on the components of $\Sigma$ where it is not constant, meaning that the  points $x\in \Sigma$ with $\nabla u(x)=0$ are isolated in any such component.
\end{corollary}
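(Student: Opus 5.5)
The plan is to use the Hartman--Wintner theorem, which is what the label \texttt{HW} points to. By Theorem \ref{thm: smoothness-and-equation}, $u$ is smooth and weakly conformal. In conformal coordinates it solves
\begin{align*}
    \Delta u+A(\nabla u,\nabla u)=H\,\partial_x u\times\partial_y u .
\end{align*}
Fix a point $x_0$ with $\nabla u(x_0)=0$ and work in a conformal chart centred at $x_0$. Since $u$ is smooth, the right-hand side obeys $\lvert A(\nabla u,\nabla u)\rvert+\lvert H\,\partial_xu\times\partial_yu\rvert\leq C\lvert\nabla u\rvert^2\leq C'\lvert\nabla u\rvert$ on a small ball. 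Hence each component $w$ of $\partial_z u=\frac12(\partial_x u-i\partial_y u)$, viewed in $\mathbb{R}^Q$, satisfies a differential inequality of the form $\lvert\partial_{\bar z}(\partial_z u)\rvert=\frac14\lvert\Delta u\rvert\leq C\lvert\partial_z u\rvert$.

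The key step is the similarity principle for such systems, i.e.\ the Carleman / Hartman--Wintner type statement: a $\mathbb{C}^Q$-valued $w$ with $\lvert\partial_{\bar z}w\rvert\leq C\lvert w\rvert$ either vanishes identically near $x_0$ or has an isolated zero there of finite order, with $w(z)=z^m a+o(\lvert z\rvert^m)$ and $a\neq0$. Since the system is vector-valued, I would not factor the solution via a scalar Bers--Vekua representation. Instead I would apply Hartman--Wintner directly to the smooth solution $u$ of an elliptic system of the form $\lvert\Delta u\rvert\leq C(\lvert\nabla u\rvert+\lvert u-u(x_0)\rvert)$. That theorem gives a dichotomy: either $u-u(x_0)$ vanishes to infinite order at $x_0$, or it has a homogeneous harmonic polynomial of degree $m\ge 1$ as its leading term. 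In the second case $\nabla u(z)=\nabla P_m(z)+o(\lvert z\rvert^{m-1})$ with $P_m$ nonzero, and $\nabla P_m$ vanishes only at the origin (for $m\ge 2$ take $P_m=\Re(\bar a z^m)$ up to conformality). So $x_0$ is an isolated zero of $\nabla u$.

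Next I would handle the infinite-order case by unique continuation. The Aronszajn--Carleman strong unique continuation principle applies to systems with the Laplacian in the principal part and lower-order terms bounded by $\lvert\nabla u\rvert+\lvert u-u(x_0)\rvert$. It gives $u\equiv u(x_0)$ near $x_0$. Then the set of points where $u$ is locally constant is open, and it is also closed by the same argument applied at its limit points, where $\nabla u$ vanishes to infinite order by smoothness. So on a connected component where $u$ is not constant, only the finite-order alternative occurs, and the branch points are isolated. Away from these points $u$ is conformal with $\nabla u\neq0$, hence an immersion, so $u$ is a branched $H$-CMC immersion on that component.

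The main obstacle is verifying that Hartman--Wintner and unique continuation apply to this system. The nonlinearity is quadratic in $\nabla u$, and both $A$ and the cross product depend on $u$. Smoothness, and in particular the local boundedness of $\nabla u$ from Theorem \ref{thm: smoothness-and-equation}, is exactly what turns the quadratic terms into linear bounds $C\lvert\nabla u\rvert$. With that reduction, the standard statements (e.g.\ as used in Pigati's free-boundary work for the $H=0$ case) apply verbatim.
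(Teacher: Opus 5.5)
Your proposal is correct and follows essentially the same route as the paper: smoothness from Theorem \ref{thm: smoothness-and-equation} gives $\lvert\Delta u\rvert\leq K\lvert\nabla u\rvert$ locally, and Hartman--Wintner then supplies unique continuation, which rules out infinite-order vanishing on a non-constant component, together with the leading-order asymptotics of $\nabla u$, which make the zeros of $\nabla u$ isolated. Two steps the paper leaves implicit are made explicit in your version: the open--closed argument that propagates local constancy to the whole component, and the observation that the leading Taylor term is a nonzero homogeneous harmonic polynomial $\Re(c z^m)$ with $\lvert\nabla P_m\rvert=m\lvert c\rvert\lvert z\rvert^{m-1}$.
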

\begin{proof}
    By Theorem \ref{thm: smoothness-and-equation}, it is enough to show that if $u$ is not constant on a component $\Sigma_i$ of $\Sigma$, then the points $x\in \Sigma_i$ such that $\nabla u(x)=0$ are isolated. Let $x$ be such a point, let $\varphi: U\to B_1(0)$ be a conformal chart centered at $x$. Recall that $u$ (regarded as a function on $B_1(0)$) satisfies \eqref{eq: PDE-system-CMC} and is smooth, and so in particular (taking $U$ smaller if necessary)
    \begin{align}\label{eq: assumption - HW}
        \lvert\Delta u\rvert\leq K\lvert\nabla u\rvert\text{ in }B_1(0),
    \end{align}
    for some constant $K$ depending on $x$, $\varphi$ and $u$. We now apply Theorem 1 and 2 of \cite{Hartman-Wintner}\footnote{The results of \cite{Hartman-Wintner} are stated for scalar functions, but the proofs also work for vector valued functions satisfying \eqref{eq: assumption - HW}.} to $u$.
    Let
    \begin{align}\label{eq: def-vanishing-order}
        n=\sup\left\{k\in\mathbb{N}\vert u(z)-u(0)=o(\lvert z\rvert^k)\text{ as }z\to 0\right\}.
    \end{align}
    Note that this is finite by the cited Theorem 2 (as $u$ is not constant in the component of $x$).
    Theorem 1 now implies that the limits $L_1:=\lim_{z\to 0}u_z\cdot z^{-n}$ and $L_2:=\lim_{z\to 0}u_{\overline z}\cdot \overline{z}^{-n}$ exist.
    If there is a sequence of points $x_n\in B_1(0)$ with $\nabla u(x_n)=0$ and $x_n\to 0$, then we would have $L_1=L_2=0$.
    But then a Taylor expansion would show that $n$ is not maximal in \eqref{eq: def-vanishing-order}, leading to a contradiction.    
\end{proof}
\begin{rmk}
    Let $z_0$ be a branch point of $u$.
    Then, according to \cite[Corollary 7.2]{Gulliver} and \cite[Satz 2]{Kaul}, there exist coordinates $x,y$ on a neighborhood of $z_0$, and local coordinates $(y_1, y_2, y_3)$ on $\mathcal M$, such that
    \begin{align*}
        y_1\circ u(x,y)+iy_2\circ u(x,y)=(x+iy)^m,\quad y_3\circ u(x,y)=\psi(x+iy)
    \end{align*}
    for some integer $m\geq2$, where $\psi$ is of class $C^2$ and $D^k\psi(z)=O(\lvert z\rvert^{m+1-k})$ for $0\leq k\leq 2$.

In particular, if $\Omega$ is a sufficiently small neighborhood of $z_0$, then the varifold $u_\ast[\Omega]$ has a unique tangent cone at $u(z_0)$, which is a plane $\Pi$ with multiplicity $m$, and the Gauss map $\nu$ extends continuously across $z_0$ with $\nu(z_0)$ orthogonal to $\Pi$.
\end{rmk}
We can now combine the results of Sections \ref{sec: existence of cmc} and \ref{sec: regularity} to obtain a proof of Theorem \ref{thm: main-theorem}.
\begin{proof}[Proof of Theorem \ref{thm: main-theorem}]
    By Theorem \ref{thm: convergence to H-pv}, there exists a parametrized $H$-CMC varifold $(\Sigma', u,N)$ such that the varifolds induced by the immersions $u_k$ converge to $(\Sigma', u,N)$, for a conformal map $u\in W^{1,2}(\Sigma')$. By Theorem \ref{thm: multiplicity-one}, $N\equiv 1$, so by Theorem \ref{thm: smoothness-and-equation}, $u$ is smooth and satisfies \eqref{eq: CMC-eq-1}. Thus, by Corollary \ref{cor: HW}, $u$ is a branched $H$-CMC immersion on every component on which it is not constant.
\end{proof}

\section{Existence of CMC immersions in closed 3-manifolds}\label{sec: existence of torus}
In this section we apply the previous convergence results to show that in any Riemannian manifold $(\mathcal{M}^3,g)$ with Heegard genus $h$, one can find a $H$-CMC immersion, for a.e. $H>0$ and $H=0$. 
To this end, we will construct a sequence of maps as in Theorem \ref{thm: main-theorem},
applying min-max methods to the sweep-outs generated by the genus $h$ Heegard splitting of $\mathcal{M}$. 

\subsection{Min-max setup}
Let $(\mathcal{M}, g)$ be a closed, oriented manifold of Heegard genus $h$. Recall that there exists a splitting $\mathcal{M}=N_1\cup_\varphi N_2$, where $N_1$, $N_2$ are handlebodies of genus $h$, and $\varphi: \partial N_1\to\partial N_2$ is a diffeomorphism. Recall also that each handlebody $N_i$ can be identified with a quotient of $[0,1]\times\Sigma$ (see Section 2). One can then glue the two quotients $([0,1]\times \Sigma)/\sim$ identifying the two copies of $\{1\}\times\Sigma$ through the map $\varphi$, so that (after reparametrization in the $t$ variable) $\mathcal{M}$ itself can be identified with a quotient of $\Sigma\times [0,1]$.

We say that a continuous map $\gamma: [0,1]\times \Sigma\to \mathcal{M}$ is a degree one sweep-out of $\mathcal{M}$ if it descends to a map on $\mathcal{M}$ (regarded as a quotient of $[0,1]\times \Sigma$) which has degree one. Note that this implies in particular that $\gamma(0,\cdot)$ and $\gamma(1,\cdot)$ are graphs.

Consider the following family of degree one sweep-outs of $\mathcal{M}$. Recall that we denoted by $\mathfrak{M}$ the space of $W^{2,4}$ immersions of $\Sigma$ in $\mathcal{M}$.
\begin{definition}\label{def: sweep-out-family}
    \begin{align*}
        \mathcal{P}_h=\{\gamma\in C^0((0,1), \mathfrak{M})\cap C^0([0,1],C^0(\Sigma,\mathcal{M}))\vert \gamma \text{ is a degree one sweep-out of }\mathcal{M}\}.
    \end{align*}
\end{definition}

In the above setting, the interior of $N_1$ (resp. $N_2$) is foliated by surfaces $\{\Sigma_t\}_{t\in(0,1/2)}$ (resp. $\{\Sigma_t\}_{t\in(1/2,1)}$) which are all isotopic to $\Sigma$. When we turn this into the language of mappings from $\Sigma\times[0,1]\rightarrow\mathcal{M}$, we see that the set $\mathcal{P}_h$ is not empty.\\
Note also that as $t\to 0,1$, the images of $\gamma(t)$ degenerate to graphs.
In what follows, when we consider sweepouts by genus-$h$ surfaces, where $h$ is the Heegaard genus of $\mathcal{M}$, we will simply write $\mathcal{P}$ instead of $\mathcal{P}_h$.

Let $\gamma\in \mathcal{P}_h$.
    For each $t\in(0,1)$, there is a natural map $f_{\gamma,t}\in\mathcal{E}(\gamma(t))$ defined as
    \begin{align*}
        f_{\gamma,t}(s,x)=\gamma(ts)(x).
    \end{align*}

Having defined the space of sweep-outs, one would like to consider the width
\begin{align*}
    \inf_{\gamma\in\mathcal{P}}\max_{t\in[0,1]}E_{H,\sigma}(\gamma(t),f_{\gamma,t}).
\end{align*}
However, for $\sigma>0$ this quantity is equal to infinity: when $t\rightarrow0$ or $t\rightarrow1$, the term $\int_{\Sigma}\vert\sff^{\gamma(t)}\vert^4\,d\vol_{\gamma(t)}$ must blow up.
The reason is that, by the compactness results of Langer \cite{LangerCompactness} and Breuning \cite{Breuning}, a uniform bound on $\int_{\Sigma}|\sff^{u}|^{4}\,d\vol_{u}$ and $\operatorname{Area}(u)$ prevents degeneration: it forces $u$ to remain in a $C^1$-precompact subset of $\mathfrak{M}$ (up to reparametrization). Consequently, the family $\gamma(t)$ cannot degenerate to a graph as $t\to 0,1$.
This is because by the compactness results of Langer \cite{LangerCompactness} and Breuning \cite{Breuning}, given the control on $\int_{\Sigma}\vert\sff^{u}\vert^4\,d\vol_{u}$, the map $u$ must stay in a bounded region in $\mathfrak{M}$, and cannot degenerate to a graph.

In order to overcome this problem, we will work with sub-intervals of $[0,1]$. For $H>0$, set
\begin{align*}
    \omega_H=\inf_{\gamma\in\mathcal{P}}\max_{t\in[0,1]}E_{H,0}(\gamma(t),f_{\gamma,t})=\inf_{\gamma\in\mathcal{P}}\max_{t\in[0,1]}(\operatorname{Area}(\gamma(t))+H\vol(f_{\gamma,t})).
\end{align*}
Note that $\omega_H<\infty$, as can be seen by considering any smooth sweep-out. 
\begin{proposition}\label{prop: non-trivial of min-max}
    For any $H>0$, we have
    \begin{align*}
       \omega_H>H\vol_g(\mathcal{M}). 
    \end{align*}
    For $H=0$, there holds
    $\omega_0>C_\mathcal{M}$
    for some constant $C_\mathcal{M}>0$ depending only on $\mathcal{M}$.
\end{proposition}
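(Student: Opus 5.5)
We want to show $\omega_H > H\vol_g(\mathcal{M})$ for $H>0$, and $\omega_0 > C_\mathcal{M}$. The idea is to compare each sweep-out with a standard isoperimetric-type inequality along the sweep-out. Since $\gamma$ is a degree one sweep-out, the full swept volume is $\vol(f_{\gamma,1}) = \vol_g(\mathcal{M})$, with the convention that $\vol(f_{\gamma,0})=0$ because $\gamma(0)$ is a graph. Moreover, $t\mapsto \vol(f_{\gamma,t})$ is continuous on $[0,1]$, since $\gamma\in C^0([0,1],C^0(\Sigma,\mathcal{M}))$; this uses Lemma \ref{lem: define of volume}(2) together with the volume formula. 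The plan is to pick a single time $t_\ast$ at which both the area and the swept volume are controlled from below.

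Here is the key step. We use the isoperimetric inequality on the closed manifold $\mathcal{M}$ in the following form. There is $c_\mathcal{M}>0$ such that for any $u=\gamma(t)$, any $f\in\mathcal{E}(u)$, and $V:=\vol(f)$ reduced mod $\vol_g(\mathcal{M})$ to lie in $[0,\vol_g(\mathcal{M})]$,
\begin{align*}
\operatorname{Area}(u)\ge c_\mathcal{M}\min\{V,\vol_g(\mathcal{M})-V\}^{2/3}.
\end{align*}
This holds because $f$ defines an integral 3-current $T$ with $\partial T=u_\ast[\Sigma]$, and $\mathbb{M}(\partial T)\le \operatorname{Area}(u)$. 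By the constancy theorem, $T$ differs from a multiplicity function $\theta\in BV(\mathcal{M};\mathbb{Z})$ only in the expected way. The coarea formula applied to $\theta$, combined with the isoperimetric inequality for Caccioppoli sets in $\mathcal{M}$, then yields the bound.

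Choose $t_\ast$ with $\vol(f_{\gamma,t_\ast})=\vol_g(\mathcal{M})/2$, which exists by continuity and the intermediate value theorem. Then
\begin{align*}
E_{H,0}(\gamma(t_\ast),f_{\gamma,t_\ast})\ge c_\mathcal{M}(\vol_g(\mathcal{M})/2)^{2/3}+H\vol_g(\mathcal{M})/2.
\end{align*}
This bound is uniform in $\gamma$, which gives $\omega_0\ge c_\mathcal{M}(\vol_g(\mathcal{M})/2)^{2/3}=:C_\mathcal{M}$.

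For $H>0$ the endpoint $t=1$ already gives $E_{H,0}\ge H\vol_g(\mathcal{M})$, so we need a strict improvement that survives taking the infimum over $\gamma$. Let $a=c_\mathcal{M}$ and $V_0=\vol_g(\mathcal{M})$. For $t$ with $\vol(f_{\gamma,t})=V\in[V_0/2,V_0]$, the isoperimetric bound gives
\begin{align*}
E_{H,0}\ge a(V_0-V)^{2/3}+HV.
\end{align*}
The function $s\mapsto a s^{2/3}-Hs$, with $s=V_0-V$, is positive for small $s>0$, namely for $s<(a/H)^3$. So we choose $V$ with $V_0-V=s_0:=\min\{V_0/2,(a/(2H))^3\}$. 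By continuity there is a $t$ at which $\vol(f_{\gamma,t})=V_0-s_0$. At this time
\begin{align*}
E_{H,0}\ge HV_0+a s_0^{2/3}-Hs_0\ge HV_0+\tfrac{1}{2}a s_0^{2/3},
\end{align*}
since $Hs_0\le \tfrac{1}{2}a s_0^{2/3}$ by the choice of $s_0$. This gap is independent of $\gamma$, hence $\omega_H\ge HV_0+\tfrac{a}{2}s_0^{2/3}>HV_0$.

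The main obstacle is justifying the isoperimetric inequality for the possibly non-embedded, branched-free immersion $u$ and the multiplicity function coming from $f$. The issue is that $\theta$ may take values other than $0$ and $1$, so one has to apply the isoperimetric inequality to its super-level sets and sum. The normalization of $V$ into $[0,V_0]$ also has to be consistent with the integer ambiguity. I would address both by working with $\theta-\lfloor\cdot\rfloor$ adjustments and using the fact that $\sum_j\min\{|\{\theta\ge j\}|,V_0-|\{\theta\ge j\}|\}^{2/3}\gtrsim\min\{V,V_0-V\}^{2/3}$.
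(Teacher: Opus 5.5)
Your proposal is correct and is essentially the paper's argument: you apply the isoperimetric inequality to the super-level sets of the integer multiplicity of the swept $3$-current, sum using subadditivity of $x\mapsto x^{2/3}$, and evaluate at a time where the swept volume is $\vol_g(\mathcal{M})-\varepsilon$ with $\varepsilon\lesssim (C/H)^3$, exactly as in the paper. The differences are in packaging: your single bound $\operatorname{Area}(u)\ge c\,\operatorname{dist}(\vol(f),\vol_g(\mathcal{M})\mathbb{Z})^{2/3}$ replaces the paper's two-case split on $\lvert\{\theta>0\}\rvert$, and it does hold, by concavity and $\vol(f)\equiv\sum_j\pm\min\{a_j,\vol_g(\mathcal{M})-a_j\}$ modulo $\vol_g(\mathcal{M})$ with $a_j=\lvert\{\theta\ge j\}\rvert$; applying it at a fixed time, after replacing $f_{\gamma,t}$ by a $C^0$-close Lipschitz filling (which has the same volume), replaces the paper's $C^1$ approximation of the whole sweep-out, and for $H=0$ you should halve your constant to get the strict inequality.
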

\begin{proof}
First we show that for any $\gamma\in C^1([0,1]\times\Sigma,\mathcal{M})$ descending to $\mathcal{M}$ (regarded as a quotient of $[0,1]\times\Sigma$) and inducing a degree one map from $\mathcal{M}$ to itself, for any $\varepsilon>0$ small enough, there is $t_\varepsilon\in (0,1)$ with
\begin{align*}
         E_H(\gamma(t_\varepsilon), f_{\gamma,t_\varepsilon})>H\vol_g(\mathcal{M}).
    \end{align*}
or--- if $H=0$--- $\operatorname{Area}(\gamma(t_\varepsilon))\geq C$, where the constant $C$ depends only on $\mathcal{M}$.
    Let $\gamma$ be such a map and let $\varepsilon\in(0, \vol_g(\mathcal{M}))$ to be determined later.
    Let $t_\varepsilon\in (0,1)$ such that
    \begin{align*}
        \vol(f_{\gamma,t_\varepsilon})=\vol_g(\mathcal{M})-\varepsilon.
    \end{align*}
    For each point $x\in\Sigma$, assume $e_1,e_2$ is an oriented orthonormal basis of $T_x\Sigma$.
    Let $\theta$ be the integer-valued degree function defined for $p \in \mathcal{M}$ by:
\begin{align*}
    \theta(p) := \sum_{z \in \gamma^{-1}(p) \cap ([t_\varepsilon,1] \times \Sigma)} \operatorname{sgn}\left( d\vol_g(\nabla_{e_1}\gamma, \nabla_{e_2}\gamma, \de_t\gamma) \right).
\end{align*}
    Let 
    \begin{align*}
        A=\left\{p\in \operatorname{Im}(\gamma\vert_{[t_\varepsilon,1]})\big\vert\theta(p)>0\right\},
    \end{align*}
    We distinguish two cases: 
    first assume that $\vol_g(A)\leq \frac{1}{2}\vol_g(\mathcal{M})$.\\
    For any $n\in \mathbb{Z}_{>0}$ set
    \begin{align*}
        A_n=\left\{p\in  \operatorname{Im}(\gamma\vert_{[t_\varepsilon,1]})\big\vert\theta(p)\geq n\right\},
    \end{align*}
    and for any $n\in \mathbb{Z}_{<0}$ set
    \begin{align*}
        A_n=\left\{p\in \operatorname{Im}(\gamma\vert_{[t_\varepsilon,1]})\big\vert\theta(p)\leq n\right\}.
    \end{align*}
    Set also $A_0=\emptyset$.
    
    In the following, we denote by $\gamma_\#$ the push-forward operator for currents induced by $\gamma$, and denote the mass of currents by $\mathbb{M}$. We have
    \begin{align}\label{eq: estimate_area_gamma_t}
        \operatorname{Area}(\gamma(t_\varepsilon))\geq \mathbb{M}(\gamma_\#\partial\jump{[t_\varepsilon,1]\times\Sigma})=\mathbb{M}(\partial(\gamma_\#\jump{[t_\varepsilon,1]\times\Sigma})).
    \end{align}
    According to Corollary 27.8 in \cite{SimonGMT}, the sets $A_n$ have finite perimeter, there holds
    \begin{align*}
        \gamma_\#\jump{[t_\varepsilon,1]\times \Sigma}=\sum_{n\in \mathbb{Z}}\operatorname{sgn}(n)\jump{A_n},
    \end{align*}
    \begin{align*}
        \partial( \gamma_\#\jump{[t_\varepsilon,1]\times \Sigma})=\sum_{n\in \mathbb{Z}}\operatorname{sgn}(n)\partial\jump{{A_n}},
    \end{align*}
    and 
    \begin{align*}
        \mathbb{M}( \partial \gamma_\#\jump{[t_\varepsilon,1]\times \Sigma})=\sum_{n\in \mathbb{Z}}\operatorname{Area}(\partial A_n),
    \end{align*}
    where $\operatorname{Area}(\partial A_n)$ denotes the perimeter of the set $A_n$.

    Note that, by assumption, $\vol_g(A_n)\leq \frac{1}{2}\vol_g(\mathcal{M})$ for any $n\in \mathbb{N}$. Thus by the Levy-Gromov isoperimetric inequality (Lemma \ref{lem: Levy-Gromov}), there holds
    \begin{align*}
        \operatorname{Area}(\partial A_n)\geq  C\vol_g^\frac{2}{3}(A_n)
    \end{align*}
    for some constant $C$ depending only on $\mathcal{M}$. We deduce that
    \begin{align}\label{eq: chain-inequalities-area-gamma}
        \operatorname{Area}(\gamma(t_\varepsilon))\geq\sum_{n\in \mathbb{N}}\operatorname{Area}(\partial A_n)\geq C\sum_{n\in \mathbb{N}}\vol_g^\frac{2}{3}(A_n)
        \geq C\left(\sum_{n\in \mathbb{N}}\vol_g(A_n)\right)^\frac{2}{3}\geq C\varepsilon^\frac{2}{3}.
    \end{align}
    For the last step we used that, by Corollary 27.8 in \cite{SimonGMT},
    \begin{align*}
        \sum_{n\in \mathbb{N}}\vol_g(A_n)=&\mathbb{M}(\gamma_\#\jump{\{z\in [t_\varepsilon,1]\times\Sigma\vert d\vol_g(\nabla_{e_1}\gamma,\nabla_{e_2}\gamma,\de_t\gamma)(z)>0\}})\\=&\int_{\{z\in [t_\varepsilon,1]\times\Sigma\vert d\vol_g(\nabla_{e_1}\gamma,\nabla_{e_2}\gamma,\de_t\gamma)(z)>0\}}\gamma^\ast\,d\vol_g\geq \varepsilon,
    \end{align*}
    where the last step holds by choice of $t_\varepsilon$.

    In the second case we have $\vol_g(A)>\frac{1}{2}\vol_g(\mathcal{M})$. We consider the set
    \begin{align*}
    B=\left\{p\in \operatorname{Im}(\gamma\vert_{[t_\varepsilon,1]})\big\vert\theta(p)<0\right\},
    \end{align*}
    which in this case satisfies $\vol_g (B)\leq \frac{1}{2}\vol_g(\mathcal{M})$. Since
    \begin{align*}
        \int_{ [t_\varepsilon,1]\times \Sigma}\gamma^\ast\,d\vol_g=\varepsilon,
    \end{align*}
    $\vol_g(A)>\frac{1}{2}\vol_g(\mathcal{M})$ implies that
    \begin{align*}
        -\int_{\{z\in [t_\varepsilon,1]\times \Sigma\cap \gamma^{-1}(B)\}}\gamma^\ast\,d\vol_g\geq \frac{1}{2}\vol_g(\mathcal{M})-\varepsilon.
    \end{align*}
    Note that now $\vol_g(A_{-n})\leq \frac{1}{2}\vol_g(\mathcal{M})$ for any $n\in \mathbb{N}$. As above, by the Levy-Gromov isoperimetric inequality,
    \begin{align*}
        \operatorname{Area}(\partial A_{-n})\geq C\vol_g^\frac{2}{3}(A_{-n})
    \end{align*}
    for any $n\in \mathbb{N}$,
    for some constant $C$ depending on $\mathcal{M}$.
    Thus, arguing as above (but taking the sum over the $A_n$ with negative indices in the first inequality in \ref{eq: chain-inequalities-area-gamma}), we obtain
    \begin{align*}
        \operatorname{Area}(\gamma(t_\varepsilon))\geq C\left(\sum_{n\in \mathbb{N}}\vol_g(A_{-n})\right)^\frac{2}{3}.
    \end{align*}
    Now
    \begin{align*}
        \sum_{n\in \mathbb{N}}\vol_g(A_{-n})=  -    \int_{\{z\in [t_\varepsilon,1]\times \Sigma\cap \gamma^{-1}(B)\}}\gamma^\ast\,d\vol_g\geq \frac{1}{2}\vol_g(\mathcal{M})-\varepsilon,
    \end{align*}
    therefore
    \begin{align*}
        \operatorname{Area}(\gamma(t_\varepsilon))\geq C\left(\frac{1}{2}\vol_g(\mathcal{M})-\varepsilon\right)^\frac{2}{3}.
    \end{align*}
    Thus, if $\varepsilon$ is small enough, in both cases we have
    \begin{align*}
        \operatorname{Area}(\gamma(t_\varepsilon))\geq C\varepsilon^\frac{2}{3},
    \end{align*}
    for some positive constant $C$ depending only on $g$.
    Then in both cases 
    \begin{align*}
        \operatorname{Area}(\gamma(t_\varepsilon))+H\vol_g(f_{\gamma,t_\varepsilon})\geq C\varepsilon^\frac{2}{3}+H(\vol_g(\mathcal{M})-\varepsilon).
    \end{align*}
    If $H=0$, choose $\varepsilon=\vol_g(\mathcal{M})/4$ to obtain
    \begin{align*}
        \operatorname{Area}(\gamma(t_\varepsilon))\geq C(\vol_g(\mathcal{M})/4)^\frac{2}{3}.
    \end{align*}
    If $H>0$, choosing $\varepsilon\leq\varepsilon_0:=\left(\frac{C}{2H}\right)^3$  we have
    \begin{align*}
         C\varepsilon^\frac{2}{3}+H(\vol_g(\mathcal{M})-\varepsilon)\geq H\vol_g(\mathcal{M})+\frac{C}{2}\varepsilon^\frac{2}{3}.
    \end{align*}
    Thus if $\varepsilon\leq\varepsilon_0$,
    \begin{align*}
        E_H(\gamma(t_\varepsilon), f_{\gamma, t_\varepsilon})\geq H\vol_g(\mathcal{M})+\frac{C}{2}\varepsilon^\frac{2}{3}.
    \end{align*}
    Now for a general $\gamma\in \mathcal{P}$ and $H\geq 0$, let $\varepsilon=\frac{\varepsilon_0}{2}$ (depending on $H$ and $C$), and let $t_\varepsilon$ be such that $\vol(f_{\gamma,t_\varepsilon})=\vol_g(\mathcal{M})-\varepsilon$.
    For any $\eta>0$, let $\gamma_\eta\in C^1([0,1]\times\Sigma)$ be a $C^1$-approximation of $\gamma$ as in the first part of the proof (i.e. a map descending to $\mathcal M$ and inducing a degree one map from $\mathcal{M}$ to itself), satisfying
    \begin{align*}
        \lvert E_H(\gamma(t_\varepsilon), f_{\gamma, t_\varepsilon})-E_H(\gamma_\eta(t_{\varepsilon}), f_{\gamma_\eta, t_{\varepsilon}})\rvert\leq& \eta,\\
        \lvert\vol(f_{\gamma_\eta,t_{\varepsilon}})-(\vol_g(\mathcal{M})-\varepsilon)\rvert\leq \eta.
    \end{align*}
    Such $\gamma_\eta$ can be constructed by mollifying $\gamma$ in $([-\eta,1+\eta]\times\Sigma, dt\times g_{\gamma(t_\varepsilon)})$ (where $\gamma$ is continuously extended on $[-\eta,0]\times \Sigma$, $[1, 1+\eta]\times \Sigma$, independently of $t$) and projecting on $\mathcal{M}$ via nearest point projection. 
    The fact that the enclosed volume can be approximated this way follows from Lemma \ref{lem: define of volume}.
    Then for $H>0$
    \begin{align*}
        E_H(\gamma(t_\varepsilon), f_{\gamma, t_\varepsilon})\geq&E_H(\gamma_\eta(t_{\varepsilon}), f_{\gamma_\eta, t_{\varepsilon}})-\eta\geq H\vol_g(\mathcal{M})+\frac{C}{2}(\varepsilon-\eta)^\frac{2}{3}-\eta,
    \end{align*}
    while for $H=0$
    \begin{align*}
        \operatorname{Area}(\gamma(t_\varepsilon))\geq \operatorname{Area}(\gamma_\eta(t_{\varepsilon}))-\eta\geq C(\vol_g(\mathcal{M})/4)^\frac{2}{3}-\eta.
    \end{align*}
    In both cases, by choosing $\eta$ sufficiently small, we obtain the desired result.
\end{proof}

In view of Proposition \ref{prop: non-trivial of min-max}, for a sweep-out $\gamma\in\mathcal{P}$, $E_{H,0}(\gamma(t), f_{\gamma,t})$ can be close to the width $\omega_H$ only for $t$ with $E_{H,\sigma}(\gamma(t),f_{\gamma,t})>H\vol_g(\mathcal{M})$, if $H>0$, or for $t$ with $E_{0,\sigma}(\gamma(t), f_{\gamma,t})$, if $H=0$.
Let $H_0>0$, we will study the existence of $H$-CMC surfaces for $H<H_0$. As $H_0$ is arbitrary, this impose no limitations.
From now on, assume that $H<H_0$.
Set
\begin{align*}
    \zeta:=\frac{1}{2}\left(\frac{\omega_{H_0}}{H_0}-\vol_g(\mathcal{M})\right)>0.
\end{align*}
For the case $H=0$, set $\zeta_0:=\frac{1}{2}C_\mathcal{M}$, where $C_\mathcal{M}$ is the constant from Proposition \ref{prop: non-trivial of min-max}.
\begin{definition}\label{def: good-intervals}
    Let $\gamma\in \mathcal{P}$. If $H>0$, set
\begin{align*}
    I_\gamma^H=\left\{t\in [0,1]\big\vert\, H\left(\vol_g(\mathcal{M})+\zeta\right)\leq \operatorname{Area}(\gamma(t))+H\vol(f_{\gamma,t})\right\}.
\end{align*}
If $H=0$, set
\begin{align*}
    I_\gamma^0=\left\{t\in [0,1]\big\vert \operatorname{Area}(\gamma(t))>\zeta_0\right\}.
\end{align*}
Note that--- just as in the proof of Lemma \ref{lem: properties-omega} below--- $H\mapsto \omega_H/H$ is non-increasing, thus by Proposition \ref{prop: non-trivial of min-max}, $I_\gamma^H$ is non-empty for any $H\geq 0$. We then define, for any $H\geq 0$,
\begin{align*}
    \omega_{H, \sigma}=\inf_{\gamma\in {\mathcal{P}}}\sup_{t\in I_\gamma^H} E_{H,\sigma}({\gamma}(t), f_{\gamma,t}).
\end{align*}
Note that
\begin{align*}
    H\vol_g(\mathcal{M)}\leq \omega_{H,\sigma}<\infty.
\end{align*}
The lower bound follows from the definition of $I_\gamma^H$, and finiteness can be verified by considering a smooth sweep-out.\\
The modified width still satisfies monotonicity properties  analogous to those in \cite[Proposition 3.2]{Cheng-Zhou}.
\begin{lemma}\label{lem: properties-omega}
    The following properties hold for $\omega_{H,\sigma}$.
    \begin{enumerate}
        \item For each $\sigma\geq0$, the function $H\mapsto\omega_{H,\sigma}/H$ is non-increasing (for $H>0$);
        \item For each $H\geq0$, the function $\sigma\mapsto\omega_{H,\sigma}$ is non-decreasing;
        \item For a.e. $H>0$, we can find a sequence $\sigma_n\rightarrow 0$ and $c>0$ such that 
        \begin{align}
            \label{eq: conclusion-Lem-Struwe-1} \sigma_n\log\sigma_n^{-1}\frac{\de\omega_{H,\sigma}}{\de \sigma}\bigg|_{\sigma=\sigma_n}\rightarrow0\text{ and }\frac{\de}{\de H}\left(-\frac{\omega_{H,\sigma_n}}{H}\right)\leq c.
        \end{align}
        \end{enumerate}
\end{lemma}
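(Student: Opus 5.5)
The three properties are proved in order. Properties (1) and (2) come from pointwise comparisons of the functionals together with the way the intervals $I_\gamma^H$ depend on $H$. Property (3) then follows from Lebesgue's theorem on monotone functions, applied to the two-parameter family in the spirit of \cite[Proposition 3.2]{Cheng-Zhou}.

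\emph{Step 1: property (1).} Fix $\sigma\geq0$ and $0<H_1<H_2$. For any $\gamma\in\mathcal{P}$ and $t$, write
\begin{align*}
\frac{E_{H,\sigma}(\gamma(t),f_{\gamma,t})}{H}=\frac{\operatorname{Area}(\gamma(t))+\sigma^4\int_\Sigma\lvert\sff^{\gamma(t)}\rvert^4}{H}+\vol(f_{\gamma,t}).
\end{align*}
The first summand is non-negative, so this quantity is non-increasing in $H$. The interval is defined by $\operatorname{Area}(\gamma(t))/H+\vol(f_{\gamma,t})\geq \vol_g(\mathcal{M})+\zeta$. Hence $I_\gamma^{H_2}\subset I_\gamma^{H_1}$, since the left-hand side also decreases in $H$.

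Consequently
\begin{align*}
\sup_{t\in I^{H_2}_\gamma}\frac{E_{H_2,\sigma}}{H_2}\leq\sup_{t\in I^{H_1}_\gamma}\frac{E_{H_1,\sigma}}{H_1}.
\end{align*}
Taking the infimum over $\gamma$ gives $\omega_{H_2,\sigma}/H_2\leq\omega_{H_1,\sigma}/H_1$. The same argument with $\sigma=0$ and $[0,1]$ in place of $I_\gamma^H$ justifies the remark that $\omega_H/H$ is non-increasing.

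\emph{Step 2: property (2).} The sets $I_\gamma^H$ do not depend on $\sigma$. Moreover, $E_{H,\sigma}$ is pointwise non-decreasing in $\sigma$, because the penalization term is non-negative. Both the supremum and the infimum preserve this inequality, which proves (2).

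\emph{Step 3: property (3).} By (1) and (2), the function $H\mapsto-\omega_{H,\sigma}/H$ is monotone for each $\sigma$, and $\sigma\mapsto\omega_{H,\sigma}$ is monotone for each $H$. In addition, $(H,\sigma)\mapsto-\omega_{H,\sigma}/H$ is non-decreasing in $H$ and non-increasing in $\sigma$.

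Fix $[a,b]\subset(0,H_0)$ and define
\begin{align*}
F(H):=\lim_{\sigma\to0^+}\left(-\frac{\omega_{H,\sigma}}{H}\right).
\end{align*}
This limit exists by monotonicity in $\sigma$, and $F$ is non-decreasing in $H$. By Lebesgue's theorem, $F$ is differentiable at a.e. $H$, and each $-\omega_{\cdot,\sigma}/\cdot$ is differentiable a.e. Take $H$ in a full-measure set where all of these derivatives exist for a countable dense family $\sigma\in\{\sigma_j\}$ and for $F$.

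\emph{Step 4: producing the sequence $\sigma_n$.} This is the step I expect to be the main obstacle, and I would argue by contradiction in the style of Struwe. Suppose that for some $c_1>0$ and all small $\sigma$ at which $\partial_\sigma\omega_{H,\sigma}$ exists we had $\sigma\log\sigma^{-1}\,\partial_\sigma\omega_{H,\sigma}\geq c_1$. Since $\int_0\frac{d\sigma}{\sigma\log\sigma^{-1}}=\infty$, integrating this bound would make $\omega_{H,\sigma}-\lim_{s\to0}\omega_{H,s}$ infinite for small $\sigma$, contradicting finiteness. The integration is legitimate because, for a monotone function, the integral of the a.e. derivative is bounded by the increment.

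The delicate point is to obtain the derivative bound in $H$ simultaneously along the same $\sigma_n$. I would handle this by Fatou's lemma on difference quotients. For $h>0$, monotonicity in $\sigma$ and the finiteness of $F'(H)$ give
\begin{align*}
\frac{1}{h}\left(\frac{\omega_{H,\sigma}}{H}-\frac{\omega_{H+h,\sigma}}{H+h}\right)\leq\frac{F(H+h)-F(H)}{h}+o_\sigma(1).
\end{align*}
Integrating in $\sigma$ against the weight $\frac{d\sigma}{\sigma\log\sigma^{-1}}$ over dyadic-in-$\log$ ranges, and using Fatou's lemma, shows that the set of $\sigma$ where $\partial_H(-\omega_{H,\sigma}/H)\leq c:=2F'(H)+1$ has positive weighted measure in every such range. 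In each range, the weighted measure of the set where the $\sigma$-condition fails at level $1/n$ must be small, by the contradiction argument above. Intersecting the two sets therefore gives $\sigma_n$ satisfying \eqref{eq: conclusion-Lem-Struwe-1}. Finally, the case $H=0$ uses only Steps 2 and 4 for the single parameter $\sigma$.
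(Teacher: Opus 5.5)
Your Steps 1 and 2 match the paper's argument. The fixed-$H$ contradiction at the start of Step 4 is also correct. The gap is in how you obtain the $H$-derivative bound along the same $\sigma_n$, and that mechanism fails.

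\textbf{The difference-quotient inequality is misstated.} From $\omega_{H,\sigma}/H\downarrow-F(H)$ and $-\omega_{H+h,\sigma}/(H+h)\le F(H+h)$ you only get
\[
\frac1h\Bigl(\frac{\omega_{H,\sigma}}{H}-\frac{\omega_{H+h,\sigma}}{H+h}\Bigr)\le\frac{F(H+h)-F(H)}{h}+\frac1h\Bigl(\frac{\omega_{H,\sigma}}{H}+F(H)\Bigr).
\]
The error term is $o_\sigma(1)/h$, not $o_\sigma(1)$, and it is not uniform as $h\to0$ at fixed $\sigma$. So it says nothing about $\partial_H(-\omega_{H,\sigma}/H)$, and integrating in $\sigma$ at fixed $H$ (with Fatou) does not remove the factor $1/h$.

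\textbf{Your claim cannot follow from the monotonicity properties and $F'(H)$ alone.} Take $-\omega_{H,\sigma}/H$ equal, for $\sigma$ in the $m$-th log-dyadic range and up to an additive constant, to $H-2^{-m}d_m(H)$. Here $d_m$ is a $[\tfrac12,1]$-valued sawtooth with slopes $2^m$ and $-3\cdot2^m$. Then:
\begin{itemize}
\item properties (1) and (2) hold;
\item $F(H)=H+\mathrm{const}$ and $\partial_\sigma\omega_{H,\sigma}=0$;
\item yet on a set of $H$ of positive measure, $\partial_H(-\omega_{H,\sigma}/H)=4>2F'(H)+1$ for \emph{every} $\sigma$ in infinitely many entire ranges.
\end{itemize}
So the assertion that the good set has positive weight in every range, with $c=2F'(H)+1$, is false in general.

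\textbf{The missing idea is to integrate in $H$ at fixed $\sigma$.} You have $\int_a^b\partial_H(-\omega_{H,\sigma}/H)\,dH\le\omega_{a,\sigma}/a-\omega_{b,\sigma}/b\le\omega_{a,1}/a$, uniformly in $\sigma\le1$. Fatou along a \emph{fixed} sequence $\sigma_n$ then gives $\liminf_n\partial_H(-\omega_{H,\sigma_n}/H)<\infty$ for a.e.\ $H$. The resulting constant depends on $H$ and is unrelated to $F'(H)$.

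This requires $\sigma_n$ to be chosen independently of $H$, which your per-$H$ selection does not provide. That is why the paper proceeds as follows:
\begin{enumerate}
\item It integrates $\partial_\sigma\omega_{H,\sigma}$ over $H\in[a,b]$ as well, using Fubini and $\int_0^1\partial_\sigma\omega_{H,\sigma}\,d\sigma\le\omega_{H,1}-\omega_{H,0}\le\omega_{a,1}H/a$.
\item It chooses $\sigma_n$ with $\sigma_n\log\sigma_n^{-1}\int_a^b\partial_\sigma\omega_{H,\sigma}|_{\sigma=\sigma_n}\,dH\to0$.
\item It extracts a subsequence along which this converges to $0$ for a.e.\ $H$.
\item Only then does it apply Fatou in $H$, followed by an $H$-dependent subsequence.
\end{enumerate}

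Your range-by-range picture can be repaired in the same spirit. Average $\partial_H(-\omega_{H,\sigma}/H)$ over each log-dyadic range and integrate in $H$. Fatou then gives, for a.e.\ $H$, infinitely many ranges (not all of them) in which the good set carries at least half the weight. That is enough, since the $\sigma$-bad set at any fixed level has vanishing weight in late ranges, so the two sets intersect.

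Separately, choosing $H$ where derivatives exist only for a countable dense family $\sigma_j$ does not guarantee differentiability at the $\sigma_n$ you finally select.
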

For convenience, let us set
\begin{align*}
    A_\sigma(u)=\operatorname{Area}(u)+\sigma^4\int_{\Sigma} \lvert \sff^u\rvert^4  d\vol_{g_u}\text{ for }u\in\mathfrak{M}.
\end{align*}
\begin{proof}
    \begin{enumerate}
        \item Let $H>H'>0$, then for any $\gamma\in \mathcal{P}$, for $t\in [0,1]$ we have
    \begin{align*}
        \frac{E_{H,\sigma}(\gamma(t), f_{\gamma,t})}{H}-\frac{E_{H',\sigma}(\gamma(t), f_{\gamma,t})}{H'}=\frac{H'-H}{HH'}A_\sigma(\gamma(t))\leq 0.
    \end{align*}
    Next for any $\delta>0$, there exist $\gamma\in \mathcal{P}$ such that
    \begin{align*}
        \max_{t\in I_{\gamma}^{H'}}E_{H',\sigma}(\gamma(t), f_{\gamma, t})\leq\omega_{H',\sigma}+\delta.
    \end{align*}
Note that if $t\in I_\gamma^H$, then
\begin{align*}
    \frac{1}{H'}(\operatorname{Area}(\gamma(t))+H'\vol(f_{\gamma,t}))\geq\frac{1}{H}(\operatorname{Area}(\gamma(t))+H\vol(f_{\gamma,t}))\geq\vol_g(\mathcal{M})+\zeta,
\end{align*}
therefore $I_\gamma^{H}\subset I_\gamma^{H'}$. Hence,
    \begin{align*}
        \frac{{\omega}_{H,\sigma}}{H}\leq \max_{t\in I_\gamma^H}\frac{E_{H,\sigma}(\gamma(t),f_{\gamma,t})}{H}\leq \max_{t\in I_\gamma^{H'}}\frac{E_{H',\sigma}(\gamma(t),f_{\gamma,t})}{H'}\leq \frac{{\omega}_{H',\sigma}}{H'}+\frac{\delta}{H'}.
    \end{align*}
    As $\delta$ is arbitrary, the statement follows.
    \item The second statement follows from a similar proof as (1) and the fact that for any $u\in \mathfrak{M}$, the function $\sigma\mapsto A_{\sigma}(u)$ is non-decreasing.
    \item By the previous point, for any $H\geq 0$, for a.e. $\sigma\geq 0$ the derivative $\frac{\de{\omega}_{H,\sigma}}{\partial \sigma}$ exists and is non-negative, and there holds
    \begin{align*}
        \omega_{H,1}-\omega_{H,0}\geq \int_0^1 \frac{\partial \omega_{H,\sigma}}{\partial \sigma} d\sigma.
    \end{align*}
    Integrate the above expression over $H\in[a,b]$ (with $b<H_0)$. Swapping the order of integration by Fubini's Theorem, we obtain 
    \begin{align}\label{eq: estimate-derivative-Struwe-1}
        \int_0^1\int_a^b\frac{\de\omega_{H,\sigma}}{\de \sigma}\,d\sigma dH\leq \int_a^b(\frac{\omega_{H,1}}{H}-\frac{\omega_{H,0}}{H})H dH\leq \frac{\omega_{a,1}}{a}\int_a^bH dH=\frac{\omega_{a,1}}{2a}(b^2-a^2)<\infty.
    \end{align}

    Hence, we can find a sequence $\sigma_n\rightarrow0$ such that
    \begin{align*}
        \sigma_n\log\sigma_n^{-1}\int_a^b\frac{\de\omega_{H,\sigma}}{\de\sigma}\bigg|_{\sigma=\sigma_n}dH\rightarrow0,
    \end{align*}
    otherwise the integral on the left side of \eqref{eq: estimate-derivative-Struwe-1} would be infinite.
    Thus the sequence of functions $f_n:=\sigma_n\log\sigma_n^{-1}\frac{\de\omega_{H,\sigma}}{\de\sigma}\big|_{\sigma=\sigma_n}$ converges to $0$ in $L^1([a,b])$. Then there exists a subsequence (not relabeled) such that $f_n\to 0$ almost everywhere, meaning that for a.e. $H\in [a,b]$
    \begin{align*}
        \lim_{n\to \infty}\sigma_n\log{\sigma_n}^{-1}\frac{\de\omega_{H,\sigma}}{\de\sigma}\bigg|_{\sigma=\sigma_n}=0.
    \end{align*}
    Fix such a sequence $\{\sigma_n\}_{n\in \mathbb{N}}$. Then for any $n\in \mathbb{N}$, by part $(1)$ the derivative $\frac{\partial}{\partial H}\left(-\frac{\omega_{H,\sigma_n}}{H}\right)$ exists for almost any $H>0$ and is non-negative. Hence by Fatou's Lemma
    \begin{align*}
        \int_a^b\liminf_{n\rightarrow\infty}\frac{\de}{\de H}\left(-\frac{\omega_{H,\sigma_n}}{H}\right)\,dH\leq\liminf_{n\rightarrow\infty}\int_a^b\frac{\de}{\de H}\left(-\frac{\omega_{H,\sigma_n}}{H}\right)\,dH\leq\frac{\omega_{a,1}}{a}-\frac{\omega_{b,0}}{b}<\infty.
    \end{align*}
    Thus for a.e. $H\in[a,b]$ we have
    \begin{align*}
        \liminf_{n\rightarrow\infty}\frac{\de}{\de H}\left(-\frac{\omega_{H,\sigma_n}}{H}\right)<\infty.
    \end{align*}
    By passing to a subsequence of $\{\sigma_n\}_{n\in \mathbb{N}}$ (possibly depending on $H$), we obtain the desired estimates.
    \end{enumerate}
\end{proof}
\begin{rmk}\label{rem: Struwe-1-H-0}
    If $H=0$, we observe that
    \begin{align*}
        \int_0^1\frac{\partial \omega_{0,\sigma}}{\partial \sigma}ds\leq \omega_{0,1}<\infty.
    \end{align*}
    Therefore arguing as in the proof of Lemma~\ref{lem: properties-omega} we obtain that there is a sequence $\sigma_n\to 0^+$ with
    \begin{align*}
        \lim_{n\to\infty}\sigma_n \log\sigma_n^{-1}\frac{\partial\omega_{0,\sigma}}{\partial\sigma}\bigg\vert_{\sigma=\sigma_n}=0.
    \end{align*}
\end{rmk}
In the next Lemma, we generalize Struwe's monotonicity trick to our two parameter case, to derive both the control on the area and the second fundamental form. This result corresponds to Lemma 3.3 in \cite{Cheng-Zhou}.
\begin{lemma}\label{lem: monotonicity-trick}
    For any $H,\sigma>0$ such that
    \begin{align}\label{eq: assumptions-good-sweep-outs}
        \frac{\de}{\de H}\left(-\frac{\omega_{H,\sigma}}{H}\right)\leq c\text{ and }\frac{\de\omega_{H,\sigma}}{\de\sigma}\leq\frac{\varepsilon}{\sigma\log\sigma^{-1}}
    \end{align}
    (for constants $c,\varepsilon>0$),
    there exists a sequence $\{\sigma_j\}_{j\in \mathbb{N}}$ of positive real numbers such that $\sigma_j\to \sigma^-$ and such that
    for any $j\in \mathbb{N}$ there exist $\gamma_j\in \mathcal{P}$ with
    \begin{enumerate}
        \item $\max_{t\in I_{\gamma_j}^{H}}E_{H,\sigma_j}(\gamma_j(t),f_{\gamma_j,t})\leq\omega_{H,\sigma_j}+\frac{1}{j}$;
        \item For all $t\in I_{\gamma_j}^H$ with $E_{H,\sigma_j}(\gamma_j(t),f_{\gamma_j,t})\geq\omega_{H,\sigma_j}-\frac{1}{j}$, there holds
        \begin{align*}
            \text{Area}(\gamma_j(t))\leq7H^2c\text{ and }\sigma^4\int_{\Sigma}\vert\sff^{\gamma_j(t)}\vert^4\,d\vol_{g_{\gamma_j(t)}}\leq\frac{3\varepsilon}{\log\sigma^{-1}},
        \end{align*}
    \end{enumerate}
    whenever $j$ is sufficiently large.
\end{lemma}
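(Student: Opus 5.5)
We follow Struwe's monotonicity trick in the two-parameter form of \cite[Lemma 3.3]{Cheng-Zhou}. The plan is to pick $\sigma_j\uparrow\sigma$ and $H_j\uparrow H$ so that difference quotients of $\omega$ in $\sigma$ and in $H$ are close to the derivatives in \eqref{eq: assumptions-good-sweep-outs}. We then take $\gamma_j$ almost optimal for the larger parameters and compare energies at different parameters along the same sweep-out.

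\textbf{Choice of parameters.} Using \eqref{eq: assumptions-good-sweep-outs}, fix $\sigma_j\uparrow\sigma$ and $H_j\uparrow H$ with
\begin{align*}
\frac{\omega_{H,\sigma}-\omega_{H,\sigma_j}}{\sigma-\sigma_j}\le\frac{2\varepsilon}{\sigma\log\sigma^{-1}},\qquad \frac{\omega_{H_j,\sigma_j}}{H_j}-\frac{\omega_{H,\sigma_j}}{H}\le 2c\,(H-H_j).
\end{align*}
The second bound is uniform in $j$ only if we control how the $H$-derivative depends on $\sigma_j$. The natural route is to choose $H-H_j$ comparable to $\sigma-\sigma_j$ and use monotonicity in both variables to bound the mixed quotient by the derivative at $(H,\sigma)$ plus a small error. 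This bookkeeping will need care. We take $\gamma_j\in\mathcal P$ almost optimal for $\omega_{H,\sigma}$, with error $(\sigma-\sigma_j)^2$ (and also $\le 1/j$). Monotonicity in $\sigma$ (Lemma~\ref{lem: properties-omega}(2)) and the relation between $I^H_\gamma$ and $I^{H_j}_\gamma$ then give property (1) for $\sigma_j$.

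\textbf{Controlling the penalization.} Let $t\in I^H_{\gamma_j}$ with $E_{H,\sigma_j}(\gamma_j(t))\ge\omega_{H,\sigma_j}-1/j$. Then
\begin{align*}
(\sigma^4-\sigma_j^4)\int|\sff|^4=E_{H,\sigma}-E_{H,\sigma_j}\le \omega_{H,\sigma}+(\sigma-\sigma_j)^2-\omega_{H,\sigma_j}+\tfrac1j.
\end{align*}
Divide by $\sigma-\sigma_j$, use $\sigma^4-\sigma_j^4\approx4\sigma^3(\sigma-\sigma_j)$, and make $1/j\ll\sigma-\sigma_j$ by passing to a subsequence of $j$. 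This gives $4\sigma^3\int|\sff|^4\le 3\varepsilon/(\sigma\log\sigma^{-1})$ for $j$ large, and hence the second bound in (2).

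\textbf{Controlling the area.} For such $t$ we have
\begin{align*}
\frac{E_{H_j,\sigma_j}}{H_j}-\frac{E_{H,\sigma_j}}{H}=\Big(\frac1{H_j}-\frac1H\Big)A_{\sigma_j}(\gamma_j(t)).
\end{align*}
Since $I^H_\gamma\subset I^{H_j}_\gamma$, the left side is at most $\omega_{H_j,\sigma_j}/H_j+\mathrm{err}-\omega_{H,\sigma_j}/H+1/(jH)$, where $\mathrm{err}$ is the near-optimality error of $\gamma_j$ at level $H_j$. The difficulty is that $\gamma_j$ is near-optimal at $(H,\sigma)$, not at $(H_j,\sigma_j)$. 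We handle this by comparing $\max_{I^{H_j}}E_{H_j,\sigma_j}$ with $\max_{I^{H}}E_{H,\sigma}$ through the same algebraic identity together with the lower bound $\omega_{H_j,\sigma_j}$.

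This step is the main obstacle. It requires choosing $\gamma_j$ optimal for a single pair of parameters, while keeping the times in $I^{H_j}\setminus I^H$ irrelevant, for instance because their energy lies below $H_j(\mathrm{vol}+\zeta)<\omega$. Once this is done, dividing by $(H-H_j)/(HH_j)\approx(H-H_j)/H^2$ gives $A_{\sigma_j}\le H^2(2c+o(1))+\dots\le 7H^2c$, and therefore the same bound for the area.
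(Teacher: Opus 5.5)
Your treatment of the penalization term matches the paper's. The area bound, however, which you yourself flag as the main obstacle, is not proved, and it does not follow from your choice of sweep-out. If $\gamma_j$ is nearly optimal for $\omega_{H,\sigma}$, you only control $E_{H,\sigma}/H=A_\sigma/H+\vol$ from above. Since $\vol(f_{\gamma_j,t})$ has no a priori lower bound, this says nothing about the area. In the identity
\begin{align*}
\Big(\frac{1}{H_j}-\frac{1}{H}\Big)A_{\sigma_j}(\gamma_j(t))=\frac{E_{H_j,\sigma_j}(\gamma_j(t),f_{\gamma_j,t})}{H_j}-\frac{E_{H,\sigma_j}(\gamma_j(t),f_{\gamma_j,t})}{H}
\end{align*}
you need an upper bound on the \emph{larger} quantity $E_{H_j,\cdot}/H_j$. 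Recovering it from a bound on $E_{H,\sigma}$ through the same identity is circular.

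The missing idea, which is what the paper does, is to make $\gamma_j$ nearly optimal at the pair $(H_j,\sigma)$, i.e.\ at the smaller mean curvature and the larger penalization:
\[
\max_{t\in I^{H_j}_{\gamma_j}}E_{H_j,\sigma}(\gamma_j(t),f_{\gamma_j,t})\le\omega_{H_j,\sigma}+\tfrac{1}{4j}.
\]
This single functional dominates everything you need.
\begin{itemize}
\item On $I^H_{\gamma_j}\subset I^{H_j}_{\gamma_j}$ one has $E_{H,\sigma}\le\frac{H}{H_j}E_{H_j,\sigma}$. The $H$-difference quotient at $\sigma$ then gives $\max_{I^H_{\gamma_j}}E_{H,\sigma}\le\omega_{H,\sigma}+O(1/j)$. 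Property (1) follows via the $\sigma$-quotient, and your penalization estimate goes through unchanged.
\item Also $E_{H_j,\sigma_j}\le E_{H_j,\sigma}\le\omega_{H_j,\sigma}+\frac{1}{4j}$, while on the relevant times $E_{H,\sigma_j}\ge\omega_{H,\sigma_j}-\frac1j\ge\omega_{H,\sigma}-\frac2j$. So the identity gives $A_{\sigma_j}(\gamma_j(t))\le\frac{HH_j}{H-H_j}\big(\frac{\omega_{H_j,\sigma}}{H_j}-\frac{\omega_{H,\sigma}}{H}+O(1/j)\big)$.
\end{itemize}
Only the $H$-quotient at $\sigma$ and the $\sigma$-quotient at $H$ enter, so the mixed quotient at $\sigma_j$ that worried you never appears. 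The times in $I^{H_j}_{\gamma_j}\setminus I^H_{\gamma_j}$ cause no trouble either, since near-optimality is imposed on the larger set.

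Your choice of scales is also inconsistent. For a sweep-out nearly optimal at the larger $\sigma$, property (1) at $\sigma_j$ needs $\omega_{H,\sigma}-\omega_{H,\sigma_j}\lesssim\frac1j$. Monotonicity in $\sigma$ points the wrong way here ($\omega_{H,\sigma_j}\le\omega_{H,\sigma}$), so you must use the derivative bound. That bound only yields the estimate if $\sigma-\sigma_j\lesssim\frac{\sigma\log\sigma^{-1}}{\varepsilon j}$. Since the threshold $\frac1j$ in (2) is fixed by the statement, you cannot arrange $\frac1j\ll\sigma-\sigma_j$. The increments must be exactly of order $\frac1j$; the paper takes $\sigma-\sigma_j=\frac{\sigma\log\sigma^{-1}}{8\varepsilon j}$ and $H-H_j=\frac{1}{8cHj}$. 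The $O(1/j)$ errors divided by these increments then contribute bounded multiples of $\frac{\varepsilon}{\sigma\log\sigma^{-1}}$ and of $cH$, not $o(1)$ terms. This is why the conclusions are bounds of size $O(\varepsilon/\log\sigma^{-1})$ and $O(cH^2)$.
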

\begin{proof}
    For any $j\in \mathbb{N}$, set
    \begin{align}\label{eq: def-sigman-Hn}
        \sigma_j=\sigma-\frac{\sigma\log\sigma^{-1}}{8\varepsilon j},H_j=H-\frac{1}{8cHj}.
    \end{align}
    By definition of derivative, for $j$ sufficiently large we have
    \begin{align}\label{eq: derivative-estimate-omega-sigma}
        \omega_{H,\sigma}\geq\omega_{H,\sigma_j}\geq\omega_{H,\sigma}-\frac{2\varepsilon(\sigma-\sigma_j)}{\sigma\log\sigma^{-1}}=\omega_{H,\sigma}-\frac{1}{4j},
    \end{align}
    \begin{align}\label{eq: derivative-estimate-omega-H}
        \frac{\omega_{H_j,\sigma}}{H_j}\geq\frac{\omega_{H,\sigma}}{H}\geq\frac{\omega_{H_j,\sigma}}{H_j}-2c(H-H_j)=\frac{\omega_{H_j,\sigma}}{H_j}-\frac{1}{4Hj}.
    \end{align}
    Let $\gamma_j$ be a sweep-out in $\mathcal{P}$ such that
    \begin{align}\label{eq: choice-gamma-n}
        \max_{t\in I_{\gamma_j}^{H_j}}E_{H_j,\sigma}(\gamma_j(t),f_{\gamma_j,t})\leq\omega_{H_j,\sigma}+\frac{1}{4j}.
    \end{align}
    Then using the fact that $I_{\gamma_j}^H\subset I_{\gamma_j}^{H_j}$ (see the proof of Lemma \ref{lem: properties-omega}, $(1)$) and \eqref{eq: derivative-estimate-omega-H}, we obtain, for $j$ sufficiently large,
    \begin{align}\label{eq: estimate-H-sigma}
    \nonumber
        \max_{t\in I_{\gamma_j}^{H}}E_{H,\sigma}(\gamma_j(t),f_{\gamma_j,t})\leq&\frac{H}{H_j}\max_{t\in I^{H_j}_{\gamma_j}}E_{H_j,\sigma}(\gamma_j(t),f_{\gamma_j,t})\leq\frac{H}{H_j}\left(\omega_{H_j,\sigma}+\frac{1}{4j}\right)\\
        \leq&\omega_{H,\sigma}+\frac{1}{4j}+\frac{H}{4H_j j}\leq\omega_{H,\sigma}+\frac{1}{2j}.
    \end{align}
    Using \eqref{eq: derivative-estimate-omega-sigma}
    \begin{align*}
        \max_{t\in I_H^{\gamma_j}}E_{H,\sigma_j}(\gamma_j(t),f_{\gamma_j,t})\leq \omega_{H,\sigma_j}+\frac{1}{4j}+\frac{1}{2j}<\omega_{H,\sigma_j}+\frac{1}{j}.
    \end{align*}
    This yields the desired upper bound for $\max_{t\in I_{\gamma_j}^H}E_{H,\sigma_j}(\gamma_j(t),f_{\gamma_j,t})$.\\
    Next, let $t\in I_{\gamma_j}^{H}\subset I_{\gamma_j}^{H_j}$ (depending on $j$) be such that
    \begin{align}\label{eq: close-to-Hn-sigma}
        E_{H,\sigma_j}(\gamma_j(t),f_{\gamma_j,t})\geq\omega_{H,\sigma_j}-\frac{1}{j}\geq\omega_{H,\sigma}-\frac{2}{j}
    \end{align}
    Note that
    \begin{align*}
        E_{H_j,\sigma_j}(\gamma_j(t), f_{\gamma_j,t})\leq E_{H_j,\sigma}(\gamma_j(t), f_{\gamma_j, t})\leq\omega_{H_j,\sigma}+\frac{1}{4j},
    \end{align*}
    where the last inequality follows from our choice \eqref{eq: choice-gamma-n}.\\    
    Then we have
    \begin{align*}
        \frac{A_{\sigma_j}(\gamma_j(t))}{HH_j}&=\frac{1}{H-H_j}\left(\frac{E_{H_j,\sigma_j}(\gamma_j(t),f_{\gamma_j,t})}{H_j}-\frac{E_{H,\sigma_j}(\gamma_j(t),f_{\gamma_j,t})}{H}\right)\\&\leq\frac{1}{H-H_j}\left(\frac{\omega_{H_j,\sigma}}{H_j}+\frac{1}{H_j4j}-\frac{\omega_{H,\sigma}}{H}-\frac{2}{Hj}\right)\\
        &\leq 8cHj\left(\frac{\omega_{H_j,\sigma}}{H_j}-\frac{\omega_{H,\sigma}}{H}\right)+2c\frac{H}{H_j}\leq2c\left(1+\frac{H}{H_j}\right),
    \end{align*}
    where the last inequality follows from \eqref{eq: derivative-estimate-omega-H}.\\
    In particular, for $j$ large enough,
    \begin{align*}
        \text{Area}(\gamma_{j}(t))\leq 7H^2c.
    \end{align*}
    On the other hand, by \eqref{eq: estimate-H-sigma} and \eqref{eq: close-to-Hn-sigma},
    \begin{align}\label{eq: computation-estimate-entropy}
        (\sigma^4-\sigma_j^4)\int_{\Sigma}\vert\sff^{\gamma_j(t)}\vert^4\,d\vol_{g_{\gamma(t)}}&=E_{H,\sigma}(\gamma_j(t),f_{\gamma_j,t})-E_{H,\sigma_j}(\gamma_j(t),f_{\gamma_j,t})\\
        \nonumber
        &\leq\omega_{H,\sigma}-\omega_{H,\sigma_j}+\frac{1}{j}.
    \end{align}
    Dividing by $(\sigma-\sigma_j)$, we then have
    \begin{align*}
        4\sigma_j^3\int_{\Sigma}\vert\sff^{\gamma_j(t)}\vert^4\leq\frac{\omega_{H,\sigma}-\omega_{H,\sigma_j}}{\sigma-\sigma_j}+\frac{8\varepsilon}{\sigma \log \sigma^{-1}}\leq\frac{10\varepsilon}{\sigma\log\sigma^{-1}}
    \end{align*}
    for $j$ sufficiently large, where the last inequality follows from the second assumption in \eqref{eq: assumptions-good-sweep-outs}. Thus for $j$ sufficiently large we obtain the desired properties.
\end{proof}

\begin{rmk}\label{rem: Struwe-2-H-0}
     If $H=0$, all terms in the energy $E_{0,\sigma}$ are non-negative. Therefore, for a sweep-out $\gamma_n$ as in \eqref{eq: choice-gamma-n} (with $H_n=0$), we have
    \begin{align*}
        \operatorname{Area}(\gamma_n(t))\leq \omega_{0,\sigma}+1\quad \forall t\in I_{\gamma_n}^0.
    \end{align*}
    Moreover, choosing $\sigma_n$ as in \eqref{eq: def-sigman-Hn}, computation \eqref{eq: computation-estimate-entropy} shows that if $\sigma$ satisfies the second assumption in \eqref{eq: assumptions-good-sweep-outs} (for $H=0$), then for any $t\in I_{\gamma_n}^0$ with
    \begin{align*}
        A_{\sigma_n}(\gamma_n(t))\geq\omega_{0,\sigma_n}-\frac{1}{n},
    \end{align*}
    there holds
    \begin{align*}
        \sigma^4 \int_\Sigma\lvert \sff^{\gamma_n(t)}\rvert^4\leq \frac{3\varepsilon}{\log\sigma^{-1}}
    \end{align*}
    if $n$ is large enough.
\end{rmk}

\subsection{The deformation Lemma}
In this Subsection we show that for fixed $H$ and $\sigma$, for $n$ sufficiently large, for the "good" sweep-outs $\gamma_n$ constructed in Lemma \ref{lem: monotonicity-trick}, there exist a time $t\in I_{\gamma_n}^H$ such that $\gamma_n(t)$ is an almost-critical point of $E_{H, \sigma_n}$. These kinds of properties are typically proved by means of a pseudo-gradient flow, see \cite{Struwe-VM} for related discussions.\par
In our specific case, $E_{H,\sigma}$ is not directly defined on $\mathfrak{M}$. Nevertheless, we can still get a good notion of pseudo gradient flow as follows. In Definition \ref{def: reduction of functional}, we introduced the local reduction of the functional $E_{H,\sigma}$ to simply connected open sets $\mathfrak{A}$ in $\mathfrak{M}$ and showed that $\delta E_{H,\sigma}$ is well defined. We recall that the construction of the pseudo-gradient vector field (see for example \cite[Chapter II.3]{Struwe-VM}) makes only use of the continuity of $\delta E_{H,\sigma}$ (Proposition \ref{prop: continuity of first variation}), and works even if the original functional $E_{H,\sigma}$ is not globally well defined. Therefore, we have the following result.

\begin{lemma}\label{lem: existence-pseudo-gradient-flow}
    Set
    \begin{align*}
        \tilde{V}=\{u\in \mathfrak{M}|\delta E_{H,\sigma}(u)\ne0\}.
    \end{align*}
    There exists a locally Lipschitz map $X:\tilde{V}\rightarrow T\mathfrak{M}$ such that
    \begin{enumerate}
        \item $X(u)\in T_u\mathfrak{M}$ for all $u\in \tilde{V}$;
        \item $\vert X(u)\vert_u<2\min\{\Vert\delta E_{H,\sigma}(u)\Vert_{(T_u\mathfrak{M})^\ast},1\}$;
        \item $\delta E_{H,\sigma}(u)(X(u))<-\min\{\Vert\delta E_{H,\sigma}(u)\Vert_{(T_u\mathfrak{M})^\ast},1\}\Vert\delta E_{H,\sigma}(u)\Vert_{(T_u\mathfrak{M})^\ast}.$
    \end{enumerate}
\end{lemma}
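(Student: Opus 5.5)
We follow the standard construction of a pseudo-gradient vector field for a $C^1$ functional on a Banach (Finsler) manifold (cf. \cite[Chapter II.3]{Struwe-VM}). The point is that it uses only two facts. First, $\delta E_{H,\sigma}(u)$ is well defined on all of $\mathfrak{M}$, independently of the local reduction $E^{\mathfrak{A}}_{H,\sigma}$; this holds by the Remark after Definition \ref{def: reduction of functional}. Second, $u\mapsto\Vert\delta E_{H,\sigma}(u)\Vert$ is continuous, which is Proposition \ref{prop: continuity of first variation}. Throughout, write $\beta(u):=\Vert\delta E_{H,\sigma}(u)\Vert_{(T_u\mathfrak{M})^\ast}$ and $m(u):=\min\{\beta(u),1\}$.

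\emph{Step 1: local pseudo-gradient vectors.} Fix $u_0\in\tilde V$, so $\beta(u_0)>0$. By definition of the dual norm, pick $Y_0\in T_{u_0}\mathfrak{M}$ with $\Vert Y_0\Vert_{u_0}=1$ and $\delta E_{H,\sigma}(u_0)(Y_0)>\tfrac{2}{3}\beta(u_0)$. Set $W_0:=-\tfrac{3}{2}m(u_0)Y_0$. Then
\begin{align*}
\Vert W_0\Vert_{u_0}=\tfrac32 m(u_0)<2m(u_0),\qquad \delta E_{H,\sigma}(u_0)(W_0)<-m(u_0)\beta(u_0).
\end{align*}

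\emph{Step 2: extension to a neighborhood.} Choose a chart of $\mathfrak{M}$ near $u_0$ and transport $W_0$ to nearby $u$. A natural choice is $W_0(u):=\Pi_u W_0$, the pointwise projection onto $T_{u(x)}\mathcal{M}$, as in the proof of Proposition \ref{prop: continuity of first variation}. This gives a locally Lipschitz vector field near $u_0$. We then show that, as $u\to u_0$:
\begin{itemize}
\item $\delta E_{H,\sigma}(u)(\Pi_uW_0)\to\delta E_{H,\sigma}(u_0)(W_0)$ and $\Vert\Pi_uW_0\Vert_u\to\Vert W_0\Vert_{u_0}$. This is exactly the estimate $\lvert\delta E_{H,\sigma}(u)[X]-\delta E_{H,\sigma}(u')[X']\rvert\le C\Vert X\Vert_u\tau$ together with $\lvert\Vert X\Vert_u-\Vert X'\Vert_{u'}\rvert\le C\Vert X\Vert_u\tau$, both proved there.
\item $\beta(u)\to\beta(u_0)$, hence $m(u)\to m(u_0)$, by Proposition \ref{prop: continuity of first variation}.
\end{itemize}
Since the inequalities in Step 1 are strict, there is an open neighborhood $N_{u_0}\subset\tilde V$ on which (1)--(3) of the Lemma hold with $X=W_0(u)$.

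\emph{Step 3: gluing.} $\mathfrak{M}$ is a metric space, since it is a complete Finsler manifold, so $\tilde V$ is paracompact (Stone's theorem). Take a locally finite refinement $\{N_\iota\}$ of the cover $\{N_{u_0}\}$ and a subordinate locally Lipschitz partition of unity $\{\phi_\iota\}$, for instance $\phi_\iota=\rho_\iota/\sum_\kappa\rho_\kappa$ with $\rho_\iota=\operatorname{dist}_{\mathfrak{M}}(\cdot,\tilde V\smallsetminus N_\iota)$. Define $X:=\sum_\iota\phi_\iota W_\iota$. Conditions (2) and (3) are convex in the vector at a fixed $u$: (2) is a norm ball and (3) is a half-space. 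Hence they survive convex combinations, and (1) is preserved because each $W_\iota(u)\in T_u\mathfrak{M}$. Local Lipschitz continuity follows from local finiteness.

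\emph{Main obstacle.} The delicate step is Step 2: checking that the projected field $u\mapsto\Pi_uW_0$ is genuinely locally Lipschitz in $\mathfrak{M}$, and that the Finsler norms and first variations vary continuously along it. For the first variation, the $\mathfrak{F}$ estimates in Proposition \ref{prop: continuity of first variation} are already stated exactly in the form needed. For the Lipschitz property, I would work in the chart given by nearest-point projection onto $\mathcal{M}$ and use that $\Pi_u$ depends smoothly on $u$ in $W^{2,4}$.
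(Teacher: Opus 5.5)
Your proposal is correct and is essentially the paper's argument. The paper only invokes the standard pseudo-gradient construction of \cite[Chapter II.3]{Struwe-VM}, noting that it needs $\delta E_{H,\sigma}$ to be globally well defined and continuous, and your Steps 1--3 spell out exactly that construction, including the intermediate estimate $\lvert\delta E_{H,\sigma}(u)[X]-\delta E_{H,\sigma}(u')[\Pi_{u'}X]\rvert\le C\lVert X\rVert_u\tau$ from the proof of Proposition \ref{prop: continuity of first variation}, which supplies the section-level continuity the construction actually requires beyond continuity of $u\mapsto\lVert\delta E_{H,\sigma}(u)\rVert$.
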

We refer readers to \cite[Chapter II.3]{Struwe-VM} for the construction of the pseudo-gradient vector field.

Now we are ready to derive the existence of almost critical points. In the proof of the following result we follow some ideas from \cite{Michelat-Riviere}.

\begin{lemma}\label{lem: new-existence-almost-critical-points}

Let $\sigma_0,A>0$, $\varepsilon\in (0,1)$, $H\geq 0$. 
Let $\gamma\in\mathcal P$ be a sweep-out and suppose that for some $\alpha>0$, for $\sigma\leq \sigma_0$:
    \begin{enumerate}
        \item $\max_{t\in I_{\gamma}^{H}}E_{H,\sigma}(\gamma(t),f_{\gamma,t})\leq\omega_{H,\sigma}+\alpha$;
        \item For all $t\in I_\gamma^H$ with $E_{H,\sigma}(\gamma(t),f_{\gamma,t})\geq\omega_{H,\sigma}-\alpha$, we have
        \begin{align}\label{eq: assumtions-good-sweep-outs}
        \operatorname{Area}(\gamma(t))\leq A\text{ and }\sigma^4\int_{\Sigma}\vert\sff^{\gamma(t)}\vert^4d\vol_{\gamma(t)}\leq\frac{\varepsilon}{\log\sigma^{-1}}.
        \end{align}
    \end{enumerate}
    Then for any $\theta>0$, if $\alpha$ and $\sigma_0$ are sufficiently small, there exist $t_0\in I_\gamma^H$ with
    \begin{enumerate}
        \item $\vert E_{H,\sigma}(\gamma(t_0),f_{\gamma,t_0})-\omega_{H,\sigma}\vert\leq\alpha;$
        \item $\gamma(t_0)$ is $\theta$-almost critical for $E_{H,\sigma}$.
    \end{enumerate}
\end{lemma}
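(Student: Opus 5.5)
We argue by contradiction, deforming $\gamma$ with the pseudo-gradient flow of Lemma \ref{lem: existence-pseudo-gradient-flow} and checking that the deformed family is still an admissible competitor for $\omega_{H,\sigma}$. Suppose that no $t\in I^H_\gamma$ with $|E_{H,\sigma}(\gamma(t),f_{\gamma,t})-\omega_{H,\sigma}|\le\alpha$ gives a $\theta$-critical point. Call such times \emph{high}. Let $K\subset(0,1)$ be the compact set of $t\in I^H_\gamma$ with $E_{H,\sigma}(\gamma(t),f_{\gamma,t})\ge\omega_{H,\sigma}-\alpha$. By hypothesis (1), $K$ contains exactly the high times. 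By hypothesis (2), every $\gamma(t)$ with $t\in K$ has area at most $A$ and $\int|\sff|^4\le\varepsilon\sigma^{-4}/\log\sigma^{-1}$.

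\textbf{Step 1: uniform criticality gap near $K$.} Proposition \ref{prop: continuity of first variation} gives constants depending only on $A$, $E=\varepsilon\sigma^{-4}/\log\sigma^{-1}$, $\sigma$ and $H$. Hence there is $\tau>0$ such that $\|\delta E_{H,\sigma}(u)\|\ge\theta/2$ whenever $\mathrm{d}_{\mathfrak M}(u,\gamma(t))<\tau$ for some $t\in K$.

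\textbf{Step 2: the deformation.} Fix a cutoff $\eta\in C^0([0,1],[0,1])$ equal to $1$ on $K$ and supported in a small neighborhood $K'\Subset(0,1)$ of $K$. Flow each $\gamma(t)$ along $\eta(t)X$, where $X$ is the pseudo-gradient field of Lemma \ref{lem: existence-pseudo-gradient-flow}, for a time $s_0$ with $2s_0<\tau$. Since $|X|_u\le 2$, the flow stays in the $\tau$-neighborhood, so Step 1 applies along it. The flow is local, so we may work with local reductions $E^{\mathfrak A}_{H,\sigma}$ on small simply connected sets. Along each curve $s\mapsto\gamma_s(t)$ we then take $f_{\gamma_s,t}$ to be the concatenation of $f_{\gamma,t}$ with the flow line, exactly as in Definition \ref{def: reduction of functional}. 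Property (3) of the lemma gives, for $t\in K$,
\begin{align*}
E_{H,\sigma}(\gamma_{s_0}(t))\le E_{H,\sigma}(\gamma(t))-s_0\min\{\theta/2,1\}\theta/2 .
\end{align*}
For $t\notin K$ the energy is non-increasing. The new family $\tilde\gamma=\gamma_{s_0}$ agrees with $\gamma$ near $t=0,1$, so it is again a degree one sweep-out in $\mathcal P$. Here one must check that the volume term in $\tilde\gamma$'s own $f_{\tilde\gamma,t}$ agrees with the concatenated one. This follows from Lemma \ref{lem: define of volume}, because the two maps are homotopic relative to the ends and are $C^0$ close when $s_0$ is small.

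\textbf{Step 3: controlling the interval $I^H_{\tilde\gamma}$.} This is the main obstacle, and it is where the idea of \cite{Michelat-Riviere} enters. The set $I^H_{\tilde\gamma}$ is defined through $\operatorname{Area}+H\vol$, not through $E_{H,\sigma}$, so the deformation may move times into or out of the interval. Times leaving the interval are harmless. A time $t$ entering it must satisfy $E_{H,0}(\tilde\gamma(t))\ge H(\vol_g(\mathcal M)+\zeta)$ while $t\notin I^H_\gamma$. I would handle this by shrinking $s_0$ and $\operatorname{supp}\eta$: the quantity $E_{H,0}(\gamma(t))$ changes by at most $C s_0$ along the flow, by Lemma \ref{lem: closeness-A-E} and the Lipschitz volume variation. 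Alternatively, one can choose the threshold defining $I$ so that the boundary level $H(\vol_g+\zeta)$ lies strictly below $\omega_{H,\sigma}-\alpha$ minus a margin. This is possible since $\omega_{H,\sigma}\ge\omega_H>H(\vol_g+2\zeta)$ for $H<H_0$, by Proposition \ref{prop: non-trivial of min-max} and the definition of $\zeta$. Then any newly entering $t$ has energy well below $\omega_{H,\sigma}-\alpha$, and $\sup_{I^H_{\tilde\gamma}}E$ is attained either at flowed high times or at old low times.

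\textbf{Step 4: contradiction.} Choose $\alpha<s_0\theta\min\{\theta/2,1\}/8$. This is why $\alpha$ must be small relative to $\theta$, while $\sigma_0$ small keeps $\tau$ and $s_0$ uniform. Then every $t\in I^H_{\tilde\gamma}$ satisfies $E_{H,\sigma}(\tilde\gamma(t))<\omega_{H,\sigma}-\alpha$. This contradicts the definition of $\omega_{H,\sigma}$ as an infimum over $\mathcal P$.
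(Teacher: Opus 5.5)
Your overall scheme is the paper's: argue by contradiction, get a uniform gap $\|\delta E_{H,\sigma}\|\ge\theta/2$ near the high set from Proposition~\ref{prop: continuity of first variation}, deform by a $t$-localized pseudo-gradient flow of fixed length, and take $\alpha\ll s_0\theta^2$. The real difference is in how the admissible set $I^H$ is kept under control.

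\textbf{How the paper controls $I^H$.} Besides the $t$-cutoff $\phi$, the paper multiplies the field by the state-dependent cutoff $\psi^t=\eta\bigl(\frac{4}{\zeta}(H^{-1}\operatorname{Area}+\vol-(\vol_g(\mathcal M)+\zeta))\bigr)$. The flow therefore switches itself off before $\operatorname{Area}+H\vol$ reaches the threshold, which gives $I^H_{\Phi^{s\phi}\circ\gamma}=I^H_\gamma$ for all $s\le T_0$ without any quantitative control of $E_{H,0}$ along the flow. The price is a case distinction at the end. Either the energy has already dropped below $\omega_{H,\sigma}-H\zeta/10$, or Lemma~\ref{lem: closeness-A-E} keeps the $\sff$-term small, which forces $\psi^t\ge1$ and makes the full decrease available.

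\textbf{How you control $I^H$.} You keep $I^H$ fixed by an energy margin plus a short flow time. This is more elementary and makes your Step 4 immediate.

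\textbf{The problem in Step 3.} Step 3 only works once its two halves are combined, and the second half as written is not justified. For $t\notin I^H_\gamma$ you have no control on $\sigma^4\int_\Sigma|\sff^{\gamma(t)}|^4$. So monotonicity of $E_{H,\sigma}$ does not show that a newly entering time has energy below $\omega_{H,\sigma}-\alpha$. Shrinking $s_0$ alone does not help either if $\operatorname{supp}\eta$ contains times just below the threshold.

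\textbf{The fix: show that no time enters at all.} For $t\in K$, hypothesis (2) together with $\omega_{H,\sigma}\ge\omega_H\ge H(\vol_g(\mathcal M)+2\zeta)$ gives
\[
E_{H,0}(\gamma(t))\ge \omega_{H,\sigma}-\alpha-\frac{\varepsilon}{\log\sigma^{-1}}\ge H(\vol_g(\mathcal M)+\zeta)+\frac{H\zeta}{2}
\]
once $\alpha<H\zeta/4$ and $\varepsilon/\log\sigma_0^{-1}<H\zeta/4$. This is where the smallness of $\sigma_0$ is really needed.
\begin{itemize}
\item Choose $\operatorname{supp}\eta$ so small that $E_{H,0}\circ\gamma\ge H(\vol_g(\mathcal M)+\zeta)+H\zeta/4$ on it; in particular $\operatorname{supp}\eta\subset I^H_\gamma$.
\item Choose $s_0$ so small that $\operatorname{Area}+H\vol$ moves by less than $H\zeta/4$ along the flow. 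Use Lemma~\ref{lem: closeness-A-E} for the area, and $|\frac{d}{ds}\vol|\le\|X\|_{L^\infty}\operatorname{Area}\le 2\operatorname{Area}$ for the volume.
\end{itemize}
Then $I^H_{\tilde\gamma}=I^H_\gamma$ exactly, and your Step 4 goes through as written (adding $\alpha<H\zeta/4$ to your smallness condition on $\alpha$). The case $H=0$ is identical, using $\omega_{0,\sigma}\ge\omega_0>2\zeta_0$ for the margin.
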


\begin{proof}
    Let us focus on the sub-intervals
    \begin{align*}
        I=\left\{t\in {I}^H_\gamma|E_{{H,\sigma}}(\gamma(t))\geq\omega_{H,\sigma}-\alpha/2\right\},
    \end{align*}
    \begin{align*}
        J=\left\{t\in {I}^H_\gamma|E_{H,\sigma}(\gamma(t))>\omega_{H,\sigma}-\alpha\right\}.
    \end{align*}
    If $H>0$, assume that $\alpha<H\zeta$; if $H=0$ assume that $\alpha<\zeta_0$.
    We wish to prove $\inf_J\Vert\delta E_{H,\sigma}(\gamma(t))\Vert_{(T_{\gamma(t))}\mathfrak{M})^\ast}<\theta$ provided $\alpha$ is small. We prove this by contradiction. Here we consider the case $H>0$. The proof for $H=0$ is similar, see Remark \ref{rmk: deformation-H-0}.\par
    Assume that $\lVert\delta E_{H,\sigma}(\gamma(t))\rVert_{(T_{\gamma(t))}\mathfrak{M})^\ast}\geq\theta$ for all $t\in J$.
    Let $\phi\in C_c^{\infty}((0,1),[0,1])$ be a cut-off function with $\phi=1$ on $I$ and $\phi=0$ on $[0,1]\smallsetminus J$.\\
    Let $\eta$ be a smooth, non-decreasing function supported in $\mathbb{R}_+$ such that $\eta\equiv 1$ on $[1,\infty)$. For each sweep-out $\gamma\in \mathcal{P}$, for $t\in[0,1]$ set
    \begin{align*}
        \psi^t(\gamma(t)):=\eta\left(\frac{4}{\zeta}\left(H^{-1}\operatorname{Area}(\gamma(t))+\vol(f_{\gamma,t})-\left(\vol_g(\mathcal{M})+\zeta\right)\right)\right).
    \end{align*}
Let $\delta_\mathcal{M}$ be the injectivity radius of $\mathcal{M}$, and note that $\psi^t$ can be continuously extended in a natural way to a $\delta_\mathcal{M}$-neighborhood of $\gamma(t)$ in $\mathfrak{M}$. We continue to denote this extension by $\psi^{t}$; it is still given by the same formula as above. Then, if $\gamma'\in \mathcal{P}$ satisfies $\gamma'(0)=\gamma(0)$ and $\operatorname{dist}_\mathfrak{M}(\gamma(t),\gamma'(t))<\delta_\mathcal{M}$ for any $t\in (0,1)$, its corresponding function $\psi^t_{\gamma'}$ satisfies $\psi_{\gamma'}^t(\gamma'(t))=\psi^t(\gamma'(t))$.

    Consider the flow $\Phi^s(\gamma(t))$ associated to $\psi X$, where $X$ is as in Lemma \ref{lem: existence-pseudo-gradient-flow}, starting from $\gamma(t)$:
    \begin{align*}
        \begin{cases}
            \frac{d}{ds}\Phi^s(\gamma(t))=(\psi^t X)(\Phi^s(\gamma(t)))\\
            \Phi^0(\gamma(t))=\gamma(t).
        \end{cases}
    \end{align*}
   
   \textbf{Claim 1}: The flow exists for a time $T_0\leq\frac{\delta_\mathcal{M}}{2}$ which depends only on $\mathcal{M},\theta,H,\sigma_0$ and $A$.
   \begin{proof}
    To verify this, it is enough to show that $\lVert \delta E_{H,\sigma}(\Phi^s(\gamma(t)))\rVert_{(T_{\Phi^s(\gamma(t))}\mathfrak{M})^\ast}$ stays bounded from below away from $0$ for $s\in [0,T_0]$ and $T_0$ as above. In fact, note that, as long as the flow exists,
   \begin{align}\label{eq: control-distance-on-M}
       \operatorname{dist}_\mathfrak{M}( \Phi^s(\gamma(t)),\gamma(t))\leq \int_0^s\lVert (\psi^t X)(\Phi^\tau(\gamma(t)))\rVert_{T_{\Phi^\tau(\gamma(t))}\mathfrak{M}}\, d\tau\leq 2s.
   \end{align}
   Thus by Proposition \ref{prop: continuity of first variation} there exist $s_0\in (0,\frac{\delta_\mathcal{M}}{2})$ such that
   \begin{align*}
       \lVert \delta E_{H,\sigma}(\Phi^s(\gamma(t)))\rVert_{(T_{\Phi^s(\gamma(t))}\mathfrak{M})^\ast}\geq  \lVert \delta E_{H,\sigma}(\gamma(t))\rVert_{(T_{\gamma(t)}\mathfrak{M})^\ast}-Cs\geq \theta-Cs
   \end{align*}
   for any $s\in (0,s_0)$, where $C$ depends on $A,\sigma_0, H$ and $\mathcal{M}$. Therefore, if we choose $T_0< \min\{\frac{s_0}{2},\frac{1}{2C}\theta\}$, we obtain the desired statement.
   \end{proof}
   Let $s\in (0,T_0)$, and assume that $\theta\leq 1$. By Lemma \ref{lem: existence-pseudo-gradient-flow},
   \begin{align}\label{eq: negative-derivative}
       \frac{d}{ds}E_{H,\sigma}(\Phi^{s\phi(t)}(\gamma(t)), f_{\Phi^{s\phi}\circ\gamma,t})\leq -\psi^t(\Phi^{s\phi(t)}(\gamma(t)))\phi(t)\theta^2.
   \end{align}
   \textbf{Claim 2}: for $s\in [0,T_0]$, $ I_\gamma^{H}= I^{H}_{\Phi^{s\phi}\circ\gamma}$. 
   \begin{proof}
    If $t\notin I_\gamma^{H}$, then $\psi^t((\gamma(t))=0$, thus $\Phi^s(\gamma(t))$ is constant in $s$ for any such $t$ and $t\notin I^H_{\Phi^{s\phi}\circ\gamma}$. On the other hand, if $t\in I_\gamma^{H}$, either $\psi^t(\gamma(t))=0$  and the flow is constant in $s$ (so that $t\in I^H_{\Phi^{s\phi}\circ \gamma}$, or $\psi^t(\gamma(t))>0$ and the flow is non-constant. In this case
\begin{align}\label{eq: staying-above-threshold}
    H^{-1}\operatorname{Area}((\gamma(t)))+\vol(f_{\gamma, t})-\left(\vol_g(\mathcal{M})+\zeta\right)>\xi\frac{\zeta}{4}>0,
\end{align}
where $\xi=\sup\{t\in \mathbb{R}_+\vert \eta(t)=0\}$. Then either \eqref{eq: staying-above-threshold} continues to hold for all $s\in [0,T_0]$,
or there exist $s_0\in (0,T_0]$ with
\begin{align*}
    H^{-1}\operatorname{Area}(\Phi^{s_0\phi(t)}(\gamma(t)))+\vol(f_{\Phi^{s_0\phi}\circ\gamma, t})-\left(\vol_g(\mathcal{M})+\zeta\right)=\xi\frac{\zeta}{4}.
\end{align*}
In this case $\psi^t(\Phi^{s_0\phi(t)}(\gamma(t)))=0$ and the flow is constant in $s$ for $s\geq s_0$. Hence in both cases $t\in I_{\Phi^s\circ\gamma}^H$ for all $0\leq s\leq T_0$.
\end{proof}
    Thus for $s\in [0, T_0]$ there holds
    \begin{align*}
       {\omega}_{H,\sigma}\leq\max_{t\in I_{\Phi^{s\phi}\circ\gamma}^{H}}E_{H, \sigma}(\Phi^{s\phi(t)}(\gamma(t)), f_{\Phi^{s\phi}\circ\gamma,t})
       \leq\max_{t\in I_{\gamma}^{H}}E_{H, \sigma}(\gamma(t), f_{\gamma,t}) \leq \omega_{H,\sigma}+\alpha.
    \end{align*}
    Here, the second inequality follows from \eqref{eq: negative-derivative} and the third one from assumption $(1)$.\par
    Recall that for all $t$ with $E_{H,\sigma}(\gamma(t),f_{\gamma,t})\geq\omega_{H,\sigma}-\alpha$, by assumption (2) we have
    \begin{align*}
        \sigma^4\int_{\Sigma}\vert\sff^{\gamma(t)}\vert^4dvol_{\gamma(t)}\leq\frac{\varepsilon}{\log\sigma^{-1}}.
    \end{align*}

    Since $\operatorname{dist}_\mathfrak{M}(\Phi^{s\phi(t)}(\gamma(t)), \gamma(t))\leq 2s$ (by \eqref{eq: control-distance-on-M}), by Lemma \ref{lem: closeness-A-E} (with $E=\frac{\varepsilon}{\sigma^4\log\sigma^{-1}}$) there holds
    \begin{align*}
        \left\lvert\int_{\Sigma}\lvert \sff^{\gamma(t)}\rvert^4d\vol_{\gamma(t)}-\int_{\Sigma}\lvert \sff^{\Phi^{s\phi(t)}(\gamma(t))}\rvert^4d\vol_{\Phi^{s\phi(t)}(\gamma(t))}\right\rvert\leq C_A\left(\frac{\varepsilon}{\sigma^4\log\sigma^{-1}}+1\right)2s,
    \end{align*}
    where $C_A$ depends only on $A$.
    In particular,
    \begin{align*}
        \sigma^4\int_{\Sigma}\lvert\sff^{\Phi^{s\phi(t)}(\gamma(t))}\rvert^4d\vol_{\Phi^{s\phi(t)}(\gamma(t))}\leq& \sigma^4\int_{\Sigma}\lvert \sff^{\gamma(t)}\rvert^4d\vol_{\gamma(t)}+C_A\left(\frac{\varepsilon}{\log\sigma^{-1}}+\sigma^4\right)2s\\
        \leq &\frac{\varepsilon}{\log\sigma^{-1}}+2C_A\left(\frac{\varepsilon}{\log{\sigma^{-1}}}+\sigma^4\right)s
    \end{align*}
    Thus, for any $s\in [0,T_0]$ and $t\in I_{\gamma}^H=I_{\Phi^{s\phi(t)}(\gamma(t))}^H$, either
    \begin{equation}\label{eq: flow-below-width}
        E_{H,\sigma}(\Phi^{s\phi(t)}(\gamma(t)), f_{\Phi^{s\phi}\circ\gamma, t})<\omega_{H,\sigma}-\frac{H\zeta}{10},
    \end{equation}
    or
    \begin{align}\label{eq: flow-below-width-second-case}
        \operatorname{Area}(\Phi^{s\phi(t)}(\gamma(t)))+H\vol(f_{\Phi^{s\phi}\circ\gamma,t})
        \geq&\omega_{H,\sigma}-\frac{H\zeta}{10}-\sigma^4\int_{\Sigma}\lvert\sff^{\Phi^{s\phi}(\gamma(t))}\rvert^4d\vol_{\Phi^{s\phi}(\gamma(t))}
        \\
        \nonumber
        \geq&
        \omega_{H,\sigma}-\frac{H\zeta}{10}-2C_A\left(\frac{\varepsilon}{\log{\sigma^{-1}}}+\sigma^4\right)s.
    \end{align}
In the second case, if $\sigma_0$ is small enough (depending on $A$, $\mathcal{M}$ and $H$), we see that $\psi^t(\Phi^{s\phi(t)}((\gamma(t)))\geq 1$.
Note that if \eqref{eq: flow-below-width} holds for some $t\in I_\gamma^H$, $s\in [0,T_0]$, then \eqref{eq: flow-below-width} also holds for $t, s'$, where $s'\in [s,T_0]$, by \eqref{eq: negative-derivative}. Thus for any $t\in I_\gamma^H$, either \eqref{eq: flow-below-width} holds for $s=T_0$, or \eqref{eq: flow-below-width-second-case} holds for all $s\in [0,T_0]$. In this case, \eqref{eq: negative-derivative} implies that
    \begin{align*}
        E_{H,\sigma}(\Phi^{T_0\phi(t)}(\gamma(t)))\leq \max\{\omega_{H,\sigma}+\alpha-T_0\theta^2, \omega_{H,\sigma}-\alpha\}.
    \end{align*}
    If $\alpha$ is sufficiently small, in both cases we obtain
    \begin{align*}
        \max_{t\in\Phi^{T_0\phi(t)}(\gamma(t))}E_{H,\sigma}(\Phi^{T_0\phi(t)}(\gamma(t)))<\omega_{H,\sigma},
    \end{align*}
    which contradicts the definition of $\omega_{H,\sigma}$. This proves the Lemma.

\end{proof}
\begin{rmk}\label{rmk: deformation-H-0}
   If $H=0$, the proof is similar. Assume again that $\lVert\delta E_{0,\sigma}(\gamma(t))\rVert_{(T_{\gamma(t)}\mathfrak{M})^\ast}\geq\theta$ for all $t\in J$. Let $\psi:\mathfrak{M}\to\mathbb{R}$ be defined as
   \begin{align*}
       \psi(u):=\eta\left(\frac{4}{\zeta_0}(\operatorname{Area}(u)-\zeta_0)\right),
   \end{align*}
   and define $\Phi^s$ as above. Claims 1 and 2 can be proved as before; instead of \eqref{eq: staying-above-threshold}, one uses the fact that if $\psi(\gamma(t))>0$, then
   \begin{align*}
       \operatorname{Area}(\gamma(t))-\zeta_0> \xi\frac{\zeta_0}{4}>0.
   \end{align*}
   Therefore we find again that for any $s\in [0,T_0]$ and $t\in I^0_\gamma$, either
   \begin{equation}
        E_{0,\sigma}(\Phi^{s\phi(t)}(\gamma(t)))<\omega_{0,\sigma}-\frac{\zeta_0}{10},
    \end{equation}
    or
    \begin{align*}
        \operatorname {Area}(\Phi^{s\phi(t)}(\gamma(t)))
        \geq
        \omega_{0,\sigma}-\frac{\zeta_0}{10}-2C_A\left(\frac{\varepsilon}{\log{\sigma^{-1}}}+\sigma^4\right)s.
    \end{align*}
   Thus, arguing as above, we see that both cases lead to a contradiction for $\sigma_0$ and $\alpha$ sufficiently small.
\end{rmk}
Combining the results of this Section we obtain the following theorem, which guarantees the existence of almost critical points for $E_{H,\sigma}$ with control of the area and the second fundamental form.
\begin{thm}\label{thm: existence-critical-points}
    For $H=0$ and for a. e. $H>0$, and for any sequence $\theta_k\rightarrow0$, there exist  constants $C,a_0,A_0,\varepsilon_0>0$ and a sequence of real positive numbers $\{\sigma_k\}_{k\in \mathbb{N}}$ such that $\sigma_k\to 0$ and for any $k\in \mathbb{N}$ sufficiently large there exist $\gamma_k\in \mathcal{P}$, $t_k\in [0,1]$, with
    \begin{enumerate}
        \item $\lvert E_{H, \sigma_k}(\gamma_{k}(t_k), f_{\gamma_k, t_k})-\omega_{H,\sigma_k}\rvert\leq \frac{1}{k}$;
        \item $\lVert \delta E_{H, \sigma_k}(\gamma_{k}(t_k))\rVert\leq \theta_k$;
        \item $\varepsilon_0\leq\operatorname{Area}(\gamma_k(t_k))\leq A_0$;
        \item $\sigma_k^4{\log\sigma_k^{-1}}\int_{\Sigma}\lvert \sff^{\gamma_k(t_k)}\rvert^4 d\operatorname{vol}_{g_{\gamma_k(t_k)}}\rightarrow0$.
    \end{enumerate}
\end{thm}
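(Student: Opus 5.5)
The plan is to combine Lemma \ref{lem: properties-omega} (or Remark \ref{rem: Struwe-1-H-0} when $H=0$), Lemma \ref{lem: monotonicity-trick} (or Remark \ref{rem: Struwe-2-H-0}) and Lemma \ref{lem: new-existence-almost-critical-points} through a diagonal argument.

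\textbf{Step 1: choosing the parameters.} Fix $H>0$ in the full-measure set given by part (3) of Lemma \ref{lem: properties-omega}. This gives a sequence $\sigma_n\to 0$ and a constant $c>0$ such that \eqref{eq: conclusion-Lem-Struwe-1} holds. Set
\[
\varepsilon_n:=\max\Bigl\{\sigma_n\log\sigma_n^{-1}\,\partial_\sigma\omega_{H,\sigma}\big|_{\sigma=\sigma_n},\ \tfrac{1}{n}\Bigr\},
\]
so that $\varepsilon_n\to 0$. Then the hypotheses \eqref{eq: assumptions-good-sweep-outs} of Lemma \ref{lem: monotonicity-trick} hold at $(H,\sigma_n)$ with constants $c$ and $\varepsilon_n$.

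\textbf{Step 2: good sweep-outs at each $\sigma_n$.} Lemma \ref{lem: monotonicity-trick} yields $\sigma_{n,j}\to\sigma_n^-$ and sweep-outs $\gamma_{n,j}$. For large $j$ these satisfy:
\begin{itemize}
\item the hypotheses of Lemma \ref{lem: new-existence-almost-critical-points} with $\sigma=\sigma_{n,j}$ and $\alpha=1/j$;
\item $A=7H^2c$;
\item the entropy bound $3\varepsilon_n/\log\sigma_n^{-1}$.
\end{itemize}
Since $\sigma_{n,j}\to\sigma_n$, we have $\log\sigma_{n,j}^{-1}\le 2\log\sigma_n^{-1}$ for large $j$. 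The entropy bound therefore becomes $\sigma_{n,j}^4\int|\sff|^4\le 6\varepsilon_n/\log\sigma_{n,j}^{-1}$, with $6\varepsilon_n<1$ once $n$ is large.

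\textbf{Step 3: extracting almost-critical points.} For each target $\theta_k$, apply Lemma \ref{lem: new-existence-almost-critical-points}. This requires $\alpha=1/j$ and $\sigma_0\ge\sigma_{n,j}$ to be small depending on $\theta_k$, $A$ and $H$, which holds once $n$ and $j$ are large. It produces $t_{n,j}\in I^H_{\gamma_{n,j}}$ with $|E_{H,\sigma_{n,j}}-\omega_{H,\sigma_{n,j}}|\le 1/j$ and $\theta_k$-criticality.

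\textbf{Step 4: diagonalizing.} For each $k$ choose $n=n(k)$ increasing and then $j=j(k)\ge k$ large, and set
\[
\sigma_k:=\sigma_{n(k),j(k)},\qquad \gamma_k:=\gamma_{n(k),j(k)},\qquad t_k:=t_{n(k),j(k)}.
\]
Properties (1) and (2) then follow directly.

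\textbf{Checking (3) and (4).} The entropy quantity is at most $6\varepsilon_{n(k)}\to 0$, which gives (4). The upper area bound $A_0=7H^2c$ holds because $t_k$ lies in the near-maximal set, so item (2) of Lemma \ref{lem: monotonicity-trick} applies. For the lower bound, we have $t_k\in I^H_{\gamma_k}$, so $\operatorname{Area}+H\vol\ge H(\vol_g(\mathcal{M})+\zeta)$. This alone does not bound the area from below, since $\vol$ could be large. I would instead use Lemma \ref{lem: define of volume}: with $\gamma_k$ a degree-one sweep-out, $\vol(f_{\gamma_k,t})\in[0,\vol_g(\mathcal{M})]$ holds modulo the natural normalization. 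Then $\operatorname{Area}(\gamma_k(t_k))\ge H\zeta=:\varepsilon_0$. If this normalization is delicate, an alternative is $E_{H,\sigma_k}(\gamma_k(t_k))\ge\omega_{H,\sigma_k}-1/k\ge\omega_{H,0}-1/k>H\vol_g(\mathcal{M})$ by Proposition \ref{prop: non-trivial of min-max} together with the $\sigma$-monotonicity, combined with the entropy bound.

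\textbf{The case $H=0$.} Use Remarks \ref{rem: Struwe-1-H-0}, \ref{rem: Struwe-2-H-0} and \ref{rmk: deformation-H-0} in place of the corresponding lemmas. The area lower bound is immediate there, since $I^0_\gamma$ requires $\operatorname{Area}>\zeta_0$.

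\textbf{Main obstacle.} I expect the main difficulty to be the bookkeeping of quantifiers in Step 3. The smallness of $\alpha$ and $\sigma_0$ in Lemma \ref{lem: new-existence-almost-critical-points} depends on $\theta$, and the constant $C$ from Proposition \ref{prop: continuity of first variation} depends on $\sigma$ through $E=\varepsilon/(\sigma^4\log\sigma^{-1})$. I would handle this by fixing $n$ first, which fixes the relevant constants, and only then sending $j\to\infty$ to make $\alpha=1/j$ small. The required smallness of $\sigma_0$ itself only enters through quantities like $\varepsilon/\log\sigma^{-1}+\sigma^4$, which are small once $n$ is large.
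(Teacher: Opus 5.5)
Your construction of $\sigma_k,\gamma_k,t_k$ follows the paper. So do your verification of (1), (2), (4), of the upper bound in (3), and of the case $H=0$. Your bookkeeping (fixing $n$ before sending $j\to\infty$, and converting the entropy bound from $\sigma_n$ to $\sigma_{n,j}$) is if anything more careful than the paper's choice $j_n=n$.

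\textbf{The gap: the lower area bound in (3) for $H>0$.} Your main route needs $\vol(f_{\gamma_k,t_k})\le\vol_g(\mathcal M)$, and nothing gives you this.
\begin{itemize}
\item $\vol(f_{\gamma,t})=\int_{[0,t]\times\Sigma}\gamma^*d\vol_g$ is a signed swept volume. Only its endpoint values ($0$ and $\vol_g(\mathcal M)$) are prescribed, and nothing in the definition of $\mathcal P$ keeps $t\mapsto\vol(f_{\gamma,t})$ inside $[0,\vol_g(\mathcal M)]$.
\item Lemma \ref{lem: define of volume} only says that different choices of $f$ differ by elements of $\vol_g(\mathcal M)\mathbb Z$. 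It says nothing about the range.
\item You cannot renormalize either. $I^H_\gamma$, $E_{H,\sigma}(\gamma(t),f_{\gamma,t})$, $\omega_{H,\sigma}$ and conclusion (1) are all defined with this particular $f_{\gamma,t}$.
\end{itemize}
Your fallback has the same defect. $E_{H,\sigma_k}(\gamma_k(t_k),f_{\gamma_k,t_k})>H\vol_g(\mathcal M)$ together with (4) only yields $\operatorname{Area}+H\vol(f_{\gamma_k,t_k})>H\vol_g(\mathcal M)-o(1)$. That bounds the area from below only if the volume is at most $\vol_g(\mathcal M)$.

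\textbf{The missing idea.} You need a contradiction argument that combines isoperimetry with the near-optimality of $\gamma_k$, i.e.\ property (1) of Lemma \ref{lem: monotonicity-trick}.
\begin{itemize}
\item Suppose $\operatorname{Area}(\gamma_k(t_k))\to0$ along a subsequence. By (1), (4) and $\omega_{H,\sigma_k}\ge H\vol_g(\mathcal M)$, you get $\vol(f_{\gamma_k,t_k})>\frac12\vol_g(\mathcal M)$.
\item Fix a small $\varepsilon$. The intermediate value theorem gives $t_k^\varepsilon<t_k$ with $\vol(f_{\gamma_k,t_k^\varepsilon})=\vol(f_{\gamma_k,t_k})-\varepsilon$.
\item Run the Levy--Gromov argument of Proposition \ref{prop: non-trivial of min-max} on the current ${\gamma_k}_\#\jump{[t_k^\varepsilon,t_k]\times\Sigma}$. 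Its boundary has mass at most $\operatorname{Area}(\gamma_k(t_k^\varepsilon))+\operatorname{Area}(\gamma_k(t_k))$, so you obtain $\operatorname{Area}(\gamma_k(t_k^\varepsilon))+\operatorname{Area}(\gamma_k(t_k))\ge C_{\mathcal M}\varepsilon^{2/3}$. Hence
\[
E_{H,\sigma_k}(\gamma_k(t_k^\varepsilon),f_{\gamma_k,t_k^\varepsilon})\ge\omega_{H,\sigma_k}+C_{\mathcal M}\varepsilon^{2/3}-H\varepsilon-\tfrac1k-2\operatorname{Area}(\gamma_k(t_k))-o(1),
\]
where the $o(1)$ accounts for the penalization term at $t_k$.
\item Choose $\varepsilon$ with $C_{\mathcal M}\varepsilon^{2/3}>H\varepsilon$. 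Then the right-hand side exceeds $\omega_{H,\sigma_k}+\frac1k$ for large $k$.
\item The same computation, using $t_k\in I^H_{\gamma_k}$, shows $t_k^\varepsilon\in I^H_{\gamma_k}$. This contradicts $\max_{I^H_{\gamma_k}}E_{H,\sigma_k}\le\omega_{H,\sigma_k}+\frac1{j(k)}$.
\end{itemize}
This is how the paper obtains $\varepsilon_0>0$: by contradiction, not from membership in $I^H_{\gamma_k}$ alone.
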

\begin{proof}
    Let $H>0$ be as in Lemma \ref{lem: properties-omega}. Then there exists a sequence $\sigma_n\to 0$ and $c>0$ such that \eqref{eq: conclusion-Lem-Struwe-1} holds. For any $n\in \mathbb{N}$, set $\varepsilon_n:=\sigma_n\log\sigma_n^{-1}\frac{\de\omega_{H,\sigma}}{\de \sigma}\big|_{\sigma=\sigma_n}$, so that $\lim_{n\to\infty}\varepsilon_n=0$.
    Note that $H, \sigma_n$ satisfy the assumptions of Lemma \ref{lem: monotonicity-trick} with $\varepsilon=\varepsilon_n$. Therefore for any $j$ there exist $\sigma_{n,j}$ and $\gamma_j\in \mathcal{P}$
    satisfying the conclusions of the Lemma. For any $n$, we choose $j_n= n$. We can then apply Lemma \ref{lem: new-existence-almost-critical-points} as follows: for any $\theta_k$, let $n_k\geq k$ such that $\sigma_k:=\sigma_{n_k, n_k}$, $\alpha_{k}:=\frac{1}{n_k}$, $\varepsilon_{k}:=\varepsilon_{n_k}$ are sufficiently small so that the Lemma applies for $\theta_k$, $\sigma_{k}, \alpha_k,\varepsilon_{k}$ and $A=7H^2c$ (where $c$ is the constant from \eqref{eq: conclusion-Lem-Struwe-1}). Then by Lemma \ref{lem: new-existence-almost-critical-points} there exists $t_k\in (0,1)$ such that $\gamma_{n_k}$, $t_k$ satisfy all the conclusions of the Theorem, except for the lower area bound. To prove the lower bound we argue as follows. Assume by contradiction that the bound doesn't hold. Then there exists a subsequence (not relabeled) such that
    $\operatorname{Area}(\gamma_k(t_k))\to0$ as $k\to\infty$. Note that by $(1)$, $(4)$ and Proposition \ref{prop: non-trivial of min-max}, we must have $\vol(f_{\gamma_k,t_k})>\frac{1}{2}\vol(\mathcal{M})$ for $k$ large enough. Let $\varepsilon\in (0,\frac{1}{2}\vol(\mathcal{M}))$ be a constant to be determined later, depending only on $\mathcal{M}$ and $H$, and for any $k\in \mathbb{N}$ let $t_k^\varepsilon\in (0, t_k)$ such that
    \begin{align*}
        \vol(f_{\gamma_k, t_k^\varepsilon})=\vol(f_{\gamma_k, t_k})-\varepsilon.
    \end{align*}
    Arguing just as in the proof of Proposition \ref{prop: non-trivial of min-max} (now for the current ${\gamma_k}_\#\jump{[t_k^\varepsilon,t_k]\times \Sigma}$) we obtain
    \begin{align*}
        \operatorname{Area}(\gamma_k(t_k^\varepsilon))+\operatorname{Area}(\gamma_k(t_k))\geq C_{\mathcal{M}}\varepsilon^\frac{2}{3},
    \end{align*}
where $C_\mathcal{M}$ is a constant depending only on $\mathcal{M}$.
Then
\begin{align}
\label{eq: area-positive}
    E_{H,\sigma_k}(\gamma_k(t_k^\varepsilon),f_{\gamma_k,t_k^\varepsilon})\geq &C_\mathcal{M}\varepsilon^\frac{2}{3}-\operatorname{Area}(\gamma_k(t_k))+H(\vol(f_{\gamma_k, t_k})-\varepsilon)\\
    \nonumber
    \geq& C_\mathcal{M}\varepsilon^\frac{2}{3}-H\varepsilon+E_{H,\sigma_k}(\gamma_k(t_k),f_{\gamma_k,t_k})-2\operatorname{Area}(\gamma_k(t_k))\\
    \nonumber
    \geq& C_\mathcal{M}\varepsilon^\frac{2}{3}-H\varepsilon+\omega_{H,\sigma_k}-\frac{1}{k}-2\operatorname{Area}(\gamma_k(t_k)),
\end{align}
where in the last inequality we used $(1)$.
Choose $\varepsilon$ such that
\begin{align*}
    C_\mathcal{M}\varepsilon^\frac{2}{3}-H\varepsilon>0.
\end{align*}
    Then for $k$ sufficiently large, \eqref{eq: area-positive} contradicts the fact that $\gamma_k$ satisfies property $(1)$ in Lemma \ref{lem: monotonicity-trick}.
    
    If $H=0$, the argument is the same: Lemma \ref{lem: properties-omega} and Lemma \ref{lem: monotonicity-trick} are replaced by Remark \ref{rem: Struwe-1-H-0} and Remark \ref{rem: Struwe-2-H-0}; and Lemma \ref{lem: new-existence-almost-critical-points} provides again the existence of almost-critical points satisfying the desired properties.
    In this case, $\varepsilon_0, A_0$ can be chosen to be any constants satisfying $\varepsilon_0<\omega_{0}<A_0$.
\end{proof}
Finally, we combine the existence and regularity theory for parametrized CMC varifolds developed in the previous Sections with Theorem \ref{thm: existence-critical-points} to prove Theorem \ref{Prop: existence of qualifying sequence}.
\begin{proof}[Proof of Theorem \ref{Prop: existence of qualifying sequence}]
    For $H$ as in Theorem \ref{thm: existence-critical-points} and $k\in \mathbb{N}$, let $u_k:\Sigma\rightarrow\mathcal{M}$ be the immersion $\gamma_k(t_k)$ given by Theorem \ref{thm: existence-critical-points}, where the numbers $\theta_k$ are as in Step 2 of the proof of Theorem \ref{thm: multiplicity-one}. Then $u_k$ satisfies the assumption of Theorem \ref{thm: main-theorem}. In particular, we can pass to a subsequence, to get the convergence of the varifolds $\textbf{v}_{u_k}$ (i.e. the varifolds induced by the maps $u_k$) to $\sum_{i=1}^N\mathbf{v}_{u_{\infty}^i}$, where $u^i_{\infty}:\Sigma_\infty^i\rightarrow\mathcal{M}$ is a $H$-CMC branched immersion from a Riemann surface $\Sigma_\infty^i$,
    and $\sum_{i=1}^Ng(\Sigma_{\infty}^i)\leq h$. In particular, if $\Sigma_\infty$ denotes the union of the surfaces $\Sigma_\infty^i$, we have a branched $H$-CMC immersion $u:{\Sigma}_\infty\rightarrow\mathcal{M}$, and ${\Sigma}_\infty$ satisfies $g({\Sigma}_\infty)\leq h$.
\end{proof}

\end{definition}
\appendix
\section{Technical results}
\begin{lemma}[Cf. {\cite[Lemma A.4]{Pigati-FB}}]\label{lem: alnerative for density lower bound}
    There exists a constant $c_v(H,\mathcal{M})>0$, depending only on $\mathcal{M}$ and $H$ (and denoted $c_v$ in the following), with the following property. Given any $p\in\mathcal{M}$ and $0<s<c_v$, for any varifold $V$ in $\mathcal{M}$ which has first variation bounded by $H$ outside $B_s(p)$ and density $\Theta(V,\cdot)\geq\bar{\theta}$ on $\spt V\setminus B_s(p)$, we have either
    \begin{enumerate}
        \item $\spt V\subset B_{2s}(p)$; or
        \item $\vert V\vert(\mathcal{M}\setminus \bar{B}_s(p))\geq c_v\bar{\theta}$.
    \end{enumerate}
\end{lemma}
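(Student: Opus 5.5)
We argue by the monotonicity formula for varifolds with bounded first variation, applied at points of the support outside $B_{2s}(p)$. Suppose (1) fails, so there is a point $q\in\spt V$ with $q\notin B_{2s}(p)$. Since $\spt V$ is closed and the density is upper semicontinuous for varifolds with locally bounded first variation, we may assume, up to moving $q$ slightly within $\spt V\setminus \bar B_{3s/2}(p)$, that $\Theta(V,q)\geq\bar\theta$. We use the hypothesis $\Theta\geq\bar\theta$ on $\spt V\setminus B_s(p)$ together with the fact that points of density at least $\bar\theta$ are dense in the support. Strictly, monotonicity already gives $\Theta(V,q)\geq\bar\theta$ at every support point as a limit, so this adjustment is only cosmetic.

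Set $\rho:=\min\{s/2, c_1\}$, where $c_1=c_1(\mathcal M)$ is a radius below which the monotonicity formula for $2$-varifolds in $\mathcal M\subset\R^Q$ holds with an almost-monotone quantity. On $B_\rho(q)$ the first variation is bounded by $H$, because $B_\rho(q)\cap B_s(p)=\emptyset$. Here we account for the extra curvature term from the second fundamental form of $\mathcal M\subset\R^Q$, exactly as in the Remark following Proposition \ref{prop: almost monotonicity for large radius}. The monotonicity inequality then gives
\begin{align*}
\frac{|V|(B_\rho(q))}{\pi\rho^2}\geq e^{-C(H+1)\rho}\,\Theta(V,q)\geq \tfrac12\bar\theta,
\end{align*}
provided $\rho$ is small compared to $(H+1)^{-1}$. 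Since $B_\rho(q)\subset\mathcal M\setminus\bar B_s(p)$, we obtain $|V|(\mathcal M\setminus\bar B_s(p))\geq \frac{\pi}{2}\bar\theta\rho^2$.

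The remaining issue, which we expect to be the main obstacle, is that $\rho$ depends on $s$, while (2) requires a bound $c_v\bar\theta$ independent of $s$. To fix this, we do not stop at radius $s/2$. Instead, we exploit that $q$ lies at distance at least $2s$ from $p$ and apply monotonicity centered at a support point far from $p$. If $\spt V\not\subset B_{2s}(p)$, we use the connectedness-type consequence of monotonicity: a varifold with bounded first variation cannot have support with a component contained in an annulus of small width away from the boundary set $\bar B_s(p)$. By the convex-hull/maximum principle for bounded mean curvature, which holds for $s<c_v$ small compared to $1/H$ and the convexity radius of $\mathcal M$, the support outside $B_{2s}(p)$ must reach distance at least a fixed $r_0(\mathcal M,H)$ from $p$. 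Concretely, we take the point $q'\in\spt V$ maximizing $d(\cdot,p)$. If $d(q',p)<r_0$, then comparing with the strictly mean-convex distance spheres $\partial B_t(p)$ (mean curvature about $2/t>H$) yields a contradiction via the strong maximum principle for varifolds (White/Solomon–White type, or the test vector field $X=\nabla \tfrac12 d(\cdot,p)^2$ cut off near $B_s(p)$). Hence there is $q'\in\spt V$ with $d(q',p)\geq r_0$, and we rerun the monotonicity at $q'$ with radius $\rho'=\min\{r_0/2,c_1,(H+1)^{-1}\}$, which avoids $B_s(p)$ once $c_v<r_0/2$. This gives $|V|(\mathcal M\setminus\bar B_s(p))\geq \frac{\pi}{2}\bar\theta\rho'^2$. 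Choosing $c_v\leq\min\{r_0/2,\frac{\pi}{2}\rho'^2\}$ yields the dichotomy.
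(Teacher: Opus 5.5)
Your final argument is correct and is essentially the paper's proof. The paper also splits according to whether $\spt V$ reaches a fixed distance from $p$ ($2\gamma$ there, your $r_0$). In the first case it uses the radial field $\chi(|\xi|)\xi^i\partial_{\xi^i}$, which is your cut-off $\nabla\frac12 d(\cdot,p)^2$ with $\chi$ non-decreasing on the inner transition so that the $\chi'$ term has a sign, to force $\spt V\subset B_{4s/3}(p)$. In the second case it applies monotonicity on $B_\gamma(q)$ at a support point far from $p$. The $s$-dependent attempt in your second paragraph and the density adjustment in your first are unnecessary, since the hypothesis already gives $\Theta(V,q)\geq\bar\theta$ at every support point outside $B_s(p)$.
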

\begin{proof}
    Multiplying $V$ by $\bar{\theta}^{-1}$, we may assume $\bar{\theta}=1$. Take a small $\gamma\geq c_v$ to be determined later. There are two possibilities.\par
    The first possibility is $\spt V\subset B_{2\gamma}(p)$. In this case let us prove $\spt V\subset B_{2s}(p)$ provided $\gamma$ and $c_v$ are small. Take a coordinate system $\xi:B_{5\gamma}(p)\rightarrow\R^3$. For $\gamma$ small, we can assume $\Vert g_{ij}-\delta_{ij}\Vert_{C^1}\leq C\gamma$ under this coordinate, where the upper bound for $\gamma$ here is independent of $p\in\mathcal{M}$. Let us consider the test vector field $X=\chi(\vert\xi\vert)\sum_i\xi^i\frac{\de}{\de\xi^i}$, where $\chi$ is a smooth function satisfying
    \begin{align*}
        \chi(t)=\begin{cases}
            0, &0\leq t\leq \frac{4}{3}s\text{ or }t\geq 4\gamma;\\
            1, &\frac{5}{3}s\leq t\leq3\gamma;\\
            \text{monotone}, & \text{ for other }t.
        \end{cases}
    \end{align*}
    Since $g_{ij}$ is $C^1$ close to $\delta_{ij}$, we deduce that $\div (\sum_i\xi^i\frac{\de}{\de\xi^i})\geq 2-C\gamma$ on $B_{3\gamma}(p)$. Since $\spt V\subset B_{2\gamma}(p)$,
    \begin{align*}
        \begin{cases}
            \operatorname{div}(X)=0&\text{in }\overline{B_{\frac{4s}{3}}(p)}\\
            \operatorname{div}(X)>0&\text{in }B_{2\gamma}(p)\smallsetminus\overline{B_{\frac{4s}{3}}(p)}.
        \end{cases}
    \end{align*}
Take $X$ in the first variation formula, we deduce that
    \begin{align*}
        \int_{Gr_2(\mathcal{M)}}\div X\,dV(p,L)&\geq(2-C\gamma)\int_{Gr_2(B_{2\gamma}(p))}\chi(\vert\xi\vert)\,d V\\&=(2-C\gamma)\int_{B_{2\gamma}(p)}\chi(\vert\xi\vert)\,d\vert V\vert.
    \end{align*}

    On the other hand 
    \begin{align*}
        H\int_{\mathcal{M}}\vert X\vert d\vert V\vert\leq2\gamma H\int_{B_{2\gamma}(p)}\chi(\vert\xi\vert)\,d\vert V\vert.
    \end{align*}
    If $(2-C\gamma)\geq 2\gamma H$, we have a contradiction here, unless $\spt V\subset B_{4s/3}(p)$. Hence $\spt V\subset B_{4s/3}(p)\subset B_{2s}(p)$ provided $\gamma$ is small enough.\par
    If the first possibility is not true, then $\spt V\setminus B_{2\gamma}(p)$ is not empty. For each $q\in \spt V\setminus B_{2\gamma}(p)$, note that by assumption $V$ has bounded first variation inside $B_{\gamma}(q)$. Hence, the monotononicity formula gives
    \begin{align*}
        \vert V\vert (B_\gamma(q))\geq c(H,\mathcal{M})\gamma^2\Theta(V,q)\geq c(H,\mathcal{M})\gamma^2.
    \end{align*}
    Since $\gamma\geq c_v$, this already provide the desired lower bound if we set a new constant $\bar{c}_v=\min\{c_v,c(H,\mathcal{M})c_v^2\}$.
\end{proof}
\begin{lemma}[{Cf. \cite[Lemma A.6]{Pigati-FB}}]\label{lem: degeneration-conf}
    There exist $c_v>0$ and a function $\delta:(0,\infty)^2\to(0,\infty)$ with $\lim_{s\rightarrow0}\delta(s,t)=0$ for each $t>0$, depending only on $H$ and $\mathcal{M}$, with the following property:
    given $p_1,p_2\in\mathcal{M},s>0$, if a 2-varifold $V$ has generalized mean curvature bounded by $K$ outside $B=\bar{B}_s(p_1)\cup\bar{B}_s(p_2)$, has density $\Theta(V,\cdot)\geq\bar{\theta}$ on $\spt V\setminus B$ and $\vert V\vert(B_r(q))\leq \alpha r^2$ for all $q\in\mathcal{M},r>0$; then either
    \begin{enumerate}
        \item $\vert V\vert(\mathcal{M})\leq\bar{\theta}\delta(s,\alpha/\bar{\theta})$, or 
        \item $\vert V\vert(\mathcal{M})\geq c_v\bar{\theta}$.
    \end{enumerate}
\end{lemma}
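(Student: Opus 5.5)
The proof will be a direct dichotomy built on Lemma \ref{lem: alnerative for density lower bound}, applied around each of the two balls, plus a compactness argument for the small-mass alternative. After normalizing $\bar\theta=1$ (replace $V$ by $V/\bar\theta$ and $\alpha$ by $\alpha/\bar\theta$), I take $c_v$ no larger than the constant of Lemma \ref{lem: alnerative for density lower bound}. Assume alternative (2) fails, so $|V|(\mathcal{M})<c_v$; the goal is to show $|V|(\mathcal{M})\le \delta(s,\alpha)$ with $\delta(s,\alpha)\to 0$ as $s\to0$.

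\textbf{Step 1: confinement of the support.} Take any $q\in\spt V\setminus B_{\rho}(\{p_1,p_2\})$ with $\rho\gg s$. The mean curvature is bounded outside $B$ and the density is at least $1$ there, so the monotonicity formula (as in the Remark after Proposition \ref{prop: almost monotonicity for large radius}) gives $|V|(B_{r}(q))\ge c r^2$ for $r\le \min(\rho/2,c')$. If $\rho$ is a fixed small constant, this contradicts $|V|(\mathcal{M})<c_v$ once $c_v$ is chosen small. Hence $\spt V\subset B_{\rho_0}(p_1)\cup B_{\rho_0}(p_2)$ for a fixed $\rho_0=\rho_0(H,\mathcal{M})$.

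\textbf{Step 2: shrink each piece.} If $\operatorname{dist}(p_1,p_2)>3\rho_0$, the two pieces are separated. On each piece the varifold has bounded first variation outside a single ball $\bar B_s(p_i)$, so Lemma \ref{lem: alnerative for density lower bound} applies and forces $\spt\subset B_{2s}(p_i)$. In the case where the pieces overlap, I instead apply the argument of that lemma to $B_{s'}(p_1)$ with $s'=|p_1-p_2|+s$. The main obstacle is when the two balls are at intermediate distance: the support can then be a thin ``neck'' joining them, of small but not obviously $o(1)$ mass.

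\textbf{Step 3: mass estimate, by contradiction and compactness.} I expect this to be the main difficulty. Suppose there are $s_j\to0$, fixed $\alpha$, and varifolds $V_j$ with $\eta\le |V_j|(\mathcal{M})<c_v$. By Steps 1–2 each $V_j$ is supported in small neighbourhoods of $p_{1,j},p_{2,j}$ or of the segment-like region between them. By Allard compactness, a subsequence converges to a varifold $V_\infty$ with bounded first variation outside two points. The upper density bound $|V|(B_r)\le\alpha r^2$ shows that the two points carry no mass, and the cutoff argument of Lemma \ref{lem: 5-12-P-FB} then shows $V_\infty$ has bounded first variation everywhere.

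\textbf{Step 4: conclusion.} Upper semicontinuity of density gives $\Theta\ge 1$ on $\spt V_\infty$. Since the mass is preserved ($|V_\infty|(\mathcal{M})\ge\eta>0$), $V_\infty\neq0$. Monotonicity then yields $|V_\infty|(\mathcal{M})\ge c\,\mathrm{diam}$-scale mass bounded below by a constant $c(H,\mathcal{M})$, which contradicts $|V_\infty|<c_v$ for $c_v$ small. This produces the function $\delta(s,\alpha)$.
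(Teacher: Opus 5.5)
Your Steps 3--4 are essentially the paper's proof. The paper defines $\delta(s,\alpha)$ as the supremum of $|V|(\mathcal{M})$ over admissible varifolds with mass below $c_v$, takes $s_n\to0$ with near-extremal $V_n$, and passes to a weak limit. It then uses the bound $|V|(B_r(q))\le\alpha r^2$ and a cutoff to extend the mean curvature bound across the two limit points, and uses monotonicity (with $c_v$ small) to force the limit to vanish, contradicting convergence of the masses.

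Your Steps 1--2, including the unproved ``segment-like region'' description, play no role in that argument and can be dropped. Your use of upper semicontinuity of the density to get $\Theta(V_\infty,\cdot)\ge1$ off the two points is, if anything, a cleaner justification than the paper's appeal to integrality.
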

\begin{proof}
    By considering $\bar{\theta}^{-1}V$, we may assume $\bar{\theta}=1$. Let
    \begin{align*}
        \delta(s,\alpha)=\sup\{\vert V\vert(\mathcal{M}):&V\text{ integral varifold with first variation}\\ &\text{ bounded by } K\text{ in }B \text{ and }\vert V\vert(\mathcal{M})<c_v\},
    \end{align*}
    where $c_v$ is a constant to be determined later.
    Take a sequence $s_n\rightarrow0$ of positive numbers and a sequence $\{V_n\}_{n\in \mathbb{N}}$ satisfying the assumptions of the Lemma with $s=s_n$ and
    \begin{align*}
        (1-2^{-n})\delta(s_n,\alpha)\leq\vert V_n\vert(\mathcal{M})<c_v.
    \end{align*}
    After passing to a subsequence, we may assume $V_n$ converges to $V$ in the sense of varifolds. Then $\vert V\vert(B_r(q))\leq\alpha r^2$ for all $q\in\mathcal{M},r>0$, and $V$ has generalized mean curvature bounded by $K$ away from two points $p_1,p_2$. Combining these two facts, one shows that $V$ has generalized mean curvature bounded by $K$ on the whole $\mathcal{M}$ (by writing any variation $X$ as $\eta X+(1-\eta)X$, with $\eta$ supported in a $\frac{1}{k}$-neighborhood of $p_1,p_2$ and equal to $1$ in a $\frac{1}{2k}$-neighborhood of the points).
    Moreover, as $V$ is integral, its density is at least one a.e. on its support, and thus the monotonicity formula implies that either $V=0$, or there holds $\lvert V\rvert(\mathcal{M})\geq c(\mathcal{M)}$ for some constant $c(\mathcal{M})$.
    Choose $c_v$ less than $c(\mathcal{M},H)$, then we must have $V=0$, and since $\lvert V_n\rvert(\mathcal{M})\to\lvert V\rvert(\mathcal{M})$, we must have
    $\lim_n\delta(s_n,\alpha)=0$. Since $s_n$ was arbitrary, the proof is complete.
\end{proof}

\begin{lemma}\label{lem: Levy-Gromov}[Levy-Gromov isoperimetric inequality]
Let $A\subset\mathcal{M}$ be a Caccioppoli set with $\lvert A\rvert\leq \frac{1}{2}\lvert \mathcal{M}\rvert$. Then
\begin{align*}
    \lvert A\rvert^\frac{2}{3}\leq C\lvert \partial A\rvert,
\end{align*}
where $C$ is a constant depending on the minimum of the Ricci curvature of $\mathcal{M}$ and on the diameter and volume of $\mathcal{M}$.
\end{lemma}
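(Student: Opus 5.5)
The plan is to reduce to the standard Levy--Gromov / Gromov comparison isoperimetric profile for closed manifolds with Ricci curvature bounded below. Set $k:=\min\{0,\min \operatorname{Ric}_g\}/2$, so that $\operatorname{Ric}_g\geq 2k\,g$, and let $D=\operatorname{diam}(\mathcal{M})$ and $V=\vol_g(\mathcal{M})$. By Gromov's generalization of the Levy--Gromov inequality (equivalently, the Bérard--Besson--Gallot lower bound for the isoperimetric profile), the normalized profile
\begin{align*}
    h(\beta):=\inf\left\{\frac{\lvert\partial A\rvert}{V}\,\Big\vert\, \frac{\lvert A\rvert}{V}=\beta\right\}
\end{align*}
is bounded below by a model profile $h_{k,D}(\beta)$. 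This model profile depends only on $k$ and $D$ and behaves like $c(k,D)\beta^{2/3}$ as $\beta\to 0$, since a model small region is close to a Euclidean ball. First I would quote this comparison, applied to Caccioppoli sets: these are approximated in perimeter by smooth sets, and the profile inequality passes to the limit by lower semicontinuity of perimeter together with $L^1$ convergence of volume.

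Second, for $\beta\in(0,\tfrac12]$ I would show $h_{k,D}(\beta)\geq c\,\beta^{2/3}$ with $c=c(k,D)>0$. Near $0$ this is the asymptotic just mentioned. On $[\beta_0,\tfrac12]$ it follows because $h_{k,D}$ is continuous and positive on compact subintervals of $(0,1)$, so it has a positive minimum there, while $\beta^{2/3}\leq 1$.

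Unwinding the normalization then gives, for $\lvert A\rvert\leq \tfrac12 V$,
\begin{align*}
    \lvert\partial A\rvert\geq cV\left(\frac{\lvert A\rvert}{V}\right)^{2/3}=cV^{1/3}\lvert A\rvert^{2/3}.
\end{align*}
This is the claimed inequality with $C=(cV^{1/3})^{-1}$, a constant depending on $\min\operatorname{Ric}$, $D$ and $V$.

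An alternative route avoids the sharp comparison theorem. One combines a local Euclidean-type isoperimetric inequality, valid in balls of radius below the injectivity radius scale, with the global Poincaré/Cheeger constant, which is bounded below in terms of the Ricci lower bound and the diameter (Buser, Li--Yau). The Cheeger bound controls sets of volume comparable to $V$; the local inequality, applied after a covering argument, controls small sets.

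The main obstacle is justifying the small-volume asymptotic $h_{k,D}(\beta)\sim c\beta^{2/3}$ uniformly in the stated geometric quantities. If I want to avoid the explicit form of Gromov's model, I would instead prove small-volume control by this covering argument: fix $r$ small depending on the geometry. If $\lvert A\cap B_r(x)\rvert\leq \tfrac12\lvert B_r(x)\rvert$ for all $x$, use the relative isoperimetric inequality in balls (Buser) and sum over a bounded-overlap cover, which gives $\lvert\partial A\rvert\geq c\lvert A\rvert^{2/3}$. Otherwise $A$ has volume at least $c r^3$, and the Cheeger bound applies.
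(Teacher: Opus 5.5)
This is essentially the paper's proof: approximate by smooth sets, apply the Bérard--Besson--Gallot comparison $\lvert\partial A\rvert/\lvert\mathcal{M}\rvert\geq c(k,D)\,\lvert\partial B\rvert/\lvert\mathbb{S}^3\rvert$ with $B$ a spherical cap of the same volume fraction, and use $\lvert\partial B\rvert\geq c\lvert B\rvert^{2/3}$ for caps of at most half the volume of $\mathbb{S}^3$. In the BBG form the model profile is just a constant times the spherical one, so the small-volume behaviour you flag as the main obstacle is immediate and the covering fallback is unnecessary. That fallback would also need care: at a fixed radius Buser's linear inequality only gives $\lvert\partial A\rvert\gtrsim r^{-1}\lvert A\rvert$, so you would need $r\sim\lvert A\rvert^{1/3}$ or a Sobolev-type relative inequality, and injectivity-radius scales are not controlled by $\min\operatorname{Ric}$, diameter and volume. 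Finally, in the approximation step what you actually use is $P(A_j)\to P(A)$, not lower semicontinuity of perimeter.
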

\begin{proof}
    First notice that it is enough to prove the statement for smooth subsets, the general statement follows by approximation (the argument in the proof of Theorem 3.42 in \cite{AFP} can be adapted to this setting).\\
    By the main Theorem in \cite{BBG-LG}, there holds
    \begin{align}\label{eq: estimate BBG}
        \vol(\partial A)\geq C_1\,\vol(\mathcal{M})\,\vol(\partial B),
    \end{align}
    where $C_1$ is a positive constant depending on the minimum of the Ricci curvature of $\mathcal{M}$ and on the diameter of $\mathcal{M}$, and $B$ is any geodesic ball in $\mathbb{S}^3$ satisfying
    \begin{align*}
        \frac{\vol(B)}{\vol(\mathbb{S}^3)}=\frac{\vol(A)}{\vol(\mathcal{M})}.
    \end{align*}
    Since $\lvert A\rvert\leq \frac{1}{2}\lvert \mathcal{M}\rvert$, we have $\lvert B\rvert\leq \frac{1}{2}\lvert {\mathbb{S}^3}\rvert$, therefore the isoperimetric inequality in the sphere (or an explicit computation) yields
    \begin{align}\label{eq: isoperimetric-S3}
        \frac{1}{C_2}\vol(\partial B)\geq\vol(B)^\frac{2}{3}=\left(\frac{\vol(\mathbb{S}^3)}{\vol(\mathcal{M})}\right)^\frac{2}{3}\vol(A)^\frac{2}{3}.
    \end{align}
    Combining \eqref{eq: estimate BBG} and \eqref{eq: isoperimetric-S3}, we obtain the desired estimate.
\end{proof}

\section{Results about quasiconformal homeomorphisms}
In this appendix we collect some of the results of the works \cite{PR-Regularity}, \cite{PR-Multiplicity-One} and \cite{Pigati-FB} by Pigati and Rivière.
\begin{lemma}\label{lem: appendixB-Lem4.3}[Lemma 4.3 in \cite{PR-Multiplicity-One}]
    Given $K\geq 1$, and $s>0$, there exists a constant $\delta_0>0$, depending only on $Q, K, s$, with the following properties: whenever
    \begin{enumerate}
        \item $u\in W^{1,2}\cap C^0(\overline{B_1(0)}, \mathbb{R}^Q)$ has $\lVert u\vert_{\partial B_1(0)}-f(s\cdot)\rVert_{C^0(\partial B_1(0))}\leq \delta_0$ for some $f\in \mathscr{D}_K^\Pi$,
        \item $u\circ\varphi^{-1}$ is harmonic and weakly conformal on $\varphi(B_1(0))$, where $\varphi:\mathbb{R}^2\to\mathbb{R}^2$ is a $K$-quasiconformal homeomorphism,
    \end{enumerate}
    then $\pi_\Pi\circ u\circ\varphi^{-1}$ is a diffeomorphism from $\varphi(\overline{B_\frac{1}{2}(0)})$ onto its image.
    In particular $\pi_\Pi\circ u$ is injective on $\overline{B_\frac{1}{2}(0)}$.
\end{lemma}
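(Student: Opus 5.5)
The plan is to argue by contradiction and compactness, exploiting that the boundary data of $u$ on $\partial B_1(0)$ is $C^0$-close to $f(s\cdot)$, where $f\in\mathscr{D}_K^\Pi$ is the composition of a linear isometry with a $K$-quasiconformal homeomorphism. Suppose that for some $K,s$ no $\delta_0$ works. Then there are sequences $\delta_n\to 0$, maps $u_n$, planes $\Pi_n$ (which we may rotate to a fixed $\Pi=\mathbb{R}^2\times\{0\}$), homeomorphisms $f_n\in\mathscr{D}_K^\Pi$ and $K$-quasiconformal $\varphi_n$, all satisfying the hypotheses with $\delta_n$, for which $\pi_\Pi\circ u_n\circ\varphi_n^{-1}$ fails to be a diffeomorphism on $\varphi_n(\overline{B_{1/2}})$.

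We normalize $\varphi_n$ by affine maps of $\mathbb{C}$, so that $\varphi_n(0)=0$ and $\min_{\partial B_1}|\varphi_n|=1$. Harmonicity and conformality are preserved by this normalization, so we may take $\varphi_n\in\mathscr{D}_K$. By compactness of $\mathscr{D}_K$ (Corollary A.4 in \cite{PR-Multiplicity-One}), after passing to a subsequence $f_n\to f$ and $\varphi_n\to\varphi$ locally uniformly. The limits $f,\varphi$ are again $K$-quasiconformal homeomorphisms, and $\varphi_n^{-1}\to\varphi^{-1}$ locally uniformly.

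Set $w_n:=u_n\circ\varphi_n^{-1}$. Each $w_n$ is harmonic and weakly conformal on $\Omega_n:=\varphi_n(B_1)$, and its boundary values are uniformly bounded. By the maximum principle $w_n$ is uniformly bounded, and interior estimates then give $C^k_{\mathrm{loc}}$ bounds on any compact subset of $\Omega:=\varphi(B_1)$, since $\Omega_n\to\Omega$ in Hausdorff distance. Hence $w_n\to w$ in $C^\infty_{\mathrm{loc}}(\Omega)$, with $w$ harmonic and weakly conformal.

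The main obstacle is identifying the limit. We want $w=f(s\varphi^{-1}(\cdot))$, or more precisely that $\pi_\Pi\circ w$ agrees with that map and $\pi_\Pi^\perp\circ w=0$. Then $\pi_\Pi\circ w$ is a weakly conformal harmonic map into a plane, hence holomorphic or antiholomorphic. The difficulty is that $C^0$ control of the boundary values does not directly pass through the domains $\Omega_n$, which vary with $n$.

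To handle this, I would use uniform equicontinuity of $u_n$ up to $\partial B_1$. For a conformal harmonic map with continuous boundary data, a Courant–Lebesgue type argument controls the oscillation near the boundary. This uses the energy bound (area bounded by the area of the convex hull of the boundary curve, through the maximum principle for the Jacobian) together with the equicontinuity of the boundary data, which comes from the convergence $f_n(s\cdot)\to f(s\cdot)$. Arzelà–Ascoli then gives $u_n\to u$ uniformly on $\overline{B_1}$, with $u|_{\partial B_1}=f(s\cdot)$.

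The perpendicular part $\pi_\Pi^\perp\circ w$ is harmonic and vanishes on $\partial\Omega$, so it is identically zero. The planar part $\pi_\Pi\circ w$ is then a conformal harmonic map of the Jordan domain $\Omega$, equal to $f(s\varphi^{-1})$ on the boundary. That boundary curve is a Jordan curve traversed once, because $f$ is a homeomorphism. By the argument principle, $\pi_\Pi\circ w$ (or its conjugate) is a holomorphic homeomorphism onto the interior of that curve. In particular its derivative is nonvanishing on the compact set $K_0:=\varphi(\overline{B_{1/2}})\Subset\Omega$.

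To conclude, $C^1$ convergence on a neighborhood of $K_0$ gives two things for large $n$. First, $d(\pi_\Pi\circ w_n)$ is invertible on $\varphi_n(\overline{B_{1/2}})$, which lies near $K_0$. Second, $\pi_\Pi\circ w_n$ is injective there: a holomorphic-limit injectivity argument applies, via Hurwitz, or directly via the degree of $\pi_\Pi\circ w_n-q$ on a slightly larger curve, which equals the limit degree $1$. Hence $\pi_\Pi\circ w_n$ is a diffeomorphism on $\varphi_n(\overline{B_{1/2}})$, contradicting the choice of the sequence. The final claim, that $\pi_\Pi\circ u$ is injective on $\overline{B_{1/2}(0)}$, follows because $\varphi$ is a bijection.
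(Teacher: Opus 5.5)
Your overall architecture is sound. The paper gives no proof of its own here and simply quotes Lemma 4.3 of Pigati--Rivi\`ere, so I am judging the argument on its merits. The sound parts are: contradiction plus compactness of $\mathscr{D}_K$; complex-affine normalization of $\varphi_n$; smooth interior convergence of $w_n=u_n\circ\varphi_n^{-1}$; identifying the limit as a planar holomorphic or antiholomorphic map whose boundary values $f(s\varphi^{-1})$ run once around a Jordan curve; and $C^1$-stability of embeddings.

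\textbf{The gap.} It sits exactly at the step you flag as the main obstacle. Uniform convergence of $u_n$ on $\overline{B_1(0)}$ is essential: without it you do not know that $w$ is continuous on $\overline{\Omega}$ with boundary values $f(s\varphi^{-1})$, which is what the argument principle needs. You derive it from Courant--Lebesgue together with an energy bound ``by the area of the convex hull of the boundary curve''. No such bound follows from hypotheses (1)--(2). Take $Q\geq 4$, $\Pi=\mathbb{C}\times\{0\}$, $s=1$ and $f=\varphi=\mathrm{id}$. The maps $u_m(z)=(z,\epsilon z^m)\in\mathbb{C}^2$ are holomorphic, hence harmonic and weakly conformal, and satisfy $\lVert u_m-f\rVert_{C^0(\partial B_1)}=\epsilon$. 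Yet $\frac12\int_{B_1}\lvert\nabla u_m\rvert^2=\pi(1+\epsilon^2 m)\to\infty$. Courant--Lebesgue with an assumed energy bound $E$ would only give a $\delta_0$ depending on $E$, which the statement does not allow.

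\textbf{How to repair it.} The maximum principle and quasiconformal distortion suffice, with no energy needed.
Let $R_n:B_1\to\Omega_n$ be the Riemann map with $R_n(0)=0$. By Carath\'eodory it extends to a homeomorphism of the closures. Hence $\psi_n:=\varphi_n^{-1}\circ R_n$ is a $K$-quasiconformal self-homeomorphism of $\overline{B_1}$ fixing $0$. By Mori's theorem, $\psi_n$ and $\psi_n^{-1}$ are uniformly $1/K$-H\"older.
Now set $h_n:=u_n\circ\psi_n=w_n\circ R_n$. It is harmonic on $B_1$ and continuous on $\overline{B_1}$. On $\partial B_1$ it lies within $\delta_n$ of the equicontinuous family $f_n(s\psi_n(\cdot))$. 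By the maximum principle, $h_n$ lies within $\delta_n$ of the Poisson extensions of these data, and those extensions are equicontinuous on $\overline{B_1}$.
Hence $h_n$, and then $u_n=h_n\circ\psi_n^{-1}$, converge uniformly on $\overline{B_1}$ along a subsequence, with $u\vert_{\partial B_1}=f(s\cdot)$. The rest of your argument then goes through.
Alternatively, $u_n$ solves a divergence-form equation whose ellipticity depends only on $K$, and you can use boundary continuity estimates for such equations.

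\textbf{Two smaller points.}
\begin{enumerate}
\item The vanishing of $\pi_\Pi^\perp\circ w$ needs no boundary regularity: the maximum principle already gives $\lvert\pi_\Pi^\perp\circ w_n\rvert\leq\delta_n$ on $\Omega_n$.
\item Hurwitz's theorem does not apply to $\pi_\Pi\circ w_n$, which is not holomorphic. Injectivity for large $n$ should instead come from your degree argument, or from $C^1$-stability of embeddings with invertible differential on a compact set.
\end{enumerate}
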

\begin{lemma}\label{lem: appendixB-LemmaA.1}[Lemma A.1 in \cite{PR-Multiplicity-One}]
Assume that $F\in C^0(\overline{B_1(0)}, \mathbb{R}^2)$ satisfies
\begin{align*}
    \lvert F(x)-\varphi(x)\rvert<\delta\text{ for all }x\in \partial B_1(0),
\end{align*}
for some $\delta\in (0,1)$ and some homeomorphism $\varphi: \mathbb{R}^2\to\mathbb{R}^2$, with $\varphi(0)=0$ and $\min_{\lvert x\rvert=1}\lvert\varphi(x)\rvert\geq 1$. Then $F(B_1(0))\supset B_{1-\delta}(0)$.
\end{lemma}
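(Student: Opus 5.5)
We want $F(B_1(0))\supset B_{1-\delta}(0)$. We argue by contradiction using a degree/winding-number argument, in the spirit of the proof in \cite{PR-Multiplicity-One}. Suppose some $q$ with $\lvert q\rvert<1-\delta$ is not attained by $F$ on $B_1(0)$.

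First we check that $q\notin F(\partial B_1(0))$. Indeed, for $\lvert x\rvert=1$ we have
\begin{align*}
    \lvert F(x)-q\rvert\geq \lvert\varphi(x)\rvert-\lvert F(x)-\varphi(x)\rvert-\lvert q\rvert>1-\delta-(1-\delta)=0.
\end{align*}
Hence $q\notin F(\overline{B_1(0)})$. Then $F\vert_{\partial B_1(0)}-q$, viewed as a loop in $\mathbb{R}^2\smallsetminus\{0\}$, extends continuously to the disk $\overline{B_1(0)}$ without zeros. It is therefore null-homotopic, so its winding number is $0$.

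Next we compare this loop with $\varphi\vert_{\partial B_1(0)}$ via the straight-line homotopy $H_t=(1-t)(F-q)+t\varphi$ on $\partial B_1(0)$. The key point is that $H_t$ never vanishes. Writing
\begin{align*}
    H_t=\varphi+(1-t)(F-\varphi)-(1-t)q,
\end{align*}
we get
\begin{align*}
    \lvert H_t\rvert\geq 1-(1-t)\delta-(1-t)(1-\delta)=t.
\end{align*}
This is positive for $t>0$, and at $t=0$ it is the strict estimate above. The same strict inequality, with the slack $1-\lvert\varphi\rvert-\lvert F-\varphi\rvert-\lvert q\rvert<0$, in fact gives $\lvert H_t\rvert>0$ uniformly in $t$. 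So the winding number of $F-q$ around $0$ equals that of $\varphi\vert_{\partial B_1(0)}$.

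Finally, $\varphi$ is a homeomorphism of $\mathbb{R}^2$ with $\varphi(0)=0$. Hence $\varphi\vert_{\partial B_1(0)}$ is a Jordan curve enclosing $\varphi(B_1(0))\ni 0$, and its winding number around $0$ is $\pm1$ (the local degree of a homeomorphism). This contradicts the winding number $0$ found above.

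The only nonformal input, and the main obstacle, is this last step: that a homeomorphism has degree $\pm1$, so its boundary circle winds once around the image of the center. This follows from invariance of the local degree under homeomorphisms (or from the Jordan–Schoenflies theorem). The remaining steps are the elementary inequality estimates above.
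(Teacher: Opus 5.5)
Your proof is correct and complete. $H_t\neq 0$ holds at $t=0$ by your first strict estimate and for $t>0$ by $\lvert H_t\rvert\ge t$, so the misstated remark about the ``slack'' can simply be dropped. The winding number $\pm1$ of $\varphi\vert_{\partial B_1(0)}$ is immediate: $\varphi$ restricts to a homeomorphism of $\mathbb{R}^2\smallsetminus\{0\}$, so it acts by $\pm1$ on $H_1(\mathbb{R}^2\smallsetminus\{0\})\cong\mathbb{Z}$, which is generated by $\partial B_1(0)$. The paper gives no proof of this lemma and only cites Lemma A.1 of Pigati--Rivière, so there is no in-paper argument to compare with; your homotopy comparison of $F\vert_{\partial B_1(0)}$ with $\varphi\vert_{\partial B_1(0)}$ in the punctured plane is the standard degree argument for this statement.
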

\begin{lemma}\label{lem: appendixB-LemmaA.1'}[Lemma 5.6 in \cite{PR-Multiplicity-One}]
    Assume that $v\in C^\infty(\overline{B_1(0)}, \mathcal{M}_{p,\ell})$
is a conformal immersion and that $\Pi$ is a 2-
plane such that there exists a $K$-quasiconformal homeomorphism $f:B_1(x)\rightarrow \Pi$
        in $\mathscr{D}_K^\Pi$ with
        \begin{align*}
            \vert v-f\vert<\delta_0\text{ on }\de B_1(0)\cup \de B_{s(K)}(0)\cup\de B_{s(K)^2}(0)
        \end{align*}
and $\frac{1}{2}\int_{B_1(0)}\lvert\nabla v\rvert^2\leq E$. Then there exist $\varepsilon_{E, K,\delta_0}$ such that if
\begin{align*}
    \int_{B_1(0)}\lvert \sff^v\rvert^4d\vol_{v}\leq \varepsilon_{E, K,\delta_0}\text{ and }\ell\leq \varepsilon_{E, K,\delta_0},
\end{align*}
then $\pi_\Pi\circ v$ is a diffeomorphism from $B_{2
s(K)^2}$
onto its image.
    
\end{lemma}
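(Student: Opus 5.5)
The plan is a degree argument for $\pi_\Pi\circ v$, made possible by upgrading the smallness of $\int_{B_1(0)}\lvert\sff^v\rvert^4d\vol_v$ into pointwise control of the tangent planes of $v$. Concretely, I would show that, once $\varepsilon_{E,K,\delta_0}$ is small, every tangent plane $dv(x)[T_xB_1]$ with $x\in\overline{B_{s(K)}(0)}$ lies within a small angle of $\Pi$. Then $\pi_\Pi\circ v$ is an orientation-preserving local diffeomorphism there, and injectivity on $B_{2s(K)^2}$ follows by counting preimages with winding numbers. I would argue by contradiction and compactness: take $v_k$, $f_k$, $\Pi_k$, $\ell_k$ satisfying the hypotheses with $\int_{B_1(0)}\lvert\sff^{v_k}\rvert^4\to0$ and $\ell_k\to0$, but with $\pi_{\Pi_k}\circ v_k$ not a diffeomorphism on $B_{2s(K)^2}$. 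By compactness of $\mathscr{D}_K$ (Corollary A.4 in \cite{PR-Multiplicity-One}) we may assume $f_k\to f$ locally uniformly and $\Pi_k\to\Pi$. Since $\ell_k\to0$, the targets $\mathcal{M}_{p_k,\ell_k}$ flatten, so $\sff^{v_k}_{\mathbb{R}^Q}$ also has vanishing $L^4$ norm.

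The first step is the geometric input. Because $4>2$, smallness of the $L^4$ norm of $\sff$ together with the area bound $E$ gives, in the spirit of Langer \cite{LangerCompactness} and Breuning \cite{Breuning}, a uniform scale $r_0=r_0(E)$ with the following property. Around every point $v_k(x)$, the connected component of $v_k^{-1}(B_{r_0}(v_k(x)))$ containing $x$ is mapped onto a graph over the tangent plane at $x$. This graph is $C^{1,1/2}$, with oscillation of its tangent planes bounded by $C\bigl(\int\lvert\sff^{v_k}\rvert^4\bigr)^{1/4}r_0^{1/2}$; this is Morrey's inequality on the graph. Chaining such graphs along paths shows that the Gauss map $x\mapsto dv_k(x)[T_xB_1]$ is almost constant on each connected set whose image has bounded extrinsic length, for instance $\overline{B_{s(K)}(0)}$. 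So along a subsequence the Gauss map of $v_k$ on $\overline{B_{s(K)}(0)}$ converges uniformly to a single plane $\Pi_\infty$.

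The second step identifies $\Pi_\infty$ with $\Pi$; I expect this to be the main obstacle, because it is where folds and sheets nearly orthogonal to $\Pi$ must be excluded. The image $v_k(\overline{B_{s(K)}})$ is then, up to errors tending to zero, contained in an affine plane parallel to $\Pi_\infty$. On the other hand, $v_k(\partial B_{s(K)})$ and $v_k(\partial B_{s(K)^2})$ lie within $\delta_0$ of $f(s(K)\cdot)$ and $f(s(K)^2\cdot)$, which are closed curves in $\Pi$. By the definition of $\eta(K)$, these curves wind around a disc of radius $4\eta(K)$. Since $\delta_0$ is small compared to $\eta(K)$, the curves cannot be contained in the $o(1)$-neighbourhood of a plane making a definite angle with $\Pi$. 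Hence $\Pi_\infty=\Pi$, and $\pi_\Pi\circ v_k$ is a local diffeomorphism on $\overline{B_{s(K)}}$ for $k$ large. Its Jacobian has a fixed sign, positive since $f$ is orientation preserving.

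The last step is a degree count. For $q\in\pi_\Pi(v_k(B_{2s(K)^2}))$, I would count preimages in a slightly smaller disc via Lemma \ref{lem: appendixB-LemmaA.1}-type winding arguments. The number of preimages of $q$ in $B_{s(K)^2}$, and similarly in annuli bounded by the three circles, equals the winding number of $\pi_\Pi\circ v_k$ restricted to the boundary circles around $q$. By the $\delta_0$-closeness to the homeomorphism $f$, this winding number is at most $1$. On $B_{2s(K)^2}$ I would use the circles $\partial B_{s(K)}$ and $\partial B_{s(K)^2}$, and choose $\delta_0$ small relative to $\eta(K)$. A local diffeomorphism with constant orientation and at most one preimage per point is injective, which contradicts the choice of the sequence.
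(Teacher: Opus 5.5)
There is a genuine gap in your final degree count. The winding number of $\pi_\Pi\circ v_k|_{\partial B_r}$ about $q$ is guaranteed to equal that of $f|_{\partial B_r}$ only when $\operatorname{dist}(q,f(\partial B_r))>\delta_0$. Inside the $\delta_0$-tube around $f(\partial B_r)$ the image curve may have small loops, and the winding number can be $2$, so ``at most $1$'' is false there. Since $\partial B_{s(K)^2}\subset B_{2s(K)^2}$, the set $\pi_\Pi\circ v_k(B_{2s(K)^2})$ certainly meets the tube around $f(\partial B_{s(K)^2})$. So splitting into $B_{s(K)^2}$ and an annulus cannot work; moreover, the annulus $B_1\setminus B_{s(K)}$ lies outside the region where you have a local diffeomorphism.

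The only count available to you is over all of $B_{s(K)}$ against the single circle $\partial B_{s(K)}$. That count needs $\pi_\Pi\circ v_k(\overline{B_{2s(K)^2}})$ to stay at distance $>\delta_0$ from $f(\partial B_{s(K)})$. You never prove this, and it has nothing to do with taking $\delta_0\ll\eta(K)$. In fact it cannot follow from what you retain at that stage. Consider a planar immersed disc carrying a thin flap folded back over itself inside the $\delta_0$-tube around $f(\partial B_{s(K)})$: it is an orientation-preserving local diffeomorphism on $\overline{B_{s(K)}}$ and is $\delta_0$-close to $f$ on $\partial B_{s(K)}\cup\partial B_{s(K)^2}$. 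These properties are preserved by precomposition with a diffeomorphism of $\overline{B_{s(K)}}$ fixing both circles. Such a diffeomorphism can move both preimages of a point of the overlap into $B_{2s(K)^2}\setminus\overline{B_{s(K)^2}}$, so a purely topological count cannot close the argument.

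What is missing is conformal rigidity, together with the hypothesis on $\partial B_1$, which your argument never uses. One way to supply it is as follows.
\begin{enumerate}
\item In your contradiction sequence, get $C^1_{\mathrm{loc}}(B_1)$ convergence $v_k\to v_\infty$ in the given conformal parametrization. Smallness of $\int\lvert\sff^{v_k}\rvert^2\,d\vol_{v_k}$ controls the oscillation of the conformal factor \`a la H\'elein/M\"uller--\v{S}ver\'ak, and the curve $v_k(\partial B_{s(K)^2})$ prevents collapse.
\item Then $v_\infty$ is a conformal immersion of all of $B_1$ into an affine plane at angle $O(\delta_0/\eta(K))$ from $\Pi$. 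Hence $F:=\pi_\Pi\circ v_\infty$ is an orientation-preserving local diffeomorphism of $B_1$.
\item By openness, $F(B_{s(K)})\subset B_{1/2+\delta_0}$. The bound $\lvert v_k-f_k\rvert<\delta_0$ on $\partial B_1$, passed to the trace, keeps $F(\partial B_r)$ close to $f(\partial B_1)$ as $r\uparrow 1$. The degree on $B_r$ then gives exactly one preimage in $B_1$ for each value in $F(B_{s(K)})$, so $F$ is injective on $B_{s(K)}$.
\item Injectivity on $\overline{B_{2s(K)^2}}$ transfers to $\pi_\Pi\circ v_k$ for large $k$ by $C^1$-stability of injective immersions of compact sets.
\end{enumerate}
If $2s(K)^2<s(K)$, a normal-families argument can replace the outer circle, at the cost of taking $\delta_0$ small depending on $K$.

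Two lesser points. First, Step 1 uses Langer patches at every point of $\overline{B_{s(K)}}$. This requires $v_k(\overline{B_{s(K)}})$ to stay a definite distance from $v_k(\partial B_1)$, and the non-stationary $v_k$ have no convex hull property. You must derive it, e.g.\ from monotonicity (the mean curvature of $v_k$ tends to $0$ in $L^4$) and the convex hull property of the limit varifold. Second, Step 2 only gives that $\Pi_\infty$ is within angle $O(\delta_0/\eta(K))$ of $\Pi$, not $\Pi_\infty=\Pi$; this is enough for the local diffeomorphism claim.
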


\printbibliography
\end{document}